
\documentclass{amsart}

\usepackage{amsmath}

  \usepackage{graphics} 
  \usepackage{epsfig} 
 \usepackage[colorlinks=true]{hyperref}
\hypersetup{urlcolor=blue, citecolor=red}
\usepackage{hyperref}

\textheight=8.2 true in
  \textwidth=5.0 true in
    \topmargin 30pt
     \setcounter{page}{1}

\newtheorem{theorem}{Theorem}[section]
\newtheorem{corollary}{Corollary}

\newtheorem{lemma}[theorem]{Lemma}

\theoremstyle{definition}
\newtheorem{definition}[theorem]{Definition}
\newtheorem{remark}{Remark}

\newcommand{\ben}{\begin{eqnarray*}}
\newcommand{\een}{\end{eqnarray*}}
 \newcommand{\sgn}{\operatorname{sgn}}

\begin{document}

\title[Posterior convergence]{Posterior convergence for approximated unknowns  in non-Gaussian statistical inverse problems }

\begin{abstract}
The statistical inverse problem of estimating  the probability distribution of an infinite-dimensional  unknown  given its noisy indirect   observation  is studied in the Bayesian framework.     In practice, one often considers only finite-dimensional unknowns and investigates numerically their probabilities.   As many unknowns are function-valued,  it is of interest to know whether the estimated probabilities converge when the finite-dimensional  approximations of the unknown are refined. In this work,  the generalized Bayes formula  is shown to  be  a powerful tool in the  convergence studies.  With the help of the generalized Bayes formula,  the question of  convergence of  the posterior distributions  is returned to the convergence of the finite-dimensional (or any other) approximations of  the unknown.  The approach allows many prior distributions  while the restrictions are mainly for the noise model and the direct theory.  Three modes of convergence of posterior distributions are considered --  weak convergence, setwise convergence and convergence in variation.  The convergence of conditional mean estimates is studied.     Several  examples  of  applicable infinite-dimensional non-Gaussian noise models
are provided,   including a generalization of the Cameron-Martin formula for certain non-Gaussian measures.
  Also,  the well-posedness of  Bayesian statistical inverse problems is studied.   
     \end{abstract}

\author{Sari Lasanen}

\keywords{Statistical inverse problems,  posterior distributions,  convergence of measures, Bayesian methods,  measures on linear spaces, non-Gaussian distributions}

\subjclass[2010]{Primary:  60B10, 65J22; Secondary: 60B11, 62C10}

\maketitle

\section{Introduction}

Statistically oriented infinite-dimensional inverse problems  are often described as problems where one  wants to estimate an unknown function given its randomly perturbed indirect observation  \cite{cava,evans,frank,siltanen,osul,luis}.  We prefer the following  description which suits well in the Bayesian framework. 

{\it The  statistical inverse problem  is to estimate the probability distribution of the unknown given its randomly perturbed indirect observation}. 
 
In this paper,  the unknown  $X$   and its observation $Y$  are modeled as random mappings from a complete probability space $(\Omega,\Sigma,P)$ into some locally convex Souslin   topological vector spaces $F$ and $G$ equipped with their Borel $\sigma$-algebras $\mathcal F$ and $\mathcal G$, respectively. Recall, that a  Souslin   space is a Hausdorff topological  space that is an image of a complete separable metric space under a   continuous mapping.  The observations are taken to be of the form   $Y=L(X)+\varepsilon $, where $\varepsilon $ represents random noise, $\varepsilon $ and $X $ are statistically independent,  and $L:F\rightarrow G$ is a continuous mapping.  
   The image measure  $\mu_X:=P\circ X^{-1}$  on $F$  is called the prior distribution,  and it represents our beliefs about  the unknown  without any  given observations.    
  
  Typically, we are given  a sample  $Y(\omega_0)=L(X(\omega_0))+\varepsilon(\omega_0)$,  which is produced by an  unknown $X(\omega_0)$ and a perturbation  $\varepsilon(\omega_0)$  for some $\omega_0 \in \Omega$.  Ultimately, we pursue after the probability measure $ U\mapsto1_U(X(\omega_0))$ defined on the Borel sets $U \subseteq F$.  This measure  would determine  the unknown $X(\omega_0)$ uniquely since  $F$ is  a Hausdorff space, which implies that the  singletons are closed sets and belong therefore to the Borel $\sigma$-algebra $\mathcal F$. We get  a simple  approximation of  the function $\omega \mapsto 1_U(X(\omega))$ on the basis of the given $Y(\omega_0)$  by taking its orthogonal projection from    $ L^2(\Omega,\Sigma,P)$  onto $L^2(\Omega, \sigma(Y), P)$, where $\sigma(Y)=Y^{-1}(\mathcal G)$ denotes the $\sigma$-algebra generated by $Y$.   Recall, that  for any $f\in L^2(\Omega,\Sigma,P)$, this projection coincides $P$-almost surely with the conditional expectation $\mathbf E[f|\sigma(Y)]$ of $f$ given the   $\sigma$-algebra generated by $Y$  (see  \cite{dud}). Moreover,  there exists a measurable real-valued function $\lambda_f$ on $G$ such that   $\lambda_f(Y(\omega))=\mathbf E[f|\sigma(Y)](\omega)$ $P$-almost surely.   We  take $\mathbf E[1_U(X)|\sigma(Y)](\omega_0)$ (or more precisely, $\lambda_{1_U}(Y(\omega_0))$  as an estimate of the  probability that the unknown $X(\omega_0)$ belongs  to the set $U\in  \mathcal F$. 
    
     When the  mappings $U\mapsto \mathbf E[1_U(X)|\sigma(Y)](\omega_0)$ form a probability measure on $(F,{\mathcal F})$, which is denoted here with $\mu(U,Y(\omega_0))$, this measure is  called the posterior distribution of $X$ given a sample $Y(\omega_0)$ of $Y$.   From the posterior distribution  one may extract information about the unknown $X(\omega_0)$.   For example,  the posterior mean  may serve as  an estimate of the unknown.
    
   The above estimation  of the probability distribution of the unknown  is  generally known as  the statistical inverse theory (also known as  the statistical inversion or the Bayesian inversion).  We postpone a literature review on the statistical inverse theory to  Section \ref{review}.  The  present paper concentrates on  the following  three topics in  the statistical inverse theory inspired by a paper of Lassas et al \cite{siltanen2}.
\begin{enumerate}
\item[(i)] Applicability  of  the generalized Bayes formula for  statistical inverse problems in locally convex Souslin   topological vector spaces.
\item[(ii)] Well-posedness of the Bayesian statistical inverse problem.
\item[(iii)]   Convergence  of  posterior distributions  and posterior means for approximated unknowns. Especially,  finding  conditions that  guarantee  the convergence of the   posterior distributions when  the corresponding approximated prior distributions converge. 
\end{enumerate}

\subsection{Case (i): The generalized Bayes formula}

  When $X$ and $Y$ have continuous probability densities with respect to the Lebesgue    measure, the conditional expectations lead to the Bayes formula  
 \begin{equation}\label{bab}
 D(x|y)D(y)=D(x,y)=D(y|x)D(x)
\end{equation}
which defines  the unique continuous posterior probability density $D(x|y)$   for any occurred observation   $y$ such that  $0<D_Y(y)<\infty$ (see   \cite{kaip}).   In \eqref{bab},   the functions $D(x), D(y)$, and $D(x,y)$  denote the probability densities of    $P\circ X^{-1}$, $P\circ Y^{-1}$, and $P\circ (X,Y)^{-1}$ at $x$, $y$, and $(x,y)$, respectively.  If the observation is of the form  $Y=L(X)+\varepsilon$, where $X$ and  the noise $\varepsilon$ are statistically independent, then the conditional density   of $Y$ given $X=x$ has the special form $D(y|x)=D_\varepsilon (y-L(x))$, where $D_\varepsilon $ is the continuous probability density of  the noise $\varepsilon$. 

 The availability of the conditional density $D(y|x)$  from the relationship between  unknowns and observations  is the key element for the statistical inverse theory. It makes the expression of the posterior density $D(x|y)$ explicit,   opening  the way for exploring the posterior distribution numerically. Unfortunately, infinite-dimensional probability  measures lack  probability density functions since there is no infinite-dimensional Lebesgue measure.  Instead of  \eqref{bab},  we have
 \begin{equation}\label{yhh}
 \mathbf E[ \mathbf E[1_U(X)|\sigma(Y)] 1_V(Y)]= P(X\in U\cap Y\in V)= 
 \mathbf E[ 1_U(X) \mathbf E[1_V(Y)|\sigma(X)]]
 \end{equation}
 for all Borel sets $U\in \mathcal F$ and $V\in \mathcal G$. The distributions of  $X$, $Y$, and $(X,Y)$ are, in principle,  known.   However, determining  $\mathbf E[1_U(X)|\sigma(Y)]$ explicitly from the first equality in 
 \eqref{yhh} is in  general a hard task, where  an explicit expression of the distribution of $(X,Y)$ is helpful, as in the case of  linear Gaussian problems \cite{markku,lus,mandelbaum}.  On the other hand,   the second equality  in \eqref{yhh} looks easy enough.   For instance, the dominated convergence of simple functions to the exponential function shows that 
    \begin{equation*}
     \mathbf E[e^{i\langle X,\phi\rangle +i\langle Y, \psi\rangle}]=
      \mathbf E[e^{i\langle X,\phi\rangle}\mathbf  E[e^{i\langle Y,\psi\rangle} |\sigma(X)]]
       \end{equation*}
 for all $\phi$ and $\psi$ in the dual spaces $F'$ and $G'$, respectively.  This  suggests  that,   after verifying  some measurability conditions,  we may   take   \begin{equation*}
  \mathbf  E[1_V(Y)|\sigma(X)](\omega)= \mu_{\varepsilon +L(X(\omega))}(V)
  \end{equation*}
$P$-almost surely since $X$ and $\varepsilon$ are statistically independent.  
Does  knowing the conditional probabilities $  \mathbf  E[1_V(Y)|\sigma(X)](\omega)$   help in determining the posterior distribution? 
The answer is positive in some cases.  If  the $\sigma$-algebra  $\mathcal G$  in question  is  countably generated and the conditional distributions  of $Y$ given $X$ are   regular and   $P$-almost surely absolutely continuous with respect to some fixed $\sigma$-finite measure  $\lambda$ on $G$ (i.e. they are dominated by $\lambda$), then the generalized Bayes formula 
 \begin{equation}\label{bab2}
\mu(U,y) =\frac{\int_U \frac{d\mu_{Y|X} (\cdot, x)}{d\lambda} (y) d\mu_X(x) }{\int_F \frac{d\mu_{Y|X} (\cdot,x)}{d\lambda} (y) d\mu_X(x)}
 \end{equation}
 is known to hold   for  $U\in \mathcal F$ and $\mu_Y$-almost every given observation $Y=y$ such that the denominator is finite and non-zero  \cite{kallianpur2,schervish}.   In \eqref{bab2}, it is required that the Radon-Nikodym  densities  $\frac{d\mu_{Y|X}(\cdot,x)}{d\lambda} (y)$  of the conditional measure  $\mu_{Y|X}(\cdot,x)$ of $Y$ given $X=x$  with respect to $\lambda$ are  jointly measurable. This is sometimes achieved by defining the Radon-Nikodym densities with the help of a fixed joint density as is done  in \cite{schervish}.
     In  \eqref{bab2},   the form of $Y$ is allowed to be more general than  in our restricted case  of $Y=L(X)+\varepsilon$, where the posterior distribution has, for suitable $L,X$, and $\varepsilon$,  the form 
 \begin{equation}\label{pooo}
\mu(U,y) =\frac{\int_U \frac{d\mu_{Y+L(x)}}{d\lambda} (y) d\mu_X(x) }{\int_F \frac{d\mu_{Y+L(x)}}{d\lambda} (y) d\mu_X(x)}
 \end{equation}
 for all $U\in \mathcal F$ and  $\mu_Y$-a.e. $y\in G$ such that the denominator is finite and non-zero. When the Radon-Nikodym densities in \eqref{pooo} are known,   the posterior distribution on $F$ has  an explicit representation for all 
 admissable $y\in G$.

In statistical inverse problems,  the generalized Bayes formula for function-valued unknowns has been used before  in the case  of  finite-dimensional noise models that have probability density functions with respect to the  Lebesgue    measure \cite{cotter,fitz, siltanen,stuart} and in the case of infinite-dimensional  Gaussian noise models, using in \eqref{pooo} the Cameron-Martin formula 
  \cite{helin,siltanen2,stuart}. The starting point in \cite{cotter,cotter2,stuart}  is that  the posterior distribution is assumed to have  Radon-Nikodym density with respect to the 
prior distribution.  Therefore,   a similar  formula  like  \eqref{bab2} is used in \cite{cotter,cotter2,stuart}, but not derived.   In \cite{helin,siltanen2,siltanen}, the unknown and the noise are statistically independent. The same seems to be the case in same examples in  \cite{cotter,cotter2,stuart} but the fact is not emphasized.

  Fitzpatrick  \cite{fitz} studied (separable) Banach-space valued unknowns, and wrote the  expression  \eqref{pooo} in the case of finite-dimensional observations.  As a concrete example,  he used  a  Gaussian prior distribution on $C([0,1])$  in  the ill-posed inverse problem of determining  the  function  $q$   in the  differential equation $-(qu')'=f$ on $(0,1)$ from finitely many noisy values of the solution $u$   satisfying the Dirichlet boundary condition.  
 Lassas and Siltanen  \cite{siltanen} used the generalized Bayes formula for certain   prior random variables  on  $C([0,1])$ and assumed the finite-dimensional noise to be Gaussian.  Lassas et al \cite{siltanen2} and Helin  \cite{helin} had emphasis on edge-preserving prior distributions and used  linear forward theory with Gaussian noise, but they  allowed  in \eqref{pooo} also other separable  Banach and  Hilbert space-valued unknowns, respectively.   The forward mapping  $L$  was assumed to be linear  in \cite{helin,siltanen2,siltanen}.    Cotter et al \cite{cotter} studied the case of finite-dimensional observations and Banach space-valued unknowns, and required $L$ to be   measurable. Stuart \cite{stuart} assumed $L$ to be locally Lipschitz continuous and aasumed   finite-dimensional  or Gaussian noise.  Stuart allowed    prior distributions that  are absolutely continuous with respect to  some Gaussian measure.  Theorem 4.1 in \cite{stuart} is an abstract generalization towards allowing  certain infinite-dimensional  non-Gaussian noise distributions but  the identification of  the used notation to any  statistical inverse problem is omitted. The same approach is used in \cite{cotter2}.
 
  In the present paper, we  provide (abstract) assumptions on the forward theory  and the noise that are sufficient for the generalized Bayes formula in the case of   statistical inverse problems  in   locally convex Souslin   topological vector spaces.   However,  such a generalization   is not  particularly novel by itself, and the generalized  Bayes formlula is treated in this work as an important tool for achieving other results.  For example,  the study of Case (iii)  exploits  the generalized Bayes formula.

In this work, we allow infinite-dimensional  noise models (similarly as in \cite{helin,sari,siltanen2,petteri}). One may ask, what are the benefits of such models because any feasible measuring instrument produces only finite-dimensional observations.  For example, an analog-to-digital converter performs the weighted averaging and quantization of the signal;  an X-ray imaging device has a finite number  of projection angles and a limited resolution of     the projection images. In this light, there is no immediate need for infinite-dimensional  noise models.  However,  changes in the measuring instrument can  lead to different posterior distributions and one may wish to choose the best finite-dimensional measurement configuration for the problem.  As noted in 
\cite{siltanen2},  the  mathematical formulation of the infinite-dimensional noise model, when possible,  may be helpful, as it provides an overall framework for the studies.  For some noise sources  there even exists physically motivated infinite-dimensional noise models, like  the  model of the thermal noise in electric circuits, which arises from the thermal motion of the charge carriers.

Particular emphasis  in this work is on finding tools for dealing with non-Gaussian noise in infinite-dimensional statistical inverse problems. 
 There are three reasons why    the Gaussian noise model is not satisfactory. 
  \begin{enumerate}
  \item  Noise does not always follow well enough a  Gaussian distribution.  In Section \ref{spheri} we  discuss  the  appearance of  $\alpha$-stable noise  in statistical inverse problems.  An evaluation of finite-dimensional  noise models in medical imaging can be found in \cite{ctnoise}.  
  \item   Model approximations --  which were studied first by Kaipio and Somersalo \cite{ks} for finite-dimensional observations        -- can also produce non-Gaussian errors (cf.  Remark \ref{model}). 
  \item  Some noise statistics  may not be exactly known. In statistical inverse theory  the inaccuracies in the noise model are  further   modeled with  hierarchical  distributions (see Section \ref{sasse} for a special case).  
     \end{enumerate}
      A  wrong noise distribution may  cause  poor performance of the  estimators of the unknown.

  In Section  \ref{sec5}, we are able to derive (with the help of the generalized Bayes formula)  explicit posterior distributions  in some new cases where the noise has  non-Gaussian infinite-dimensional distribution (Sections \ref{ridi}--\ref{peri}).  It turns out that  in some  cases (Section \ref{spheri})   the posterior  distribution has a simple expression  for the infinite-dimensional observations but not  for the  truncated finite-dimensional observations.  As a further motivation for the study of infinite-dimensional noise models,  we  suggest that the solutions of  infinite-dimensional Bayesian problems may give rise  to new numerically feasible, but non-Bayesian, approximations of the finite-dimensional posterior distributions.

\subsection{Case (ii): Well-posedness of the Bayesian statistical inverse problem.}

The projection operator from $L^2(\Omega,\Sigma,P)$ onto 
$L^2(\Omega, \sigma(Y), P)$  determines posterior probabilities $\mu(U, Y(\omega))$ only up to 
$P$-almost every $\omega\in \Omega$. The  uniqueness of the posterior distribution  for a given $y\in \mathcal R(Y)$ is  therefore unsettled (note that such  form of nonuniqueness  has nothing to do with   the uniqueness of  the deterministic inverse problem of recovering $x_0$ from $L(x_0)$).  The nonuniqueness is fairly  well understood in   Gaussian linear problems \cite{sari,markku,lus,mandelbaum,simonithe},  where  the posterior mean is  known to be determined up to a set of probability zero,  but  has  received limited attention in the general case.    For a Bayesian scientist,  such   nonuniqueness  is discomforting. Two Bayesians using the  same   prior distribution and the same observations can, in principle, have different  posterior distributions for some  observations (in a set of probability zero).   One aim of the present work is to make the two Bayesians agree on the form of their posteriors for a given $y\in G$, at least in some special  cases. In Theorem \ref{ex},   we first carefully identify the nonuniqueness of the posterior distributions in locally convex Souslin topological vector spaces by adopting   a new concept, the  essential uniqueness,  from the theory of conditional measures  to the statistical inverse theory.   Then,  we apply a   choice first made in \cite{siltanen2} and appearing also in  \cite{cotter,cotter2,helin,stuart}, which is to work, if possible, with  a fixed version of the posterior distribution   depending continuously on observations in certain sense.  Evans and Stark suggested even earlier  that certain non-uniqueness problems with conditional expectations could  be avoided by  using  dominated probabilities (see Remark 3.7 in \cite{evans}).

The original part of this work begins  in  Section \ref{sec22},  where we achieve   partial uniqueness of those posterior distributions that  depend continuously on   observations in  the sense that posterior probabilities of Borel sets depend continuously on  observations (cf. Theorem \ref{uniq}).  The partial uniqueness gives an unambiguous meaning to the posterior distribution at a fixed observation. Moreover,  it shows that then the  Bayesian statistical inverse  problem is well-posed -- there exists a unique posterior distribution that depends continuously on the observations.     The method of using continuous probability densities  is widely used  in the finite-dimensional case (see    \cite{kaip}), but seems not to have been taken before within the  abstract infinite-dimensional problems.   

     The  posterior distributions are further studied in  Theorem \ref{support2}, where  it is shown that the continuous dependence of the posterior probabilities of Borel sets  on the observations implies  the absolute continuity of  the posterior distribution with respect to the prior distribution. We remark that Theorem \ref{support2}  clarifies some of  the differences between  the  undominated  and  the $\mu_X$-a.s. dominated  cases. Indeed,  if  $F$ and $G$ are Polish vector spaces,  a result of Macci \cite{macci} says that   the absolute continuity of $\mu_Y$-almost all  posterior distributions  with respect to the prior  distribution is equivalent to the  absolute continuity of the   measures $\mu_{\varepsilon+L(x)}$  with respect to the measure   $\mu_Y$ for $\mu_X$-a.e. $x\in F$. Hence, the  continuous dependence of the  posterior probabilities of Borel sets  on observations is  possible only when the measures $\mu_{\varepsilon+L(x)}$ are   dominated  by some $\sigma$-finite measure for $\mu_X$-a.e. $x\in F$.    What does this mean for the undominated cases?  The  posterior probability of at least  one Borel set will be   discontinuous as a  function of observations.  Hence, the corresponding Bayesian problem is ill-posed -- small perturbations of the given sample can lead to large  perturbations of some posterior probabilities.   The ill-posedness in the linear Gaussian statistical inverse problems has been considered before by  Florens and   Simoni  \cite{simoni,simonithe}, who noted that the posterior mean in the Gaussian linear case can be ill-posed. Florens and Simoni also showed that the regularizing effect of the prior distribution has  a limited power in such a case. They  suggested using an additional Tikhonov  regularization in the Gaussian linear case in order to obtain  approximations of  the posterior means that depend continuously on the observations.
     
  We note that the worst-case scenario for discontinuous posterior distributions on  complete separable metric spaces  is somewhat characterized in \cite{burgess}, where it is proved that    either the set of all  $y\in G$ such that the  posterior distributions $\mu(\cdot,y)$ are mutually singular is (at most) countable or there exists a non-empty compact perfect set  $C\in G$  and a Borel set $B\in F\times G$ such that  $1=\mu(B_y,y)=\mu(F\backslash B_{y},y')$ for all   $y,y'\in C$ such that $y\not=y'$. Here $B_y=\{ x\in F: (x,y)\in B\}$.

   In   Theorem \ref{uni}, we  present some  sufficient conditions that guarantee continuous dependence  of the posterior posterior probabilities of Borel sets on observations  by using  the generalized Bayes formula. Cotter et al \cite{cotter,cotter2} have shown a closely related  result which states that  under certain conditions (including domination and Gaussian prior distribution), their version of the posterior distribution is Lipschitz continuous  in finite-dimensional observations  with respect to  the Hellinger distance. Our  proof relies on the Borel measurability of separately continuous functions -- a result first obtained  by  Lebesgue and later generalized by  Rudin \cite{rud}. The author was unable to find the  proof  of the measurability of separately continuous  Souslin space-valued functions  so  the proof  is included.  
   
   Note that even in a  dominated case the posterior probabilities of Borel sets need  not be  continuous  on  any measurable linear subspace of full $\mu_Y$-measure (see Section \ref{spheri} and Remark \ref{epa} for an example).  However,  the posterior probabilities of Borel sets in a dominated case are always continuous on certain compact sets of nearly full measure 
   (cf. Theorem \ref{lusin}). Unfortunately,  in infinite-dimensional normed spaces the interior of any compact set is empty. The partial uniqueness  of Theorem \ref{uniq} is therefore not generic for infinite-dimensional normed spaces,  unless there is a locally finite union of compact sets $K_i$ such that $\cup_{i=1}^\infty K_i$ has full $\mu_Y$-measure and the restriction of $\mu(U,\cdot)$ onto each $K_i$ is  continuous, which guarantees that $\mu(U,\cdot)$ is continuous on the whole $\cup_{i=1}^\infty K_i$.   However, in Remark \ref{samplespa} we note that  for any version of the posterior distribution there always  exists some  stronger topology on  $G$   that generates the same Borel sets, but makes  the version continuous.

\subsection{Case (iii): Posterior convergence}\label{sec13}
  For computational reasons, the unknown  $X$  is often replaced with a finite-dimensional approximation $X_n$,  where  $X_n$ is an $F$-valued random variable   on  the probability space $(\Omega,\Sigma,P)$ with finite-dimensional range.  Instead of exploring the posterior distribution of $X$ given $Y(\omega_0)$, we  would like to explore  the   finite-dimensional posterior            distribution of $X_n$ given $Y_n(\omega_0):= L(X_n(\omega_0))+\varepsilon (\omega_0)$,  which is   $\mu_n(U,Y_n(\omega_0))= \mathbf E [1_U(X_n)| \sigma(Y_n)](\omega_0)$. As noted in \cite{sari,siltanen2}, the value $Y_n(\omega_0)$ is  not given and  the common procedure is to replace $Y_n(\omega_0)$   with $y=Y(\omega_0)$ in  the expression of $\mu_n$. 
  We continue to call $\mu_n(\cdot,y)$ the posterior distribution of $X_n$, even though the replacement -- strictly speaking -- 
   brings us out from the Bayesian world.  One should  note  that continuity of the posterior distribution $\mu_n$  may additionally  diminish the  distortions in posterior opinions on $X_n$ that are caused by  replacing  $Y_n(\omega_0)$ with   a close  observed  value  $Y(\omega_0)$.     The question is then, do the  posterior distributions  $\mu_n(\cdot,y)$ on $F$ (and the posterior  means)  converge   when the approximations are refined?

 Positive results for  the convergence of either  the posterior distributions  $\mu_n(\cdot,y)$  or $\mu_n(\cdot, Y_n(\omega))$  have  been given by   Fitzpatrick \cite{fitz} in the case of finite-dimensional observations of  separable Banach space-valued  unknowns,   Lasanen \cite{sari}  in the linear  Gaussian case, Lassas and Siltanen \cite{siltanen} for the total variation prior on 
 C([0,1]), Pii\-roinen \cite{petteri} in the framework of statistical experiments for the  Souslin space-valued random variables,   Lassas et al  \cite{siltanen2} for certain Banach space-valued priors (including the Besov prior),  Helin \cite{helin} for certain Hilbert space-valued priors (including an edge-preserving  hierarchical prior), and   Stuart \cite{stuart} for a special form  $f_n  d\mu_0$ of  the approximating posterior distributions, where $f_n\in L^1(\mu_0)$ for a Gaussian measure $\mu_0$.   
 
 The convergence in \cite{fitz} is proved for  the  posterior probabilities of sets  $P_n(U)$ where   $U$ is a  Borel sets and the approximating operators  $P_n$, where $n\in \mathbf N$    are  continuous and   converge  to the identity on the Banach space.  We note that since image of  a Borel set under a continuous mapping in a Polish space is Souslin (see  Theorem A.3.15 in \cite{bog}), the class of all sets  $P_n(U)$, where $ U\in \mathcal F$,   is a subclass of all universally measurable sets. 
  The convergence results in \cite{helin,sari,siltanen2,petteri} hold    with respect to the weak convergence of measures i.e. $\lim_{n\rightarrow \infty} \mu_n(f) = \mu(f)$ for all continuous bounded real-valued  functions $f$ on $F$.  The     convergence results in \cite{cotter2,stuart} are formulated for the Hellinger distance of the posterior distributions and  the convergence results in \cite{siltanen} for  weak convergence of the posterior distributions of the  pointwise values of  the unknown continuous function (i.e.  the weak convergence in distribution). In  \cite{fitz,siltanen},  the  observations $Y$  and the random variables $Y_n=L(X_n)+\varepsilon$ were assumed to have  continuous probability densities with respect to the Lebesgue   measure.The  convergence results in \cite{helin,sari,siltanen2,siltanen} are formulated for  a linear forward theory  $L$ in the case of  Gaussian noise.    The   converging posterior distributions in  \cite{sari} are evaluated either at  samples  of     $Y=L(X)+\varepsilon$ or the points $Y_n=L(X_n)+\varepsilon$, the converging  posterior distributions in \cite{petteri} are evaluated  at  the points $Y_n=L(X_n)+\varepsilon$,  and the convergence results in \cite{helin,siltanen2,stuart}  are formulated  for  fixed versions of the posterior distributions at given samples of $Y=L(X)+\varepsilon$.

  The limit of the posterior distributions  may not always be what one suspects.  The famous example  is the case of the so-called  finite-dimensional total variation priors   whose  highly appreciated  non-Gaussian posterior distributions  converge weakly to  a Gaussian distribution as the approximations are refined.  Lassas and Siltanen \cite{siltanen} showed that this problem actually originates from the  behavior of  the prior distributions --   the  random  variables $X_n(t)$ obeying  total variation priors converge to  Gaussian limits when the disrcretizations are refined.  This discovery, first conjectured by Markku Lehtinen, has  changed the view on how   Bayesian statistical inverse problems should be solved for infinite-dimensional unknowns --  one should construct an infinite-dimensional prior distribution and check that  the corresponding finite-dimensional  posterior distributions converge to the right limit. Otherwise one risks  the consistency of  the prior knowledge and the consistency of the  posterior distributions with respect to the increase in dimensionality. This guideline is followed in 
  \cite{helin,sari,siltanen2,petteri}.

 Lassas et al  \cite{siltanen2} introduced   a deterministic function on $F$, called the reconstructor $R_g$, that coincides a.s. with  the conditional expectation of  $g(X)$ given  $Y^{-1}(\mathcal F)$, where $g$ is a  measurable function having values in some separable Banach space. Lassas et al used  a clever choice of their reconstructors $R_{1_U}$, $U\in \mathcal F$, which allowed them to state posterior convergence results  for any given observation.  The framework effectively transformed a question of originally probabilistic nature,  the convergence of the  conditional expectations $\mathbf E [1_U(X_n)|Y_n]$,  into a question in analysis, the convergence of integrals. Moreover, it  was possible to replace the samples of   $Y_n$ with samples of 
 $Y$ in the posterior distribution.  The same technique is extensively used in the present work. 
 
 Unlike in  \cite{siltanen2},   the convergence of the posterior distributions in    Helin's work \cite{helin}  is not based on approximating the prior random variables directly but on approximating the prior  probability distributions in the weak topology.   We adopt his viewpoint, since the posterior distribution depends on the prior random variable only through its  distribution, assuming that the  noise and the unknown are statistically independent
  (see Theorem \ref{ex} and Lemma \ref{eka2}).

  Positive results for the convergence of   posterior  means  for approximated unknowns have been obtained in the linear Gaussian case  \cite{sari} (a.s. convergence in the Schwartz space $\mathcal D'([0,1])$),  for the total variation prior on $C([0,1])$ \cite{siltanen} (for the pointwise-values),  for  exponentially integrable separable  Banach space-valued priors \cite{siltanen2}  (in the norm topology),   for uniformly discretized separable Hilbert space-valued  priors  with exponential weights   \cite{helin} (in the norm topology for all exponentially bounded functions),  for polynomially bounded functions and  those posterior approximations that have the form  $f_n  d\mu_0$, where $f_n\in L^1(\mu_0)$ for some Gaussian measure $\mu_0$ \cite{cotter2,stuart}.  
  
  In Section \ref{sec4}, some results in \cite{cotter,helin, siltanen2,petteri,stuart}  that  concern  the  weak convergence of posterior distributions  and convergence of   the posterior  means   are extended  in several directions.  
  
  Firstly, we  allow prior distributions   to be  probability measures on a locally convex Souslin topological vector space $F$, whereas  Lassas et al  \cite{siltanen2}  and Helin \cite{helin} applied the generalized Bayes formula for separable Banach  and Hilbert space-valued unknowns, respectively.  
Posterior distributions in locally convex Souslin (not metrizable) topological vector  spaces have been considered before  in the special case of  Gaussian  distribution-valued random variables  in \cite{sari,markku} and in the general abstract case of Souslin space-valued random variables only in \cite{petteri}.   The first part of Section \ref{sec2} is therefore devoted to the basics of the abstract  statistical inverse theory in locally convex Souslin topological vector  spaces.     Unlike in the work of Piiroinen \cite{petteri}, where general Souslin space-valued random variables were first studied in statistical inverse problems, we use the generalized Bayes formula for the proofs. We also  derive the generalized Bayes formula  from the equation $Y=L(X)+\varepsilon$, which supplements also the formulation presented in  \cite{cotter,cotter2,stuart} where  the starting point is a given form of the posterior distribution. This makes it easier to recognize  situations where the generalized Bayes formula holds.

  In this work, the use of locally convex Souslin   topological vector spaces is mostly motivated by the fact that in Souslin  spaces   the Borel $\sigma$-algebras are  regular enough for the existence of regular conditional measures  \cite{bogm}.  Moreover,   the class of  such spaces  contains many useful spaces, like complete separable  metric vector spaces and   spaces of (Schwartz) distributions \cite{schwarz}.  We remark   that the distribution spaces are  sometimes preferred since   the convergence of the characteristic functions of measures implies  the weak convergence of measures for them  (this fact is shown e.g. in \cite{bogm} and used in \cite{sari}).   We require $G$ to be a  topological vector space  since  $Y$ is defined as  the sum of  two $G$-valued random variables.  However, it is well-known that  the sum of two   random variables is not always a random variable in  arbitrary topological vector spaces. In Lemma \ref{yeke}, we check that $Y$ is indeed a random variable because the sample space $G$ is a Souslin space (the fact is known but the author was unable to find  a reference  for the proof in the literature).  We require $G$ and $F$ to  be locally convex topological vector spaces  since locally convex spaces have rich enough dual spaces that for example allow the use of characteristic functions in the  identification of    measures. In Remark \ref{samplespace},  we note that  a  locally convex Souslin  sample space  of  $Y$ is allowed to be mis-specified by a continuous linear injection without altering the posterior distributions. This holds for any statistical inverse problem,
   not just for those admitting the representation \eqref{bab2}.

   The main  difference of the present  Theorem \ref{coco2} to  Theorem 4.8 in   \cite{petteri} is that   we do not require the conditioning $\sigma$-algebras $Y^{-1}_n (\mathcal G)$ to be increasing. This is a significant difference as it allows more general approximation schemes.  On the other hand, the present approach  utilizes  the generalized Bayes formula, which was not needed in   \cite{petteri}.  Hence, the results in \cite{petteri} are valid for many noise models  that are  not covered by our assumptions, like  additive undominated noise,  multiplicative noise, or noise that is  statistically dependent on the unknown (we assume that the noise is statistically independent from $X$ and all of its approximations $X_n$).  Another difference from Theorem 4.8 in  \cite{petteri} is  that we work with  one fixed  sample 
  $y$ of  $Y=L(X)+\varepsilon$ whereas in \cite{petteri} it is assumed that  we have a sequence  $\{y_n\}$  consisting of samples  of the random variables   $Y_n=L(X_n)+\varepsilon$, which is a drawback when one considers realistic observations. The reason for this is that in \cite{petteri} the convergence is shown 
  for the conditional expectations $\mathbf E [f(X_n) | \sigma(Y_n)](\omega)$ (which are equivalent to $ \mu_n (f,Y_n(\omega))$) for   $P$-almost every $\omega\in\Omega$).  However, in   Lemma 4.15 in  \cite{petteri}, it is explained how Theorem 4.8 can be used in certain cases where the observation  $Y=L(X)+\varepsilon$ is given.  Namely,   the prior distribution of the Souslin space-valued random variable $X$  is assumed to be  concentrated on a separable Hilbert space $H$  and its approximations are of the form $X_n=P_nX$, where  the operators $P_n$  are   some finite-dimensional linear operators on $H$ that converge to the identity at every $x\in H$.  Assuming that  the range of the linear operator $L$ is  some separable Hilbert space $\widetilde H$ and the noise $\varepsilon\in \widetilde H$ with probability one,   Piiroinen constructed certain projection  operators $R_n$, and  showed that    the sequence of the posterior distributions of   $X_n$  given   $R_{n} (LX+\varepsilon)$   converges weakly to the posterior distribution of $X$ given   $LX+\varepsilon$ as $n\rightarrow \infty$.    The result of Piiroinen shows,  remarkably, that in some cases less data is adequate -- and  easier to manage -- than full data. We remark that the required  assumptions exclude   injective compact linear operators of infinite rank as $L$.   Indeed,   if  $L:H\rightarrow \widetilde H$ is any compact linear  bijection, then its inverse operator is bounded by  the open mapping theorem. Considering $LL^{-1}=I$ on $\widetilde H$  and $L^{-1}L=I$ on $H$, we see that   $L$  can not be compact unless the Hilbert spaces $H$ and $\widetilde H$ are finite-dimensional. On the other hand, compactness of $L$ is a typical feature leading to the ill-posedness of the inverse problem.  But under the assumptions of  Lemma 4.15 in  \cite{petteri}, the convergence result of Piiroinen 
  is stronger than ours in the sense that   it  allows any noise model with the property $\varepsilon \in \widetilde H$ with probability 1. However, the compactness of $L$  is not a restriction for the present convergence results.
   The weak convergence of  posterior distributions in the linear Gaussian case in \cite{sari} is not  fully covered by the present results since some of the cases appearing in \cite{sari} are not $\mu_X$-a.s. dominated.

  Secondly,  we allow a wider class  of noise models  than the Gaussian models applied in \cite{helin,siltanen2}. 
    Theorem \ref{coco2}  gives a positive answer to the  weak convergence of the  posterior distributions in locally convex Souslin topological vector spaces, when the translations of the noise distribution by $L(x)$  are $\mu_X$- and 
  $\mu_{X_n}$-a.s. dominated and the corresponding Radon-Nikodym densities satisfy certain measurability and uniform integrability conditions.    Examples of suitable noise models are given in Section \ref{sec5}, which include the well-known cases of finite-dimensional noise and Gaussian infinite-dimensional noise but also four novel models such as  spherically invariant noise and   periodic signals in decomposable noise.

 Theorem \ref{kass}  extends  the convergence of the  posterior means  in \cite{helin,siltanen2}  for     more general noise models,  and relaxes slightly the  integrability properties  imposed on the  approximations of the unknown in \cite{helin}.  Theorem  \ref{kass}  extends also assumptions in Theorem 4.10 of \cite{stuart} for more general approximations of the prior distributions (but the mode of convergence is different). Some  conditions,   which imply  the posterior convergence of continuous linear functionals for weakly  converging  prior distributions, are presented in  Theorem \ref{kass0}.  The  mentioned conditions are indebted to the well-known criteria for the convergence of  integrals with respect to measures that converge weakly.

 Thirdly, we consider stronger modes of convergence for   posterior distributions than  the weak convergence considered in \cite{helin,siltanen2}.  In  Theorem \ref{coco}, we give sufficient conditions  under which  the posterior distributions  inherit also the setwise convergence or  the convergence in variation  of the approximated prior distributions.  Recently, Stuart has established  (see  Theorem 4.6 in \cite{stuart}) an estimate for  the speed of  convergence in Hellinger distance of the  posterior distributions  for the approximated posterior distributions of the restricted form $\mu_{n}(dx,y)=f_n(x,y)d\mu_0(x)$, where $f_n(\cdot,y)\in L^1(\mu_0)$ and $\mu_0$ is a Gaussian measure on  a Banach space $F$. In the present  Theorem \ref{coco}, the approximated posterior need not be absolutely continuous with respect to a Gaussian measure, and the approximations $X_n$  need  not  be measurable functions of $X$.   
  
  Moreover,  we  allow the direct theory  $L$ to be nonlinear  (as  in the  less general cases in \cite{cotter,cotter2,stuart}), which is   a minor modification of  the linear case in \cite{helin,siltanen2}, but indicates that  nonlinearity  does not necessarily complicate the mathematical convergence,  although the exploration of  the posterior distribution becomes more difficult.  The result is not surprising since nonlinearities are frequently handled  in the  stochastic filtering problems \cite{kallianpur,oek}, which  have connections to the statistical inverse problems.  Throughout  the paper, we  consider continuous  forward mappings $L:F\rightarrow G$,  although  the existence of  posterior distributions requires only their measurability. However,   the  continuity of $L$ is utilized  in the main results of the present paper  on  the convergence of the posterior distributions  and  on  the well-posedness of the Bayesian statistical inverse problem. It is also consistent with  the usual description of the deterministic inverse problems where continuity holds.

Unlike in  \cite{helin,sari,siltanen2,petteri},   the  case of approximated observations is not studied (nor reviewed) in the present work.  By focusing on the approximated unknowns, we hope to single out their essential properties.
Section   \ref{sec6} contains some examples  of  prior approximations.

{\it Notations:} When $G$ is  a topological vector  space, we denote with  $G'$ its topological dual space. If $m$ is a measure on 
$G$, we sometimes denote $m(f):=\int_G  f(x)dm(x)$.
 If $Z:\Omega\rightarrow G$ is a random variable, 
 its image measure $P\circ Z^{-1}$ on $G$ is denoted with 
 $\mu_Z$. A Borel measure and its Lebesgue's completion are denoted with the same symbol.  

\subsection{A literature review} \label{review}
 
  Statistical inverse theory became a popular  method for solving geophysical problems in 1980's \cite{tarantola,tarantol}, and has  since spread into many other fields (see   \cite{kaip, stuart}).
   In this short review, we focus on general theoretical developments  that lead to the modern description of    statistical inverse theory.  A more problem-oriented  review of   infinite-dimensional Bayesian statistical inverse problems can be found in  \cite{stuart}, and  reviews of statistically oriented  inverse problems can be found in \cite{cava2,luis}.        A good reference to the computational aspects of finite-dimensional statistical inverse problems is  \cite{kaip}, to  Bayesian statistics \cite{flo,robert,schervish}, and  to  measure theory \cite{bogm}.

  The statistical background of the statistical inverse theory belongs to  the field of nonparametric Bayesian inference.   Nonparametric statistics is  concerned with making inferences about   infinite-dimensional unknowns whereas parametric statistics studies finite-dimensional unknowns \cite{bernardo}.  The function-valued prior models in statistical inverse problems are therefore well within  the scope of  nonparametric statistics.   We briefly review     Bayesian  nonparametric statistics and clarify its relations to  statistical inverse problems.  
    
 Important nonparametric  problems  are  the density estimation problem and the regression problem  \cite{muller}.  These two problems have  guided the modern  development of  Bayesian nonparametric statistics.    
  
  In  the density estimation problem, the observations  are i.i.d. samples  obeying   some  unknown probability distribution that has a density function $f$ (usually on $\mathbf R$), and the objective is to estimate  the density function $f$.  This problem is not directly related to our statistical inverse problem but is  connected to  the general development of the research field. It should be mentioned  that    Wolpert et al \cite{wol1,wol2}  have described    a semidiscrete Fredholm integral equation of the first kind  as a Bayesian density estimation problem.  On the other  hand, the  positron emission tomography (PET) imaging  is an inverse problem that is  usually described as a special density estimation problem where only indirect  samples are available \cite{silver}.   
  Hence, certain inverse problems lead to density estimation problems.
  
  In the regression problem, the observations are  of the type
  $$
  y_i=K(x_i)+\varepsilon_i.
  $$
  where $x_i\in \mathbf R^n$,  $i=1,...,n$, and the noise terms $\varepsilon_i$ are typically independent  and identically distributed. The objective is to estimate the unknown function $K$.   This problem has connections to statistical inverse problems. For example, if the realizations of $X$ and $Y=L(X)+\varepsilon$ are functions on $\mathbf R$ and  $L$  is the identity mapping, then $X$ is identified as $K$.

One difference between the density estimation problem and the regression problem is the nature of given samples. 
In the regression problem, a single sample can also be infinite-dimensional, at least in theory.  When the noise is Gaussian, such  infinite-dimensional observation models  are often called white noise models (see the short review in \cite{zhao}).

The main questions in  Bayesian nonparametrics have been the  construction of  the  prior models,  the  utilization of  the posterior distribution, and the consistency of  the  posterior distributions.

\subsubsection{Prior models}

The problem of finding good infinite-dimensional prior models  has a long history. An early application  of a function-valued  prior model was carried out in 1896 by    Poincar\'e \cite{poin}, who applied  a random  series of the type    $X(t)=\sum_{i=1}^\infty X_i t^i$  in a  regression problem on $[0,1]$. He assumed independent normal distributions on coefficients  $X_i$ and   calculated the posterior mean estimate on the basis of  the given values $y_i=X(t_i)$, $i=1,...,n$.   In  Section V of Chapter 11 in \cite{poin2}    Poincar\'e  discussed,  in his visionary manner, the noisy regression problem. He proposed that the smoothness of the regression curve follows from the  prior information of the unknown curve described in the form of  probability distributions.  
In 1950, Grenander \cite{grenander} applied a Gaussian process prior in a linear  regression problem with additive  Gaussian process noise. In 1957-58 Whittle   \cite{whittle1,whittle2} discussed   prior information on the  smoothness of the unknown  in certain density estimation problems, and later Kimeldorf and Wahba \cite{wahba2} clarified  the relations between smoothing and  Gaussian prior models. Nowadays, regularity of functions is one of  the most important guidelines in constructing infinite-dimensional prior models in statistical inverse problems.  This follows from the fact that the priors in statistical inverse problems have two objectives. They express the prior beliefs about the unknown and  are countermeasures against the ill-posedness of the deterministic inverse problem. 

In general, the knowledge on  infinite-dimensional random variables (and on their distributions) started to increase  after Wiener published  his  construction of  the Brownian motion in the beginning of  1920's \cite{wiener}.   A decade later,  Kolmogorov \cite{kol} introduced  a constructive method   for defining  general infinite-dimensional random variables  in  the abstract setting. His method suited well for   countably many random variables,  but Doob noticed that  the constructed $\sigma$-algebra was somewhat limited: for continuous parameter processes certain interesting sets, such as the set of all continuous functions,  were not measurable with respect to  the constructed $\sigma$-algebra.  Doob's remedy was the careful definition  of  the continuous-parameter stochastic processes  in 1937 \cite{doob}.  The theory of stochastic processes    Doob's definition of the separable stochastic  processes provides the tools but not immediate  answers for certain  questions in statistical inverse problems. Namely,   can the stochastic process be interpret as a function-valued random variable that has values in some nice function space?  The question is quite relevant  since the direct theory is a mapping between two function spaces.  One can  e.g. apply Kolmogorov's continuity theorem (proven by Kolmogorov in 1934, see \cite{slu}). Another approach  is to directly define probability measures  on function spaces. Jessen  \cite{jessen} carried out   integration on infinite-dimensional dimensional torus equipped with the coordinate-wise convergence.  M. Fr\'echet  initiated  the study of  random variables  in metric spaces (see   \cite{frechet}). His emphasis was on different modes of  convergence and typical values of  random variables, like the mean and  the median.   Significant contributions to the theory of probability measures   on topological spaces were given by Alexandrov and Prohorov (see   \cite{vara}).    Later devolopements can be found in the books  of  Bogachev \cite{bog,bogm}, Gelfand and Vilenkin \cite{gel},   Gihman and Skorokhod \cite{gih},  Kahane \cite{kahane},   Kuo \cite{kuo},  Ledoux and Talagrand \cite{ledoux},  Schwartz \cite{schwarz}, Vakhania \cite{vakha},  Xia \cite{xia}, and Yamasaki \cite{yamasaki}. Typical points discussed in these books are the  existence of measures,   invariance properties of measures,  topological supports,  the equivalence  and  the equality  of measures,  the convergence of random series and  the convergence of measures, all relevant properties for   prior distributions.   The existence of measures is often based on the Bochner-Minlos theorem that gives conditions for the one-to-one correspondence between measures $\mu$ and  their  characteristic functions $L(\phi)=\int e^{i\langle x,\phi\rangle} d\mu(x)$ on certain spaces.   From the point of view of statistical inverse problems, it is unfortunate that direct connections between the characteristic function and  the included prior information are not known. Therefore, it is no wonder that popular prior models have been  described by other means, for example with  infinite product measures and random series expansions.  The works of  Karhunen \cite{karhunen} in 1940's on  a series expansions of Gaussian random variables, nowadays known as the Karhunen-Lo\'eve expansion,  are in this sense important.    The Karhunen-Lo\'eve expansion was first used  in 1950's and 1960's for expanding infinite-dimensional data  \cite{davenport,grenander,kelly}, which made  the  Bayesian method of conditional mean estimation  and  the non-Bayesian method of  likelihood ratio testing tractable. It  was later adopted to  describing   infinite-dimensional  unknowns (see for example \cite{cox}),  but its main application has been  in providing finite-dimensional  approximations of Gaussian random variables.  At present, other orthogonal expansions  of Gaussian random variables are  available \cite{bog}. The pioneering work of Mandelbaum \cite{mandelbaum} from 1984 on linear Gaussian statistical  inverse problems relies on such series expansions of the Gaussian random variables.  Other works on Gaussian priors in statistical inverse problems are \cite{simoni,sari,markku,lus,simonithe}.

   In  1963,  Freedman introduced  the class of tail-free   priors  for the density estimation problem \cite{freed}.   In  1970's the density estimation and the  regression problem evolved further  in different directions.     Wahba  et al \cite {wahba2, wahba} took the approach with  smoothing splines and Gaussian random series in the regression problem, and   Ferguson \cite{fer} constructed Dirichlet process  priors, which are  certain random measure-valued unknowns,  for the  density estimation problem.  In the case of Dirichlet processes, the space of the unknowns is the space of all probability measures on 
   the fixed measure space equipped with the Borel $\sigma$-algebra with respect to the weak topology of measures. The Dirichlet process priors have similar properties in the density estimation problem as    Gaussian priors have  in the linear statistical inverse problems. Namely, the posterior distribution is the distribution of another Dirichlet process with  updated parameters.  In the both cases, the calculations of the posterior distribution  are based on similar elements,which are  the properties of the finite-dimensional distributions and the properties of the martingales \cite{fer,mandelbaum}.
    The  Dirichlet process priors were generalized later to  mixtures of Dirichlet processes (see \cite{esco}).
   Summaries of the prior distributions applied in modern density estimation problems can be found in \cite{gos,walter}. 
 
In 1990, Steinberg \cite{stein} suggested  a prior model defined  as a random series in which Hermite polynomials were multiplied by either  improper    or Gaussian  coefficients.  During 1998-2000, Abramovich et al \cite{silve2,silve3,silve} suggested   random wavelet expansions in Besov spaces,  with hierarchical coefficients  whose hyperparameters guaranteed the sparseness  of the expansions,  as priors for  the regression problem.   In 1990's also mixtures of Gaussian measures were suggested  as priors for the regression problem  (see   \cite{zhao}).  Recently, Lassas et al \cite{siltanen2} and Helin \cite{helin}  constructed non-Gaussian edge-preserving priors  suitable for statistical inverse problems.  Besov space priors introduced by Lassas et al  are defined with random wavelet expansions and Helin's  hierarchical prior distributions as mixtures of Gaussian  measures. In 2010, Stuart \cite{stuart} applied prior distributions of the type   $f(x) \mu (dx)$, where $f\in L^1(\mu)$ and $\mu$ is a Gaussian measure. In the abstract setting,   the statistical inverse theory was applied for unknowns described as   Souslin space-valued random variables in \cite{petteri}.  
     
 It should be mentioned that some combinations of  prior informations  do not have faithful probabilistic descriptions.  In 1987,  Backus \cite{backus} pointed out  that hard constraints,  such as  the boundedness of an infinite-dimensional random variable $X$ in norm, can lead to troubles if one assumes also isotropy. A well-known example is  a Gaussian random variable  $X$ that is  invariant with respect to rotations (e.g. orthogonal transformations) on an infinite-dimensional separable  Hilbert space $H$ but satisfies $\Vert X\Vert_H =\infty$ with probability one \cite{bog}.

\subsubsection{Utilization of the posterior distribution}
  
 We first look at the history   of infinite-dimensional  posterior distributions in nonparametric statistics and in statistical inverse problems. 
 
In  Bayesian nonparametrics, the both problems, the density estimation and  the regression problem, are solved  with  conditional probability measures. This part of the solution mechanism is exactly the same as in  statistical inverse theory.  In 1930's, the rigorous definition of the conditional expectation by Kolmogorov  \cite{kol}   made it  possible to define conditional probability measures in the abstract infinite-dimensional setting but it was soon noted that such conditioning did not always produce  a probability measure. The works of  Doob \cite{doob1} and  Dieudonn\'e \cite{die1,die2}     lead to the definition  of  a regular conditional probability, which is a random probability measure with probability one. The existence of regular versions of   all conditional probabilities  was  verified by applying certain  properties of the space of the unknowns  in the works of   Rohlin \cite{roh},  Ji{\v r}ina \cite{ji1,ji2}, and Sazonov \cite{sa}.
 Nowadays, one either checks  the properties of  the space of the unknowns (as in \cite{petteri}) or checks  always the regularity of  the acquired conditional measure for the chosen prior distribution (as in \cite{fer}). The former is  used in theoretical studies 
  for avoiding pathological cases \cite{helin, sari, siltanen2, petteri}  while the latter is convenient in practical solutions where a fixed version is needed \cite{cotter,stuart}.    We remark that the  non-existence of a regular version is known  only for some  conditional measures in   exemplifying cases (see \cite{bogm,regu}).

A major step for the statistical inference for stochastic processes was the emergence of  the so-called filtering problems
in 1940's  by Wiener \cite{wiener2}, Kolmogorov \cite{kol1} and Krein \cite{krein1,krein2}.  Especially, Wiener's  \cite{wiener2}   straightforward method of solution (by ergodicity and  least squares estimation) encouraged  others to take later further steps towards the Bayesian nonparametric approach \cite{foster,grenander,whittle2}. A good review on  developments  in the filtering theory is \cite{kailath}.   An interesting  work in the filtering theory is  \cite{bhat}, where it is shown that 
the solution of the filtering problem depends continuously on the distribution of the unknown. A nice collection of nonlinear filtering problems with Gaussian noise can be found in \cite{mandal}.

The first deliberate unions of  inverse problems and Bayesian statistics were seen in  1960's in the form of statistical regularization i.e.  minimum mean squared error estimation for the  Gaussian linear inverse problem \begin{equation}\label{gaus} Y=LX+\varepsilon\end{equation} with the finite-dimensional  unknown $X$ and  the finite-dimensional observation $Y$.  That is, one pursues after the estimator $\widehat X(Y)$ that minimizes   $\mathbf E [\Vert \widehat X-X\Vert ^2 ] $  (i.e.  the conditional mean).  In 1961, motivated by Wiener's filtering theory,  Foster \cite{foster} presented a solution to the estimation problem \eqref{gaus}.   Other motivation for the Gaussian  approach arose from  the regularization method of Philips \cite{phil} generalized later by Twomey for  Fredholm integral equations of the first kind \cite{twomey} and from the Tikhonov regularization method.   During 1967-71   Turchin et al (see \cite{turchin} and references therein), independently with  Strand and Westwater \cite{strand},  replaced the regularization method by a statistical framework that utilized a Gaussian prior distribution.    The approach lead to  Franklin's  infinite-dimensional description \cite{frank} of the minimum mean squared error estimator of a Hilbert space-valued Gaussian unknown whose linear observations were corrupted by an additive  Gaussian white noise. The connection between \cite{frank} and  regularization methods  in reproducing Hilbert spaces were studied by  Prenter and Vogel  \cite{pren}. The first work that contained   the  existence of regular conditional probabilities and an explicit formula for the posterior distribution in a  linear infinite-dimensional inverse problem  was the seminal paper of   Mandelbaum \cite{mandelbaum} on Hilbert space-valued Gaussian random variables.    The value of the result for inverse problems was first recognized by  Lehtinen et al \cite{markku} who  generalized it   for the Gaussian (Schwartz) distribution-valued random variables. This work of Lehtinen et al  can be considered  as the starting point of the  infinite-dimensional  Bayesian inverse problems.  The  case of Banach space-valued  Gaussian random variables was later  considered by Luschgy \cite{lus}.    In these works,  the expression of the  posterior mean is obtained by using  the equivalence between   statistical independence of Gaussian random variables and  their orthogonality in $L^2(P)$. The key factor is the orthogonal random series expansion of the   Gaussian observation -- a method  used by Grenander \cite{grenander}, and even by Poincar\'e \cite{poin}.   Cox \cite{cox} applied Gaussian separable Banach space-valued
unknowns in a linear regression problem with additive Gaussian noise. The approach of Cox differs from that of Mandelbaum  since it uses the generalized Bayes formula rather than the special  properties of Gaussian random variables
 (see Proposition 2.1 in \cite{cox}).   An abstract formulation of Bayesian statistical inverse problems for  Souslin space-valued random variables was given  by Piiroinen \cite{petteri}, who only required the observation and the 
unknown to be Souslin space-valued random variables, thus allowing nonlinear direct problems and more complicated noise terms.

 Little is known about the form of   posterior distributions   in  infinite-dimensional statistical inverse problems  outside  the   Gaussian linear case \cite{sari,markku,lus, mandelbaum,simonithe} and the dominated case with Gaussian noise \cite{helin,siltanen2,zhao}.  When $F$ and $G$ are complete separable metric spaces, a result of Macci  \cite{macci} tells that  the Lebesgue decomposition of the posterior distribution  with respect to the prior distribution contains a nontrivial singular part  in  undominated cases. Namely,   the Lebesgue decomposition of the posterior distribution with respect to the prior distribution  is of the form  
   $
  \mu (\cdot ,y)= \mu ^{(ac)}(\cdot,y) +\mu^{(s)}(\cdot,y),$
  where the absolutely continuous part $\mu^{(ac)}(\cdot,y)$ is determined by  the absolutely continuous part of $\mu_{Y|X}(\cdot ,x)$ with respect to $\mu_{Y}$ through the equations 
 \begin{eqnarray*}
 \mu^{(ac)}(U,y)= \int_U  \frac{d\mu_{Y|X}^{(ac)}(\cdot , x)}{d\mu_{Y} }(y) d\mu_X(x), 
 \end{eqnarray*}
 where   $ U\in \mathcal F$.  Moreover,  the singular part $\mu^{(s)}(\cdot,y)$ is determined by  the singular part of $\mu_{Y|X}(\cdot , x)$ with respect to $\mu_Y$ through the equations
 
 \begin{eqnarray*}
  \mu^{(s)}(U,y)= \frac{d \left( \int_U  \mu_{Y|X} ^{(s)} (\cdot,x)  d\mu_X \right) }{ d\mu_Y}(y),   \; U \in \mathcal F,
  \end{eqnarray*}
  from which one chooses  a  regular version. We remark  that in such undominated cases, one may expect  to meet  some surprises.  The posterior distribution  presents then some things that seemed to be  {\it a priori} impossible.

The  extraction of information from  the posterior distribution involves   decision theory, including  point estimation  and hypothesis testing (for the general  description of the Bayesian decision theory, see   \cite{schervish}).      A decision theoretic view towards   Bayesian inversion  is given in  \cite{evans}, where  one performs the estimation of the (separable Banach space-valued) unknown  by  first fixing the prior distribution of the unknown $X$ and choosing a loss function $\ell:F\times F\rightarrow \mathbf R$ that penalizes the  inaccuracies  in the  estimates  of the unknown,  and then choosing the so-called Bayes  estimator $\widehat X:G\rightarrow F$, which is a deterministic function  that gives  the smallest  averaged loss   $\mathbf E [ \ell(\widehat X(Y),X)]$. This is equivalent  to taking  as  each $\widehat X(Y(\omega))$ the value $d$ that minimizes the posterior expected loss  $ \mathbf E [ \ell(d,X)|\sigma(Y)](\omega)$.  

Common point estimators in  finite-dimensional statistical inverse problems  are the maximum a posteriori (MAP)  estimator  and  the conditional mean (CM) estimator (i.e. the posterior mean) \cite{kaip}.   The CM estimator $\widehat X(Y)=\mathbf E [X|\sigma(Y)]$ minimizes the posterior risk for the squared error loss function $\ell(x',x)=  | x'-x| ^2$ (when $X:\Omega \rightarrow \mathbf R^n$ is suitably integrable) \cite{kaip}.  Conditional means have appeared also in the framework of infinite-dimensional statistical inverse problems  \cite{helin,sari,siltanen2,siltanen, markku,lus,mandelbaum}. However, the decision-theoretic justification  is  often neglected, and the conditional mean is  reported just as a typical value of the posterior distribution.   
Other notions of typical values for  distributions on  separable metric spaces  were considered by 
Frech\'et \cite{frechet}.

 The  mean of a locally convex Hausdorff topological  vector space-valued random variable  can arise  from different definitions, depending on the   space in question. In general,   the (weak) mean of a  locally convex  Hausdorff topological  vector space-valued random variable $X$  is a vector $m\in F{''}$  (or more generally,  $m$   in the algebraic dual space of $F'$)  such that $\langle m,\phi \rangle_{F{''},F'}=\mathbf E [\langle X,\phi \rangle_{F,F'}]$ for all $\phi \in F'$  $P$-a.s.  (see   \cite{bog}).  Such notion of vector-valued integration was developed by 
Pettis \cite{pet}  in 1933  for reflexive separable Banach spaces $F$.  Gelfand  used  a similar  definition  for  distribution-valued random variables (see   \cite{gel}).  The Pettis-Gelfand integral was generalized for quasi-complete Souslin space-valued functions  by Thomas \cite{thomas}.    For  Banach-space valued random variables having integrable norm,  a  mean can  be defined also  as the Bochner integral $m=\int_F xd\mu_X(x)$, introduced  in early 1930's by Bochner  (see   \cite{diestel} and references therein).

  When the posterior distribution $\mu(\cdot, y)$  is known for a given sample $y$ of $Y$,  the (weak) conditional mean $m\in F''$ is a vector that satisfies  $\langle m,\phi \rangle_{F'',F'}= \int \langle x,\phi \rangle_{F,F'} \mu(dx,y)$ for all $\phi \in F'$.   When  $F$ is a separable reflexive Banach space and $\Vert X \Vert$ is integrable, the same  posterior mean can also be defined as the Bochner integral  (see Proposition V.2.5 in \cite{neveu}).

 We remark that the weak posterior mean  $E[ X|\sigma(Y)]$   is  a Bayes  estimator in a weak  sense i.e.   it gives the smallest averaged loss for the family of loss functions $ \ell_\phi(x,x')= |\langle x-x',\phi\rangle |^2$, where $\phi\in F'$.  Franklin \cite{frank} used such requirement, when he defined the 
best linear estimator in a Gaussian linear inverse problem in 1970.  An  earlier  approach to the best linear estimator in function-valued Gaussian case was given  by   Grenander in 1950 (see Chapter 6 in \cite{grenander}).    He considered a Gaussian linear regression problem and identified the best linear estimator (with respect to the  pointwise squared error loss $\ell(X(t), \widehat X(t))=|X(t)- \widehat X(t)|^2$)   with the   posterior  mean. Grenander used infinite-dimensional observations, but he made simultaneous inferences on only  finitely many values of the unknown function.   Moreover, he required, but not proved, the regularity of the conditional probabilities. In this sense, his  approach to the posterior means was still far from the description of Mandelbaum from 1984 \cite{mandelbaum}.  Remark,  that the technique of  estimating  the value of $X(t)$ on the basis of infinitely many observations is still the standard in the modern filtering theory \cite{oek}.
 Also  in the Bayesian density estimation,   the estimation  is sometimes carried   out  in  either in the  form  $\widehat X (t) = \mathbf E [X(t)| Y_1,...,Y_n]$, where $X$ is  the unknown probability density function  on, say  [0,1] (see \cite{whittle2}),   or in the form $\widehat X(U)=\mathbf E[X(U)|Y_1,...,Y_n]$, where  the sets $U\subseteq [0,1]$ are Borel set and $X$ is  an unknown random probability measure (see    \cite{muller} and Proposition 4.2.1 in \cite{gho}).  The  density  estimator  $\widehat X$  is a Bayes estimator  with respect to the squared error loss function for each $t$ or for each Borel set $U$, respectively.  An other option is to use a  weighted $L^2$-loss function   $\ell(\widehat X,X)=\int_0^\infty |\widehat X(t)- X(t)|^2 dw(t)$ \cite{fer}. The two estimators coincide when   
   $X$ is suitably integrable.

In the works of Mandelbaum \cite{mandelbaum} and Luschgy  \cite{lus}, the space $F$ is a Hilbert or Banach space, and the posterior mean  is defined as a Bochner integral.   However,  the emphasis is on the Gaussian nature of the prior, and the posterior mean is  calculated as   $\mathbf E [\sum_{i=1^\infty } X_i e_i |\sigma(Y)]= \sum_{i=1}^\infty \mathbf E[X_i|\sigma(Y)] e_i$.  Similar approach appears in  \cite{sari,markku}  for the distribution space, where the posterior mean is defined in the weak sense. The  weak definition of the posterior  mean is used  also in \cite{siltanen} for the space $C([0,1|)$.     In \cite{helin,siltanen2}, the conditional mean of a separable Banach space-valued random variable is defined as a Bochner integral with respect to the posterior distribution.   Before Luschgy, Krug  \cite{krug} determinded  the posterior mean of a separable Banach space-valued Gaussian unknown in a linear Gaussian case, but he   assumed  that the given observation was finite-dimensional.

We remark that when  $F$ is a Hilbert space, one can take $\ell(x',x)=\Vert x-x'\Vert_F^2$ as the loss function that gives the CM estimator.  As in the finite-dimensional case, the  main point is that 
\begin{eqnarray*}
&&\mathbf E[ \Vert \hat X(Y)- X\Vert_F^2]   =\mathbf E[ \Vert \hat X(Y)-E[X|\sigma(Y)]\Vert_F^2 ] +\\
&&\mathbf E [ ( \hat X (Y)- E[X|\sigma(Y)],  E[X|\sigma(Y)]- X  )_F]+
 \mathbf E [\Vert X \Vert_F^2], 
 \end{eqnarray*}
 and the additional difficulty is just in  checking that $\mathbf E [(f,X)| \sigma(Y)] = (f,\mathbf E [X|\sigma(Y)])$. 
 Such loss functions   have been used  in the regression problem  for   the Gaussian mixture priors when  $F=L^2([-1,1])$  \cite{zhao}.  Instead of an $L^2$-loss function, Abramovich et al \cite{silve2}  used 
an $L^1$-loss function in a regression problem  for a discretized Besov space-valued unknown.
We note that a common approach in the regression problem is to  present only the Bayes estimates instead of the whole posterior distribution. 

 Luschgy \cite{lus} made an (unproven) remark that for  Gaussian  posterior distributions  the conditional mean is the Bayes estimator for every symmetric quasi-convex (measurable) loss function $\ell(x,x')=\ell(x-x')$. A proof can be found in   \cite{bul}, where it is derived  from the Anderson property of Gaussian measures (for the property, see   \cite{lewa}).

In finite-dimensional spaces,   the MAP estimator can be interpreted as a limit of  Bayes estimators for  the 0-1-valued losses $\ell_\epsilon (x',x)= 1_{ F\backslash \overline B(x,\epsilon)} (x')$, where  $\epsilon\rightarrow 0$ \cite{robert}.   Here $\overline B(x,\epsilon)$ is the closed ball in $F$  that is centered at $x$  and has radius $\epsilon$.   Lassas and Siltanen \cite{siltanen} showed that    MAP estimates can behave inconsistently as dimensionality of the   unknown increases, even though the posterior distributions converge at the same time.    In their example, the MAP estimates  actually vanish at the limit, regardless of the given observation. Similar result is proved in \cite{helin2} for a hierarchical edge-preserving prior.  The MAP and CM estimates coincide for the finite-dimensional Gaussian priors, and numerical results demonstrate 
that they can practically coincide  for the finite-dimensional approximations of  Besov-priors  \cite{siltanen3}.  
   Cotter et al   \cite{cotter}  discussed MAP estimation in the  context of infinite-dimensional Bayesian problems. They    showed that there exists a minimizer for a  penalized log-likelihood function, which has similar form as in the case of finite-dimensional Gaussian unknown.  However, the  conditions that would relate the penalized log-likelihood function to any posterior density were  omitted in \cite{cotter}, which leaves open the question what connections  the minimizer has  to the  infinite-dimensional posterior distributions. Recalling the result of Lassas and Siltanen \cite{siltanen} arises at least  some caution. Another attempt towards MAP estimation with infinite-dimensional Gaussian priors  is given by Hegland \cite{hegland}. Unfortunately,  the proof of Proposition 1 in \cite{hegland} is not rigorous, as it  involves subtraction of two  numbers that are 
 infinitely large with probability 1 (i.e. the Cameron-Martin norms of arbitrary vectors in the space of the unknowns).   
  
In infinite-dimensional statistical inverse problems the hypothesis testing has been largely neglected, although several interesting question could be raised. For example,  Fitzpatrick \cite{fitz} has made an initiative  on  testing if the evidence supports the homogeneity of  the unknown diffusion coefficient.  Hypothesis testing  was proposed also for some  nonparametric statistical inverse problems in \cite{biz} within the classical framework. However, it was  pointed out in  \cite{biz} that the problems can  be similarly handled also by the  (finite-dimensional) Bayesian methods but this remark is not elaborated further.

  Another approach to  exploiting  posterior distributions was   given by Piiroinen \cite{petteri}.  He  interpret the  posterior distributions  as   statistical measurements, which allowed comparisons  of   information contents of  different posterior distributions. The result is  especially useful in experimental design \cite{koodi}.

\subsubsection{Posterior consistency}

 The consistency of the posterior distributions  (with respect to repeated independent observations) is closely connected to the 
 uniqueness of the deterministic inverse problem of determining $x$ from  $y=L(x)$. 
  The pioneering work of Doob \cite{doobm} on martingales  touched   the question of consistency of  the posterior distributions. Doob's results imply that under model identifiability (i.e. the measures $\mu_{\varepsilon+L(x)}$ are different for different $x\in F$)  the posterior  distributions would   concentrate (in the weak topology of measures) on the true unknown $x_0$   $\mu_{L(x_0)+\varepsilon}$-almost surely for $\mu_X$-a.e. $x_0$  when  infinitely many i.i.d. observations would be available.   The consistency of the posterior distribution is an important topic because it shows that enough data will  guide  a Bayesian scientist almost surely to the true answer.      The words $\mu_X$-a.s. made Doob's approach slightly impractical as they left open the frequentist case where the observations are not samples of $L(X)+\varepsilon$ but samples of 
 $L(x)+\varepsilon$ for some fixed $x$.   Freedman  \cite{freed} demonstrated that inconsistency  could  hold on  topologically large sets.  The problem was approached by  Schwartz \cite{lorraine} who  described   a  set of unknowns $x$ for which  consistency  holds $\mu_{\varepsilon+L(x)}$-almost everywhere under some decision theoretic conditions  and domination (i.e. all measures $\{ \mu_{\varepsilon+L(x)}: x\in F\}$ are assumed to be absolutely continuous with respect to some common $\sigma$-finite measure). The required property is  the positive prior probability  of all Kullback-Leibler neighborhoods of the   unknown $x$.    Consistency has  been studied also  in other topologies, beside of the weak topology.  Barron et al \cite{barron}  proved consistency in Hellinger distance. Summaries of consistency result in density estimation  can be found  in \cite{diaco,dia,xi}. The case of Gaussian regression has been studied in \cite{vaart}, where certain probabilities  are shown to converge.  
 
  Consistency  issues in inverse problems are discussed in 
\cite{evans}. For our statistical inverse problem,  the consistency corresponds to observing one sample of  $Y= L(X)+ \frac{1}{\sqrt n} \varepsilon$, where  $n$  represents the number of i.i.d. observations of $L(x)+\varepsilon$.  The works  of Hofinger and Pikkarainen \cite{pikkarainen3,pikkarainen2}, and Neubauer and Pikkarainen  \cite{pikkarainen}  on  finite-dimensional   Gaussian statistical inverse problems  concern  the question of posterior consistency. They studied the convergence of posterior distributions and the  posterior means  in  linear  Gaussian inverse problems for finite-dimensional   random variables as the variance of the noise decreases.   In particular, it was  shown in \cite{pikkarainen3}  that the posterior distributions given   observed values  $\widetilde Y_{\delta_n}=Lx+\delta_n \varepsilon(\omega)$  of $Y_{\delta_n}= LX+ \delta_n \varepsilon$  for a  sequence  $\delta_n\rightarrow 0$,  converge to the  point mass on the true value $x$  in the Ky Fan metric, assuming that also the prior distribution are modified appropriately.      Hofinger and Pikkarainen  \cite{pikkarainen} studied  posterior  convergence rates for finite-dimensional approximations of  Hilbert-space-valued random variables when the approximation level increases in certain manner as the noise level $\delta_n$ approaches to zero.   However, the convergence was shown only for unknowns in  an  {\it a priori} zero measurable set (the Cameron-Martin space of the prior distribution).     Also   Florens and Simoni \cite{simoni}  studied the  posterior consistency for the  infinite-dimensional  linear Gaussian inverse problems when  the variance of the noise diminishes.  They were able to show the posterior consistency  if the posterior measures  with respect to the weak topology (and give estimates for  the speed of convergence of the posterior means)  by assuming that the  direct theory is regular enough and the  prior distribution  depends suitably on   the noise level.

 Another  convergence topic that   has received more attention  in statistical inverse problems is the posterior convergence for approximated unknowns  and observations \cite{helin,sari,siltanen2, siltanen,petteri,stuart}. This case has been discussed above in the introduction.  
 
 Almost all known  convergence results for posterior distributions  \cite{maen,helin,sari,roininen,siltanen2, siltanen, stuart} are  based on the  known form of the posterior distribution. There are also some measure-theoretic  approaches  for convergence of conditional expectations.   The results of  G\"anssler and Pfanzagl \cite{gans} showed  that the  conditional expectations  $E[1_U(X_n)|Y]$ converge  when the joint distributions of the observation and the approximated  unknowns are dominated  by some  $\sigma$-finite measure and the corresponding  Radon-Nikodym densities converge almost everywhere. Furthermore, they also showed that there exists a regular version for which the convergence holds almost surely in variation. Here one should pay attention to the fact that the conditioning $\sigma$-algebra does not depend on $n$ which is not satisfactory from the point of view of numerical solutions of statistical inverse problems.   Landers et al \cite{land} generalized this result for monotonic sequences of   conditioning $\sigma$-algebras  $\sigma(Y_n)$.  It should be noted that in statistical inverse problems, the  $\sigma$-algebras $\sigma(L(X_n)+\varepsilon )$ are usually not increasing.   Krikkeberg  \cite{kri} proved a martingale type convergence theorem for   not necessarily  monotonic  $\sigma$-algebras, but his conditions seem to be  too abstract  for the statistical inverse problems in the present form. A reformulation of his conditions in terms of random variables $(X_n,Y_n)$ would give valuable information on the  almost sure posterior convergence in  the general undominated case.  Goggin \cite{goggin} and Crimaldi et al \cite{crimaldi1,crimaldi2} studied conditions under which  the  convergence of $(X_n,Y_n)$ to 
 $(X,Y)$   implies the convergence of the conditional expectations $\mathbf E [f(X_n)|Y_n]$ to  $\mathbf E [f(X)|Y]$ in distribution or in probability.  Their results are not satisfactory for statistical inverse problems, since they do not say anything about the almost sure convergence of posterior distributions for fixed samples of $Y_n$, but the necessary conditions in \cite{crimaldi2} are valid for also a.s. convergence.  For example,  the results in \cite{crimaldi2}  imply that setwise convergence of the prior distributions  is necessary for the setwise convergence of the posterior distributions (given samples of $Y_n$).  We remark  that samples of $Y$, not  $Y_n$, are  usually given.    For the undominated case, a result of Berti et al \cite{berti} somewhat simplifies the study of posterior convergence. Under quite general conditions,  their result reduces the problem of  almost sure weak convergence of  random measures to the study of only countably many sequences of  conditional expectations. Piiroinen  gave a  sufficient condition that guarantees  the convergence of  posterior distributions  when  unknowns and  observations are approximated \cite{petteri}.
The emphasis in his results was on obtaining with probability 1 the posterior  convergence  for the  Souslin space-valued  approximated unknowns   given   samples of multi-indexed  observations of the corresponding   approximated unknowns.    His proof relies on improving the a.s. convergence of the conditional expectations of  each function of the type   $f(X_n)g(X)$, where $f$ and $g$ are continuous and bounded, to almost sure weak convergence of posterior distributions.
 
  A  concept close to the posterior convergence  is the so-called discretization invariance,  which was first used by Markku Lehtinen in  1990's  (see    \cite{siltanen2}).  It  asks that the prior knowledge is consistent at all discretization levels  and aims to the stability  of   posterior knowledge on different discretization levels. Definitions for discretization invariance in   statistical inverse problems are given in \cite{siltanen2,siltanen}.    In \cite{siltanen2}, Lassas et al defined  a proper linear discretization  $X_n=P_nX$ of a Banach space-valued  random variables $X$, where  $P_n$ are bounded linear operators  on the Banach space $F$ having finite-dimensional ranges and  the  random variables $\langle P_n X,\phi\rangle $ converge in distribution to $\langle X,\phi\rangle$ for all $\phi\in F'$.     Gaussian priors  and   Besov space priors were shown to be discretization invariant in \cite{siltanen2} in the sense that they have  proper linear discretization for which the conditional mean estimates converge.     An important example  was studied by   Lassas and Siltanen \cite{siltanen} who showed that  the finite dimensional total variation priors converge   to a Gaussian measure and the corresponding CM estimates converge to the CM estimate obtained with a Gaussian prior. The total variation priors are  not discretization invariant as   the   finite-dimensional prior distributions  lead to  unwanted effects.  
  A special method for obtaining stable posterior knowledge was 
suggested by Kaipio and Somersalo \cite{ks}, who proposed the approximation  error approach for  statistical inverse problems. 
 In approximation error approach, the conditioning random variable 
 $Y=L(X)+\varepsilon $ is written as $Y=L(X_n)+ (L(X)-L(X_n))+\varepsilon$, where $L(X)-L(X_n)$ is taken to be an additional noise term $\tilde \varepsilon$. For example, if $X$ is Gaussian and $X_n=P_nX$, where $P_n$ are  linear projection operators,  the CM estimators take a  consistent form $\mathbf E [X_n|Y]=P_n \mathbf E[X|Y]$.  The problem becomes computationally more tractable if  $X_n$ and $\tilde \varepsilon $ are statistically independent in which case only  the   distribution of $\tilde \varepsilon $  needs to be additionally  determined. This condition is often forced on $\tilde\varepsilon$ together with a numerically feasible approximated   distribution \cite{ks,tanja}.

  \section{Conditional probabilities and posterior distributions}\label{sec2}

\subsection{Solution of the statistical inverse problem}

We define what we exactly mean by a statistical inverse problem and its solution.  We begin by recalling the definition of the conditional expectation.  

 The conditional expectation  of $f\in L^1(\Omega,\Sigma,P)$ given  a
 sub-$\sigma$-algebra $\Sigma_0\subset \Sigma$ is a $\Sigma_0$-measurable function $\mathbf E[f|\Sigma_0]$ such that 
 \begin{equation*}
\int_A f dP=\int_A \mathbf E[f|\Sigma_0] dP
 \end{equation*}
 for all $A\in \Sigma_0$. 
  Conditional expectations exist  due to  the Radon-Nikodym theorem as  the densities of the (signed) measure  $fdP$   with respect to the  measure  $P$ on $\Sigma_0$, but they are only defined up to sets  $N\in \Sigma_0$ of $P$-measure zero. 
  We denote $\mathbf E[\cdot | Y]= \mathbf E[\cdot | Y^{-1}(\mathcal G)]$.

\begin{definition}\label{si}
Let   $(\Omega,\Sigma,P)$ be a complete probability space. Let $F$ and $G$ be two Souslin spaces equipped with their Borel  $\sigma$-algebras $\mathcal F$ and $\mathcal G$, respectively.  Let $X:\Omega\rightarrow F$ and    $Y: \Omega\rightarrow G$ be measurable mappings. We  call a mapping $\mu:\mathcal F\times G\rightarrow [0,1]$   {\it  a solution of  the statistical inverse problem of estimating the distribution of  the unknown $X$ given the observation  $Y$} if 
\begin{enumerate}
\item  {   $\mu(U,Y(\omega))= \mathbf E[1_U(X)| Y](\omega)$  $P$-almost surely for every $U\in \mathcal F$, }
\item { $y\mapsto \mu(U,y)$ is $\mu_Y$-measurable for every $U\in \mathcal F$, and}
\item{ $ U\mapsto \mu(U,y)$ is a probability measure on 
$(F, \mathcal F)$ for   every $y\in G$.}
\end{enumerate}
 The distributions  $ \mu (\cdot, y)$ are called {\it posterior distributions} of $X$ given$Y=y$.   \end{definition}
 Strictly speaking, the posterior distributions are defined {\it a posteriori} of the observation  $Y(\omega)$   but we feel that there is no  harm in calling    $\mu(\cdot,y)$ posterior distributions also for  $y\notin R(Y)$ since $\mu_Y(G)=1$.

The solution is just a regular  conditional distribution of $X$ given the  sub-$\sigma$-algebra $Y^{-1}(\mathcal G)$, where the regularity holds  in the sense of Doob  i.e. the solution $\mu$ is $\mu_Y$-measurable in the second variable (see Remark 10.6.3 in \cite{bogm} for a further discussion).    
    The nature of the mapping  
  $\omega\mapsto \mu(U,Y(\omega))$, which need not be $\sigma(Y)$-measurable, is verified in the following simple lemma. 
  
  \begin{lemma}\label{leu}
Let  $(G,\mathcal G)$ be a measurable space. Let   $Y: \Omega\rightarrow G$ be a measurable mapping from a complete probability space $(\Omega,\Sigma,P)$ into $G$.
 If $f:G\rightarrow \mathbf R $ is a $\mu_Y$-measurable function  then $f(Y)$ is a random variable on $(\Omega,\Sigma,P)$, and   $\mathbf E [f(Y)]= \int  f(y) d\mu_Y(y)$. Moreover, if $\tilde f:G\rightarrow \mathbf R$ is a Borel measurable function such that $f=\tilde f$  $\mu_Y$-a.s., then   $f(Y(\omega))=\tilde f( Y(\omega))$  $P$-almost surely 
 and $\mathbf E [ f(Y)|\Sigma_0](\omega)= \mathbf E [\tilde f(Y)|\Sigma_0](\omega)$  $P$-almost surely  for any  sub-$\sigma$-algebra 
 $\Sigma_0\subset \Sigma$.  
 \end{lemma}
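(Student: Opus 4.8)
The plan is to reduce every assertion to the genuinely Borel case by first producing a Borel representative $\tilde f$ of $f$, and then transporting null sets from $G$ back to $\Omega$ through $Y^{-1}$, using the completeness of $(\Omega,\Sigma,P)$ to absorb the exceptional set. First I would recall the standard measure-theoretic fact that a function measurable with respect to the Lebesgue completion of $(\mathcal G,\mu_Y)$ agrees $\mu_Y$-almost everywhere with a $\mathcal G$-measurable (Borel) function. Applying this to the $\mu_Y$-measurable $f$ yields a Borel $\tilde f$ with $\{f\neq \tilde f\}\subseteq N_0$ for some $N_0\in \mathcal G$ satisfying $\mu_Y(N_0)=0$. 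For the \emph{Moreover} part such a $\tilde f$ is handed to us in the hypothesis, so this production step is only needed for statements (i)--(ii).

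Next I would push this down to $\Omega$. Since $Y$ is $\Sigma/\mathcal G$-measurable, $Y^{-1}(N_0)\in\Sigma$ and $P(Y^{-1}(N_0))=\mu_Y(N_0)=0$; moreover the set $\{\omega:f(Y(\omega))\neq \tilde f(Y(\omega))\}=Y^{-1}(\{f\neq \tilde f\})$ is contained in $Y^{-1}(N_0)$, hence is a subset of a $P$-null set. By completeness of $(\Omega,\Sigma,P)$ this subset belongs to $\Sigma$ and is $P$-null, so $f(Y)=\tilde f(Y)$ $P$-almost surely, which is precisely the first conclusion of the \emph{Moreover} part. Because $\tilde f$ is Borel and $Y$ is measurable, $\tilde f(Y)$ is $\Sigma$-measurable; and a function equal $P$-a.s. to a $\Sigma$-measurable function is itself $\Sigma$-measurable on a complete space. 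Hence $f(Y)$ is a random variable, establishing (i).

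For (ii) I would invoke the image-measure (change of variables) theorem applied to the Borel function $\tilde f$, giving $\mathbf E[\tilde f(Y)]=\int_G \tilde f\, d\mu_Y$ (for nonnegative $f$, and then for integrable $f$ by the usual decomposition). Since $f(Y)=\tilde f(Y)$ $P$-a.s., the left-hand side equals $\mathbf E[f(Y)]$, and since $f=\tilde f$ $\mu_Y$-a.s. the right-hand side equals $\int_G f\, d\mu_Y$; combining yields $\mathbf E[f(Y)]=\int_G f\, d\mu_Y$. The final assertion then follows from the $P$-a.s. identity $f(Y)=\tilde f(Y)$ together with the elementary stability of conditional expectation under $P$-a.s. equality of integrands, which gives $\mathbf E[f(Y)\mid\Sigma_0]=\mathbf E[\tilde f(Y)\mid\Sigma_0]$ $P$-almost surely for any sub-$\sigma$-algebra $\Sigma_0\subset\Sigma$.

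The only delicate point is the completion step: $\mu_Y$-measurability of $f$ does not \emph{a priori} make $f\circ Y$ measurable with respect to the pullback of $\mathcal G$, and the entire argument hinges on the completeness of $(\Omega,\Sigma,P)$ to certify that the pulled-back exceptional set $Y^{-1}(\{f\neq\tilde f\})$ is genuinely $\Sigma$-measurable and $P$-null. Everything else is routine bookkeeping with the image measure and the standard properties of conditional expectation.
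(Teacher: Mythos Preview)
Your proposal is correct and follows essentially the same approach as the paper: produce a Borel version $\tilde f$ of $f$, find a Borel $\mu_Y$-null set containing $\{f\neq\tilde f\}$, pull it back through $Y^{-1}$ and invoke completeness of $\Sigma$ to get $f(Y)=\tilde f(Y)$ $P$-a.s., then use the image-measure formula and the stability of conditional expectation under $P$-a.s.\ equality. Your explicit identification of the completeness step as the only delicate point matches the paper's emphasis exactly.
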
 
\begin{proof}
Every  $\mu_Y$-measurable function  has  a Borel measurable version    (see Proposition  2.1.11 in \cite{bogm}). Denote with  $\tilde f$  a Borel measurable version of $f$.
 The set  $N=\{ y\in G :  f(y)\not=\tilde f(y)\}\in \mathcal G^{\mu_Y}$ is then $\mu_Y$-zero measurable and, by definition, there exists a $\mu_Y$-zero measurable Borel set $B\in \mathcal G$ such that $N\subseteq B$.  Especially,  $Y^{-1}(N)\subset Y^{-1}(B)$, which has $P$-measure $P(Y^{-1}(B))=\mu_Y(B)=0$ so that also $Y^{-1}(N)$ belongs to the complete $\sigma$-algebra  $\Sigma$  and has   $P$-measure zero. Therefore,
$$
f(Y(\omega))=\tilde f(Y(\omega))
$$
  P-almost surely.   By the completeness of $\Sigma$, also the mapping $\omega\mapsto f (Y(\omega))$ is $\Sigma$-measurable.  
By the almost  sure equivalence of the functions, we get
$$
\mathbf E[f(Y(\omega))] = \mathbf E [\tilde f(Y(\omega))]= \int \tilde  f(y) d\mu_Y(y) = \int f(y) d\mu_Y(y).
$$   
The conditional expectations of equivalent random variables coincide, since they have the same integrals over 
$\Sigma_0$-measurable sets. 
\end{proof}

From the point of view of the  posterior analysis, Condition 3 of Definition \ref{si} may  give a false sense of security.  Any $\mu$ that satisfies  Conditions 1 and 2 but is a probability measure only for $\mu_Y$-a.e. $y$ can be redefined on a negligible set  in such a way that it is a    solution.  For example, if $N$ is the $\mu_Y$-zero measurable set that contains all $y$'s  for which $\mu(\cdot,y)$ is not a probability measure,  we  may  redefine $\mu(U,y)$  as  $1_U(x_0)$ for some fixed $x_0\in F$ and  all $y\in N$. Then $\mu$ satisfies   Conditions 1, 2 and 3, but   $\mu(\cdot,y)$  is not  related to  the unknown when  $y\in N$.

 We briefly compare the solution $\mu$ with other formulations of Bayesian inverse problems.    Clearly,  any regular conditional distribution $\mu$  of $X$ given $Y$ (such that $y\mapsto \mu(U,y)$ is Borel-measurable for any $U\in \mathcal F$)  qualifies as a solution. Especially,  posterior distributions obtained by the Bayes  formula  \eqref{bab} on $\mathbf R^n$  for positive continuous probability densities form a solution of the form $\mu(U,y)=  \int_U D(x|y)dx$ \cite{kaip}.   The Gaussian conditional probabilities   in \cite{sari, markku, lus,mandelbaum} are also solutions that are allowed to be $\mu_Y$-measurable in the sense of Condition 2.   Our approach is similar to the work of  Piiroinen \cite{petteri}, where a general formulation of the statistical inverse problem for Souslin space-valued random variables first appeared.  The difference  is that Piiroinen chose the  posterior probabilities $\mu(U,y)$   to be universally measurable with respect to the second variable, that is,    $m$-measurable for any  finite  Radon measure $m$ on $(G,\mathcal G)$  whereas  we prefer  to take all  $\mu_Y$-measurable versions as  solutions, since it helps to avoid the  somewhat artificial modifications of $\mu_Y$-measurable functions (encountered for example in the Gaussian case \cite{markku}) to any universally measurable or  $\mathcal G$-measurable functions.   Lassas et al \cite{siltanen2}  used   a different  approach  where the posterior distribution was obtained by defining reconstructors.     A mapping  $ y\mapsto \mathcal R(g|y)$ is called  a reconstructor of $g\in L^1(\mu_X)$ (more generally, a Bochner integrable $g$)  given the observation $Y$  if  $\mathcal R(g,Y(\omega))= \mathbf E [g(X)| Y^{-1}(\mathcal G)](\omega)$  almost surely \cite{siltanen2}.  The concept of a reconstructor is more elemental  than our solution.   However, the reconstructors  that were used for  solving the statistical inverse problem in \cite{helin,siltanen2}   were chosen to be more regular.  They depend continuously on observations and  satisfy also   Conditions 1 and 3.  Hence,  they form a regular conditional distribution and are especially solutions  in the sense of Definition \ref{si}.   A common point of the reconstructor and our solution is that both are defined for all $y\in G$, not   only for samples  $Y(\omega)\in R(Y)$. However, the simplicity of the reconstructors comes with some disadvantages. Namely, if  the reconstructor   of the unknown $X$ does not originate from a regular conditional distribution,   some power of the Bayesian inference is lost, as there is no posterior probability distribution to draw from.     Furthermore,   two reconstructors $\mathcal R_1$ and $\mathcal R_2$  of the same function $f$   may differ on a "large" set  $Y(N)\subset G$, where  $N= \{\omega: \mathcal R_1(f,Y(\omega))\not= \mathcal R_2(f,Y(\omega)) \} \in \Sigma$  has probability zero. The set   $Y(N)$ might not belong to $\mathcal G^{\mu_Y}$  and $Y(N)$ may  have positive $\mu_Y$-outer measure.  Indeed, we provide a simple example of this situation  with the help of the so-called image measure catastrophe (see p. 30 in \cite{schwarz}).  Let $U\subset [0,1]$ be a nonmeasurable set such  that the  Lebesgue outer measure  $m^*(U)=1$.  Let $(U,\mathcal B_U([0,1]), m|_U)$ be the restriction of the Lebesgue measure $m$ on $U$ i.e. the Borel $\sigma$-algebra $\mathcal B_U([0,1])$ contains all sets 
$B\cap U$, where $B\in \mathcal B([0,1])$, and for such sets  $m_U(B\cap U)= m(B\cap \widetilde U)$, where 
$\widetilde U\in \mathcal B([0,1])$ is such that $m^*(U)=m(\widetilde U)$, say $\widetilde U=[0,1]$. Take $(\Omega,\Sigma,P)$ to be the completion of 
$(U,\mathcal B_U([0,1]), m|_U)$ and $F=G=[0,1]$ equipped with its Borel $\sigma$-algebra.   Let  $Y:U\rightarrow [0,1]$ be the identity and 
take $X=Y$.  Then the image measure  $\mu_Y$ is the Lebesgue measure on $[0,1]$.  Moreover, the conditional expectation  of  the measurable function $\omega\mapsto 1_{[0,1]}(X(\omega))$  given  
$\sigma(Y)$ is 
$$
\mathbf E [1_{[0,1]} (X)|\sigma(Y)](\omega)= 1_{[0,1]}(Y(\omega)) 
$$ 
which is equal to  $1_U(Y(\omega))$ $P$-almost surely.  However,  the  reconstructor $\mathcal R_1(1_{[0,1]},\cdot):y\mapsto 1_U(y)$ is not $\mu_Y$-measurable on  $([0,1],\mathcal B([0,1]))$, and  the two reconstructors $\mathcal R_1(1_{[0,1]},\cdot) $ and $\mathcal R_2(1_{[0,1]},\cdot): y\mapsto 1_{[0,1]}(y)$ differ on the set $N_0:=[0,1]\backslash U$ which has  positive $\mu_Y$-outer measure.  Condition 2  helps us to avoid this small shortcoming. If the reconstructors are $\mu_Y$-measurable,   the set $N_0=\{y\in G: \mathcal R_1(f,\cdot)\not= \mathcal R_ 2(f,\cdot)\}\in \mathcal G^{\mu_Y}$ and  $\{\omega: \mathcal R_1(f,Y(\omega))\not= \mathcal R_2(f,Y(\omega)) \} =  Y^{-1 }(N_0)$. Then   $N_0$ has zero $\mu_Y$-measure.

A regular conditional distribution  is  not unique in general because of the  non-uniqueness of  the  conditional expectations.  For our theoretical considerations, the following concept (adapted from \cite{bogm} in  context of  regular conditional measures) is useful.

\begin{definition}\label{esse}
We say that a  solution  $\mu$ of  the statistical inverse problem of estimating the distribution of  $X$ given  the observation $Y$ is {\it essentially unique} if for any other   solution   $\tilde \mu$  of the same statistical inverse problem  there exists a set  $C=C(\mu,\tilde \mu)\in \mathcal G^{\mu_Y}$ with  $\mu_Y(C)=1$ such that $\tilde \mu$ agrees with $\mu$   on  $\mathcal F\times C$.  Similarly, we say that the posterior distribution $\mu(\cdot,y)$ is essentially unique if $\mu$ is essentially unique.
\end{definition}

In other words, an essentially unique  solution $\mu$  may be arbitrary on  the sets of the form $\mathcal F\times N$, where $N\subset G$ is  a set of $\mu_Y$-measure zero. In a sense,  this makes  the  posterior  distribution  $\mu(\cdot,Y(\omega))$ a  relevant estimate of the distribution of $X$ with probability 1.

Next, we recall some  results on the existence and essential uniqueness of regular conditional  distributions in Souslin spaces.   The existence of regular conditional distributions of $X$ given $Y$ has been shown in  Lemma 4.2 of \cite{petteri} (by using  the definition of the Souslin space and the existence of regular conditional distributions on Polish spaces, leading to a universally measurable kernel $\mu$),   and in Example 10.7.5 of  \cite{bogm}, where also  the essential uniqueness has been verified. The present "extension"  covers  $\mu_Y$-measurable  solutions. The condensed proof is included  only to support the last sentence, which   provides some motivation for  the  main results of this work. Namely, the definition of the conditional expectation may give the impression that we need  to  specify some  random variable   $Y$ among all equivalent random variables  for  determining the conditional expectation of $1_U(X)$  when an observation  $y=Y(\omega_0)\in G$ has occurred. This is not true as a weaker  description of  $Y$ and $X$  suffices.

\begin{theorem}\label{ex}
Let  $(F,\mathcal F)$ and $(G,\mathcal G)$ be two measurable   spaces. Let $X$ be an $F$-valued random variable and $Y$ be a  $G$-valued random variable on a complete probability space $(\Omega,\Sigma,P)$.   If $F$ and $G$ are Souslin spaces equipped with their Borel $\sigma$-algebras, then there exists an essentially unique   solution  $\mu:\mathcal F\times G \rightarrow [0,1]$ of  the statistical inverse problem of estimating the distribution of the unknown $X$ given the observation $Y$. 

The  values $ \mu(U,y)$ are  determined by the joint distribution $\mu_{(X,Y)}$ of $X$ and $Y$ for all $U\in \mathcal F$ and  $\mu_Y$-almost every $y\in G$.
\end{theorem}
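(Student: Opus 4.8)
The plan is to recognize a solution in the sense of Definition \ref{si} as a disintegration of the joint law $\mu_{(X,Y)}$ along the projection of $F\times G$ onto $G$; once this is established, both the essential uniqueness and the dependence on $\mu_{(X,Y)}$ alone fall out of the essential uniqueness of such disintegrations. I would dispose of existence by citing the known theory: since $F$ is Souslin, a regular conditional distribution of $X$ given the sub-$\sigma$-algebra $Y^{-1}(\mathcal G)$ exists and disintegrates through $Y$ into a kernel $\mu:\mathcal F\times G\to[0,1]$ satisfying Conditions 1 and 3, with $y\mapsto\mu(U,y)$ either universally measurable (Lemma 4.2 of \cite{petteri}) or Doob-regular (Example 10.7.5 of \cite{bogm}). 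Since both properties imply $\mu_Y$-measurability, Condition 2 holds as well and $\mu$ is a solution; the essential uniqueness among Doob-regular versions is likewise supplied by \cite{bogm}, so the remaining task is to extend it to all $\mu_Y$-measurable solutions and, more importantly, to prove the final assertion.

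The key step is a single integral identity. Fixing $U\in\mathcal F$ and $V\in\mathcal G$ and integrating Condition 1 over the $\sigma(Y)$-measurable set $Y^{-1}(V)$, the defining property of the conditional expectation gives
\[
\int_{Y^{-1}(V)}\mu(U,Y(\omega))\,dP(\omega)=\int_{Y^{-1}(V)}1_U(X)\,dP=\mu_{(X,Y)}(U\times V).
\]
Because $y\mapsto 1_V(y)\mu(U,y)$ is $\mu_Y$-measurable, Lemma \ref{leu} rewrites the left-hand side as $\int_V\mu(U,y)\,d\mu_Y(y)$, so every solution satisfies the disintegration identity
\begin{equation}\tag{$\ast$}
\int_V\mu(U,y)\,d\mu_Y(y)=\mu_{(X,Y)}(U\times V),\qquad U\in\mathcal F,\ V\in\mathcal G.
\end{equation}
Since $\mu_Y(V)=\mu_{(X,Y)}(F\times V)$ is itself a marginal of $\mu_{(X,Y)}$, every quantity in $(\ast)$ depends on the data only through the joint law.

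To convert $(\ast)$ into $\mu_Y$-a.e. determination I would use that the Borel $\sigma$-algebra $\mathcal F$ of a Souslin space is countably generated, fixing a countable algebra $\mathcal A=\{A_1,A_2,\dots\}$ with $\sigma(\mathcal A)=\mathcal F$. Let $\mu$ and $\tilde\mu$ be solutions arising from two pairs of random variables sharing the joint law $\mu_{(X,Y)}$ (taking the pairs equal recovers essential uniqueness). For each fixed $k$, $(\ast)$ shows that $\mu(A_k,\cdot)$ and $\tilde\mu(A_k,\cdot)$ are $\mu_Y$-measurable functions with equal integrals over every $V\in\mathcal G$; after extending $(\ast)$ to $\mathcal G^{\mu_Y}$ by completion and testing against $\{\mu(A_k,\cdot)>\tilde\mu(A_k,\cdot)\}$ and its reverse, this forces $\mu(A_k,\cdot)=\tilde\mu(A_k,\cdot)$ off a $\mu_Y$-null set $N_k$. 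Putting $C=G\setminus\bigcup_k N_k\in\mathcal G^{\mu_Y}$, which has $\mu_Y(C)=1$, the probability measures $\mu(\cdot,y)$ and $\tilde\mu(\cdot,y)$ agree on the generating algebra $\mathcal A$ for every $y\in C$ and hence on all of $\mathcal F$. This yields essential uniqueness in the sense of Definition \ref{esse} and, verbatim, the dependence of $\mu(U,y)$ on $\mu_{(X,Y)}$ alone for all $U$ and $\mu_Y$-a.e. $y$.

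The step I expect to be most delicate is the measurability bookkeeping around $(\ast)$: passing from equality of integrals over all $V\in\mathcal G$ to $\mu_Y$-a.e. equality requires test sets that are only $\mu_Y$-measurable, so one must first extend $(\ast)$ to the completion $\mathcal G^{\mu_Y}$ and keep careful track of Borel versus completed-measurable versions, exactly as in Lemma \ref{leu}. The other load-bearing input is the countable generation of $\mathcal F$ for a Souslin space, which is what collapses the $U$-dependent exceptional sets into the single null set $\bigcup_k N_k$ and lets the uniqueness of measures agreeing on a generating algebra finish the argument.
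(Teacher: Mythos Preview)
Your proposal is correct and follows essentially the same approach as the paper: existence is obtained from the standard regular conditional distribution results for Souslin spaces (the paper constructs it via Corollary 10.4.6 of \cite{bogm} on the product space, you cite it more directly), and both the essential uniqueness and the dependence on $\mu_{(X,Y)}$ are derived from the disintegration identity $(\ast)$ together with the countable generation of $\mathcal F$. The paper phrases the uniqueness step slightly differently---computing $\int|\mu(U,y)-\nu(U,y)|\,d\mu_Y(y)=0$ via Lemma \ref{leu} rather than testing against the sets $\{\mu(A_k,\cdot)>\tilde\mu(A_k,\cdot)\}$---but this is the same argument, and your explicit framing around $(\ast)$ makes the final claim about $\mu_{(X,Y)}$ marginally cleaner.
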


\begin{proof} 
First we show that  for each $U\in \mathcal F$  there exists a solution  $\mu_0(U,\cdot): G\rightarrow [0,1]$  such that $y\mapsto \mu_0(U,y)$ is  Borel-measurable and     $\omega\mapsto \mu_0(U,Y(\omega))$ is a conditional expectation of $1_U(X)$ given 
$Y^{-1}(\mathcal G)$.  

   Consider the measure space $(F\times G, \mathcal B(F\times G), \mu_{(X,Y)}) $  and  the sub-$\sigma$-algebra  
    $ \mathcal G_0=\{\emptyset, F\} \otimes \mathcal G$ generated by the   canonical projection $p_2 (x,y)=y$ to the second variable. Recall, that  the direct products of Souslin spaces are Souslin spaces.  Due to the Souslin property of $F\times G$,  there  exists a  conditional measure $\mu_0:\mathcal B(F\times G) \times (F\times G)  \rightarrow [0,1]$ such that 
 $\mu_0(U',\cdot)$ is $\mathcal G_0$-measurable for every $ U'\in\mathcal B(F\times G)$, the measure  $\mu(\cdot, (x,y))$ is a probability distribution  on $\mathcal B(F\times G)$ for every $(x,y)\in G$, and 
  $$
  \mu_{(X,Y)}(U'\cap V')= \int_{V'} \mu_0(U',(x,y)) d\mu_{(X,Y)}(x,y)
   $$
  for every $U'\in  \mathcal B(F\times G) $ and $V'\in \mathcal G_0$  by Corollary 10.4.6 in \cite{bogm}.  
  Let us restrict  $\mu_0 (U',(x,y))$   on sets $U'$ of the form $U\times G$, where $U\in \mathcal F$.   Since $\mathcal G_0$ is trivial with respect to the first variable, we may denote    the restriction with $\mu_0(U,y)$ where $U\in \mathcal F$ and $y\in G$. Especially, 
  $y\mapsto \mu_0(U,y)$ is $\mathcal G$-measurable and 
  $$
P(X\in U \cap Y\in V) =  \mu_{(X,Y)} (U\times V)= \int _{V} \mu_0(U,y) d\mu_Y(y)=\int_{Y^{-1}(V)} \mu_0(U,Y) dP
  $$  
for every $U \in \mathcal F$ and  $ V \in \mathcal G$.  Therefore, $\mu_0:\mathcal F\times G \rightarrow [0,1]$ is 
a solution of the statistical inverse problem of estimating the probabilities of the unknown  $X$ given the observation $Y$.

A  solution $\mu$ is essentially unique since the Borel $\sigma$-algebra of a Souslin space is countably generated (see \cite{bogm}).  Indeed,   suppose that $\mu$ and $\nu$ are two  solutions in the sense of Definition \ref{si}.   For  $U\in \mathcal  F$,   we have that   $ \mu(U,Y(\omega))=  \mathbf E [1_U(X)|  Y^{-1}(\mathcal G)](\omega)=\nu(U,Y(\omega))$  $P$-almost surely.  Then  $\mu(\cdot,y)=\nu(\cdot,y)$  outside some  $\mu_Y$-zero measurable set $N_U\in \mathcal G^{\mu_Y}$,  since
\begin{equation*}  
0=\mathbf E [|\tilde \mu(U,Y)-\tilde\nu(U,Y)|]=   \int | \mu(U,y)- \nu(U,y)| d\mu_Y(y)
\end{equation*}
by Lemma \ref{leu}. Every  countable algebra $\mathcal F_0$  that generates the $\sigma$-algebra $\mathcal F$  is  measure-determining, i.e. measures coinciding on $\mathcal F_0$ coincide on $\mathcal F$ (e.g. Lemma 1.9.4 in  \cite{bogm}). Hence,  the two solutions coincide  except  for $y\in \cup_{U\in \mathcal F_0} N_U$.

Finally, if $\mu$ is any solution then the values $\mu(U,y)$ are determined by the measure 
$\mu_{(X,Y)}$  for all $U\in \mathcal F$ and $\mu_Y$-almost all $y\in G$ since   $\mu$ coincides with 
$\mu_0$ on $\mathcal F\times C$ (by essential uniqueness) and the values of $\mu_0(U,\cdot)$ are actually  versions of the Radon-Nikodym densities of measures  $\mu_{(X,Y)} (U,\cdot) $ with respect to $\mu_Y$ for  $\mu_Y$-almost all $y$. The  distribution $\mu_Y$ is the marginal of 
$\mu_{(X,Y)}$.  
\end{proof}

We have reached  the usual  starting point of nonparametric Bayesian statistics. In a conventional Bayesian experiment, one specifies only   conditional distributions of  $Y$ given  $X=x$ for all values $x\in G$ -- the so-called parametric family of  distributions  or  sampling distributions --  and  the prior distribution  $\mu_X$ on $(F,\mathcal F)$  \cite{flo,schervish}, which together  determine  the joint  distribution of $X$ and $Y$.

\begin{remark}\label{samplespace}
The  choice of the sample  space $(G,\mathcal G)$ of random variable $Y$ is usually not trivial. One might  choose as well a larger  (or sometimes  even a smaller)  space than $G$.  The solutions  of the statistical inverse problem could, in principle, depend  on  the choice of the sample space $(G,\mathcal G)$  since the conditioning $\sigma$-algebra $Y^{-1}(\mathcal G)$ depends on the topology of $G$. But since we are working with the Souslin  spaces this is not the case. Indeed,  if $(G_1,\mathcal G_1)$ and $(G_2,\mathcal G_2)$ are two Souslin  spaces equipped with their Borel  $\sigma$-algebras  and  $i:G_1\mapsto G_2$ is a continuous (or just Borel!)  injection, then, quite remarkably,  $i^{-1}(\mathcal G_2)=\mathcal G_1$. Indeed,  $i^{-1}(\mathcal G_2)\subset \mathcal G_1$ by the continuity of $i$. Moreover,  the image of a Borel set under 
 a Borel mapping between Souslin spaces is a Souslin set  i.e. a Souslin space with respect to the relative topology (see Theorem 6.7.3 in \cite{bogm}). Therefore, $i(G_1)$,  $i(B)$ and $i(G_1\backslash B)$ are all Souslin sets in $G_2$
 for any Borel set $B\in \mathcal G_1$. By injectivity,  $i(G_1\backslash B)= i(G_1)\backslash i(B)$ i.e. the complement of 
 the Souslin set $i(B)$ in the subspace $i(G_1)$ of $G_2$ is  a Souslin set.  By  Corollary 6.6.10 in \cite{bogm},  a Souslin set 
  in a Hausdorff space is a Borel set if also its complement is a Souslin set. Therefore, $i(B)$ is a Borel set in the relative topology of $i(G_1)$.  But each Borel set $i(B)$ in $i(G_1)$ is of the form $i(B)=i(G_1)\cap B'$, where $B'\in \mathcal G_2$. 
 Therefore, $B=i^{-1}(B')$ for some $B'\in \mathcal G_2$ which implies that  $\mathcal G_1= i^{-1}(\mathcal G_2)$.  
      Consequently,  $(i Y)^{-1}(\mathcal G_2)=  Y^{-1}(\mathcal G_1)$  for any  $G_1$-valued random variable $Y$. Therefore,  $\mu_1(\cdot, Y(\omega))=\mu_2(\cdot,  i(Y(\omega)) )$ $P$-almost surely for any solutions  $\mu_1$ and $\mu_2$  of the inverse problems  of estimating the distribution of $X$ given $Y:\Omega\rightarrow  G_1$ and  $i(Y)$, respectively.   If    $\tilde \mu_2$ is a  Borel measurable version of $\mu_2$, then 
      $\tilde \mu_2(U,i(y))$ is $\mathcal G_1$-measurable and 
      $\mu_2(U,y')=\tilde\mu_2(U, y')$,   except possible on some set $N$ such that $\mu_{i(Y)}(N)=0$ which implies that. 
      $\mu_2(U,i(y))=\tilde \mu_2(U,i(y))$  except possibly on the set $i^{-1}(N)$ which has  $\mu_Y (i^{-1}(N))=0$.
      Therefore, $\mu_2 (\cdot, i(\cdot))$ is also a solution of the statistical inverse problem of estimating the distribution of $X$ given  $Y$. We are allowed to mis-specify the Souslin sample space $G_1$ by  Borel injections without altering the essentially unique solution.   In the general case that involves non-Souslin spaces, we only know that $i^{-1}(\mathcal G_2)\subset \mathcal G_1$, where the inclusion may be strict.   As an example, take   $G_1$ and $G_2$ to be the sequence space $\ell^\infty$ where we take $\mathcal G_1$ to be the usual Borel $\sigma$-algebra  with respect to the supremun norm topology (which is not separable) and $\mathcal G_2$ to be the Borel $\sigma$-algebra with respect to the weak topology, and take $i$  to be the identity.  Then $i^{-1}(\mathcal G_2 )\not= \mathcal G_1$ (see  Proposition  2.9 in \cite{vakha}).  
      \end{remark}

\subsection{Partial uniqueness of the solution}\label{sec22}

From practical point of view, the essential uniqueness is not enough since we are given some fixed  observation $y_0\in G$ that might belong to the  set where arbitrariness of  $\mu$ still rules. Our proposal for removing this deficiency of the posterior distributions  is to proceed as in the finite-dimensional case, where $\mu$ is  required  to depend   continuously   on the second variable i.e. the posterior distributions depend continuously on the  observations. The following new concept  turns out to be useful.

\begin{definition}
Let   $(\Omega,\Sigma,P)$ be a complete probability space. Let $F$ and $G$ be two Souslin spaces equipped with their Borel  $\sigma$-algebras $\mathcal F$ and $\mathcal G$, respectively.  Let $X:\Omega\rightarrow F$ and    $Y: \Omega\rightarrow G$ be measurable mappings. Let  $\mu$ be a solution of the statistical inverse problem of estimating the distribution of the unknown $X$  given the observation $Y$. Let $A\subset G $ and let $\mathcal F_0\subset \mathcal F$. We say that  a solution  $\mu$  is {\it $\mathcal F_0$-continuous on $A$} if  the mapping 
$y\mapsto \mu(U, y)$ is continuous on $A$ with respect to the relative topology  for every $U\in \mathcal F_0$.
\end{definition}

Consider a set  $S\subset G$ that contains every point $y\in G$ whose any open neighborhood has positive  $\mu_Y$-measure. On Souslin spaces such a set $S$ is known to coincide with the {\it  topological  support} of $\mu_Y$,  i.e.  the smallest  closed set $S\subset G$   such that $\mu_Y(S)=1$ 
\begin{lemma}\label{support}
Let $\nu$ be a Borel probability measure on a Souslin space $G$. 
The topological support of $\nu$ exists and it  consists of  exactly   those $y\in G$ whose  every open neighborhood   has positive measure.
\end{lemma}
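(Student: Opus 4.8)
The plan is to produce an explicit candidate for the support and then verify the two defining properties: that it is closed and of full measure, and that it is contained in every closed set of full measure. First I would define
$$
N = \{\, y \in G : \text{some open } V \ni y \text{ satisfies } \nu(V) = 0 \,\},
$$
and set $S = G \setminus N$, so that $S$ is exactly the set of points all of whose open neighborhoods have positive $\nu$-measure. Since $N$ is a union of open sets it is open, and hence $S$ is closed.

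The key step will be to show $\nu(N) = 0$, and this is where I expect the Souslin hypothesis to do the real work. I would argue that $G$ is Lindel\"of: it is a continuous image of a complete separable metric space, separable metric spaces are second countable and therefore Lindel\"of, and the continuous image of a Lindel\"of space is Lindel\"of (pull an open cover back through the continuous surjection, extract a countable subcover, and push it forward using surjectivity). The null neighborhoods $\{\, V_y : y \in N \,\}$ form an open cover of the open, hence Lindel\"of, set $N$, so I would extract a countable subcover $\{V_{y_n}\}$ and conclude $\nu(N) \le \sum_n \nu(V_{y_n}) = 0$ by countable subadditivity. Consequently $\nu(S) = \nu(G) = 1$.

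For minimality I would take an arbitrary closed set $C$ with $\nu(C) = 1$; then $G \setminus C$ is an open null set, so each of its points has a null neighborhood and thus lies in $N$, giving $G \setminus C \subseteq N$, equivalently $S \subseteq C$. Since $S$ is itself a closed full-measure set contained in every closed full-measure set, the topological support exists and equals $S$, which by construction consists exactly of the points whose every open neighborhood carries positive measure, as claimed.

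The main obstacle is precisely the passage from ``$N$ is a union of null open sets'' to ``$\nu(N) = 0$'': in a general topological space an uncountable union of null open sets can fail to be null, so the argument genuinely rests on the Lindel\"of property, which here is supplied by the Souslin assumption. Everything else is formal bookkeeping.
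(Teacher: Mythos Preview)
Your overall strategy is correct and is essentially the one the paper invokes (the paper simply cites Parthasarathy's Theorem 2.1 together with the fact that Souslin spaces are hereditarily Lindel\"of, Lemma 6.6.4 in \cite{bogm}). There is, however, one genuine gap in your write-up: the phrase ``the open, hence Lindel\"of, set $N$'' is not justified by what you have proved. You establish only that $G$ is Lindel\"of, and open subspaces of Lindel\"of spaces need not be Lindel\"of in general --- the one-point Lindel\"ofication of an uncountable discrete space is Lindel\"of, but removing the added point leaves an uncountable discrete space.

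The repair is small and stays entirely within your framework: upgrade ``Lindel\"of'' to ``hereditarily Lindel\"of'' along the whole chain. A separable metric space is second countable, hence every subspace is second countable and therefore Lindel\"of; and a continuous surjection $f\colon X\to Y$ carries hereditary Lindel\"ofness forward, since for any $A\subseteq Y$ the restriction $f|_{f^{-1}(A)}\colon f^{-1}(A)\to A$ is a continuous surjection from a Lindel\"of space. Thus $G$ is hereditarily Lindel\"of, so $N$ is Lindel\"of, and your countable-subcover argument for $\nu(N)=0$ goes through. This is precisely the property the paper cites from Bogachev. With that one adjustment your proof is complete and matches the paper's route.
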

\begin{proof}
See   Theorem 2.1 in \cite{partha}, which generalizes to Souslin spaces, since Souslin spaces are  hereditarily Lindel\"of by Lemma 6.6.4 of \cite{bogm}.
\end{proof}

We obtain   partial uniqueness of the posterior distributions  by using  the  continuity of  solutions on certain subsets of the topological support of $\mu_Y$. We denote with $A^\circ$ the  interior points of $A$.

\begin{theorem}\label{uniq}
Let $F$ and $G$ be Souslin spaces equipped with their Borel $\sigma$-algebras $\mathcal F$ and $\mathcal G$ respectively. Let 
$X$ be an $F$-valued  and $Y$ be a $G$-valued random variable on a complete probability space $(\Omega,\Sigma,P)$. Let  $A \in \mathcal G^{\mu_Y}$  be a subset of the topological support $S$ of $\mu_Y$  such that  either $A\subset  \overline{A^\circ}$  or $\mu_Y(A)=1$. Let $\mathcal F_0\subset \mathcal F$ be a measure-determining class.

All    solutions of the statistical inverse problem  of estimating the probabilities of $X$ given $Y$ that are $\mathcal F_0$- continuous on $A$   coincide on $\mathcal F \times A $.  

\end{theorem}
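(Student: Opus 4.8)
The plan is to fix two solutions $\mu$ and $\nu$ that are $\mathcal F_0$-continuous on $A$, and to show $\mu(U,y)=\nu(U,y)$ for every $U\in\mathcal F$ and every $y\in A$; this is exactly coincidence on $\mathcal F\times A$. First I would invoke the essential uniqueness provided by Theorem~\ref{ex}: there is a set $C\in\mathcal G^{\mu_Y}$ with $\mu_Y(C)=1$ on which $\mu$ and $\nu$ agree, so for each fixed $U$ the $\mu_Y$-measurable function $g_U:=\mu(U,\cdot)-\nu(U,\cdot)$ vanishes off the $\mu_Y$-null set $G\setminus C$; thus $\{y\in G:g_U(y)\neq 0\}$ is $\mu_Y$-null. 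The whole substance of the theorem is then to upgrade this almost-everywhere vanishing to everywhere vanishing on $A$, separately for each $U\in\mathcal F_0$, exploiting the relative continuity of $g_U$ on $A$ together with the fact that $A$ lies in the topological support $S$ of $\mu_Y$.

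The key local input, via Lemma~\ref{support}, is that every point of $A\subset S$ has the property that each of its open neighbourhoods in $G$ carries positive $\mu_Y$-mass. Fix $U\in\mathcal F_0$ and suppose $g_U(y_0)\neq 0$ for some $y_0\in A$. By relative continuity the set $\{y\in A:g_U(y)\neq 0\}$ is relatively open in $A$, hence equal to $A\cap V$ for some open $V\subset G$ with $y_0\in V$ on which $g_U$ does not vanish. Here the two hypotheses on $A$ split the argument. If $\mu_Y(A)=1$, then $\mu_Y(A\cap V)=\mu_Y(V)>0$, while $A\cap V\subset\{g_U\neq 0\}$ forces $\mu_Y(A\cap V)=0$, a contradiction. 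If instead $A\subset\overline{A^\circ}$, I first run the neighbourhood argument at an interior point $y_0\in A^\circ$: there $A^\circ$ is genuinely open in $G$, so a small open $W\subset A^\circ$ with $g_U\neq 0$ on $W$ would satisfy $\mu_Y(W)>0$ while $W\subset\{g_U\neq 0\}$, again impossible; hence $g_U\equiv 0$ on $A^\circ$. Continuity of $g_U$ on $A$ then propagates the zero to the closure: every $y\in A\subset\overline{A^\circ}$ is a limit of a net in $A^\circ\subset A$, so $g_U(y)=0$.

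I expect the main obstacle to be precisely this passage from almost-everywhere to everywhere, and in particular keeping it valid in non-metrizable Souslin spaces: one cannot in general reach $\overline{A^\circ}$ through sequences and must instead argue with nets, relying only on the net characterisations of closure and of continuity, which hold in arbitrary topological spaces. Once $g_U\equiv 0$ on $A$ is established for every $U\in\mathcal F_0$, the conclusion follows from the measure-determining property of $\mathcal F_0$: for each fixed $y\in A$, Condition~3 of Definition~\ref{si} makes $\mu(\cdot,y)$ and $\nu(\cdot,y)$ genuine probability measures on $(F,\mathcal F)$ that agree on $\mathcal F_0$, hence they agree on all of $\mathcal F$. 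This yields $\mu(U,y)=\nu(U,y)$ for all $U\in\mathcal F$ and all $y\in A$, completing the proof.
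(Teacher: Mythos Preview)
Your proposal is correct and follows essentially the same strategy as the paper: use continuity on $A$ together with the support characterisation (Lemma~\ref{support}) to show that a nonzero difference would persist on a relatively open subset of $A$ carrying positive $\mu_Y$-mass, contradicting that the two solutions agree $\mu_Y$-almost everywhere.

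One minor point of comparison: in the case $A\subset\overline{A^\circ}$ you proceed in two steps (first prove $g_U\equiv 0$ on $A^\circ$, then extend to $A$ by a net/continuity argument), whereas the paper handles any $y_0\in A$ in a single stroke by observing that the open neighbourhood $V$ of $y_0$ (on which $g_U\neq 0$) must meet $A^\circ$ because $y_0\in\overline{A^\circ}$, so $V\cap A^\circ$ is already a nonempty open subset of $G$ with positive $\mu_Y$-measure. This sidesteps the net argument entirely. Also, the paper derives the $\mu_Y$-a.e.\ agreement directly from the defining integral identity of conditional expectation rather than first citing essential uniqueness (Theorem~\ref{ex}); the effect is the same. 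Neither difference is substantive.
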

\begin{proof}
  Assume that   $\mu_1$ and $\mu_2$ are two  solutions that have the described properties.  If $\mu_1\not=\mu_2$  on $\mathcal F\times A$ then there exists  $y_0\in A$ and $U_0\in \mathcal F$  such that $\mu_1(U_0,y_0)\not=\mu_2(U_0,y_0)$,  say  $\mu_1(U_0,y_0)-\mu_2(U_0,y_0)>\varepsilon $.  Since  $\mu_i(\cdot,y_0)$, $i=1,2$,  are measures, the set    $U_0$ can be taken to be from the  measure-determining class  $\mathcal F_0$.

   The   function  $f:A \rightarrow \mathbf R$ defined as $f( y):=\mu_1(U_0, y)-\mu_2(U_0,y)$  is   continuous in the relative topology of $A$  and positive at $ y_0$.  The set   $f^{-1}((\varepsilon ,\infty))$ is therefore a non-empty open neighborhood of $ y_0$  in the relative topology of $A$,  and there exists a non-empty open set   $V\subset G$  such that $V\cap A = f^{-1}((\varepsilon ,\infty))$. The set $V\cap A$ has positive $\mu_Y$-measure. Indeed, if $\mu_Y(A)=1$, then $\mu_Y(V\cap A)=\mu_Y(V)>0$ by Lemma \ref{support}, since $y_0\in V\cap A$ belongs also to the support of $\mu_Y$.  On the other hand, if 
   $A\subset\overline {A^\circ}$, the neighborhood $V$ of $y_0$ contains also points from $A^\circ$. It follows that $V\cap A$ contains a non-empty open set $V\cap A^\circ$. By Lemma \ref{support},  $\mu_Y(V\cap A)>0$.  
    This   implies that  $\mu_1(U_0,y)- \mu_2(U_0,y)>\varepsilon $ on a set  $f^{-1}((\varepsilon ,\infty))\in \mathcal G^{\mu_Y}$ of positive $\mu_Y$-measure. Therefore, it is impossible that the  both  mappings $\mu_1$ and $\mu_2$  satisfy the requirements of Definition     \ref{si}, in particular  the property
     $$
    \int_{f^{-1}((\varepsilon ,\infty))}  \mu_i(U_0,y) d\mu_Y(y) =  P(X \in U_0  \cap  Y\in f^{-1}((\varepsilon ,\infty))),  \; i=1,2, 
    $$
     of conditional expectations does not hold.  Hence, the two solutions  necessarily coincide on $\mathcal F \times A$.
         
\end{proof}

\begin{remark} 
Recall, that a Borel measure  is called strictly positive if it is positive on all non-empty open subsets.  Then the topological support of the measure is the whole space.  When  $\mu_Y$ is strictly positive,
the  partial uniqueness holds on $\mathcal F\times G$  for the solutions that are $\mathcal F_0$-continuous  on $G$.  If  $\mu_Y$ is strictly positive and the solution  $\mu$ is $\mathcal F_0$  continuous on some non-empty open subset $A$   of $G$, we get similarly  the uniqueness   on $\mathcal F \times A$. This situation is often encountered  in finite-dimensional statistical inverse problems, where one usually excludes those $y\in G$ for which the continuous probability density function of   $Y$  vanishes. 
\end{remark}

\begin{remark} \label{samplespa}
The  partial  uniqueness of the  solution is obtained  by fixing the topology of the  space of observations.  However, the topology of a Souslin space is a slightly ambigiuos concept  in  measure theoretical sense.  Namely,  it is well-known that different topologies can generate the same Borel sets.    For example. any  Borel measurable function on a Souslin space is continuous with respect to some stronger topology  that makes  the space Souslin  and generates the same  Borel sets as the original topology (see Exercise 6.10.62 in \cite{bogm} for the proof).  If $\mu$ is a $\mathcal F$-continuous on a Souslin space and $\tilde \mu$ is its Borel-measurable version that is not continuous, then 
the both are continuous  with respect to some stronger topology that makes also $\tilde \mu$ continuous. 
We remark that although the essentially unique solutions are invariant under injective continuous mappings between Souslin  spaces (see Remark \ref{samplespace}),     the strengthening  of the  topology can affect  the partial uniqueness  of the solution  e.g. by  diminishing  the topological support.  
\end{remark}

Due to the properties of the conditional expectation,  the prior distribution  $\mu_X(U)$ is the mixture  $\int  \mu(U,y) d\mu_Y(y)$ of all  posterior distributions so that the prior probability  of $U$ vanishes exactly when $\mu_Y$-almost all posterior probabilities of $U$  vanish.  When  $\mu$ is regular enough, we get the following converse result, which contrasts nicely with the  well-known representation theorem   considered in the  next section. 

\begin{theorem}\label{support2}
Let $F$ and $G$ be Souslin spaces equipped with their Borel $\sigma$-algebras $\mathcal F$ and $\mathcal G$ respectively. Let 
$X$ be an $F$-valued  and $Y$ be a $G$-valued random variable on a complete probability space 
$(\Omega,\Sigma,P)$.
Let $A\in \mathcal G^{\mu_Y}$ be any subset of the topological support $S$ of $\mu_Y$ such  that either  $A\subset \overline{A^\circ}$ or  $\mu_Y(A)=1$.  If $\mu$ is  a   solution of     the statistical inverse problem of estimating probabilities of $X$ given $Y$  that is $\mathcal F$-continuous on $A$, then the posterior distribution  $\mu(\cdot, y)$ at any $y\in A$  is absolutely continuous with respect to the prior distribution.  
\end{theorem}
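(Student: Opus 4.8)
The plan is to reduce the statement to the following pointwise claim: for every $U \in \mathcal F$ with $\mu_X(U) = 0$ one has $\mu(U, y) = 0$ for \emph{all} $y \in A$. Once this is established for each such $U$, fixing an arbitrary $y \in A$ shows that $\mu_X(U) = 0$ implies $\mu(U,y) = 0$, which is precisely the assertion $\mu(\cdot, y) \ll \mu_X$. The argument will run parallel to the proof of Theorem \ref{uniq}: a statement holding $\mu_Y$-almost everywhere will be upgraded to a pointwise statement on $A$ by combining $\mathcal F$-continuity with the support properties of $\mu_Y$.

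First I would invoke the mixture representation $\mu_X(U) = \int_G \mu(U, y) \, d\mu_Y(y)$ recorded just above the theorem, which follows from the defining property of the conditional expectation together with Lemma \ref{leu}. Fixing $U$ with $\mu_X(U) = 0$ and using that the integrand $\mu(U, \cdot)$ is non-negative, I conclude that $\mu(U, y) = 0$ for $\mu_Y$-almost every $y \in G$; let $N_U \in \mathcal G^{\mu_Y}$ be a $\mu_Y$-null set outside of which $\mu(U, \cdot)$ vanishes.

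Next, suppose toward a contradiction that $\mu(U, y_0) > 0$ for some $y_0 \in A$, and choose $\varepsilon > 0$ with $\mu(U, y_0) > \varepsilon$. Since $\mu$ is $\mathcal F$-continuous on $A$, the map $y \mapsto \mu(U, y)$ is continuous on $A$ in the relative topology, so the set $\{ y \in A : \mu(U, y) > \varepsilon \}$ is relatively open in $A$ and hence equals $V \cap A$ for some open $V \subset G$ with $y_0 \in V$. The key step -- and the one I expect to require care -- is to verify that $\mu_Y(V \cap A) > 0$. This is exactly the dichotomy already settled in the proof of Theorem \ref{uniq}: if $\mu_Y(A) = 1$, then $\mu_Y(V \cap A) = \mu_Y(V) > 0$ by Lemma \ref{support}, because $y_0 \in V$ lies in the support $S \supseteq A$; and if instead $A \subset \overline{A^\circ}$, then the neighborhood $V$ of $y_0$ meets $A^\circ$, so $V \cap A^\circ$ is a non-empty open subset of the support and $\mu_Y(V \cap A) \geq \mu_Y(V \cap A^\circ) > 0$, again by Lemma \ref{support}.

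Finally, the two facts collide. On the one hand, $V \cap A \subseteq \{ y : \mu(U, y) > \varepsilon \} \subseteq \{ y : \mu(U,y) > 0 \} \subseteq N_U$, so $\mu_Y(V \cap A) = 0$; on the other hand, the support computation gives $\mu_Y(V \cap A) > 0$. This contradiction forces $\mu(U, y) = 0$ for every $y \in A$, proving the claim and hence the theorem. I anticipate that the sole genuine obstacle is the positivity $\mu_Y(V \cap A) > 0$, but since this is the very support-lemma argument already carried out in Theorem \ref{uniq}, the proof essentially inherits that reasoning and the remaining steps are routine.
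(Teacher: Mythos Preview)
Your proof is correct and follows essentially the same approach as the paper: both use the mixture identity $\mu_X(U)=\int \mu(U,y)\,d\mu_Y(y)$ to get $\mu(U,\cdot)=0$ $\mu_Y$-a.e., then exploit $\mathcal F$-continuity on $A$ together with the support argument from Theorem~\ref{uniq} to upgrade this to $\mu(U,y)=0$ for every $y\in A$. The only cosmetic difference is that the paper works directly with the relatively open set $\{y\in A:\mu(U,y)>0\}$ rather than introducing an $\varepsilon$-threshold, but the logic is identical.
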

\begin{proof}
Assume that $\mu_X(U)=0$ for some $U\subset \mathcal F$.  According to the definition of  conditional expectation, 
\begin{equation}\label{cond}
\int \mu(U,y)d\mu_Y(y)= P( X\in U )= \mu_X(U),
\end{equation}
which now vanishes. Since the solution is non-negative, we get that 
$ \mu(U,y)=0$   $\mu_Y$-almost surely on $G$. Since $y\mapsto \mu(U,y)$ is  continuous on $A$,  the set $\widetilde V=\{ y\in A: \mu(U,y)>0\}$ is   a relatively open  set i.e. there exist an open set $ V\subset G$ such that 
 $ V\cap A = \widetilde V$.  Suppose $\widetilde V$ is non-empty.   Similarly as in the proof of Theorem \ref{uniq},  $\mu_Y(\widetilde V)=  \mu_Y(V)>0$ when $A$  has full measure and $\mu_Y(\widetilde V)>0$ when $A\subset \overline {A^\circ}$.  But this contradicts  \eqref{cond} because  $\mu_X(U)=0$.  Thus  the set $\widetilde V$ is empty and $\mu(U,y)=0$ for all $y$ from $A$. 
\end{proof}

When Theorem  \ref{support2} holds for $A=S$,   every   Borel set  $B$ with full prior  probability  has  also full posterior probability  $\mu(B,y)=1$ for all $y\in S$. Our posterior  perception of   the  unknown   appears to be  inline with our prior insight in this aspect.

\begin{remark} \label{divi}
According to a result of Macci \cite{macci}, the absolute continuity of the posterior distributions $\mu(\cdot,y)$  with respect to the prior distribution for  $\mu_Y$-almost every $y$    implies that the conditional distribution of $Y$ given $X=x$ is absolutely continuous  with respect to (=dominated by)
$\mu_{Y}$ for  $\mu_X$-a.e. $x\in F$.  By Theorem \ref{support2}, the posterior probabilities of Borel sets may depend continuously on the observations  $y\in G$ only in  the dominated cases i.e the conditional distribution of $Y$ given $X=x$ has to be  absolutely continuous with respect to some $\sigma$-finite measure for $\mu_X$-a.e. $x\in F$.  The same conclusion holds even if the  space $G$  is replaced with some  subset   $A\in \mathcal G^{\mu_Y}$  having  full $\mu_Y$-measure.   In  the undominated cases, the posterior distribution    has necessarily a  large amount of discontinuities -- the set of all  discontinuity points must have positive  $\mu_Y$-measure. 
 \end{remark}

A partial converse to Theorem \ref{support2} holds in complete separable metric spaces. 

\begin{theorem}\label{lusin}
Let $F$ and $G$ be complete separable metric spaces equipped with their Borel $\sigma$-algebras $\mathcal F$ and 
$\mathcal G$, respectively.  Let $X$ be an $F$-valued and $Y$ be a $G$ random variable on a complete probability space 
$(\Omega,\Sigma,P)$. Let   $\mu$  be a solution of the statistical inverse problem of estimating the distribution of $X$ given $Y$. If the family of  the posterior distributions $\{ \mu(\cdot,y): y\in G\}$ is dominated by a Borel measure $\nu$ on $\mathcal F$, then 
for every $\epsilon>0$ there exists a compact set $K=K(\epsilon,\mu)  \in \mathcal G$ such that 
$\mu_Y(G\backslash K)<\epsilon$ and $\mu$ is $\mathcal F_0$-continuous on $K$ for some family $\mathcal F_0$ of measure-determining
 sets.
 \end{theorem}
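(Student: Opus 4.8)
The plan is to encode the entire family of posterior distributions as a single vector-valued map and then apply a Lusin-type theorem in $L^1(\nu)$. Since each $\mu(\cdot,y)$ is a probability measure dominated by $\nu$, the Radon--Nikodym theorem yields densities $g_y:=d\mu(\cdot,y)/d\nu$, and I would define $T:G\to L^1(\nu)$ by $T(y)=g_y$. Because $F$ is Polish, $\mathcal F$ is countably generated, so $L^1(\nu)$ is separable; this separability is precisely what will let the Lusin argument handle all of $\mathcal F$ simultaneously rather than one set at a time.

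First I would verify that $T$ is $\mu_Y$-measurable via the Pettis measurability theorem. The dual of $L^1(\nu)$ is $L^\infty(\nu)$, and for $\phi=1_U$ the scalar function $y\mapsto\int\phi\,g_y\,d\nu=\mu(U,y)$ is $\mu_Y$-measurable by Condition 2 of Definition \ref{si}. By linearity this extends to simple $\phi$, and since $\|g_y\|_{L^1(\nu)}=\mu(F,y)=1$ uniformly in $y$, a uniform approximation of $\phi$ by simple functions in the $L^\infty$-norm extends measurability to all $\phi\in L^\infty(\nu)$; hence $T$ is weakly measurable. As its range lies in the separable space $L^1(\nu)$, Pettis's theorem upgrades this to strong $\mu_Y$-measurability of $T$.

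Next I would invoke Lusin's theorem for maps into a separable metric space. Since $G$ is Polish, $\mu_Y$ is a finite Radon (tight) measure, so for every $\epsilon>0$ there is a compact set $K\in\mathcal G$ with $\mu_Y(G\setminus K)<\epsilon$ on which $T$ is continuous (this is the separable-metric-valued Lusin property; cf.\ \cite{bogm}). Translating back, for any $U\in\mathcal F$ and $y,y'\in K$ one has $|\mu(U,y)-\mu(U,y')|\le\|g_y-g_{y'}\|_{L^1(\nu)}=\|T(y)-T(y')\|$, so the continuity of $T$ on $K$ forces $y\mapsto\mu(U,y)$ to be continuous on $K$ uniformly over all $U\in\mathcal F$. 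Thus $\mu$ is $\mathcal F$-continuous on $K$, and one may take $\mathcal F_0=\mathcal F$, which is certainly measure-determining.

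The main obstacle is the measurability step: one must ensure that the densities $g_y$, although each defined only up to $\nu$-null sets, assemble into a genuinely measurable $L^1(\nu)$-valued map. This is exactly where Pettis's theorem and the separability of $L^1(\nu)$ (inherited from the countable generation of $\mathcal F$ on the Polish space $F$) are indispensable; without separability one could not upgrade the easy scalar (weak) measurability to the strong measurability that the vector-valued Lusin theorem needs. I would also note that the argument in fact delivers continuity in total variation on $K$, since $\sup_{U\in\mathcal F}|\mu(U,y)-\mu(U,y')|=\tfrac12\|g_y-g_{y'}\|_{L^1(\nu)}$, a stronger conclusion that dovetails with the dominated setting isolated in Remark \ref{divi} and serves as the promised partial converse to Theorem \ref{support2}.
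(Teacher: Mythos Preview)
Your argument is correct and follows a genuinely different route from the paper. The paper does not pass to densities; it views $y\mapsto\mu(\cdot,y)$ as a map into the Polish space of probability measures on $F$ equipped with the \emph{weak} topology, checks measurability directly (since $y\mapsto\mu(f,y)$ is a pointwise limit of simple-function integrals for each bounded continuous $f$), and applies Lusin's theorem in that target space. Weak continuity on $K$ then yields, via the Portmanteau theorem, $\mathcal F_0$-continuity only for the class $\mathcal F_0=\{U\in\mathcal F:\mu(\partial U,y)=0\text{ for all }y\in K\}$; the domination hypothesis enters solely at the last step, to argue that this class of continuity sets is measure-determining. By contrast, you exploit domination from the outset by working in the norm of $L^1(\nu)$, and this buys the sharper conclusion $\mathcal F_0=\mathcal F$ together with total-variation continuity on $K$. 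The paper's route has the complementary virtue that its measurability and Lusin steps require no domination at all, isolating exactly where the hypothesis is needed. One minor caveat: separability of $L^1(\nu)$ and the Radon--Nikodym step tacitly assume $\nu$ is $\sigma$-finite; this is the natural reading of the hypothesis, and in any case one may pass to an equivalent $\sigma$-finite dominating measure before starting.
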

\begin{proof}
Equip the space  $M$ of all  probability measures on $(F,\mathcal F)$ with the topology of the weak convergence (i.e. convergence of  integrals of all bounded continuous functions).  It is well-known that this space is a complete separable metric  space whenever   $F$ is a complete separable metric space (see Theorem 8.9.5 in \cite{bogm}).  Equip $M$ with the Borel $\sigma$-algebra  $\mathcal M$ with respect to the weak topology i.e. the cylinder set $\sigma$-algebra  of  the sets of the type 
$$
\{\nu\in \mathcal M: (\nu(f_1),\nu(f_2),...) \in  B\},
$$ 
where $f_i, i\in \mathbf N $ are  continuous bounded functions on $F$ and $B \in \mathcal B(\mathbf R^\infty)$ (with respect to the coordinate-wise convergence).   By Condition 2 of Definition \ref{si},  the solutions $y\mapsto \mu(\cdot,y)$  are  $\mu_Y$-measurable mappings from $G$ to $M$ since $y\mapsto \mu(f,y)$ is  $\mu_Y$-measurable as a pointwise limit of  integrals of simple functions.  By the Lusin  theorem (see Theorem 7.1.13 in \cite{bog}), there exists a family of  compact sets $\mathcal K\subset \mathcal G$  such that given any $\epsilon>0$, the probability   $\mu_Y(K^C)<\epsilon$   for some $K\in \mathcal K$ and  the measure-valued random variable  $y\mapsto \mu(\cdot,y)$ is continuous on $K$ in   the weak topology, of measures implying that $\lim_{i\rightarrow \infty} \mu(f,y_i)= \mu(f,y)$ whenever 
$\lim_{i\rightarrow \infty} y_i=y$ in $K$. Especially, the mappings $y\mapsto \mu(\cdot,y)$  are  $\mathcal F_0$-continuous on $K$, where $\mathcal F_0$  consists of  all Borel sets $U$ whose boundary  satisfies $\mu(\partial U,y)=0$ for all $y\in K$. This follows from the fact that  $\lim_{i\rightarrow \infty} \mu(U,y_i)= 
\mu (U, y)$ whenever $\lim_{i\rightarrow \infty} y_i=y$ by the weak convergence (see Corollary 8.2.10 in \cite{bogm}).
If the family of  posterior distributions $\{\mu(\cdot,y):y \in K\}$ is  dominated by some  Borel  measure, then 
   $\mathcal F_0$ is a measure-determining set (see Lemma 1.9.4 and Proposition 8.2.8 in \cite{bogm}).
      \end{proof}

 \section{The representation of posterior distributions}\label{sec3}

  In this section, we consider a known representation formula  (see   Section 1.2.2 in \cite{flo},  Theorem 1.31 in \cite{schervish}, or  pp. 231--232 in \cite{shir}) for   solution of the statistical inverse problem of estimating the distribution of $X$ given the observations $Y$ that generalizes the finite-dimensional formula 
$$
   D(x|y)= CD(y|x)D_{pr}(x).
$$  
For readers convenience, the proofs  of Lemma  
\ref{yeke}, Lemma \ref{eka2} and Theorem \ref{toka}  are included, 
although they  are  special cases of more general known results.

Throughout the section, we assume that $F$ and $G$  are locally convex Souslin topological  vector spaces equipped with their Borel 
$\sigma$-algebras $\mathcal F$ and $\mathcal G$,  respectively,  and $X$ is taken to be an $F$-valued random variable and $\varepsilon $ is taken to be a $G$-valued random variable statistically independent from  $X$. All the random variables are defined on the same 
complete probability space $(\Omega,\Sigma,P)$.   The mapping $L: F\rightarrow G$ is assumed to be continuous.  We denote $Y=L(X)+\varepsilon $.   

 First, we check that  $Y$ is indeed a random  variable as a combination of Borel measurable mappings.   The product space $F\times G$  is equipped with the usual product $\sigma$-algebra  $\mathcal F\otimes \mathcal G$ generated by rectangles  $U\times V$, where  $U\in \mathcal F$ and $V\in \mathcal G$. 
\begin{lemma} \label{yeke}
The mapping $T:(x,z)\mapsto  L(x)+ z$ is  Borel measurable from $F\times G$ to $G$. 
\end{lemma}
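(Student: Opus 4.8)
The plan is to write $T$ as a composition of continuous maps and then reconcile the resulting topological Borel measurability with measurability for the product $\sigma$-algebra $\mathcal F \otimes \mathcal G$.

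First I would note that, as $G$ is a topological vector space, the addition $a : G \times G \to G$, $a(u,v)=u+v$, is continuous by definition, and that $L \times \mathrm{id}_G : F \times G \to G \times G$, $(x,z)\mapsto(L(x),z)$, is continuous, since a map into a topological product is continuous exactly when its coordinate maps $L\circ\pi_F$ and $\pi_G$ are, which they are by continuity of $L$ and of the projections. Hence $T = a\circ(L\times\mathrm{id}_G)$ is continuous from $F\times G$, equipped with the product topology, into $G$, and therefore measurable with respect to the Borel $\sigma$-algebra $\mathcal B(F\times G)$ of the product topology.

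The real content is then to upgrade this to $\mathcal F\otimes\mathcal G$-measurability, i.e. to establish $\mathcal B(F\times G)=\mathcal F\otimes\mathcal G$; the inclusion $\mathcal F\otimes\mathcal G\subseteq\mathcal B(F\times G)$ is automatic, so only the reverse inclusion is at stake. Here I would use the Souslin hypothesis. The product $F\times G$ of Souslin spaces is again Souslin, hence hereditarily Lindel\"of. Consequently any open $W\subseteq F\times G$ is covered by the basic open rectangles it contains, and by the Lindel\"of property this cover admits a countable subcover $\{U_n\times V_n\}_{n}$, so that $W=\bigcup_n(U_n\times V_n)\in\mathcal F\otimes\mathcal G$. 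Since the open sets generate $\mathcal B(F\times G)$, we obtain $\mathcal B(F\times G)\subseteq\mathcal F\otimes\mathcal G$, which gives the desired identity (this is also recorded directly in \cite{bogm}). The continuous map $T$ is thus $\mathcal F\otimes\mathcal G$-measurable.

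I expect the only subtle point to be this last identification of $\sigma$-algebras: measurability for the product topology does not by itself yield measurability for the coarser product $\sigma$-algebra, and indeed $\mathcal F\otimes\mathcal G$ can be strictly smaller than $\mathcal B(F\times G)$ for general topological vector spaces. It is precisely the Souslin (hence hereditarily Lindel\"of) structure, together with the fact that products of Souslin spaces are Souslin, that closes this gap; the continuity of $L$ and of the vector-space operations makes the remainder routine.
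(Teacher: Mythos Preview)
Your proof is correct and follows essentially the same approach as the paper: both reduce the question to the identification $\mathcal B(F\times G)=\mathcal F\otimes\mathcal G$ and obtain this from the Souslin (hence hereditarily Lindel\"of) property via countable subcovers of basic open rectangles. The paper phrases the argument on $G\times G$ (then precomposes with the continuous map $(L,\mathrm{id})$), whereas you work directly on $F\times G$, but the substance is the same.
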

\begin{proof} 
 As the addition is just $\mathcal B(G\times G)$-measurable by continuity,  there is the question whether the Borel 
 $\sigma$-algebra  $\mathcal B(G\times G)$  of the topological product space coincides with the product $\sigma$-algebra   $\mathcal G\otimes \mathcal G$ generated by the rectangles $V\times W$ where $ V,W\in \mathcal G$. 
 
 Certainly, $\mathcal G\otimes \mathcal G\subset \mathcal B(G\times G)$, since the products of open sets   $V,W\subset G$  form  a basis of topology   for $G\times G$.  
 
  Due to the Souslin property, the space $G\times G$  is hereditarily Lindel\"of (\cite{bogm},  Lemma 6.6.4 and Lemma 6.6.5).   Any open set in $G\times G$ can therefore be expressed as a countable union of  sets of the form $V\times W$,  where $V,W \in \mathcal  G$ are open.   Hence $ \mathcal B(G\times G)\subset \mathcal G\otimes \mathcal G$. 
 \end{proof}

 We verify now that 
for any  $\mu_Y$-integrable  $f:G\rightarrow \mathbf R$,  the conditional expectation of 
$f(Y)$ given $X$ is the random variable
$$
\mathbf E[f(Y)| X](\omega) = \int _G f(z) d\mu_{\varepsilon +L(X(\omega))}(z). 
$$
Here the  measure $\mu_{\varepsilon +L(X(\omega))}$ is the image measure of the random variable  $\omega'\mapsto \varepsilon (\omega')+L(X(\omega))$, where $X(\omega)$ is treated as a constant. 
We apply the  following more general claim, for which we failed to find a reference. 
\begin{lemma}\label{eka2}
 Let $Z_1$ be an  $F$-valued   and $Z_2$ be a  $G$-valued random variable  that are statistically independent.  Denote $Z_3=T(Z_1,Z_1)$, where  $T:F\times G\rightarrow G$ is a Borel measurable mapping.    For any 
 $\mu_{Z_3}$-integrable function $f:G\rightarrow \mathbf R$, it holds  that
 $$
 \mathbf E[f(Z_3 )| Z_1](\omega) = \int _G f(z) d\mu_{T(Z_1(\omega),Z_2)}(z) 
 $$
 $P$-almost surely, and $\int_G f(z) d\mu_{T(Z_1 (\omega) ,Z_2)}(z)$ 
 is  a  version of the conditional expectation of $ f(Z_3)$ given $\sigma(Z_1)$.  
\end{lemma}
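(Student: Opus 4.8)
The plan is to exhibit an explicit $\mu_{Z_1}$-measurable function on $F$ whose composition with $Z_1$ satisfies the two defining properties of a conditional expectation. First I would set
\[
g(x) := \int_G f(T(x,w))\,d\mu_{Z_2}(w), \qquad x\in F,
\]
and observe, by the image measure theorem applied to $\omega'\mapsto T(x,Z_2(\omega'))$ with $x$ frozen, that $g(x)=\int_G f(z)\,d\mu_{T(x,Z_2)}(z)$; thus the quantity appearing in the statement is exactly $g(Z_1(\omega))$. It then suffices to prove that $g$ is $\mu_{Z_1}$-measurable and that $\int_A f(Z_3)\,dP=\int_A g(Z_1)\,dP$ for every $A\in\sigma(Z_1)$, for then $g(Z_1)$ is by definition a version of $\mathbf E[f(Z_3)\mid\sigma(Z_1)]$.

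For the measurability I would invoke the Souslin hypothesis exactly as in Lemma \ref{yeke}: the hereditary Lindel\"of property gives $\mathcal B(F\times G)=\mathcal F\otimes\mathcal G$, so the Borel map $T$ is $\mathcal F\otimes\mathcal G$-measurable and hence $(x,w)\mapsto f(T(x,w))$ is $\mathcal F\otimes\mathcal G$-measurable for Borel $f$. The Tonelli--Fubini theorem for the product measure $\mu_{Z_1}\otimes\mu_{Z_2}$ then renders the partial integral $g$ a $\mu_{Z_1}$-measurable function of $x$. To pass from Borel $f$ to a general $\mu_{Z_3}$-integrable $f$, I would replace $f$ by a Borel version (as in Lemma \ref{leu}); since $\mu_{Z_3}=(\mu_{Z_1}\otimes\mu_{Z_2})\circ T^{-1}$, the exceptional $\mu_{Z_3}$-null set pulls back to a $\mu_{Z_1}\otimes\mu_{Z_2}$-null set, so the two versions of $f\circ T$ agree almost everywhere and no integral is affected. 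The same change of variables shows $f\circ T\in L^1(\mu_{Z_1}\otimes\mu_{Z_2})$, because
\[
\int_{F\times G}|f(T(x,w))|\,d(\mu_{Z_1}\otimes\mu_{Z_2})=\int_\Omega|f(Z_3)|\,dP=\int_G|f|\,d\mu_{Z_3}<\infty,
\]
which in particular makes $g(x)$ finite for $\mu_{Z_1}$-a.e. $x$ and legitimizes the use of Fubini below.

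The integration identity is the core step, and it is where independence enters. Taking $A=Z_1^{-1}(B)$ with $B\in\mathcal F$ (such sets generate $\sigma(Z_1)$), I would transport the integral to the product space through the joint law, factor it by independence as $\mu_{(Z_1,Z_2)}=\mu_{Z_1}\otimes\mu_{Z_2}$, and split with Fubini:
\[
\int_A f(Z_3)\,dP=\int_{F\times G}1_B(x)\,f(T(x,w))\,d(\mu_{Z_1}\otimes\mu_{Z_2})=\int_F 1_B(x)\,g(x)\,d\mu_{Z_1}(x).
\]
The image measure theorem identifies the last expression with $\int_A g(Z_1)\,dP$, so the defining equation of the conditional expectation holds on every generator $A$, and hence throughout $\sigma(Z_1)$. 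Combined with the established measurability, this shows that $g(Z_1)=\int_G f(z)\,d\mu_{T(Z_1(\cdot),Z_2)}(z)$ is a version of $\mathbf E[f(Z_3)\mid\sigma(Z_1)]$.

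I expect the only genuinely delicate point to be the joint measurability of $T$ with respect to the product $\sigma$-algebra $\mathcal F\otimes\mathcal G$, which Fubini requires and which can fail for non-Souslin spaces; here it is supplied precisely by the Souslin assumption, through the coincidence $\mathcal B(F\times G)=\mathcal F\otimes\mathcal G$ already exploited in Lemma \ref{yeke}. Everything else reduces to the image measure theorem, the factorization of the joint distribution under statistical independence, and Fubini's theorem applied to the $L^1$ function $f\circ T$.
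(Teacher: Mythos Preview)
Your proof is correct and rests on the same ingredients as the paper's: the coincidence $\mathcal B(F\times G)=\mathcal F\otimes\mathcal G$ from the Souslin property, the factorization $\mu_{(Z_1,Z_2)}=\mu_{Z_1}\otimes\mu_{Z_2}$ from independence, Fubini's theorem, and the reduction to a Borel version of $f$ via Lemma~\ref{leu}. The organizational difference is that the paper first invokes the existence of a regular conditional distribution of $(Z_1,Z_2)$ given $\sigma(Z_1)$ (Corollary 10.4.6 in \cite{bogm}), identifies it on the measure-determining rectangles $B_1\times B_2$, and then climbs back up through simple functions to general $f\circ T$; you instead verify the defining identity $\int_A f(Z_3)\,dP=\int_A g(Z_1)\,dP$ directly for $A=Z_1^{-1}(B)$ by a single application of Fubini to the integrable function $1_B(x)f(T(x,w))$. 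Your route is slightly more economical, as it avoids the detour through regular conditional measures and the simple-function approximation, but both arguments are standard and essentially equivalent.
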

\begin{proof}
We show  that the claim holds for a  Borel measurable version of  $f$, which exists by Proposition 
2.1.11 in \cite{bogm}. The generalization  for $\mu_{Z_3}$-measurable functions follows then  from Lemma \ref{leu}. 

Remark that   $f\circ T: F\times G \rightarrow \mathbf R$ is then a  Borel measurable function. We will show  that 
 $
 \mathbf E[g (Z_1,Z_2) | Z_2](\omega) = \int _G g(Z_1(\omega), z_2)  d\mu_{Z_2}(z_2) 
 $
holds  for all   Borel measurable  simple functions  $g$  on $F\times G$. The   usual approximation of  Borel  measurable functions  with   simple functions  implies then  for $g=f\circ T$ that 
\begin{eqnarray*}
\mathbf E[ f (T(Z_1,Z_2)) | Z_1](\omega) &=& \int _G f(T(Z_1(\omega), z_2) ) d\mu_{Z_2}(z_2)= 
\mathbf E [ f(T(Z_1(\omega),Z_2))] \\
 &=& \int _G  f(z) d\mu_{T(Z_1(\omega),Z_2)}(z). 
\end{eqnarray*}
Take now  $g=1_C$, where $C\in \mathcal B(F\times G)$.  We need to determine 
 the conditional expectation
$
\mathbf E [ 1_C(Z_1,Z_2)|Z_2]
$
i.e.  the conditional distribution of $(Z_1,Z_2)$ given $\sigma(Z_2)$.  Since $F$ and $G$ are  Souslin spaces,  a regular conditional measure exists  (by Corollary 10.4.6 in \cite{bogm})
and is determined by values on any measure-determining sets. In Souslin spaces,  
the rectangular sets $C=B_1\times B_2$, where $B_1\in \mathcal F$ and $B_2\in \mathcal G$ are measure-determining sets, since $\mathcal B(F\times G)=\mathcal F\otimes \mathcal G$ (see the proof of Lemma \ref{yeke} and  Lemma 1.9.4 in \cite{bogm})).  By the properties of the conditional expectation,
\begin{eqnarray*}
\mathbf E [ 1_{B_1\times B_2}(Z_1,Z_2)|Z_1](\omega)&=& 1_{B_1}(Z_1(\omega)) \int 1_{B_2}(z_2) d\mu_{Z_2}(z_2)\\
&=&
 \int  1_{B_1\times B_2 }(Z_1(\omega),z_2) d\mu_{Z_2}(z_2).
\end{eqnarray*}
\end{proof}

Here is the description of the   solutions $\mu(U,z)$  modulo 
$\mu_Y$-zero measurable sets.   The result is a special case of   Kallianpur-Striebel formula  \cite{kallianpur2}. 

\begin{theorem}\label{toka}
   Let $\mu_{\varepsilon +L(x)}$ be absolutely continuous with respect to a $\sigma$-finite measure $\nu$ for $\mu_X$-a.e.  $x\in F$.  
Set 
\begin{equation*}
\rho(x,z)=\begin{cases} 
 \frac{d\mu_{\varepsilon +L(x)}}{d\nu}(z)  \; {\rm when }\; \mu_{\varepsilon +L(x)}\ll \nu\\
 0 {\rm \; otherwise}.
\end{cases}
\end{equation*}
If   $\rho(x,z) $ is a  non-negative  $\mu_X \times \nu$-measurable  function  on $F\times G$, then there is an  essentially unique   solution $\mu$  of the statistical inverse problem of estimating the distribution of  $X$ given $Y=L(X)+\varepsilon $ 
 such that
\begin{equation} \label{ii}
\mu(U,z)=\frac{\int 1_U(x) \rho(x,z) d\mu_X(x)}{\int \rho(x,z)d\mu_X(x)}  
\end{equation}
for all $z\in G\backslash N_0$, where the set   
   $$ N_0=\{ z\in G: \int \rho(x,z)d\mu_X(x)= 0{\rm \; or \;}\infty\}$$ 
   has   $\mu_Y$-measure zero. 

If $\mu_Y(N)=0$  then $N$ is  also $\mu_{\varepsilon + L(x_0)}$-zero measurable for  $\mu_X$-almost every  $x_0\in F$. If  additionally $\mu_{\varepsilon+L(x)}<<\nu$ 
   for all $x\in F$ and $\rho$ is  positive   $ \mu_X \times \nu$-almost everywhere, then  
$\mu_{\varepsilon + L(x_0)}(N)=0$ for all $x_0\in F$.
 \end{theorem}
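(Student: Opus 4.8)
The plan is to reduce the entire statement to two explicit Radon--Nikodym computations — one for the joint law $\mu_{(X,Y)}$ and one for its marginal $\mu_Y$ — and then to recognize the right-hand side of \eqref{ii} as a version of the conditional density characterizing the posterior. Essential uniqueness will then be automatic: \emph{any} solution of this statistical inverse problem is essentially unique by Theorem \ref{ex}, so it suffices to exhibit one solution that satisfies \eqref{ii} off a $\mu_Y$-null set and to prove that null set is exactly $N_0$.

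First I would compute the joint distribution. Applying Lemma \ref{eka2} with $Z_1 = X$, $Z_2 = \varepsilon$ and $T(x,z)=L(x)+z$, the conditional law of $Y$ given $X=x$ is $\mu_{\varepsilon+L(x)}$, which by hypothesis has $\nu$-density $\rho(x,\cdot)$ for $\mu_X$-a.e.\ $x$. Hence for $U\in\mathcal F$ and $V\in\mathcal G$,
$$
\mu_{(X,Y)}(U\times V)=\int_U \mu_{\varepsilon+L(x)}(V)\,d\mu_X(x)=\int_U\int_V \rho(x,z)\,d\nu(z)\,d\mu_X(x),
$$
and since $\rho\ge 0$ is jointly $\mu_X\times\nu$-measurable and $\nu$ is $\sigma$-finite, Tonelli's theorem shows $\mu_{(X,Y)}\ll\mu_X\times\nu$ with density $\rho$. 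Taking $U=F$ gives $\mu_Y\ll\nu$ with density $p(z):=\int_F \rho(x,z)\,d\mu_X(x)$, a $\nu$-measurable $[0,\infty]$-valued function (again by Tonelli). Because $\int_G p\,d\nu=\mu_Y(G)=1<\infty$, the density $p$ is $\nu$-integrable, hence finite $\nu$-a.e.; thus $\nu(\{p=\infty\})=0$ and $\mu_Y(\{p=\infty\})=\int_{\{p=\infty\}} p\,d\nu=0$, while $\mu_Y(\{p=0\})=\int_{\{p=0\}} p\,d\nu=0$. Therefore $\mu_Y(N_0)=0$.

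Next I would define $\mu$. For $z\notin N_0$ (so $0<p(z)<\infty$) let $\mu(U,z)$ be the quotient in \eqref{ii}; the numerator $U\mapsto\int_U \rho(x,z)\,d\mu_X(x)$ is a finite measure of total mass $p(z)$, so $U\mapsto\mu(U,z)$ is a probability measure (Condition 3), and for $z\in N_0$ I set $\mu(\cdot,z)=\delta_{x_0}$ for a fixed $x_0\in F$. Measurability in $z$ (Condition 2) holds since numerator and denominator are $\nu$-measurable by Tonelli and $\mu_Y\ll\nu$. The crux is Condition 1. By Lemma \ref{leu}, $\omega\mapsto\mu(U,Y(\omega))$ is a random variable with $\int_{Y^{-1}(V)}\mu(U,Y)\,dP=\int_V \mu(U,z)\,d\mu_Y(z)$; since $\int_{Y^{-1}(V)}1_U(X)\,dP=\mu_{(X,Y)}(U\times V)$, Condition 1 reduces to the identity $\int_V \mu(U,z)\,d\mu_Y(z)=\mu_{(X,Y)}(U\times V)$ for all $V\in\mathcal G$. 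Using $d\mu_Y=p\,d\nu$ and $\mu_Y(N_0)=0$,
$$
\int_V \mu(U,z)\,d\mu_Y(z)=\int_{V\setminus N_0}\frac{\int_U \rho(x,z)\,d\mu_X(x)}{p(z)}\,p(z)\,d\nu(z)=\int_{V\setminus N_0}\int_U \rho(x,z)\,d\mu_X(x)\,d\nu(z).
$$
On $\{p=0\}$ the inner integral vanishes (as $\rho\ge 0$ and $U\subseteq F$), and $\{p=\infty\}$ is $\nu$-null, so the domain may be restored to $V$ at no cost; Tonelli then rewrites the right side as $\int_U\int_V \rho(x,z)\,d\nu(z)\,d\mu_X(x)=\mu_{(X,Y)}(U\times V)$, as required. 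The main obstacle is exactly this null-set bookkeeping around $N_0$, where $p$ degenerates and \eqref{ii} is ill-defined: everything hinges on the numerator vanishing wherever $p=0$ and on $\{p=\infty\}$ being $\nu$-negligible.

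Finally, the transfer claims follow from the marginal identity $\mu_Y(V)=\int_F \mu_{\varepsilon+L(x)}(V)\,d\mu_X(x)$ obtained above. If $\mu_Y(N)=0$, choose a Borel set $B\supseteq N$ with $\mu_Y(B)=0$; then $\int_F \mu_{\varepsilon+L(x)}(B)\,d\mu_X(x)=0$ forces $\mu_{\varepsilon+L(x_0)}(B)=0$, whence $\mu_{\varepsilon+L(x_0)}(N)=0$, for $\mu_X$-a.e.\ $x_0$. Under the additional hypotheses, positivity of $\rho$ (via Fubini applied to the null set $\{\rho=0\}$) gives $p>0$ $\nu$-a.e., so $\mu_Y(B)=\int_B p\,d\nu=0$ now forces $\nu(B)=0$; the global domination $\mu_{\varepsilon+L(x)}\ll\nu$ then yields $\mu_{\varepsilon+L(x_0)}(B)=0$, and hence $\mu_{\varepsilon+L(x_0)}(N)=0$, for \emph{every} $x_0\in F$.
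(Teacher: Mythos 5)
Your proposal is correct, and it reaches the result by a route that is organized differently from the paper's, though both rest on the same two pillars (Lemma \ref{eka2} and the Tonelli interchange). The paper first invokes Theorem \ref{ex} to obtain an abstract solution $\tilde\mu$, then computes $P(X\in U\cap Y\in V)$ twice --- once conditioning on $X$ (its \eqref{veka}) and once conditioning on $Y$ through $\tilde\mu$ (its \eqref{seka}) --- and \emph{solves} the resulting identity $\tilde\mu(U,z)\int\rho\,d\mu_X=\int_U\rho\,d\mu_X$ for $\nu$-a.e.\ $z$, so the formula \eqref{ii} is derived for a solution already known to exist; Theorem \ref{ex} is load-bearing in the computation itself. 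You instead \emph{construct} the candidate $\mu$ outright and verify Condition 1 directly, using the explicit identifications $\mu_{(X,Y)}=\rho\,d(\mu_X\times\nu)$ on rectangles and $d\mu_Y=p\,d\nu$ with $p(z)=\int\rho(x,z)\,d\mu_X(x)$; Theorem \ref{ex} enters only to supply essential uniqueness at the end. What your route buys: the domination $\mu_Y\ll\nu$ and the conclusion $\mu_Y(N_0)=0$ fall out cleanly from $\int p\,d\nu=1$ (the paper extracts the same facts from \eqref{veka} with $U=F$, somewhat more tersely), and your treatment of the final transfer claim is actually more careful than the paper's: the paper plugs $V=N$ into \eqref{veka} although $N$ is only $\mu_Y$-measurable, whereas you pass explicitly to a Borel hull $B\supseteq N$, which is the correct repair. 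What the paper's route buys is a direct demonstration that \emph{any} solution must satisfy \eqref{ii} $\nu$-a.e., without separately checking that the quotient formula defines a kernel --- though, as you note, essential uniqueness recovers this from your construction anyway. Two small points you handle implicitly but correctly: the rectangle computation suffices because Condition 1 only tests sets $Y^{-1}(V)$, so you never need $\mathcal B(F\times G)=\mathcal F\otimes\mathcal G$ in full; and your remark that $\nu$-measurability of $p$ upgrades to $\mu_Y$-measurability via $\mu_Y\ll\nu$ (through a Borel version, as in Lemma \ref{leu}) is exactly the bookkeeping Condition 2 requires.
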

\begin{proof}
Let $\mu(U,z)$ be defined by \eqref{ii}.
  If $z\in N_0$, we set $  \mu(U,z)= 1_{U}(x_0)$ for some fixed $x_0\in F$.    We prove that $\mu$ is  a   solution.
      
Let  $U\in \mathcal F$ and $V\in \mathcal G$.  By Theorem \ref{ex} there exists an essentially unique  solution, which we denote here with  $\tilde \mu$. We write two expressions for  $P(X\in U\cap Y\in V)$  using  Lemma \ref{eka2}.  The first is 
\begin{equation}
\begin{split}\label{veka}
\mathbf E[1_U(X)\mathbf E[1_V(Y)|X]]=& \int 1_U(x) \left( \int 1_V(z) d\mu_{\varepsilon +L(x)}(z) \right) d\mu_X(x)\\
=&\int 1_U(x)  \left( \int 1_V(z)  \rho(x,z)d\nu(z) \right) d\mu_X(x)\\
=&\int 1_V(z)  \left( \int 1_U(x)  \rho(x,z) d\mu_X(x) \right) d\nu(z)
\end{split}
\end{equation}
and the second  expression is 
\begin{equation}
\begin{split} \label{seka}
\mathbf E[1_V(Y)\mathbf E[1_U(X)|Y]]=& \mathbf E[1_V(Y)\tilde \mu(U,Y)]\\
=&\mathbf E[\mathbf E[1_V(Y)\tilde \mu(U,Y)|X] ]
\\
=&\int  \int 1_V(z) \rho(x,z) \tilde \mu(U,z) d\nu(z) d\mu_X(x)\\
=&   \int 1_V(z) \left( \int  \rho(x,z) d\mu_X(x) \right) \tilde \mu(U,z)  d\nu(z). 
\end{split}
\end{equation}
 The measurability of $\rho$ is used in  changing the order of integrations by the Fubini theorem.  The  integrability of $\rho$ follows automatically from the finiteness of the left-hand side of \eqref{seka} for $U=F$ and $V=G$.  

Since the equivalence of \eqref{veka} and \eqref{seka}  holds for all $V\in \mathcal G$, we obtain $$
\tilde \mu(U,z){\int \rho(x,z)d\mu_X(x)} = \int 1_U(x) \rho(x,z) d\mu_X(x)$$   for  $\nu$-almost every $z$. Hence, $\tilde \mu(U,z)=\mu(U,z)$ for  $\nu$-almost every $y$ such that 
$0<\int \rho(x,z)d\mu_X(x)<\infty$. 
  
          The denominator in (\ref{ii}) may vanish only on a set $A$  of $\mu_Y$-measure zero since  the choice $U=F$, $V=A$   gives $\mu_Y(A)=0$ in \eqref{veka}.       The same consideration implies that also the measure $\mu_Y$ is absolutely continuous with respect to $\nu$.  Similarly, the denominator is finite $\nu$-almost surely, which implies $\mu_Y$-almost surely.  
          We conclude that  $N_0$ has $\mu_Y$-measure zero and $\tilde \mu(U,z) =\mu(U,z) $ $\mu_Y$-almost surely.  
          Then $\mu(U,y)$  satisfies Condition 2 of Definition \ref{si}.  By Lemma \ref{leu},    $\mu$ satisfies Condition 1. 
           By the integrability of $\rho$, $\mu$ satisfies Condition 3 of Definition \ref{si}.

               We proceed to the last claim.     Taking $V=N$  and $U\in \mathcal F$ in   \eqref{veka} implies that  $\mu_{\varepsilon +L(x)}(N)=\int 1_N(z) \rho(x,z)d\nu(z) $ vanishes  for $\mu_X$-almost all $x\in F$. When $\rho$ is a.e.  positive, also  $\nu(N)$ has to  vanish.  We obtain $\mu_{L(x_0)+\varepsilon }(N)=0$ for all $x_0\in F$  by using the absolute continuity.
     
    \end{proof}

The  last statement of the above theorem is added to show how small the zero  measurable set for a given unknown is. The representation formula does not  improve the  essential uniqueness of   solutions, because   the Radon-Nikodym density $z\mapsto d{\mu_{L(x)+\varepsilon }}/d\nu(z)$ is  only determined up to $\nu$-equivalence.  
It should be noted that under the  domination assumptions  on $\mu_{\varepsilon+L(x)}$ in Theorem \ref{toka}, there always exists versions of   the  Radon-Nikodym densities that are jointly measurable. In  \cite{kallianpur2}, this claim is proved assuming  that  $Y^{-1}(\mathcal G)$ is countably generated. 
In Souslin spaces, the Borel $\sigma$-algebras are countably 
generated  by Corollary 6.7.5 in \cite{bogm}.

It is easy to see, that the prior distribution $\mu_X$ and the posterior distribution $\mu(\cdot,z)$ are equivalent if  \eqref{ii} holds   and $\rho(\cdot, z)>0$  $ \mu_X$-almost everywhere.  

\begin{remark}
The existence of $\nu$ is a delicate matter. For example, the measure $\mu_{\varepsilon+L(x)}$ may not be almost surely absolutely continuous with respect to $\mu_Y$, although
 \begin{eqnarray*}
\mu_Y(U)= \mathbf E [1_U(Y)]= \mathbf E [ \mathbf E [1_U(Y)|X]] = \int  \mu_{\varepsilon +L(x)}(U) d\mu_X(x)
 \end{eqnarray*}
 by Lemma \ref{eka2}.  We can only  conclude that $\mu_{\varepsilon +L(x)}(U)$ vanishes $\mu_X$-a.s. whenever $\mu_Y(U)$ vanishes and the  $\mu_X$-zero measurable set may depend on $U$.  A Gaussian example in  Remark \ref{gah} of Section \ref{sec5} shows that this is indeed the case.      In general,   the Halmos-Savage theorem (Lemma 7 in    \cite{halmos}),  states  that   from a dominated family   of finite measures, which in our case is   $\{ \mu_{\varepsilon+L(x)}: x\in M\}$ where $\mu_X(M)=1$,  one can pick out countably many measures $\mu_{\varepsilon+L(x_i)}$  in such a way that   the measure  $\nu:=\sum_{i} a_i \mu_{\varepsilon+L(x_i)}$, where $\sum_{i}a_i=1$ and all $a_i>0$, is not only  a dominating measure but also equivalent to the family  $\{\mu_{\varepsilon+L(x)}: x \in M\}$ (i.e. the measures in the  family vanish on the same subsets as $\nu$).     Especially, this gives   a necessary and sufficient condition for the domination of the   probability measures $\mu_{\varepsilon+L(x)}$.  In Section \ref{sec5} we concentrate on  special cases where $\mu_\varepsilon$  can be taken as $\nu$.   In these examples, we  require that  $\mu_{\varepsilon +L(x)} (U)=0$ whenever $\mu_{\varepsilon }(U)=0$ i.e. $\mu_\varepsilon $ is quasi-invariant with respect to translations with $L(x)$,  where $x\in F$. This allows the use of any prior distribution on $F$. 
However, Remark \ref{siirto} in Section \ref{sec5} demonstrates that in dominated cases it is not always possible to choose   $ \mu_\varepsilon$ as $\nu$.    
     \end{remark}

We return to the question  of partial uniqueness  (Theorem  \ref{uniq}). 
  The conditions in the next theorems allow  easier validation of the measurability and guarantee  some continuity for the solutions. However,   under  the stronger assumption that the function  $(x,z)\mapsto \rho(x,z)$ is jointly continuous and bounded,   the solution $\mu$ is always $\mathcal F$-continuous on $G$ (see Theorem 7.14.8  in \cite{bogm}).        Recall,  that the class of all Souslin sets is quite large since all Borel subsets of a Souslin space are Souslin sets  by Corollary 6.6.7 in \cite{bogm}.

\begin{theorem} \label{uni}
Let $\mu_{\varepsilon+L(x)}$  be  absolutely continuous with respect  to  a  $\sigma$-finite measure $\nu$   for $\mu_X$-almost every  $x\in F$.      If  
  \begin{equation*}
\rho(x,z)=\begin{cases} 
 \frac{d\mu_{\varepsilon +L(x)}}{d\nu}(z)  \; {\rm when }\; \mu_{\varepsilon +L(x)}\ll \nu\\
 0 {\rm \; otherwise}.
\end{cases}
\end{equation*}
   is a separately continuous  function  on some $F_0\times A$, where $F_0$ is a Souslin subset of $F$ with full $\mu_X$-measure and  $A$ is a Souslin subset of  $G$ such that $\nu(A^C)=0$, then $\rho$ is   $\mu_X\times \nu$-measurable.  

  If additionally   $\sup_{z\in K} \rho(x,z)\in L^1(\mu_X)$ for all compact sets $K\subset G$  then 
  \begin{equation} \label{rocky1} 
z\mapsto \mu(U,z)=\frac{\int 1_U(x) \rho(x,z) d\mu_X(x)}{\int \rho(x,z)d\mu_X(x)}  
\end{equation} 
is  $\mathcal F$-continuous  on  $ K\cap \{ z\in A: 0<\int \rho(x,z)d\mu_X(x)<\infty\}$ for every compact set $K\subset G$.
\end{theorem}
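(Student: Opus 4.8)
The statement has two parts, and I would prove them separately, since the first (measurability of $\rho$) feeds the second (continuity of $z\mapsto\mu(U,z)$) only through the fact that the defining integrals make sense.

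For the measurability, my plan is to reduce the separate continuity on the Souslin product $F_0\times A$ to the classical Lebesgue--Rudin theorem on separately continuous functions over \emph{metric} spaces. Since $F_0$ and $A$ are Souslin, there are Polish spaces $P,Q$ and continuous surjections $\phi\colon P\to F_0$ and $\psi\colon Q\to A$. The composition $\tilde\rho:=\rho\circ(\phi\times\psi)\colon P\times Q\to\mathbf R$ is again separately continuous, and now both factors are metric, so by the theorem of \cite{rud} the function $\tilde\rho$ is Borel on the Polish space $P\times Q$. I would include a proof of this classical step for completeness: using a countable dense set $\{q_k\}$ in $Q$, approximate $\tilde\rho$ pointwise by functions that on a suitable measurable partition of $Q$ take the values $\tilde\rho(\cdot,q_k)$; each approximant is a countable sum of products of a function continuous in the first variable with a Borel indicator in the second, hence Borel, and it converges to $\tilde\rho$ pointwise by continuity in $q$.

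To transport this back to $F_0\times A$, I would observe that for any Borel $B\subseteq\mathbf R$ the set $\tilde\rho^{-1}(B)$ is Borel in $P\times Q$, and that surjectivity of $\phi\times\psi$ onto $F_0\times A$ gives $\rho^{-1}(B)\cap(F_0\times A)=(\phi\times\psi)\bigl(\tilde\rho^{-1}(B)\bigr)$. The right-hand side is the image of a Borel set under a continuous map between Souslin spaces, hence a Souslin set in $F\times G$ (Theorem 6.7.3 in \cite{bogm}, as already used in Remark \ref{samplespace}). Souslin sets are universally measurable, so $\rho^{-1}(B)\cap(F_0\times A)$ is $\mu_X\times\nu$-measurable; and since $\mu_X(F\setminus F_0)=0$ and $\nu(A^C)=0$ force $(F_0\times A)^C$ to be $\mu_X\times\nu$-null (using $\sigma$-finiteness of $\nu$), $\rho$ itself is $\mu_X\times\nu$-measurable. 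This is exactly the hypothesis needed for Theorem \ref{toka}.

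For the continuity of $z\mapsto\mu(U,z)$, fix a compact $K\subseteq G$ and write $N(U,z)=\int 1_U(x)\rho(x,z)\,d\mu_X(x)$ and $D(z)=\int\rho(x,z)\,d\mu_X(x)$. The key preliminary point is that $K$ is metrizable: a locally convex Souslin space carries a countable family $\{\ell_n\}\subseteq G'$ that separates points (its Borel $\sigma$-algebra is countably generated and coincides with $\sigma(G')$), whence $T=(\ell_n)_n$ is a continuous injection of $G$ into a metric space; its restriction $T|_K$ is a continuous bijection of the compact $K$ onto a Hausdorff image, hence a homeomorphism, so $K$ and all its subsets are metrizable. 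On a metrizable set topological continuity reduces to sequential continuity, so it suffices to take $z_n\to z_0$ in $K\cap A$ and pass to the limit. For each fixed $z\in A$ the separate continuity makes $x\mapsto\rho(x,z)$ continuous, hence $\mu_X$-measurable on $F_0$, so $N(U,z)$ and $D(z)$ are well defined; and $0\le\rho(x,z_n)\le\sup_{z\in K}\rho(x,z)=:h_K(x)\in L^1(\mu_X)$, while $\rho(x,z_n)\to\rho(x,z_0)$ for every $x\in F_0$ by continuity in the second variable. Dominated convergence then yields $N(U,z_n)\to N(U,z_0)$ and $D(z_n)\to D(z_0)$, so $N(U,\cdot)$ and $D$ are continuous on $K\cap A$. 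Since $D$ is finite on all of $K$ (again by $h_K\in L^1(\mu_X)$) and strictly positive on the indicated set, the quotient $\mu(U,z)=N(U,z)/D(z)$ is continuous on $K\cap\{z\in A:0<\int\rho\,d\mu_X<\infty\}$ for every $U\in\mathcal F$, which is precisely $\mathcal F$-continuity there.

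The main obstacle, and the place where the locally convex Souslin hypothesis genuinely earns its keep, is the passage from sequential to honest topological continuity in the second part: dominated convergence is intrinsically about sequences, so without knowing that compact subsets of $G$ are metrizable one cannot upgrade the pointwise-plus-domination data to continuity on $K$. Securing that metrizability through the countable separating family and the compact-to-Hausdorff argument is therefore the crux. In the first part the parallel difficulty is that $F_0$ and $A$ need not be metrizable at all; I sidestep this by descending to Polish covers and then re-ascending via the universal measurability of Souslin sets, rather than attempting to run the Lebesgue--Rudin approximation directly on the Souslin factors.
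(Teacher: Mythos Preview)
Your proof is correct and tracks the paper's argument closely: for measurability you lift to Polish covers and invoke Rudin's theorem on separately continuous functions, and for continuity you combine dominated convergence with metrizability of compact subsets of Souslin spaces (the paper simply cites Corollary~6.7.8 in \cite{bogm} for the latter, where you sketch the separating-family argument).

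The one place where your route diverges slightly is the descent step in the measurability part. The paper lifts only the first factor, obtaining a Borel function $(m,z)\mapsto\rho(R(m),z)$ on $M\times A$, and then composes with a universally measurable right inverse $(R^{-1},I)$ supplied by the measurable selection theorem (Theorem~6.9.1 in \cite{bogm}). You instead lift both factors and push forward, observing that $\rho^{-1}(B)\cap(F_0\times A)=(\phi\times\psi)(\tilde\rho^{-1}(B))$ is a continuous image of a Borel set between Souslin spaces, hence Souslin, hence universally measurable. Both mechanisms are standard and essentially dual to one another; your version has the mild advantage of avoiding the selection theorem, at the cost of needing Rudin's result only in the fully metric case (which is the classical Lebesgue theorem anyway), while the paper's version shows that lifting one factor already suffices.
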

\begin{proof}
 Assume that $\rho$ is separately continuous. Since $F_0$ is a Souslin space, there exists a continuous surjection $R$ from some complete separable metric space $M$ onto $F_0$.  We consider first  the function  
 $(m,z)\rightarrow \rho(R(m),z)$ on $M\times A$. This function is a pointwise limit of continuous functions due to  a theorem of W. Rudin \cite{rud}. Hence the function $(m,z)\mapsto \rho(R(m),z)$   is $\mathcal B(M\times A)=\mathcal B(M)\otimes \mathcal B(A) $ -measurable. We compose it with a $\mu_X\times\nu$-measurable mapping  $(R^{-1},I)$ where the inverse comes from the measurable choice theorem    (see Theorem 6.9.1 in \cite{bogm}, note that Souslin sets are universally measurable i.e. measurable with respect to any finite Radon measure by Theorem  7.4.1 in \cite{bogm}). Then we see that $1_{A}(z)1_{F_0}(x)\rho(x,z)$, together with its equivalent mapping $\rho(x,z)$,  is $\mu_X\times \nu$-measurable. 
  
   By  the  Lebesgue dominated convergence theorem, we obtain the sequential continuity of the marginals. On Souslin spaces, the compact sets are metrizable (see Corollary 6.7.8 in \cite{bogm}). In metrizable spaces sequential continuity coincides with 
continuity.
  \end{proof}

The above Theorem \ref{uni} shows  ${\mathcal F}$-continuity  of the solution $\mu$ on $\{z \in  A :  0<\int \rho(x,z)d\mu_X(x)<\infty\}$  when   $G$ is e.g. a $k$-space i.e.   a subset $C$ of $G$ is closed if and only if $C\cap K$ is closed for every compact $K\subset G$ (see Definition 43.8 in   \cite{willard}).   Indeed, it is well-known that a function $f$ is continuous on a $k$-space if and only if it is continuous on every compact subset. In particularly,   this holds for all first-countable spaces, like metric spaces.   Note,  that the  space of tempered  distributions  $\mathcal S'(\mathbf R^n)$ is a $k$-space when equipped with its  strong topology but not with its weak topology, while the distribution space  $\mathcal D'(U)$ is not a $k$-space with respect to either topology \cite{harada}. But $\mathcal D'(U)$ is a   Lusin space \cite{schwarz} -- i.e. a Hausdorff space that is a continuous injective image of  a  complete metric space -- and can be  equipped with a  stronger   metrizable topology inherited from the metric space.   However, this topology depends on the chosen metric space and has all the   drawbacks indicated in Remark \ref{samplespa}.

We combine Theorem \ref{uniq} and Theorem \ref{uni} in  a simple case.
 \begin{corollary}\label{uni2}
Let $G$  be a $k$-space. Let $\mu_{\varepsilon+L(x)}$  be  equivalent with  a  probability  measure $\nu$   for every  $x\in F$.  Denote $S_\nu$ the topological support of $\nu$.     If  
  \begin{equation*}
\rho(x,z)= \frac{d\mu_{\varepsilon +L(x)}}{d\nu}(z)
\end{equation*}
   is a separately continuous  function  on $F\times S_{\nu}$  and if    $\sup_{z\in K} \rho(x,z)\in L^1(\mu_X)$ for all compact subsets $K\subset G$  then all   solutions of the statistical inverse problem of estimating the distribution of $X$ given $Y$  
   that are $\mathcal F$-continuous on $ \{ z\in  S_{\nu}:  0< \int \rho(x,z)d\mu_X(x)<\infty\} $ coincide with 
     \begin{equation} \label{rocky} 
z\mapsto \mu(U,z)=\frac{\int 1_U(x) \rho(x,z) d\mu_X(x)}{\int \rho(x,z)d\mu_X(x)}  
\end{equation} 
 on   $\mathcal F \times \{ z\in  S_{\nu}:  0< \int \rho(x,z)d\mu_X(x)<\infty\}$.
\end{corollary}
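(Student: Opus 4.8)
The plan is to exhibit the right-hand side of \eqref{rocky} as a genuine solution that is $\mathcal F$-continuous on the set $A_0:=\{z\in S_\nu: 0<\int\rho(x,z)\,d\mu_X(x)<\infty\}$, and then to feed $A=A_0$ and $\mathcal F_0=\mathcal F$ into the partial-uniqueness Theorem \ref{uniq}. First I would check that the hypotheses of Theorem \ref{uni} hold with $F_0=F$ and $A=S_\nu$: the support $S_\nu$ is closed, hence a Souslin subset of $G$ with $\nu(S_\nu^C)=0$, and $\rho$ is separately continuous on $F\times S_\nu$ by assumption, so $F$ and $S_\nu$ are admissible Souslin sets. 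Theorem \ref{uni} then yields that $\rho$ is $\mu_X\times\nu$-measurable; since $\mu_{\varepsilon+L(x)}\ll\nu$ for every $x$ (equivalence implies absolute continuity), Theorem \ref{toka} applies and shows that \eqref{rocky} does define an essentially unique solution $\mu$, with its exceptional set $N_0$ being $\mu_Y$-null.

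Next I would establish the $\mathcal F$-continuity of $\mu$ on $A_0$. Writing $D(z)=\int\rho(x,z)\,d\mu_X(x)$ and $N_U(z)=\int 1_U(x)\rho(x,z)\,d\mu_X(x)$, the domination hypothesis $\sup_{z\in K}\rho(x,z)\in L^1(\mu_X)$ together with the separate continuity of $\rho$ lets dominated convergence show that $D$ and $N_U$ are sequentially continuous on every compact $K\subset G$; as compact subsets of a Souslin space are metrizable (Corollary 6.7.8 in \cite{bogm}), they are in fact continuous on each such $K$, and $D$ is finite there. Here the $k$-space hypothesis enters: since $S_\nu$ is closed in the $k$-space $G$ it is itself a $k$-space, so continuity on every compact subset of $S_\nu$ upgrades to continuity of $D$ and $N_U$ on all of $S_\nu$. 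On $A_0$, where $D>0$, the quotient $N_U/D=\mu(U,\cdot)$ is then continuous in the relative topology for every $U\in\mathcal F$, which is exactly the $k$-space strengthening of Theorem \ref{uni} already noted after its proof.

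It then remains to verify that $A_0$ meets the three requirements of Theorem \ref{uniq}. A Fubini interchange (legitimate by the measurability of $\rho$) gives $\mu_Y(E)=\int_E D\,d\nu$ for every Borel $E$, so $D=d\mu_Y/d\nu$; this is the reading that drives the rest. Because $D$ is continuous on the closed set $S_\nu$ and finite, $A_0=\{z\in S_\nu:D(z)>0\}$ is relatively open in $S_\nu$, hence Borel, so $A_0\in\mathcal G\subset\mathcal G^{\mu_Y}$. Since $\mu_Y(G\setminus S_\nu)=0$ (as $\mu_Y\ll\nu$) and $\mu_Y(\{D=0\})=\int_{\{D=0\}}D\,d\nu=0$, we obtain $\mu_Y(A_0)=1$, which secures one of the two alternatives in Theorem \ref{uniq}. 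For the containment $A_0\subset S$ in the topological support of $\mu_Y$: given $z\in A_0$ and any open $W\ni z$, continuity of $D$ provides an open $V\subseteq W$ with $z\in V$ and $D\ge c:=D(z)/2>0$ on $V\cap S_\nu$, whence $\mu_Y(W)\ge\int_{V\cap S_\nu}D\,d\nu\ge c\,\nu(V)>0$, using that $z\in S_\nu$ forces $\nu(V)>0$ by Lemma \ref{support}. Taking $\mathcal F_0=\mathcal F$ (trivially measure-determining), Theorem \ref{uniq} concludes that every solution $\mathcal F$-continuous on $A_0$ agrees on $\mathcal F\times A_0$ with the solution \eqref{rocky}, which is the assertion.

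The only genuinely delicate step I expect is the propagation of continuity from compact sets to all of $S_\nu$, and this is precisely where the $k$-space hypothesis on $G$ (via the fact that a closed subspace of a $k$-space is a $k$-space) together with metrizability of compacta in Souslin spaces does the work. The verification of $A_0\subset S$ is the remaining point demanding care, and it hinges entirely on recognizing $D$ as the $\nu$-density of $\mu_Y$; everything else is bookkeeping with the already-established Theorems \ref{toka}, \ref{uni}, and \ref{uniq}.
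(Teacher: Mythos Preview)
Your argument is correct and follows the same overall strategy as the paper: invoke Theorem \ref{uni} (together with the $k$-space hypothesis) to obtain $\mathcal F$-continuity of the explicit solution \eqref{rocky} on the set $A_0$, and then feed $A_0$ into Theorem \ref{uniq}. The only noteworthy difference is in how the inclusion $A_0\subset S_{\mu_Y}$ is secured. The paper isolates this in Lemma \ref{konti}, which uses the full equivalence $\mu_{\varepsilon+L(x)}\sim\nu$ to conclude at once that $S_{\mu_Y}=S_\nu$; then $A_0\subset S_\nu=S_{\mu_Y}$ is immediate. You instead identify $D=\int\rho(\cdot,z)\,d\mu_X$ as $d\mu_Y/d\nu$ and argue directly that every open neighbourhood of a point in $A_0$ carries positive $\mu_Y$-mass. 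Both routes are short; the paper's version is slightly cleaner conceptually and yields the stronger statement $S_{\mu_Y}=S_\nu$, while yours is more self-contained and, incidentally, only needs $\mu_{\varepsilon+L(x)}\ll\nu$ at that step (the reverse absolute continuity enters nowhere in your argument for $A_0\subset S_{\mu_Y}$).
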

 The proof is an  immediate consequence of  the following  lemma, where  we characterize the topological support of $\mu_Y$ in more convenient terms.
\begin{lemma}\label{konti}
Let $Y=L(X)+\varepsilon$, where $X$ and $\varepsilon $ are statistically independent. 
 The topological support of $\mu_Y$  is the smallest closed set $S\subset G$ such that $\mu_{\varepsilon+L(x)}(S)=1$ for $\mu_X$-almost every   $x\in F$.
Moreover, if $\mu_{\varepsilon+L(x)}$ is equivalent with a probability measure $\nu$ for  every $x\in F$, then 
the topological supports of $\mu_Y$ and $\nu$ coincide. 
\end{lemma}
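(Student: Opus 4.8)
The plan is to reduce both assertions to the mixture representation
\[
\mu_Y(U)=\int_F \mu_{\varepsilon+L(x)}(U)\,d\mu_X(x),\qquad U\in\mathcal G,
\]
which is the content of Lemma \ref{eka2} applied with $Z_1=X$, $Z_2=\varepsilon$ and $T(x,z)=L(x)+z$, so that $Z_3=Y$; in particular this lemma guarantees that $x\mapsto\mu_{\varepsilon+L(x)}(U)$ is $\mu_X$-measurable for every Borel $U$, which is what makes the integral meaningful. Combined with the characterization of the topological support from Lemma \ref{support} (valid here because $G$ is Souslin), this identity does essentially all the work.

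For the first assertion, write $S_Y$ for the topological support of $\mu_Y$, which exists by Lemma \ref{support}. First I would check that $S_Y$ belongs to the competing family: since $\mu_Y(G\setminus S_Y)=0$, the mixture formula applied to $U=G\setminus S_Y$ forces the non-negative integrand $\mu_{\varepsilon+L(x)}(G\setminus S_Y)$ to vanish for $\mu_X$-a.e.\ $x$, i.e.\ $\mu_{\varepsilon+L(x)}(S_Y)=1$ for $\mu_X$-almost every $x$. Conversely, if $S$ is any closed set with $\mu_{\varepsilon+L(x)}(S)=1$ for $\mu_X$-a.e.\ $x$, then $S$ is Borel and the mixture formula gives $\mu_Y(S)=\int_F 1\,d\mu_X=1$; since $S_Y$ is by definition the smallest closed set of full $\mu_Y$-measure, this yields $S_Y\subseteq S$. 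Thus $S_Y$ lies in the family and is contained in every member, hence it is the smallest such closed set.

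For the second assertion, assume $\mu_{\varepsilon+L(x)}\sim\nu$ for every $x\in F$ and let $S_\nu$ be the topological support of $\nu$. Equivalent measures have the same null sets, hence agree on which open sets carry positive mass, so by Lemma \ref{support} the topological support of $\mu_{\varepsilon+L(x)}$ equals $S_\nu$ for every $x$. I would then prove $S_Y=S_\nu$ directly from the support characterization. On one hand $\mu_{\varepsilon+L(x)}(S_\nu)=1$ for all $x$, so the mixture formula gives $\mu_Y(S_\nu)=1$ and minimality of $S_Y$ forces $S_Y\subseteq S_\nu$. On the other hand, let $y\in S_\nu$ and let $V$ be any open neighborhood of $y$; since $y$ lies in the support of each $\mu_{\varepsilon+L(x)}$, Lemma \ref{support} gives $\mu_{\varepsilon+L(x)}(V)>0$ for every $x$, so the integrand in the mixture formula is strictly positive everywhere and $\mu_Y(V)>0$. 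As $V$ was an arbitrary neighborhood of $y$, Lemma \ref{support} places $y$ in $S_Y$, whence $S_\nu\subseteq S_Y$ and the supports coincide.

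The argument is largely bookkeeping once the mixture representation is available, so the only genuine point requiring care — and the step I would watch most closely — is the $\mu_X$-measurability of $x\mapsto\mu_{\varepsilon+L(x)}(U)$ that legitimizes the integral formula and the implicit interchange of integration with the support conditions; this is exactly what Lemma \ref{eka2} supplies, and, together with the fact that on Souslin spaces the support is detected through open neighborhoods (Lemma \ref{support}), I expect no further obstacle.
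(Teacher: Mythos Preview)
Your proof is correct and follows essentially the same route as the paper: both rest on the mixture identity $\mu_Y(U)=\int_F\mu_{\varepsilon+L(x)}(U)\,d\mu_X(x)$, and you have simply spelled out the details that the paper leaves implicit. For the second claim the paper argues slightly more directly---observing that equivalence forces $\mu_{\varepsilon+L(x)}(S)=\nu(S)$ on closed full-measure sets and then invoking the first part---whereas you recompute the support via open neighborhoods; both arguments are equivalent and equally short.
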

\begin{proof}
The first claim follows from the convolution $\mu_Y(S)= \int_G \mu_{\varepsilon+L(x)}(S) d\mu_X(x)$. 
For the second claim,  we note that $\mu_{\varepsilon+L(x)}(S)= \nu(S)$ for every closed set $S\subset G$
 with full $\nu$-measure  (or full $\mu_{\varepsilon+L(x)}$-measure). 
\end{proof}

\section{Converging approximations}\label{sec4}

  Throughout this section, we use the following assumptions
\begin{definition} We say that  Assumption A holds,  if the following four conditions are satisfied.
\begin{enumerate}
\item{  Topological spaces  $F$ and $G$  are   locally convex   Souslin topological vector    spaces equipped with their Borel $\sigma$-algebras $\mathcal F$ and $\mathcal G$, respectively.  }
\item {The triple $(\Omega,\Sigma,P)$ is a complete probability space,  $X$ and $X_n$  are 
  $F$-valued random variables on $\Omega$  and   $\varepsilon$ is  $G$-valued random variable on $\Omega$.  The random variables  $X$ and $\varepsilon$ are  independent.   The random variables  $X_n$ and $\varepsilon$ are  independent.} 
\item The mapping $L:F\rightarrow G$ is continuous,  and we denote   $Y=L(X)+\varepsilon$ and  $Y_n=L(X_n)+\varepsilon$.
\item{ The measure  $\mu_{\varepsilon +L(x)}$ is  absolutely continuous with respect to some  $\sigma$-finite measure $\nu$ on $(G,\mathcal G)$ for any $x\in F$, and   its density 
 $$
 \rho(x,y):= \frac{d\mu_{\varepsilon +L(x)}}{d\nu}(y)
 $$ is a $\mu_Z \times \nu$-measurable  function  on $F\times G$ for  random variables $Z=X$ and $Z=X_n$, $n\in \mathbf N$.} 
 \end{enumerate}
 \end{definition}

 When Assumptions A holds, we can use  Theorem \ref{toka}  to  represent  the approximated posterior distribution of $X_n$ given 
 $y=Y_n(\omega_0)$ as
\begin{equation}\label{muuan}
\mu_n(U,y):=\frac{\int 1_U(x) \rho(x,y)d\mu_{X_n}(x)}{\int \rho (x,y) d\mu_{X_n}(x)}
\end{equation}
and the posterior distribution of $X$ given $y=Y(\omega_0)$ as  
\begin{equation}\label{muu}
\mu(U,y):=\frac{\int 1_U(x)\rho (x,y) d\mu_{X}(x)}{\int \rho(x,y) d\mu_{X}(x)}
\end{equation}
for all $U\in \mathcal F$ and  $y\in M_0$, where 
\begin{eqnarray}
M_0 =   \{ y\in G:   0 < \int  \rho (x,y) d\mu_{Z }(x)<\infty ,  \text{for } Z=X, X_n  \text{ where }\;  n\in \mathbf N   \} \label{mnolla}
\end{eqnarray}
 has full $\mu_Y$-measure.
 
We recall  some definitions on the convergence of measures.
\begin{definition}
Let  $m$ and $m_n$, where $n\in \mathbf N$,   be   $\sigma$-finite measures on a topological space $F$ equipped with the Borel $\sigma$-algebra $\mathcal F$.

 $(i)$  The measures $m_n$ {\it converge weakly} to  $m$ if 
$$
\lim_{n\rightarrow \infty }\int f(x) dm_n(x)= \int f(x) dm(x)
$$
for all bounded continuous functions $f$ on $F$.

 $(ii)$ The measures $m_n$  {\it converge setwise} to $m$ if 
$$\lim_{n\rightarrow \infty }m_n(U)=m(U)$$ for every $U\in \mathcal F$.

 $(iii)$    The measures $m_n$    {\it converge in variation} to $m$ if 
\begin{equation*}
\lim_{n\rightarrow \infty }\sup_{U\in \mathcal F} |m_n(U)-m(U)|=0.\end{equation*}
 \end{definition}

 It is well-known that the weak convergence of the probability measures implies  the  convergence of certain  expectations on  regular enough spaces. The  following theorem generalizes slightly Lemma 8.4.3 in \cite{bogm} by requiring that   the  discontinuities of  $f$  belong to some  $m$-zero measurable set.   The proof for the present case seems no to be readily available in the literature.

\begin{lemma}\label{lele}
Let $F$ be a locally convex    Souslin topological vector     space and  $m,m_n$, where $n\in \mathbf N$,  be finite measures on $(F,\mathcal F)$. Let $f$ be an $m$-integrable Borel function on $F$ whose discontinuities are contained in an $m$-zero measurable set.
  If 
  \begin{equation*}
  \lim_{C\rightarrow \infty} \sup_n \int _{|f|>C} |f|(x)dm_{n}(x)=0,
  \end{equation*}
 then
 $m_{n}(f)$ converge to $m(f)$ whenever 
 $m_{n}$ converge weakly  to $m$  
 \end{lemma}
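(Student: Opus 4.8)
The plan is to run the classical ``uniform integrability plus weak convergence'' argument: truncate $f$ at height $C$, treat the bounded truncation by a portmanteau-type result, and use the uniform integrability hypothesis to absorb the truncation error uniformly in $n$. Two preliminary observations are needed. Testing weak convergence against the bounded continuous function $1$ gives $m_n(F)\to m(F)$, so $s:=\sup_n m_n(F)<\infty$; and for any fixed $C$ the estimate $\int|f|\,dm_n\le C\,m_n(F)+\int_{\{|f|>C\}}|f|\,dm_n$ together with the hypothesis shows $f\in L^1(m_n)$, so each $m_n(f)$ is well defined.

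First I would introduce the $1$-Lipschitz clamping map $\phi_C(t)=\max(-C,\min(t,C))$ and set $f_C=\phi_C\circ f$. Since $\phi_C$ is continuous, $f_C$ is continuous at every point where $f$ is, so its discontinuity set is contained in the discontinuity set $D$ of $f$, hence in an $m$-zero measurable set; moreover $|f_C|\le C$. The next step is the \emph{bounded} case, namely $m_n(f_C)\to m(f_C)$ for each fixed $C$. I would prove this by the layer-cake formula combined with the set portmanteau theorem (Corollary 8.2.10 in \cite{bogm}): putting $g=f_C+C\ge 0$, one has $m_n(g)=\int_0^{2C}m_n(\{g>t\})\,dt$; since $\partial\{g>t\}\subset\{g=t\}\cup D$, while $m(D)=0$ and $m(\{g=t\})>0$ for at most countably many $t$, we get $m(\partial\{g>t\})=0$ for a.e. $t$, whence $m_n(\{g>t\})\to m(\{g>t\})$ for a.e. $t\in(0,2C)$. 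Since these integrands are bounded by $s$, bounded convergence on $(0,2C)$ yields $m_n(g)\to m(g)$, and subtracting $C\,m_n(F)\to C\,m(F)$ gives $m_n(f_C)\to m(f_C)$. This is precisely the bounded version of the statement, i.e. Lemma 8.4.3 of \cite{bogm} with the continuity hypothesis relaxed to allow an $m$-null discontinuity set.

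It then remains to control the truncation error. A case check over $\{|f|\le C\}$, $\{f>C\}$ and $\{f<-C\}$ gives the pointwise bound $|f-f_C|\le |f|\,1_{\{|f|>C\}}$, so that
\begin{equation*}
|m_n(f)-m_n(f_C)|\le \int_{\{|f|>C\}}|f|\,dm_n,
\qquad
|m(f)-m(f_C)|\le \int_{\{|f|>C\}}|f|\,dm .
\end{equation*}
The first bound tends to $0$ as $C\to\infty$ uniformly in $n$ by hypothesis, and the second tends to $0$ as $C\to\infty$ by dominated convergence, since $f\in L^1(m)$. Given $\varepsilon>0$ I would fix $C$ so that both tails are below $\varepsilon/3$ (the first uniformly in $n$), then choose $N$ with $|m_n(f_C)-m(f_C)|<\varepsilon/3$ for $n\ge N$, and finish by the triangle inequality $|m_n(f)-m(f)|\le |m_n(f)-m_n(f_C)|+|m_n(f_C)-m(f_C)|+|m(f_C)-m(f)|$.

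The main obstacle is the bounded case $m_n(f_C)\to m(f_C)$: here continuity fails on an $m$-null set, so one cannot simply test against bounded continuous functions, and it is the layer-cake reduction to the set portmanteau theorem that renders the $m$-null discontinuities harmless. Everything else is the routine $\varepsilon/3$ splitting, with the uniform integrability hypothesis performing its expected task of making the tail of $f$ uniformly small across all the $m_n$.
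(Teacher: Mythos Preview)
Your proof is correct and follows the same overall architecture as the paper's: truncate $f$ by the same clamp $\phi_C$ (which is exactly the paper's $\operatorname{sgn}(f)(|f|\wedge C)$), handle the bounded truncation via a portmanteau argument, and absorb the tail with the uniform integrability hypothesis through an $\varepsilon/3$ split.

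The only genuine difference is in the bounded step. The paper pushes the bounded function $g$ forward to $[-c,c]$, writes $m_n(g)=\int_{-c}^c t\,d(m_n\circ g^{-1})(t)$, and shows $m_n\circ g^{-1}\Rightarrow m\circ g^{-1}$ via the closed-set portmanteau criterion (Corollary~8.2.4 in \cite{bogm}), using that $\overline{g^{-1}(A)}\subset g^{-1}(A)\cup N_g$ when $g$ is continuous off $N_g$. You stay on $F$ and instead apply the layer-cake formula together with the continuity-set portmanteau (Corollary~8.2.10 in \cite{bogm}), noting that $\partial\{g>t\}\subset\{g=t\}\cup D$ and that $m(\{g=t\})>0$ for at most countably many $t$. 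Both routes are short and correct; the paper's pushforward reduces the question to weak convergence on a compact interval, while your layer-cake argument avoids the change of variables at the cost of checking the null-boundary condition level-by-level. Neither approach offers a real advantage over the other here.
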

\begin{proof}
   Consider first  a bounded  Borel measurable function $g$, say  $|g| \leq c$,  whose points of discontinuity belong to an  $m$-zero measurable set $N_g$.  The integral of $g$ can be written as 
   $$
  \int g d m_n=  \int_{-c} ^{c} t  d(m_n\circ g^{-1})(t)  $$
  where the integrand is bounded and continuous on  $[-c,c]$.  We show that 
 the measures  $m_n\circ g^{-1}$ on $[-c,c]$ converge weakly to $m\circ g^{-1}$, which immediately implies the  convergence of  $m_n(g)$  to $m(g)$ as $n$ grows.   We apply a well-known property of completely regular spaces  (see Corollary 8.2.4 in   \cite{bogm}), according to which the weak convergence  of  $m_n \circ g^{-1}$ to the Radon measure  $m\circ g^{-1}$ is equivalent to 
    $$ 
   \limsup_{n\rightarrow \infty}  m_n\circ g^{-1}(A)\leq m\circ g^{-1}(A)
   $$ 
   for all  closed sets $A$. Note that all  locally convex spaces are completely regular.   If $A\subset [-c,c]$ is closed, the
   closure   $\overline {g^{-1}(A)}\subset g^{-1}(A)\cup N_g$ because 
   $g$ is continuous in the relative topology of $G\backslash N_g$.
       Since $N_g$ has zero $m$-measure, 
   $$
 m(g^{-1}(A)) =m(\overline {g^{-1}(A)})\geq 
 \limsup_{n\rightarrow \infty }m_n(\overline {g^{-1}(A)})\geq
  \limsup_{n\rightarrow \infty} m_n (g^{-1}(A))  
 $$ 
 by  the weak convergence of measures $m_n$.

 Let $\wedge$ denote the binary operation of taking the minimum of two real numbers. For the general case,   we approximate $f$ with bounded functions ${ \sgn}(f) (|f|\wedge C)$  in the difference
\begin{eqnarray}
|(m_{n}-m)(f)|&=& |(m_{n}-m)(f- {\sgn}(f) (|f|\wedge C) + {\sgn}(f) (|f|\wedge C))| \label{kaava1} \\
&\leq &   \sup_n \int_{|f|>C}    |f| (x)d(m_n+m)  + \left|\int {\sgn}(f)(|f|\wedge C) d(m_{n}-m)\right|.\nonumber
\end{eqnarray}
By the assumption, the first term in the sum \eqref{kaava1} gets arbitrarily small  when $C$  is chosen large enough.  
Since $ {\sgn}(f)(|f|\wedge C)=:g$  is bounded and $m$-a.e. continuous, the second  term in the sum 
\eqref{kaava1} converge to zero for fixed $C$ when $n$ grows by  the weak convergence of the measures $m_n$.  
\end{proof}

Lemma \ref{lele} can be applied  for $f=g \rho(\cdot,y)$, $m_n=\mu_{X_n}$ and 
$m=\mu_X$, where $g$ is any  continuous  bounded function on $F$.  

\begin{theorem}\label{coco2}
Let Assumption A hold and let $\mu_n$,  $\mu$ and $M_0$ be defined by equations \eqref{muuan}, \eqref{muu} and \eqref{mnolla}, respectively. Let $y\in M_0$ and let the  discontinuities of $x\mapsto \rho(x,y)$ belong to a $\mu_X$-zero  measurable set. If  the functions  $x\mapsto \rho(x,y)$  satisfy   \begin{equation*}
  \lim_{C\rightarrow \infty} \sup_n \int _{|\rho(\cdot,y)|>C} |\rho(x,y)|d\mu_{X_n}(x)=0.
  \end{equation*}
 then  the approximated  posterior distributions  $\mu_n(\cdot, y)$ converge weakly to  the posterior distribution $\mu(\cdot ,y)$  whenever  the approximated prior distributions $\mu_{X_n}$ converge weakly to the prior distribution $\mu_X$.   
\end{theorem}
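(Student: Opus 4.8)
The plan is to reduce the weak convergence of the posterior distributions to the separate convergence of the numerator and denominator in the representation formulas \eqref{muuan} and \eqref{muu}, obtaining each limit from Lemma \ref{lele}. Fix $y\in M_0$ and let $g$ be an arbitrary bounded continuous real-valued function on $F$. By the definitions of $\mu_n(\cdot,y)$ and $\mu(\cdot,y)$,
\begin{equation*}
\int_F g(x)\,d\mu_n(x,y)=\frac{\int_F g(x)\rho(x,y)\,d\mu_{X_n}(x)}{\int_F \rho(x,y)\,d\mu_{X_n}(x)},
\end{equation*}
and similarly for $\mu(\cdot,y)$ with $\mu_{X_n}$ replaced by $\mu_X$. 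Hence it suffices to prove that $\int_F g\rho(\cdot,y)\,d\mu_{X_n}\to\int_F g\rho(\cdot,y)\,d\mu_{X}$ and, taking $g\equiv 1$, that $\int_F \rho(\cdot,y)\,d\mu_{X_n}\to\int_F \rho(\cdot,y)\,d\mu_{X}$; since $y\in M_0$ the latter limit lies in $(0,\infty)$, so the quotients will converge to the correct quotient.

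For both claims I would apply Lemma \ref{lele} with $m=\mu_X$ and $m_n=\mu_{X_n}$, using the assumed weak convergence $\mu_{X_n}\to\mu_X$. For the denominator the relevant integrand is $f=\rho(\cdot,y)$: it is $\mu_X$-integrable because $y\in M_0$, its discontinuities lie in a $\mu_X$-zero measurable set by hypothesis, and the required uniform integrability is exactly the standing assumption of the theorem. For the numerator I take $f=g\rho(\cdot,y)$; writing $M=\sup_x|g(x)|<\infty$, this function is $\mu_X$-integrable since it is dominated by $M|\rho(\cdot,y)|$, and its set of discontinuities is contained in that of $\rho(\cdot,y)$ because $g$ is continuous, hence in a $\mu_X$-zero measurable set.

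The one point that needs an actual argument is transferring the uniform integrability from $\rho(\cdot,y)$ to $g\rho(\cdot,y)$. Since $|g\rho(\cdot,y)|\le M|\rho(\cdot,y)|$, the level set $\{|g\rho(\cdot,y)|>C\}$ is contained in $\{|\rho(\cdot,y)|>C/M\}$, whence
\begin{equation*}
\sup_n\int_{|g\rho(\cdot,y)|>C}|g(x)\rho(x,y)|\,d\mu_{X_n}(x)
\le M\,\sup_n\int_{|\rho(\cdot,y)|>C/M}|\rho(x,y)|\,d\mu_{X_n}(x),
\end{equation*}
and the right-hand side tends to $0$ as $C\to\infty$ by hypothesis. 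Thus Lemma \ref{lele} applies to both $f=\rho(\cdot,y)$ and $f=g\rho(\cdot,y)$, yielding the two desired limits.

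Dividing the converging numerator by the converging denominator, whose limit is finite and nonzero, gives $\int_F g\,d\mu_n(\cdot,y)\to\int_F g\,d\mu(\cdot,y)$ for every bounded continuous $g$, which is precisely the weak convergence $\mu_n(\cdot,y)\to\mu(\cdot,y)$. The proof is therefore essentially a double invocation of Lemma \ref{lele}, and the only delicate issue I anticipate is the measurability bookkeeping for the slice $\rho(\cdot,y)$: Lemma \ref{lele} is stated for Borel $f$, so one passes to a Borel version of $\rho(\cdot,y)$ that agrees $\mu_X$-a.e., noting that such a modification alters neither the integrals nor the discontinuity set off a null set. I do not expect any genuine obstacle beyond this transfer of the uniform integrability condition.
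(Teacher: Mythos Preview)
Your proof is correct and follows exactly the approach the paper takes: the paper's ``proof'' is the single sentence immediately preceding the theorem, namely that Lemma \ref{lele} can be applied with $f=g\rho(\cdot,y)$, $m_n=\mu_{X_n}$ and $m=\mu_X$ for any bounded continuous $g$. You have simply spelled out the details of that invocation, including the transfer of the uniform integrability from $\rho(\cdot,y)$ to $g\rho(\cdot,y)$, which is the right thing to do.
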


The conditional  mean is a common estimate for the unknown. 
The convergence  of  conditional mean estimates in  the weak topology of       $F$ is considered next.
\begin{theorem}\label{kass0}
Let Assumption A hold,   let  $y\in M_0$ and  let the discontinuities of $x\mapsto \rho(x,y)$ belong to  a $\mu_X$-zero  measurable set.  If functions  $ x\rightarrow \langle x,\alpha\rangle^k \rho(x,y)$ belong to $L^1(\mu_{X})$ and  satisfy 
  \begin{equation*}
  \lim_{C\rightarrow \infty} \sup_n \int _{|\langle \cdot, \alpha\rangle|^k \rho(\cdot,y)>C} |\langle x,\alpha\rangle|^k\rho(x,y) d\mu_{X_n}(x)=0,
  \end{equation*}
  for $k=0,1$ and  $\alpha \in F'$,  then  the  approximated weak conditional mean estimates  $\mu_n(\langle \cdot,\alpha\rangle, y)$ converge to  the weak  conditional mean estimate $\mu(\langle \cdot ,\alpha\rangle ,y)$  whenever the approximated   prior distributions   $\mu_{X_n}$ converge weakly to the prior distribution $\mu_X$. 
\end{theorem}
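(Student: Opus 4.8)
The plan is to apply Lemma \ref{lele} separately to the numerator and the denominator of the quotient defining the weak conditional mean estimate, and then to pass to the limit in the ratio. By the notation $m(f)=\int f\,dm$ together with the representations \eqref{muuan} and \eqref{muu}, the estimates in question are
$$
\mu_n(\langle \cdot,\alpha\rangle,y)=\frac{\int \langle x,\alpha\rangle\, \rho(x,y)\,d\mu_{X_n}(x)}{\int \rho(x,y)\,d\mu_{X_n}(x)},\qquad
\mu(\langle \cdot,\alpha\rangle,y)=\frac{\int \langle x,\alpha\rangle\, \rho(x,y)\,d\mu_{X}(x)}{\int \rho(x,y)\,d\mu_{X}(x)},
$$
so it suffices to establish convergence of both integrals and to keep the denominator bounded away from zero.

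First I would handle the denominators by applying Lemma \ref{lele} with $m=\mu_X$, $m_n=\mu_{X_n}$, and $f=\rho(\cdot,y)$. Here $\rho(\cdot,y)$ is $\mu_X$-measurable (being continuous off a $\mu_X$-zero set), its discontinuities lie in a $\mu_X$-null set by hypothesis, its membership in $L^1(\mu_X)$ is the $k=0$ integrability assumption, and, since $\rho\ge 0$ as a Radon--Nikodym density, the $k=0$ tail condition is exactly the uniform integrability required by the lemma. Thus the weak convergence $\mu_{X_n}\to\mu_X$ gives $\int \rho(x,y)\,d\mu_{X_n}(x)\to \int\rho(x,y)\,d\mu_X(x)$, and this limit is strictly positive and finite because $y\in M_0$.

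Next I would handle the numerators with $f=\langle\cdot,\alpha\rangle\,\rho(\cdot,y)$. Since $\alpha\in F'$, the functional $\langle\cdot,\alpha\rangle$ is continuous on $F$, so $f$ is continuous at every point where $\rho(\cdot,y)$ is, and its discontinuity set is still contained in the same $\mu_X$-null set. The integrability $f\in L^1(\mu_X)$ is the $k=1$ assumption, and because $\rho\ge 0$ we have $|f|=|\langle\cdot,\alpha\rangle|\,\rho(\cdot,y)=|\langle\cdot,\alpha\rangle|^1\rho(\cdot,y)$, so the $k=1$ tail hypothesis coincides with the uniform integrability condition of Lemma \ref{lele}. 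A second application yields $\int \langle x,\alpha\rangle\,\rho(x,y)\,d\mu_{X_n}(x)\to \int \langle x,\alpha\rangle\,\rho(x,y)\,d\mu_X(x)$. Combining the two limits and dividing, which is legitimate since the denominator limit is nonzero, gives $\mu_n(\langle\cdot,\alpha\rangle,y)\to\mu(\langle\cdot,\alpha\rangle,y)$.

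I expect the argument to be essentially routine, the only genuine bookkeeping being the reduction to Lemma \ref{lele}: namely, verifying that multiplication by the continuous functional $\langle\cdot,\alpha\rangle$ does not enlarge the discontinuity set beyond a $\mu_X$-null set, and matching the tail hypothesis as stated (in terms of $|\langle x,\alpha\rangle|^k\rho(x,y)$) to the form $\int_{|f|>C}|f|\,dm_n$ demanded by the lemma through the nonnegativity of $\rho$.
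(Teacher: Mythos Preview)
Your proposal is correct and follows essentially the same approach as the paper: apply Lemma \ref{lele} separately to the numerator and denominator of the quotient defining $\mu_n(\langle\cdot,\alpha\rangle,y)$, using the $k=0$ and $k=1$ hypotheses respectively, and then divide. The paper's proof is in fact even terser than yours, simply stating that both converge by Lemma \ref{lele}; your additional bookkeeping (discontinuity sets, matching the tail condition via $\rho\ge 0$, and nonvanishing of the denominator from $y\in M_0$) is accurate and fills in details the paper leaves implicit.
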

\begin{proof}
 The nominators and the denominators of 
$$
\mu_n(\langle \cdot,\alpha \rangle, y)= \frac{\int \langle x, \alpha \rangle \rho(x,y) d\mu_{X_n}(x)}{\int \rho(x,y) d\mu_{X_n}(x) }
$$
converge  as $n$ grows   by Lemma \ref{lele}, and the limit of their quotients is $\mu(\langle \cdot,\alpha\rangle, y)$.
\end{proof}

When $F$ is a separable Banach space, we can state conditions  for the norm convergence of the conditional mean 
estimates that  are defined as Bochner integrals, that is,  
$\int x dm(x)$  is taken to be   the limit in $F$  of  integrals of simple functions of the form  $  x\mapsto \sum_{i=1}^n  x_{k_i} 1_{U_{k_i}}(x)$, where $x_{k_i}\in F$ and $U_{k_i}\in \mathcal F$. 

\begin{theorem}\label{kass}
Let Assumption A hold and let $\mu_n$,  $\mu$ and $M_0$ be defined by equations \eqref{muuan}, \eqref{muu} and \eqref{mnolla}, respectively. Let  $y\in M_0$ and   let the discontinuities of $x\mapsto \rho(x,y)$ belong to  a $\mu_X$-zero  measurable set.   Additionally, let $F$ be  a separable Banach space with norm 
$\Vert \cdot \Vert$.

If  $\Vert  \cdot \Vert \rho(\cdot,y)\in L^1(\mu_X)$ 
satisfies
\begin{eqnarray} \label{ippe}
  \lim_{C\rightarrow \infty} \sup_n \int _{ \{ x :  \Vert x \Vert^k \rho(x,y) > C \} }   \Vert x\Vert ^k\rho(x,y) d\mu_{X_ n}(x)=0
  \end{eqnarray}  
  for $k=0,1$  
       then the conditional mean estimates  $\int x \, \mu_n(dx,y)$ converge in the norm of $F$ to the conditional mean estimate $ \int  x \, \mu(dx ,y)$  whenever  the approximated   prior distributions  $\mu_{X_n}$ converge weakly to the prior distribution $\mu_X$. 
\end{theorem}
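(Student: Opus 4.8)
The plan is to reduce the statement to the norm convergence of the Bochner numerators and to obtain the latter by combining the scalar convergence already furnished by Lemma~\ref{lele} with a tightness argument and a finite-rank approximation of the identity. Write $N_n=\int x\,\rho(x,y)\,d\mu_{X_n}(x)$ and $N=\int x\,\rho(x,y)\,d\mu_X(x)$ for the $F$-valued numerators, and $D_n=\int\rho(x,y)\,d\mu_{X_n}(x)$, $D=\int\rho(x,y)\,d\mu_X(x)$ for the scalar denominators in \eqref{muuan}--\eqref{muu}, so that the conditional mean equals $\int x\,\mu_n(dx,y)=N_n/D_n$. The hypothesis \eqref{ippe} with $k=0$ makes $\rho(\cdot,y)$ uniformly $\mu_{X_n}$-integrable, and by assumption its discontinuities form a $\mu_X$-null set, so Lemma~\ref{lele} applied to $f=\rho(\cdot,y)$ gives $D_n\to D$; since $y\in M_0$ we have $0<D<\infty$, and $M_\rho:=\sup_n D_n<\infty$ (this also shows $N_n$ is a well-defined Bochner integral). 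As $D_n\to D\neq0$, it will suffice to prove $\|N_n-N\|\to0$, after which $\|N_n/D_n-N/D\|\le\|N_n-N\|/D_n+\|N\|\,|1/D_n-1/D|\to0$.

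First I would treat a finite-rank principal part. Since $\mu_{X_n}\to\mu_X$ weakly and a separable Banach space is Polish, Prokhorov's theorem (see \cite{bogm}) makes $\{\mu_{X_n}\}\cup\{\mu_X\}$ uniformly tight: for $\eta>0$ there is a compact $K\subset F$ with $\mu_{X_n}(F\setminus K)<\eta$ for all $n$ and $\mu_X(F\setminus K)<\eta$. Covering $K$ by finitely many balls $B(z_j,\delta)$ with centers $z_j\in K$ and taking a continuous partition of unity $\{\phi_j\}$ subordinate to the cover (with $\sum_j\phi_j\le1$ and $\sum_j\phi_j=1$ on $K$) yields $P_0(x)=\sum_j\phi_j(x)z_j$, which is continuous, satisfies $\|P_0(x)\|\le R_K:=\sup_{x\in K}\|x\|$, and obeys $\|P_0(x)-x\|<\delta$ on $K$. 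A radial rescaling $P(x):=\min(1,\|x\|/\|P_0(x)\|)\,P_0(x)$ keeps $P$ continuous with range in the fixed finite-dimensional space $V:=\mathrm{span}(z_1,\dots,z_m)$, enforces both $\|P(x)\|\le\|x\|$ and $\|P(x)\|\le R_K$ everywhere, and still gives $\|P(x)-x\|\le2\delta$ on $K$. Because $P$ is continuous, $V$-valued and bounded, each of the finitely many coordinates $x\mapsto\langle P(x),e_k^{*}\rangle$ in a basis of $V$ is bounded and $\mu_X$-a.e.\ continuous, so $x\mapsto\langle P(x),e_k^{*}\rangle\rho(x,y)$ is dominated by a multiple of $\rho(\cdot,y)$, hence uniformly integrable by \eqref{ippe} with $k=0$; Lemma~\ref{lele} then yields $\int P\rho\,d\mu_{X_n}\to\int P\rho\,d\mu_X$ in the norm of $F$.

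The remainder $x-P(x)$ is controlled by \eqref{ippe} with $k=1$. On $K$ I would use $\|x-P(x)\|\le2\delta$ to bound $\int_K\|x-P\|\rho\,d\mu_{X_n}\le2\delta M_\rho$, while on $F\setminus K$ the global inequality $\|x-P(x)\|\le\|x\|+\|P(x)\|\le2\|x\|$ gives $\int_{F\setminus K}\|x-P\|\rho\,d\mu_{X_n}\le2\int_{F\setminus K}\|x\|\rho\,d\mu_{X_n}$, and splitting at level $C$ yields $\int_{F\setminus K}\|x\|\rho\,d\mu_{X_n}\le C\mu_{X_n}(F\setminus K)+\sup_n\int_{\{\|x\|\rho>C\}}\|x\|\rho\,d\mu_{X_n}$. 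Given $\epsilon>0$ I would first fix $C$ so that the tail term is below $\epsilon$ for all $n$ and for $\mu_X$ (using \eqref{ippe} with $k=1$ and $\|\cdot\|\rho(\cdot,y)\in L^1(\mu_X)$), then choose $K$ with $\eta<\epsilon/C$, and finally $\delta$ so small that $2\delta M_\rho<\epsilon$; the same bounds hold verbatim for $\mu_X$. Feeding these into the triangle inequality $\|N_n-N\|\le\|\int P\rho\,d\mu_{X_n}-\int P\rho\,d\mu_X\|+\int\|x-P\|\rho\,d\mu_{X_n}+\int\|x-P\|\rho\,d\mu_X$, where the first term tends to $0$ by the previous paragraph, gives $\limsup_n\|N_n-N\|\le C'\epsilon$; letting $\epsilon\to0$ proves $N_n\to N$ in norm, and the reduction above finishes the proof.

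The main obstacle is precisely the passage from the coordinatewise (weak) convergence supplied by Theorem~\ref{kass0} to convergence in the norm of the infinite-dimensional space $F$: mass may spread across infinitely many directions, so no finite family of functionals controls $\|N_n-N\|$ uniformly. The two devices overcoming this are Prokhorov tightness, which confines all but $\eta$ of the mass of every $\mu_{X_n}$ to one compact set $K$, and the radially truncated finite-rank map $P$, which approximates the identity uniformly on $K$ while never increasing the norm, so that the exterior remainder is dominated by $2\|x\|\rho(\cdot,y)$ rather than by an uncontrolled $\|P\|$ term. The uniform integrability \eqref{ippe} is exactly what keeps all of these tails small simultaneously in $n$.
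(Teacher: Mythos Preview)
Your argument is correct, but it proceeds along a different line from the paper. The paper's proof handles the numerators by invoking the Skorokhod representation theorem on the Polish space $F$: it passes to a new probability space $(\widetilde\Omega,\widetilde\Sigma,\widetilde P)$ on which copies of $X_n$ and $X$ live with the given laws and with $X_n\to X$ $\widetilde P$-almost surely, then writes $\Vert N_n-N\Vert\le\int\Vert X_n\rho(X_n,y)-X\rho(X,y)\Vert\,d\widetilde P$ and splits this single integral at a level $C$. The tail is controlled by uniform integrability (exactly your \eqref{ippe}), and the bounded part goes to zero by dominated convergence plus the $\mu_X$-a.e.\ continuity of $\rho(\cdot,y)$. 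No finite-rank approximation or explicit tightness argument is needed.

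Your approach stays on the level of the original measures and replaces the Skorokhod coupling by Prokhorov tightness together with a hand-built, norm-contracting, finite-rank map $P$; the finite-dimensional principal part is then dispatched coordinatewise by Lemma~\ref{lele}. This is more explicit and avoids the Skorokhod machinery, at the price of a longer construction (partition of unity, radial truncation, two-scale choice of $C$, $K$, $\delta$). The paper's route is shorter and conceptually cleaner; yours is more elementary in the sense that it reduces everything to the scalar lemma already proved. Both use the uniform integrability hypothesis in the same essential way to kill the tails.
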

\begin{proof}
The assumptions for $ k=0$ guarantee that the denominators 
$\int \rho(x,y)d \mu_{X_n}(x)$ of the posterior distributions  converge to $\int \rho(x,y)d\mu_X(x)$ as $n\rightarrow \infty$ 
by Lemma \ref{lele}.

The function $\Vert \cdot \Vert \rho(\cdot,y) $ has a  finite expectation 
with respect to all measures  $\mu_{X_n}$, $n\in\mathbf N$ and $\mu_X$. Therefore,  the  mapping  $x\mapsto x\rho(x,y)$ is Bochner integrable with respect to all $\mu_{X_n}$ and $\mu_{X}$, and its discontinuities belong to a $\mu_X$-zero measurable set. 
 
 For the moment, let us choose random variables $X_n$ and $X$ on another probability space $(\widetilde \Omega, \widetilde \Sigma, \widetilde P)$  in such a way  that their image measures  are  $\mu_{X_n}$ and $\mu_X$,  respectively, and  the random variables   $X_n$ converge almost surely to $X$ as $n\rightarrow \infty$.  Such a choice is possible by the Skorokhod  representation theorem (see Theorem 8.5.4 in \cite{bogm}).  Especially, $X_n\rho(X_n,y)-X\rho(X,y) $ is Bochner integrable with respect to the probability measure  $\widetilde P$ by the triangle inequality. 
Denote  with  $A_C=\{\Vert X_n\rho(X_n,y)-X \rho(X,y)\Vert >C\}$ for $C>0$. 
Then  the nominators of the posterior distributions satisfy
\begin{eqnarray*}
\left \Vert \int   x \rho(x,y) d\mu_{X_n}(x)-\int x\rho(x,y)d\mu_{X}(x)\right\Vert &=& 
\left \Vert \int X_n \rho(X_n,y)- X\rho(X,y) d\widetilde P \right \Vert \\
&\leq  & \int_{A_C}  \Vert X_n\rho(X_n,y)-X \rho(X,y)\Vert d\widetilde P +\\ 
&&\int C\wedge \Vert X_n\rho(X_n,y)-X \rho(X,y)\Vert d\widetilde P\\
&=:&I_1(n;C)+I_2(n;C).
\end{eqnarray*}
 The integrals $I_1(n;C)$ vanish when $C\rightarrow 
 \infty$ since their arguments are uniformly integrable. Indeed,
both $ \Vert X_n\rho(X_n,y) \Vert$ and $\Vert X \rho(X,y)\Vert$ are uniformly integrable,  as is also their 
sum. Any sequence of non-negative functions  that has   uniformly integrable upper bound  with respect to a finite measure   is again uniformly integrable  with respect to the finite measure. These facts are direct consequences  from the characterization of the uniformly integrable function through  uniformly absolutely continuous integrals (see Proposition 4.5.3 in \cite{bogm}). 

The integrals $I_2(n;C)$ for a fixed $C$  converge to zero as $n\rightarrow \infty $ by  the  Lebesgue dominated convergence theorem and continuity properties  of $\rho$.
\end{proof}

\begin{remark}\label{tapsa}
Theorem \ref{kass}  generalizes the  similar convergence results of  \cite{helin,siltanen2},   in which  the spaces  $F$ and $G$  are separable Banach  spaces, $\varepsilon$ is Gaussian   for non-Gaussian $\varepsilon$ on more general spaces.  In \cite{helin,siltanen2}, it is assumed that   $$ \sup_n \mathbf E [\exp(a \Vert X_n \Vert )]< \infty$$ for all $a>0$. This  attractive condition  is stronger than \eqref{ippe} for the given $\rho$, which has the form $\rho(x,y)=\exp(\langle y, Lx\rangle_G - \frac{1}{2}\Vert Lx\Vert ^2_H) $,  where  $H$ is a certain Hilbert space.    Indeed,  by  the 
de la Vall\'ee Poussin theorem (e.g. Theorem  4.5.9 in \cite{bogm}) the condition \eqref{ippe} 
is equivalent to the  existence of a nonnegative increasing  functions 
$g_k$ on $\mathbf R $ such that $\lim_{t\rightarrow \infty}  t^{-1}g_k(t)=+\infty$ and    $\sup_n  \int g_k(|x|^k \rho(x,y)) d\mu_{X_n}(x)<\infty$.
Moreover,
   $$ 
   \Vert x\Vert ^k \rho(x,y) =\Vert x\Vert^k \exp(\langle y, Lx\rangle_G - \frac{1}{2}\Vert Lx\Vert ^2_H)\leq  \exp (a \Vert x \Vert ), $$
  where $a=1+\Vert y\Vert_G  \Vert L\Vert_{F\rightarrow G}$.  The choice  $g(t)=t^2$  guarantees that the   condition \eqref{ippe} holds when $\sup_n \mathbf E [\exp(a \Vert X_n \Vert )]< \infty$.    
\end{remark}

Next, we pursue after a stronger convergence of the posteriors. 

\begin{theorem}\label{coco}
Let Assumption A hold and let $\mu_n$,  $\mu$ and $M_0$ be defined by equations \eqref{muuan}, \eqref{muu} and \eqref{mnolla}, respectively.  Let $y\in M_0$.
 
  If  the measures $U\mapsto \int_ U \rho(x,y) d\mu_n(x)$ on $(F, \mathcal F)$   are  uniformly bounded, and  equicontinuous at zero in the sense that for  every decreasing sequence $\{U_i\}\subset \mathcal F$ with empty intersection,   $$\lim_{i\rightarrow \infty } \sup_n \int_{U_i} \rho(x,y)d\mu_n(x)=0,$$     then  the  approximated posterior distributions   $\mu_n(\cdot, y)$ converge  setvice (or in variation) to  the posterior distribution $\mu(\cdot,y)$  whenever   the approximated   prior distributions  $\mu_{X_n}$ converge setvice (or in variation) to the prior distribution $\mu_X$.
\end{theorem}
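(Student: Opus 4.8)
The plan is to pass to the \emph{unnormalized} posterior measures. For the fixed $y\in M_0$, define the finite measures $\Phi_n(U):=\int_U \rho(x,y)\,d\mu_{X_n}(x)$ and $\Phi(U):=\int_U\rho(x,y)\,d\mu_X(x)$ on $(F,\mathcal F)$, so that $\mu_n(U,y)=\Phi_n(U)/\Phi_n(F)$ and $\mu(U,y)=\Phi(U)/\Phi(F)$; membership of $y$ in $M_0$ guarantees $0<\Phi(F)<\infty$ and $0<\Phi_n(F)<\infty$ for all $n$. The two standing hypotheses say precisely that the family $\{\Phi_n\}$ is uniformly bounded and uniformly countably additive (equicontinuous at zero). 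The whole statement follows once I prove that $\Phi_n\to\Phi$ in the relevant mode (setwise, resp.\ in variation) and then normalize.

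The core step is a truncation argument that transfers convergence from the priors to the $\rho$-weighted measures, the difficulty being that $\rho(\cdot,y)$ need not be bounded. Fix $U\in\mathcal F$ and split $\rho=(\rho\wedge C)+(\rho-C)^+$. First I would discard the set $\{\rho(\cdot,y)=\infty\}$: since $\Phi_n(F)$ and $\Phi(F)$ are finite, this set is $\mu_{X_n}$- and $\mu_X$-null and contributes nothing to any $\Phi_n$ or $\Phi$. On the sets $\{i<\rho(\cdot,y)<\infty\}$, which decrease to the empty set, the equicontinuity hypothesis gives $\lim_{C\to\infty}\sup_n\Phi_n(\{\rho(\cdot,y)>C\})=0$, while $\rho(\cdot,y)\in L^1(\mu_X)$ (again because $y\in M_0$) gives $\lim_{C\to\infty}\Phi(\{\rho(\cdot,y)>C\})=0$ by monotone convergence; these two facts control the tail $(\rho-C)^+$ uniformly in $n$ and in $U$. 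For the bounded part, $1_U(\rho\wedge C)$ is a bounded Borel function, so setwise convergence of $\mu_{X_n}$ yields $\int_U(\rho\wedge C)\,d\mu_{X_n}\to\int_U(\rho\wedge C)\,d\mu_X$ for each fixed $C$ (bounded integrands are approximated uniformly by simple functions and the $\mu_{X_n}$ are probability measures), while variation convergence yields the uniform estimate $\sup_U|\int_U(\rho\wedge C)\,d(\mu_{X_n}-\mu_X)|\le C\,\sup_U|\mu_{X_n}(U)-\mu_X(U)|$. Choosing $C$ large to make the tails small and then $n$ large gives $\Phi_n(U)\to\Phi(U)$ in the setwise case and $\sup_U|\Phi_n(U)-\Phi(U)|\to0$ in the variation case.

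Finally I would normalize. Since $\Phi_n(F)\to\Phi(F)>0$, in the setwise case the quotient $\Phi_n(U)/\Phi_n(F)$ converges to $\Phi(U)/\Phi(F)$ for each $U$. In the variation case I would use the identity
\begin{equation*}
\mu_n(U,y)-\mu(U,y)=\frac{\Phi(F)\bigl(\Phi_n(U)-\Phi(U)\bigr)+\Phi(U)\bigl(\Phi(F)-\Phi_n(F)\bigr)}{\Phi_n(F)\,\Phi(F)},
\end{equation*}
and, bounding $\Phi(U)\le\Phi(F)$ and $|\Phi_n(F)-\Phi(F)|\le\sup_U|\Phi_n(U)-\Phi(U)|$, obtain $\sup_U|\mu_n(U,y)-\mu(U,y)|\le 2\sup_U|\Phi_n(U)-\Phi(U)|/\Phi_n(F)\to0$, since $\Phi_n(F)$ stays bounded away from $0$. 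The main obstacle is the unboundedness of $\rho(\cdot,y)$; the equicontinuity-at-zero hypothesis is exactly the uniform integrability needed to make the tail bound hold simultaneously for all $n$, and the only remaining subtlety is the bookkeeping that lets one apply it to the level sets $\{\rho(\cdot,y)>C\}$ after removing the null set $\{\rho(\cdot,y)=\infty\}$.
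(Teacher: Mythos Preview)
Your argument is correct. The route differs from the paper's proof in a way worth noting. The paper introduces a dominating measure $\nu:=\mu_X+\sum_n 2^{-n}\mu_{X_n}$ and applies Egorov's theorem to approximate $x\mapsto\rho(x,y)$ almost uniformly (with respect to $\nu$) by simple functions $\phi_y^{(i)}$; the ``bad'' set $A_{\varepsilon_j}^{C}$ where uniform approximation fails is then handled by the equicontinuity hypothesis, while on the good set the simple-function approximation reduces matters directly to setwise (resp.\ variation) convergence of the priors. Your proof replaces Egorov's theorem and the auxiliary measure $\nu$ by the elementary truncation $\rho=(\rho\wedge C)+(\rho-C)^+$: the level sets $\{C<\rho(\cdot,y)<\infty\}$ play the role of the bad sets, and the equicontinuity hypothesis is applied to them directly. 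The bounded piece $\rho\wedge C$ is then handled by uniform approximation by simple functions (setwise case) or by the crude bound $\bigl|\int_U(\rho\wedge C)\,d(\mu_{X_n}-\mu_X)\bigr|\le 2C\sup_V|\mu_{X_n}(V)-\mu_X(V)|$ (variation case; note the harmless missing factor~$2$ in your estimate, coming from $|\mu_{X_n}-\mu_X|(F)=2\sup_V|\mu_{X_n}(V)-\mu_X(V)|$). Your approach is a bit more elementary, avoiding both the dominating measure and Egorov's theorem; the paper's approach, on the other hand, makes no use of the specific structure of the level sets of $\rho$ and works with an arbitrary Egorov decomposition. Both proofs use the equicontinuity hypothesis in exactly the same way---to make the contribution of a shrinking family of ``bad'' sets vanish uniformly in $n$---and both finish with the same normalization step.
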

\begin{proof}
Assume first, that the approximated prior distributions converge setwise.
Define a finite  measure $\nu:=\mu_X+\sum_{n=1}^\infty 2^{-n}\mu_{X_n}$ on $(F,\mathcal F)$. Each  $\mu_{X_n}$ is absolutely continuous with respect to $\nu $ and has  Radon-Nikodym density $f_n:= \frac{d\mu_{X_n}}{d\nu}$. 

The measurable function $x\mapsto \rho(x,y)$ is an increasing limit of some simple functions $\phi_y^{(i)}(x)$ and, by   Egorov's theorem, the convergence is almost uniform with respect to  the measure $\nu$.  That is, for every $\varepsilon >0$, there exist a set $A_\varepsilon $ such that $\phi_y^{(i)}$ converge uniformly to $\rho(\cdot,y)$ on $A_\varepsilon $  and $\nu(A_\varepsilon ^C  )<\varepsilon $. One may choose a sequence $\varepsilon _j\rightarrow 0$ and get increasing sets $A_{\varepsilon _j}$ such that $\nu(\cap_j A_{j}^C  )=0$ and the simple functions $\phi_y^{(i)}$ converge uniformly on each $A_{\epsilon_j}$. But then 
\begin{equation*}
\begin{split}
\left \vert \int_U \rho(x,y)d\mu_X(x)-\int_U \rho(x,y)d\mu_{X_n}(x) \right\vert
\leq \left \vert\int_U \phi_y^{(i)}(x)d \mu_X(x) -\int _U  \phi_y^{(i)}(x)d\mu_{X_n}\right \vert
\\  + \left \vert\int_U  \rho(x,y)-\phi_y^{(i)}(x)d \mu_X(x) -\int _U  \rho(x,y)- \phi_y(x)^{(i)}d\mu_{X_n}\right \vert,
\end{split}
\end{equation*}
where the last term is bounded by
\begin{equation*}
\begin{split}
  \int_{U\cap A_{\varepsilon _j}} |\rho(x,y)-\phi_y^{(i)}(x)| (f_n+f)(x)d\nu(x) 
+&\int_{A_{\varepsilon _j}^C  }   |\rho(x,y)-\phi_y^{(i)}(x)| (f_n+f)(x)d\nu(x)\\
 =:&
 I_1+I_2 
 \end{split}
\end{equation*}
In the integral $I_2$,  the estimate $|\rho(x,y)-\phi_y^{(i)}(x)|\leq \rho(x,y)$ gives  \begin{eqnarray}\label{hau}
I_2\leq  \sup_n \int_{A_{\varepsilon _j}^C  } \rho(x,y) d(\mu_{X_n}+\mu_{X})(x).
\end{eqnarray}
If the intersection of  the sets $A_{\varepsilon _j}^C  $   is not empty, we subtract the $\nu$-zero measurable intersection from each $A_{\varepsilon _j}^C  $. Then the equicontinuity at zero of  measures $U\mapsto \int_U\rho(x,y)d\mu_{X_n}(x)$ implies that the integrals \eqref{hau} are bounded by any given  positive number when $j$ is large enough. 
The final thing is to choose the  simple function $\phi_y^{(i)}$ so that 
$|\rho(\cdot,y)-\phi_y^{(i)}|$ is small enough on chosen $A_{\varepsilon _j}$  and  then  choose large enough $n$ so that $|\mu_{X}(1_U \phi_y^{(i)})-\mu_{X_n}(1_U \phi_y^{(i)})|$  gets small enough.
This is possible since the integrand is a bounded simple function and 
 $\mu_{X_n}$ converge setwise to $\mu_X$.

In order to prove convergence in variation, just add $\sup_{U\in \mathcal F}$ in front of the above estimates.  
\end{proof}

 Equivalent conditions for the equicontinuity at zero  of   a bounded family of  measures $m_n$ on $(F,\mathcal F)$ are presented in Lemma 4.6.5  in \cite{bogm}.  The  setwise convergence  of measures $\mu_{X_n}$  actually implies that they are  equicontinuous at zero by Theorem 4.6.3 in \cite{bogm}.

\section{Examples of noise}\label{sec5}

Below, some  cases are presented, where the Radon-Nikodym derivatives 
$$
\frac{d\mu_{\varepsilon+L(x)}}{d \nu}
$$
exist with respect to some $\sigma$-finite measure 
$\nu$. Two first cases,  where  the noise term is  finite-dimensional  or Gaussian, are well-known.
For these cases, we apply the results of previous sections. The  next four cases demonstrate that the 
approach taken in this paper applies also for more general noise models. 

\subsection{Finite-dimensional noise with a probability density}

 This example extends  the  convergence results   in  \cite{fitz} to locally convex Souslin space-valued unknowns. 
Let $G$ be the Euclidian space $\mathbf R^k $,  let $F$ be a locally convex Souslin space, and let  $L:F\rightarrow G$ be a continuous mapping.  Consider the statistical inverse problem of estimating the distribution of  an $F$-valued random variable $X$ given a sample $y$ of a $G$-valued random variable $Y=L(X)+\varepsilon$, where   the  $G$-valued random variable $\varepsilon$ is statistically independent from $X$.    In order to use the representation formula of Theorem \ref{toka} for  the essentially unique posterior distribution of $X$ given a sample $y_0$ of  $Y$, we need  the required  $\sigma$-finite measure $\nu$.  A natural choice is to take the Lebesgue    measure as $\nu$, when possible.

 Assume that  the noise $\varepsilon $ is  $\mathbf R^k$-valued random vector whose image measure $\mu_\varepsilon $ is absolutely continuos with respect to the Lebesgue    measure , say $\mu_\varepsilon (dx)= D_\varepsilon (x)dx$,  with  the property  that  $D_\varepsilon>0$ almost everywhere. Especially, $\mu_\varepsilon$ is then equivalent to the Lebesgue   measure.   
 
 In Theorem \ref{toka},   the Radon-Nikodym derivative of $\mu_{\varepsilon +L(x)} $ with respect to the Lebesgue   measure    , i.e.   $(x,y)\mapsto D_\varepsilon (y-L(x))$, is  required to be jointly measurable.   Since $D_\varepsilon $ is measurable, and the addition is measurable, the continuity  of $L$ suffices here.  We obtain  an essentially unique  solution $\mu$ of the statistical inverse problem of estimating the distribution of $X$ given a sample $y_0$ of $Y$ that satisfies
\begin{equation*} 
\mu(U,y_0)=\frac{\int_U D_\varepsilon (y_0-L(x)) d\mu_X(x)  }{\int  D_\varepsilon  (y_0-L(x))d\mu_X(x)}  
\end{equation*}
for all $U\in \mathcal F$ and  all  $y_0$ such that $0<\int  D_\varepsilon  (y_0-L(x))d\mu_X(x)<\infty$.
 Here $D_\varepsilon (y_0-L(x))$ is often called the likelihood function.  If  $D_\varepsilon $ is continuous and bounded,  we may drop out the word "essentially", as the   solution is the unique continuous solution  by Corollary \ref{uni2} (the topological support of 
 $\mu_Y$ is the whole space by  Lemma \ref{konti} since $\mu_{\varepsilon+L(x)}$ is   equivalent with the Lebesgue   measure).

When  $X$ is an $\mathbf R^m$-valued random variable with a density $D_{pr}(x)$ with respect to  the Lebesgue   measure, we get the familiar expression
\begin{equation*} 
\mu(U,y)=\frac{\int_U D_\varepsilon  (y-L(x)) D_{pr}(x)dx  }{\int   D_\varepsilon (y-L(x))  D_{pr}(x)dx}
\end{equation*}
 for all  $y$  such that  $0<\int   D_\varepsilon (y-L(x))  D_{pr}(x)dx<\infty$.

\begin{remark}\label{siirto}
When $D_\varepsilon \geq 0$  almost everywhere,  $\mu_\varepsilon$ need not be equivalent to the 
Lebesgue   measure.   Moreover, the translated measure $\mu_{\varepsilon+L(x)}$ need not be absolutely continuous with respect to  $\mu_{\varepsilon}$. 
 \end{remark}

We consider next the convergence of posterior distributions. Let  $\mu_{X_n}$ be the finite-dimensional 
distributions that approximate $\mu_X$ and denote with  $X_n$ the corresponding  $F$-valued random variables that are statistically independent from $\varepsilon$.    Denote \begin{equation*}
\mu_n(U,y)=\frac{\int_{ U} D_\varepsilon  (y-L( x)) d\mu_{X_n}(x)  }{\int   D_\varepsilon (y-L(x)) d\mu_{X_n}(x) }
\end{equation*}
the  corresponding    solutions of estimating the probabilities 
of $X_n$ given  $Y_n=L(X_n)+\varepsilon  $.  When $D_{\varepsilon }$ is continuous and bounded,  the probabilities   $\mu_n(\cdot  ,y)$ converge weakly to 
\begin{equation*} 
\mu(\cdot ,y)=\frac{\int_{\cdot} D_\varepsilon (y-L( x)) d\mu_X(x)  }{\int  D_\varepsilon  (y-L(x))d\mu_X(x)}  
\end{equation*}
 for all $y$ such that 
 $$  \inf _n \int  D_\varepsilon  (y-L( x)) d\mu_{X_n}(x) >0 \, \text{and } \,    \sup _n \int  D_\varepsilon  (y-L( x)) d\mu_{X_n}(x)  <\infty
 $$ 
  whenever   $\mu_{X_n} $ converge weakly to $\mu_{X}$ by Theorem \ref{coco2}. Also Theorem \ref{kass} and  Theorem  \ref{coco}  are available, provided the assumptions hold.
 
In practical applications one often  takes such approximations of $\mu_{X}$  that can be  identified with a probability  distribution   $D_{pr}^{(n)}dx $ on $\mathbf R^n$  by  some linear isomorphism   $\mathcal I_n$ defined on a subspace  of full measure  i.e. $\mu_{X_n}\circ \mathcal I_n^{-1}= D_{pr}^{(n)} dx$.

 \subsection{Infinite-dimensional Gaussian noise}

The finite-dimensional Gaussian noise model is often chosen because of  its relatively straightforward  justification -- if  the total noise is produced by  many identical independent noise sources,  the sum is nearly Gaussian by the central limit theorem.   For instance, this applies to the origin of  thermal noise in electrical circuits, where heat motion of the charge carriers disturbs the analog signal. The usual model  of  thermal noise  is  white Gaussian noise, which  is  an acceptable approximation on usual frequencies.

We first recall  a method for constructing infinite-dimensional Gaussian random vectors by a  procedure linked to abstract Wiener spaces \cite{bog}. 

\subsubsection{Basics of Hilbert space-valued Gaussian random variables}

Let $H$ be a separable Hilbert space. We define  $Z$ as a  random sum 
\begin{equation}\label{summa}
Z= \sum_{i=1}^\infty Z_i e_i,
\end{equation}
where  $Z_i$ are independent standard normal random variables on $(\Omega,\Sigma,P)$  and  $\{e_i\}$ is an orthonormal basis of $H$.  Clearly,  the sum does not converge  a.s. in $H$.  Instead,  we take   a  larger Hilbert  space $G$   into which $H$ can be  imbedded with an injective Hilbert-Schmidt operator $j$.  When the range of the imbedding is  dense, the triple $(j,H,G)$ is a special case of an abstract Wiener space \cite{bog}. However, we do not  require the range to be dense.  Let $G'$ denote the dual space of $G$ and  $\langle \cdot,\cdot \rangle$ the duality between $G$ and $G'$. 

A sufficient condition for the a.s. convergence of  the random sums $\sum_{i=1}^n Z_i e_i$  in $G$  is that the series 
 \begin{equation*}
 \sum_{i=1}^\infty \mathbf E[\Vert Z_ie_i \Vert^2_G]=
 \sum_{i=1}^\infty  \Vert e_i\Vert_G^2.
 \end{equation*} 
is convergent \cite{kahane}.  But this follows from the Hilbert-Schmidt property of  the inclusion map $j$.  

Since $G$ is a separable Fr\'echet space (more generally, a locally convex Souslin  space \cite{schwarz}),  its  Borel 
$\sigma$-algebras with respect to  the weak and  the original topology  coincide.   The benefit of the weak topology is that the  measurability  of  the limit  $Z=\lim_{n\rightarrow \infty} \sum_{i=1}^n Z_i e_i$  can be checked similarly as in the case of real-valued  functions with sets of the type  $\cap_{i=1}^k \{ |\langle Z,\phi_i\rangle -a_i| <c_i\}$.  We conclude that  the a.s. limit $Z$ of the random sums defines a measurable mapping from $(\Omega,\Sigma,P)$ to $(G,\mathcal G)$. Its image measure  $\mu_Z=P\circ Z^{-1}$ can be viewed also as a countably additive cylinder set measure.

 In general, the mean of a   random variable $Z$ is the vector $m\in G''=G$ such that 
$\mathbf E[\langle Z,\phi\rangle]=\langle m,\phi\rangle$ for all  $\phi\in G'$ and the covariance operator of $Z$ is the mapping $C: G'\rightarrow G $ such that  
$$
\langle C\phi,\psi\rangle=  \mathbf  E[(\langle Z,\phi\rangle-\langle m,\phi\rangle)( \langle Z, \psi \rangle -\langle    m,\psi\rangle)]
$$ for all  $\phi,\psi\in G'$ \cite{bog}.   

Since limits of Gaussian random variables are Gaussian, the random variable 
\begin{equation*}
\langle Z+m,\phi\rangle= \langle  m,\phi\rangle +\sum_{i=1}^\infty Z_i \langle e_i, \phi\rangle,
\end{equation*} 
where $m\in G$,  has a characteristic function 
\begin{equation}\label{kara}
e^{i\langle m,\phi\rangle -\frac{1}{2}\langle C\phi,\phi\rangle  } = \mathbf E[e^{i\langle Z,\phi\rangle }]
\end{equation}
for all $\phi\in G'$. In our case, the random variable $Z$ has mean
$m=0$ and  covariance  $\langle C \phi,\phi\rangle = \sum_{i=1}^\infty \langle e_i, \phi \rangle^2$. The covariance $\langle C \phi,\phi\rangle$ is the squared norm of  $\phi$ in  the strong dual space $H'$ of $H$.  Indeed, the  linear form $\langle  j\cdot, \phi\rangle_{G,G'}$ is continuous on $H$ so it belongs to $H'$ and its norm is 
$\langle C\phi,\phi\rangle$. For short, we denote $\phi \in H'$.    The  covariance  $\langle C\phi,\phi\rangle $ for  any $\phi \in G'$ is finite, since $H\hookrightarrow G$  implies that  $G'\hookrightarrow H'$ continuously. 
The dual space $G'$                 is actually dense in $H'$  as a consequence of the Hahn-Banach theorem.  Indeed,  if $h'_0\in H'\backslash \overline{j'(G')}\not=\emptyset$ then there would exist     $h\in H''=H$ such that $\langle  h, h'_0\rangle=1$ and $\langle h, h'\rangle=0$ for every $h'\in j'(G')$. But $j'(G')$ separates the points   in $H$ because of the  injectivity of $j$.     Therefore, $h=0$ and  hence $j'(G')$ is dense in $H'$.

The mapping  $ G'\ni \phi\mapsto \langle C \phi,\phi\rangle $ has an extension $H'\ni g\mapsto \langle \bar C g,g\rangle:=\Vert g\Vert_{H'}^2$.  By the polarization equality, $\bar C$ is the isometric isomorphism   between $H'$ and $H$  defined by the Riesz representation theorem.  We continue to  denote $\bar C$ with $C$.

\begin{remark}\label{imb}
It is well-known that the sample space $G$ of $Z$  can be replaced with any bigger locally convex  Souslin  vector space $G_0$ into which   $G$  can be continuously and  injectively embedded. For example, $G_0$ may be  the distribution space $\mathcal D'(U)$, where $U\subset \mathbf R^n$ is open,    equipped with the usual weak topology. 
\end{remark}

 Measures  having   characteristic functions of the above form  \eqref{kara} are called Gaussian measures. Especially, the image measure $\mu_Z=P\circ Z^{-1}$ is Gaussian. Random variables, whose image measures are Gaussian, are   called Gaussian random variables. The space $H$ is  the so-called  {\it Cameron-Martin space} of $\mu_Z$.    
 
 By Theorems 3.2.3,  3.2.7 and  3.5.1 in \cite{bog} any zero-mean Gaussian random variable on a locally convex Souslin space  is equivalent with   a random variable of the form  \eqref{summa}.      More details on Gaussian measures  can be found in 
\cite{bog,gih,kuo}.

\subsubsection{Inverse problems  with Gaussian noise}\label{secgau}

We  consider the statistical inverse  problem  of estimating the distribution of $X$     given a sample of $Y=L(X)+\varepsilon$, where   $\varepsilon $  is   a zero mean Gaussian   random variable  that    has values  in a separable Hilbert space $G$.

  We denote  with  $H_{\mu_\varepsilon}$   the  Cameron-Martin space of $\mu_\varepsilon $ and with  $C_\varepsilon :H_{\mu_\varepsilon} '\rightarrow H_{\mu_\varepsilon} $ the covariance operator of $\varepsilon$.    The unknown random variable $X$ has values in some locally convex   Souslin topological vector    space $F$. The random variables   $\varepsilon $ and $X$ are taken to  be independent.   The direct theory $L:F\rightarrow G$ is  a continuous mapping that satisfies  the folloging  additional restrictive conditions:  $L:F\rightarrow H_{\mu_\varepsilon}$ is continuous,   the range of the  combined mapping  $C_\varepsilon^{-1} L$ belongs to  $G'$ where $G'$ is the  strong dual of $G$, and   the mapping $C_{\varepsilon }^{-1}L : F\rightarrow G'$ is continuous.

As an approximated   model, we take a sequence of  $F$-valued random variables $X_n$ that satisfy the same conditions as $X$.  We denote $Y_n:=L(X_n)+\varepsilon$.

 Recalling Remark \ref{tapsa},   we require that
\begin{equation}\label{alas}
\mathbf E \left[e^{a \Vert L(X) \Vert_{G'} }\right]  \wedge  \sup_n \mathbf E\left [e^{ a \Vert L( X_n) \Vert_{G'} }\right]< \infty 
\end{equation}
for all $a>0$.  The  condition holds  especially when  the range of $C_\varepsilon^{-1} L$ is bounded in $G'$.

According to  the  famous Cameron-Martin formula (see  Corollary 2.4.3 and Theorem 3.2.3 in  \cite{bog}), the Gaussian measures 
$\mu_\varepsilon $ and $\mu_{\varepsilon +L(x)}$  are equivalent 
when  $L(x)\in H_{\mu_\varepsilon} $. The corresponding   Radon-Nikodym density is
\begin{equation*}
\rho(x,z):=\frac{d\mu_{\varepsilon +L(x)}}{d\mu_\varepsilon }(z)= 
\exp\left( \langle z,  C^{-1}_\varepsilon  L(x) \rangle - \frac{1}{2} \Vert L(x)\Vert^2_{H_{\mu _\varepsilon} }\right),  \; z\in G.
\end{equation*}
\begin{remark}    In the Cameron-Martin formula,  the notation $\langle \cdot, \cdot \rangle$ is, in general,  a measurable extension  of the duality.  Namely,  the vector $C^{-1}_\varepsilon  L(x)$ need not belong to the space $G'$ but in the larger space $H'_{\mu_\varepsilon}$. But   $G'$ is dense in $H'_{\mu_\varepsilon}$.  Following  Lemma 2.2.8. in \cite{bog}, we may define  $\langle z, C^{-1}_\varepsilon  L(x)\rangle$ as the 
limit  of $\langle z,  \phi_n\rangle$ in $L^2(\mu_\varepsilon)$
 where $\phi_n\in G'$ converge to $C^{-1}_\varepsilon  L(x)$ in $H'_{\mu_\varepsilon}$ as $n\rightarrow \infty$. Especially, $\langle z, C^{-1}_\varepsilon  L(x)\rangle $ is a Gaussian random variable on $(G,\mathcal G, \mu_\varepsilon)$.  Different approximating sequences  lead to equivalent random variables, since the limits coincide in $L^2(\mu_\varepsilon)$.
\end{remark}

When  the range   $C^{-1}_{\varepsilon} L \subset  G'$,   we have  $\langle z,C^{-1}_\varepsilon L(x) \rangle = \langle z, C^{-1} \varepsilon\rangle _{G,G'}$, and, consequently,   the Radon-Nikodym density 
is separately continuous with respect to  $z$ on $G$  and 
with respect to $x$ on $F$.  By Theorem \ref{uni}, $\rho$ is  
$\mu_X\times \mu_\varepsilon$-measurable.   In  Theorem \ref{toka}, we may choose $\nu=\mu_\varepsilon $ and take
\begin{equation*}
\mu(U,y):=\frac{\int_U \exp\left( \langle y,  C^{-1}_\varepsilon  L(x) \rangle - \frac{1}{2} \Vert L(x)\Vert^2_{H_\varepsilon }\right)d\mu_{X}(x)}{\int \exp\left( \langle y,  C^{-1}_\varepsilon  L(x) \rangle - \frac{1}{2} \Vert L(x)\Vert^2_{H_\varepsilon }\right)d\mu_{X}(x)}
\end{equation*}
as  an essentially unique solution for all $y\in G$.   Note, that our assumptions guarantee that 
$$
0< \exp\left( \langle y,  C^{-1}_\varepsilon  L(x) \rangle - \frac{1}{2} \Vert L(x)\Vert^2_{H_\varepsilon } \right) \leq 
   \exp\left(  \Vert y \Vert_ {G}  \Vert  C^{-1}_\varepsilon  L(x)  \Vert _ {G'}\right) \in L^1(\mu_X)
$$ 
so that the set $M_0$ in \eqref{mnolla} is empty.
Similarly, when  $X_n$ satisfies the same conditions as $X$, we obtain
\begin{equation*}
\mu_n(U,y):=\frac{\int_U \exp\left( \langle y,  C^{-1}_\varepsilon  L(x) \rangle - \frac{1}{2} \Vert L(x)\Vert^2_{H_\varepsilon }\right)d\mu_{X_n}(x)}{\int \exp\left( \langle y,  C^{-1}_\varepsilon  L(x) \rangle - \frac{1}{2} \Vert L(x)\Vert^2_{H_\varepsilon }\right)d\mu_{X_n}(x)}
\end{equation*}
for all $y\in G$.

We  consider  next the partial uniqueness of the solutions $\mu$ and   $\mu_n$ on $\mathcal F \otimes G$. 
Denote with $S_{\mu_\varepsilon}$ the support  of  $\mu_\varepsilon $ on $G$, which coincides with  the closure of the Cameron-Martin space  $H_{\mu_\varepsilon}$ in $G$  by Theorem 3.6.1  in \cite{bog}.   The measure $\mu_{\varepsilon+L(x)}$ is equivalent with $\mu_{\varepsilon}$ by the Cameron-Martin formula. Hence, the measures $\mu_Y$ and   $\mu_{Y_n}$ have the same topological  support as  the measure $\mu_{\varepsilon}$ by Lemma \ref{konti}.   We conclude that   $S_{\mu_Y}=S_{\mu_{Y_n}}= \overline{H_{\mu_\varepsilon}}$.  Since $\sup_{z\in K } \rho(x,z) \leq  \exp (C \Vert C^{-1}_\varepsilon L(x) \Vert_{G'})$, the solutions $\mu$ and  $\mu_n$  are $\mathcal F$-continuous  on $G\cap  \overline{H_{\mu_\varepsilon}}$  by Theorem \ref{uni}. Hence, $\mu$  and $\mu_n$ are the only $\mathcal F$-continuous   solutions  on $G\cap  \overline{H_{\mu_\varepsilon}}$  by Corollary \ref{uni2}.    In the light of  Corollary \ref{uni2} and the discussion preceding it,   the partial uniqueness is not  so simple 
   in the situation  described in  Remark \ref{imb}.

In order to apply Theorem   \ref{coco2}, we use the  continuity of $x\mapsto \rho(x,y)$ and the uniform integrability that follows from  the assumption  \eqref{alas}.   Consequently, Theorem \ref{coco2} holds.   If, for example, the  range of $C_{\varepsilon} L$ is a bounded set in $G'$, also Theorem \ref{coco} is available.

\begin{remark}\label{gah}
In general, the measure $\mu_Y=\mu_{L(X)+\varepsilon}$ does not  satisfy $\mu_{\varepsilon+L(x)}<< \mu_Y$ for $\mu_X$-almost every $x$.  Indeed,  take $X$ and $\varepsilon $  to be independent Gaussian  random variables with the  same Cameron-Martin space $L^2(I)$, where $I$ is the unit interval $(0,1)$.  Let $L$ be the identity.   If $\mu_Y(U)=0$, then 
$\mu_{\varepsilon +x}(U)=0$ for $\mu_X$-almost every $x$ by the formula 
\begin{equation} \label{gaga}
\mu_Y(U)= \mathbf E[ \mathbf E 1_Y(U)|X]]= \int \mu_{\varepsilon +x}(U) d\mu_X(x).
\end{equation}
Suppose that  $\mu_{\varepsilon +x}\ll \mu_Y$ for $\mu_X$-a.e. $x$, say for all $x\in M$ such that $\mu_X(M)=1$. 
 The random variable $Y=X+\varepsilon $ is also Gaussian, and any  two Gaussian measures on the same locally convex space are either equivalent or singular. But then  $\mu_{\varepsilon +x_1}$ is equivalent to $\mu_Y$ and $\mu_{\varepsilon +x_2}$ is equivalent to $\mu_Y$ for any $x_1,x_2\in M$ so also the two measures
   $\mu_{\varepsilon +x_1}$ and   $\mu_{\varepsilon +x_2}$ are equivalent.   But $P(X\in L^2(I))=0$ so equivalence should hold also for some $x_1,x_2\notin L^2(I)$, which is impossible  by the Cameron-Martin theorem. 
   The $\mu_X$-zero measurable set in (\ref{gaga}) necessarily depends    on $U$ in this case. 
  \end{remark}

\subsection{Gaussian dominated noise} \label{ridi}

We consider a simple  modification of   Gaussian noise. Suppose that the assumptions in 
Section \ref{secgau} hold except that  instead of   $Y=L(X)+\varepsilon$ we are  observing  $\widetilde Y= L(X)+ \widetilde \varepsilon$, where 
 $\mu_{\tilde \varepsilon}$ is dominated by the Gaussian measure $\mu_\varepsilon$ i.e. 
 $$
\frac{d\mu_{\widetilde \varepsilon}}{d\mu_\varepsilon} (y) = f(y)
 $$
 for some $f\in L^1(\mu_\varepsilon)$.   The translation of $\mu_{\widetilde \varepsilon} $ by $L(x)$  has the form 
\begin{eqnarray*}
\mu_{\widetilde \varepsilon+L(x)}(V)&= &\int   1_{ V } (y+L(x)) d\mu_{\widetilde \varepsilon}(y) \\
&=&  \int   1_{ V } (y+L(x))f(y)  d\mu_{ \varepsilon}(y) \\
 &= &  \int_V  f(y-L(x)) d\mu_{\varepsilon+L(x)}(y)\\
&=&   \int_{V} f(y-L(x)) \exp\left( \langle y,  C^{-1}_\varepsilon  L(x) \rangle - \frac{1}{2} \Vert L(x)\Vert^2_{H_ {\mu_\varepsilon}}\right) d\mu_{\varepsilon}(y).
\end{eqnarray*}
The integrand is a $\mu_X\times \mu_{\varepsilon}$-measurable functions as a product of two  $\mu_X\times\mu_ \varepsilon$- measurable functions.
By Theorem \ref{toka}, the posterior distribution of $X$ given a sample $y$ of   $\widetilde Y=L(X)+\widetilde \varepsilon$ can be taken to be 
\begin{eqnarray}\label{donoise}
\mu(U,y)=\frac{\int_U  f(y-L(x))\exp\left( \langle y,  C^{-1}_\varepsilon  L(x) \rangle - \frac{1}{2} \Vert L(x)\Vert^2_{H_ {\mu_\varepsilon}}\right) d\mu_X(x)}{\int  f(y-L(x)) 
\exp\left( \langle y,  C^{-1}_\varepsilon  L(x) \rangle - \frac{1}{2} \Vert L(x)\Vert^2_{H_ {\mu_\varepsilon}}\right) 
d\mu_X(x)}
\end{eqnarray}
whenever the denominator is positive. 

 For instance,  let  $\widetilde \varepsilon$  to be  a restriction of $\varepsilon$ to some open set $K\in \mathcal G$  that has positive   $\mu_\varepsilon$-measure.  This means that  the noise  $\widetilde \varepsilon  =\varepsilon|_K$ has the  distribution  
\begin{equation}\label{ehto}
\mu_{\widetilde \varepsilon} (V)= \frac{P(\varepsilon\in K\cap V)}{P(\varepsilon\in K)}
\end{equation}  
for all $V\in \mathcal G$  i.e.  we consider  conditional probabilities 
$$
\mu_{\widetilde \varepsilon} (V)=  \mu_\varepsilon(V|K).
$$
Note that as a Borel set,  $K$ is of  the form $
K=\{ y\in G: (\langle y,\phi_1\rangle, \langle y,\phi_2\rangle,\cdots  ) \in  E\},
$
where $\phi_i\in G'$ separate the points in $G$  and  $E\in \mathcal B( \mathbf R^\infty)$. 
The Radon-Nikodym density of $\mu_{\widetilde \varepsilon}$ with respect to 
$\mu_{\varepsilon}$ is by \eqref{ehto}
   $$
 f(y) = \frac{d\mu_{\widetilde \varepsilon}}{d\mu_\varepsilon}(y)= \frac{1}{ \mu_\varepsilon(K)}1_K(y).
   $$
By Theorem \ref{toka},  an essentially unique  posterior distribution of $X$ given  a sample  $y_0$  of   $\widetilde Y=L(X)+\widetilde \varepsilon$ can be represented as \begin{eqnarray*}
\mu(U,y_0)=\frac{\int_U 1_K(y_0-L(x))\exp\left( \langle y_0,  C^{-1}_\varepsilon  L(x) \rangle - \frac{1}{2} \Vert L(x)\Vert^2_{H_ {\mu_\varepsilon}}\right) d\mu_X(x)}{\int 1_K(y_0-L(x)) 
\exp\left( \langle y_0,  C^{-1}_\varepsilon  L(x) \rangle - \frac{1}{2} \Vert L(x)\Vert^2_{H_ {\mu_\varepsilon}}\right) 
d\mu_X(x)}
\end{eqnarray*}
whenever the denominator is positive.  We see that when we can  exclude noise patterns, the  posterior distribution will  concentrate more on the true  value $x_0$  (when $L$ is  injective). 
When $\mu_X(  L^{-1} (\partial (\{y_0\}-K)))=0$,  the mapping $x\mapsto 1_K(y-L(x))$ is continuos on a set of full  $\mu_X$-measure and the convergence results are  hence available. 

For example,  take $G=F=H^{-1}(a,b)$, where  $-\infty <a<b<\infty$, and  set    $Lx= \sum_{i=1}^\infty c_i  \langle x, e_i\rangle  e_i$ for all $x\in L^2(a,b)$, where  $\{e_i\}_{i=1}^\infty $ is an orthonormal basis of $L^2(a,b)$ and the constants $c_i>0$ satisfy  $ \{(1+i) c_i\}_{i=1}^\infty \in \ell ^2$. Then $L: L^2(a,b) \rightarrow  H^1(a,b)$ is continuous.   Set  $X= \sum_{i=1}^\infty X_i e_i $   and $\varepsilon = \sum_{i=1}^\infty \varepsilon_i e_i $, where $\varepsilon_i$ and $X_i$, $i\in \mathbf N$,   are independent standard normal random variables.  Set $$K=\{ y\in G: | \sum_{i=1}^k \langle y , e_i\rangle|\leq C \}.$$  Then $\mu_{\varepsilon}(K)>0$ and 
\begin{eqnarray*}\mu_X(L^{-1} (\partial  (\{y_0\}-K))) &=&  
\mu_X(  L^{-1} (\{y \in G:  | \sum_{i=1}^k  \langle y,e_i\rangle + \langle y_0,e_i\rangle   | = C  \})\\
&=&\mu_X( \{ y \in L^2(a,b) :  |  \sum_{i=1}^k   c_i( \langle y,e_i\rangle +   \langle y_0,e_i\rangle)  | = C \})\\
&=&  \prod_{i=1}^k P (   |Z | = C)=0,
\end{eqnarray*}
where $Z=\sum_{i=1}^k   c_i(  X_i +   \langle y_0,e_i\rangle)$ is a Gaussian random variable.  
 
  The partial uniqueness  with respect to the  topology of $G$ remains  an open question.

Another example arises from  the Girsanov formula.  We equip $G=C([0,T])$, where $T>0$,  with  the usual supremum norm. The space $G$ is  then complete separable Banach space and its dual space $G'$ is the space of Radon measures on $[0,T]$. We assume that the observation is of the form $Y_t=L(X)_t+\widetilde \varepsilon_t$ for $0\leq t\leq T$, where $F$-valued $X$ and $C([0,T])$-valued $\widetilde\varepsilon$ 
are statistically independent and $L:F\rightarrow C([0,T])$ is a continuos mapping.  More precisely, we assume a stronger condition that $L:F\rightarrow C^2_0(0,T)$ is continuous.

   Suppose that the noise $\widetilde \varepsilon\in G$ is of the form
$$
\widetilde \varepsilon _t  =  \varepsilon_t + \int_0 ^t  a(s; \varepsilon_s ) ds  
$$ 
where $\varepsilon_t$ is an ordinary Brownian motion on $[0,T]$  and $a: [0,T] \times \mathbf R \rightarrow \mathbf R$ is continuous.  Note that $\widetilde \varepsilon_t $ indeed is  a $C([0,T])$-valued  random variable since  the continuous functionals $\{\delta_t: t\in \mathbf Q \cap [0,T]\}$ separate the points in $G$ and, therefore, 
  also generate the $\sigma$-algebra of $G$.

  It is  well-known that the Cameron-Martin space of the Brownian motion  on $[0,T]$ is the separable  Hilbert space
$\{ f\in H^1(0,T): f(0)=0\}$ equipped with the norm $\Vert f' \Vert _{L^2}$,  the covariance operator $C_\varepsilon$  has kernel $\min(t,s)$ and 
$C^{-1}_\varepsilon = \frac{d^2}{dt^2}$ on  $\{f\in H^2(0,T): f(0)=0, f'(T)=0\}$ (see \cite{bog}). By the Cameron-Martin theorem
$$
\frac{d \mu_{\varepsilon+L(x)}}{d\mu_\varepsilon} (y) =  \exp\left(  \int_0^T  y_s  \frac{d^2 L(x)_s }{ds^2} ds  - \frac{1}{2} \left \Vert\frac {d L(x)_s}{ds}\right\Vert^2_{L^2(0,T)}\right).
$$
  The Girsanov formula 
$$
\frac{d \mu_{\widetilde \varepsilon}}{d\mu_\varepsilon} (y) =
   \exp \left( \int_0^T a(s,y_s) dy_s - \frac{1}{2}  \int_0^T |a(s,y_s)|^2 ds \ \right),  
 $$
where  the first integral is a sample  of the  corresponding stochastic integral, holds when the  Novikov's condition
\begin{equation}\label{novikov}
\mathbf E \left[\exp\left( \int_0^T |a(s; \varepsilon_s )|^2 ds \right) \right]<\infty
\end{equation}
is satisfied (see  \cite{oek}). For example, if $|a(s,x)|\leq  C(1+|x|)$ for some $C>0$, then \eqref{novikov} holds  since
 $$
\mathbf E [e^{C \int_0^T  \varepsilon_s^2 ds }]\leq  e^{\frac{C^2}{4a}}\mathbf    E [ e^{ a  \Vert\varepsilon  \Vert^2_{L^2(0,T)}}]<\infty
 $$ 
 by the Fernique  theorem (see Corollary 2.8.6 in \cite{bog}).   For instance, take $a(s,x)=\frac{2x}{1+x^2}$. By the It\=o formula,  we see that  the mapping 
 $$
\varepsilon \mapsto  \int_0^T a(\varepsilon_s) d\varepsilon_s=  \ln (1+\varepsilon_T^2) - \int_0 ^T   \frac{1- \varepsilon_s^2}{(1+\varepsilon_s^2)^2 }ds  
 $$
 extends to a    continuous functional  on $C([0,T])$. Thus $y\mapsto \frac{d\mu_{\widetilde \varepsilon}}{ d\mu_{\varepsilon}}(y)$
 has a continuous version
 $$
  \frac{d\mu_{\widetilde \varepsilon}}{ d\mu_{\varepsilon}}(y) =  (1+y_T^2)  \exp\left( - \int_0 ^T   \frac{1- y_s^2}{(1+ y_s^2)^2 }ds  - \frac{1}{2}  \int_0^T \left | \frac{2y_s}{1+y_ 2^2}\right|^2 ds \right) 
 $$
   on $C([0,T])$.  As in \eqref{donoise}, we obtain an explicit solution
$$
\mu(U,y)= \frac{\int_U  \frac{d \mu_{\widetilde \varepsilon}}{d\mu_\varepsilon} (y-L(x))     \frac{d \mu_{\varepsilon+L(x)}}{d\mu_\varepsilon} (y)  d\mu_X(x) } {\int   \frac{d \mu_{\widetilde \varepsilon}}{d\mu_\varepsilon} (y-L(x))        \frac{d \mu_{\varepsilon+L(x)}}{d\mu_\varepsilon} (y)   d\mu_X(x) }
$$
 of the statistical inverse problem of estimating the distribution of $X$  given the observation 
 $Y_t=L(X)_t+\varepsilon_t+ \int_0^t a(s, \varepsilon_s) ds$ on $[0,T]$.
 The posterior convergence results are available for approximated prior distribution. 

In general,  any $G$-valued random variable  $\widetilde \varepsilon$ whose image measure is  absolutely continuous with respect to  a zero mean  Gaussian measure $\mu_\varepsilon$ satisfies  
 $$
 \widetilde  \varepsilon =  \varepsilon + T(\varepsilon) $$
  in distribution for some mapping  $T: G \rightarrow H_{\mu_\varepsilon}$  
(see Corollary 4.2  in \cite{bogata}).  

\subsection{Spherically invariant noise}\label{spheri}

Let $F$ and $G$ be locally convex Souslin topological   vector spaces. We say that $\varepsilon $ is   {\it a spherically invariant $G$-valued random variable} if  $\varepsilon =\gamma Z$, where $Z$ is a zero-mean Gaussian $G$-valued random variable whose Cameron-Martin space is  infinite-dimensional, and $Z$ is statistically independent from   
a non-negative real-valued random variable $\gamma$  whose distribution has no atom at zero.

The expression "spherically invariant random process (SIRP)" is used in  the engineering literature \cite{sph} while the more descriptive but little used  expression "$H_{\mu_Z}$-spherically symmetric measure"  appears in the mathematical literature (see Definition 7.4.1 in \cite{bog}).  The latter has emphasis on the fact that  the measure is  only invariant   with respect to  orthogonal operators  on  $H_{\mu_Z}$ (see Theorem 7.4.2 in  \cite{bog}).

In order to study the posterior  measure of    $X$ given 
$Y=L(X)+ \gamma Z$, we apply an averaging principle  together with  the following lemma.

\begin{lemma}\label{olk}
Let $F$ and $G$ be locally convex Souslin topological  vector  spaces.
Let $Z$ be a zero-mean Gaussian $G$-valued random variable whose Cameron-Martin space is infinite-dimensional. Let  $X$ be an $F$-valued random variable, and
 let  $\gamma$ be a non-negative  random variable whose distribution has no atom at zero. Suppose that 
 $\gamma$, $X$ and $Z$ are statistically independent.

Let   $L:F\rightarrow G$   be a continuous mapping such that
$ L(F)\subset H_{\mu_Z}$, where $H_{\mu_Z}$ is the Cameron-Martin space of $\mu_Z$.  Let   $\{e_i\}_{i=1}^\infty$  be an orthonormal basis of $H_{\mu_Z}$ such that  $C_Z^{-1} e_i \in G'$, where   $C_Z$  is  the covariance operator of $Z$.
Set $Y=L(X)+ \gamma Z$.

  For  any $f\in L^1(\mu_{(Y,\gamma)})$,  the conditional expectation $$
\mathbf E[f(Y,\gamma)| \sigma(Y)] (\omega)= f(Y(\omega),\gamma_{Y(\omega)})
$$  
for $P$-almost every $\omega\in \Omega$, where $ y\mapsto \gamma_y$ is a $\mathcal G$-measurable  function on $G$ that satisfies
\begin{equation}\label{pre} 
\gamma_y= \left(\lim _{n\rightarrow \infty}  \frac{1}{n} \sum_{i=1}^n \langle y, C_Z ^{-1}e_i \rangle^2 \right) ^\frac{1}{2}
 \end{equation}
 whenever a finite limit exists and $\gamma_y=0$ otherwise.
 \end{lemma}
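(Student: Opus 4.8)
The plan is to show that $\gamma$ coincides $P$-almost surely with a $\mathcal G$-measurable function of $Y$, namely with $\gamma_Y$. Granting this, $f(Y,\gamma)=f(Y,\gamma_Y)$ $P$-almost surely, and the latter is $\sigma(Y)$-measurable, so it is its own conditional expectation given $\sigma(Y)$, which is exactly the claimed formula. I would first record the routine measurability of $y\mapsto\gamma_y$: each functional $y\mapsto\langle y,C_Z^{-1}e_i\rangle$ is continuous because $C_Z^{-1}e_i\in G'$, hence the Ces\`aro averages $\frac1n\sum_{i=1}^n\langle y,C_Z^{-1}e_i\rangle^2$ are Borel, the set on which they admit a finite limit is Borel, and the limit is Borel there; $\gamma_y$ is its square root on that set and $0$ elsewhere, hence $\mathcal G$-measurable. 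That $f$ is only defined $\mu_{(Y,\gamma)}$-a.e.\ will be handled by Lemma \ref{leu}, once one observes that $(Y,\gamma_Y)$ has the same law $\mu_{(Y,\gamma)}$ as $(Y,\gamma)$.

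The heart of the proof is the identity $\gamma_Y=\gamma$ $P$-a.s. Write $\xi_i:=\langle Z,C_Z^{-1}e_i\rangle$ and $a_i:=\langle L(X),C_Z^{-1}e_i\rangle$. Since each $C_Z^{-1}e_i\in G'$, the functional is linear and continuous, so $\langle Y,C_Z^{-1}e_i\rangle=a_i+\gamma\xi_i$. Using the covariance identity $\mathbf E[\langle Z,\phi\rangle\langle Z,\psi\rangle]=\langle C_Z\phi,\psi\rangle$ and the reproducing property of the Cameron-Martin space, $\mathbf E[\xi_i\xi_j]=\langle e_i,e_j\rangle_{H_{\mu_Z}}=\delta_{ij}$ because $\{e_i\}$ is orthonormal in $H_{\mu_Z}$; as the $\xi_i$ are jointly Gaussian, they are i.i.d.\ standard normal. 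The numbers $a_i=\langle L(X),e_i\rangle_{H_{\mu_Z}}$ are the Fourier coefficients of $L(X)$, so $\sum_i a_i^2=\Vert L(X)\Vert_{H_{\mu_Z}}^2<\infty$ for every $\omega$, using $L(F)\subset H_{\mu_Z}$.

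Expanding the Ces\`aro average gives
\begin{equation*}
\frac1n\sum_{i=1}^n\langle Y,C_Z^{-1}e_i\rangle^2
=\frac1n\sum_{i=1}^n a_i^2+\frac{2\gamma}{n}\sum_{i=1}^n a_i\xi_i+\frac{\gamma^2}{n}\sum_{i=1}^n\xi_i^2.
\end{equation*}
The main obstacle is the a.s.\ convergence of these three terms, complicated by the fact that $a_i$, $\xi_i$ and $\gamma$ come from the three independent sources $X$, $Z$ and $\gamma$. I would resolve this by conditioning on $(X,\gamma)$: since $Z$ is independent of $(X,\gamma)$, for $\mu_{(X,\gamma)}$-a.e.\ fixed value $(x,c)$ the $\xi_i$ remain i.i.d.\ $N(0,1)$, so the strong law of large numbers gives $\frac1n\sum_{i=1}^n\xi_i^2\to1$ a.s.; the first term tends to $0$ as the Ces\`aro average of a summable sequence; and the series $\sum_i a_i(x)\xi_i$ converges a.s.\ by Kolmogorov's theorem (its variances sum to $\sum_i a_i(x)^2<\infty$), whence its Ces\`aro average tends to $0$. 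Thus the right-hand side converges to $c^2$ for $\mu_Z$-a.e.\ realization (including on $\{c=0\}$, where every term vanishes), and Fubini upgrades this to convergence to $\gamma^2$ for $P$-a.e.\ $\omega$. As $\gamma\ge0$, the limit is finite and $\gamma_Y=\gamma$ $P$-a.s.

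Finally I would combine the two pieces. By Lemma \ref{leu}, applied to a Borel version $\tilde f$ of $f$ (legitimate since $(Y,\gamma_Y)$ and $(Y,\gamma)$ share the law $\mu_{(Y,\gamma)}$), one has $f(Y,\gamma)=\tilde f(Y,\gamma_Y)$ $P$-a.s., and $\omega\mapsto\tilde f(Y(\omega),\gamma_{Y(\omega)})$ equals $[\tilde f\circ(\mathrm{id},\gamma_{\boldsymbol\cdot})]\circ Y$, hence is $\sigma(Y)$-measurable and $P$-integrable. Being $\sigma(Y)$-measurable, it is its own conditional expectation given $\sigma(Y)$, so $\mathbf E[f(Y,\gamma)\mid\sigma(Y)](\omega)=f(Y(\omega),\gamma_{Y(\omega)})$ for $P$-almost every $\omega$, as claimed.
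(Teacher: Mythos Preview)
Your proof is correct and follows essentially the same route as the paper: establish that $\gamma_Y=\gamma$ $P$-a.s.\ via the law of large numbers applied to the i.i.d.\ standard normals $\langle Z,C_Z^{-1}e_i\rangle$, then conclude by $\sigma(Y)$-measurability of $f(Y,\gamma_Y)$. Your three-term expansion and the Kolmogorov argument for the cross term $\frac{2\gamma}{n}\sum a_i\xi_i$ make explicit what the paper compresses into a single displayed equality; otherwise the structure is identical.
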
 
\begin{proof}
The mapping $y\mapsto \gamma_{y}$ is 
indeed measurable since the set
$$
N=\{ y\in G: \lim _{n\rightarrow \infty}  \frac{1}{n} \sum_{i=1}^n \langle y, C_Z ^{-1}e_i \rangle^2 \not\exists\}.
 $$ 
  is a Borel set (see Lemma 2.1.7 in \cite{bogm}). We show in a moment  that $\gamma=\gamma_Y$ $P$-almost surely.   
  Then  the conditional expectations of $f(Y,\gamma)$ and $f(Y,\gamma_Y)$ coincide  since the two random variables coincide almost surely.  In order to conclude the claim, we note that  $\gamma_{Y(\omega)}$ is  $Y^{-1}(\mathcal G)$-measurable as a combination of two measurable functions.

  The random variables $\gamma $, $X$ and  $Z$ are statistically independent, which implies that  their image measure $\mu_{(\gamma,X,Z)}$ is a product measure
 on the product space  $\mathbf R_+\times F\times G$.  
 
 Since  $Z$ has a Gaussian distribution, the random variables $\langle Z, C_{Z}^{-1} e_i\rangle$ are 
 statistically independent standard normal random variables.  The same holds for the random variables 
 $(t,x,z)\mapsto \langle z, C_{Z}^{-1} e_i\rangle$  on the measure space 
$ (\mathbf R_+\times F\times G, \mathcal B(\mathbf R_+\times F\times G)   ,\mu_{\gamma}\otimes \mu_X\otimes\mu_Z)$.
The random variable 
$$
(t,x,z)\mapsto  L(x)+tz
$$
has the following property.  The law of large numbers implies that
 $$
\lim_{n\rightarrow \infty } \frac{1}{n}\sum_{i=1}^n \langle L(x)+ t  z, C_{Z}^{-1}e_i\rangle ^2 =  \lim_{n\rightarrow \infty } \frac{t^2 }{n}\sum_{i=1}^n \langle   z, C_{Z}^{-1} e_i\rangle ^2   = t^2 
 $$ 
 for any $t\in \mathbf R_+$, $x\in F$, and  $\mu_Z$-a.e. $z\in G$.  Since the image measure has the product  structure, this  also holds  
for  $\mu_{(\gamma,X,Z)}$-almost every $(t,x,z)$.  Hence,
$$
(\gamma,X, Z)^{-1} \{ (t,x, z): \lim_{n\rightarrow \infty } \frac{1}{n}\sum_{i=1}^n \langle L(x)+ t  z, C_{Z}^{-1} e_i\rangle ^2  =t^2   \}
$$
has full $P$-measure i.e.
$$
\lim_{n\rightarrow \infty } \frac{1}{n}\sum_{i=1}^n \langle  L(X)+\gamma  Z, C_{Z}^{-1} e_i\rangle ^2 =  \gamma^2  
$$ 
$P$-almost surely. 
 \end{proof}

The averaging principle for  the posterior distributions is given in the  following lemma.
Note, that also topological products of Souslin spaces are   Souslin spaces.
\begin{lemma}\label{aver}
Let the assumptions of the Lemma \ref{olk} hold. 

 A  solution   $\mu(\cdot,  y)$ of the statistical inverse problem of estimating the distribution  of $X$ given  a sample $y$ of 
$Y= L(X)+\gamma Z$ coincides  $\mu_Y$-almost surely with a Borel measurable solution  
$\tilde \mu(\cdot, (y,\gamma_y))$ of the statistical inverse problem of  estimating the distribution   of $X$ given  $(Y,\gamma)=(y,\gamma_y)$, where $\gamma_y$ is defined by \eqref{pre}.
 \end{lemma}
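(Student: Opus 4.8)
The plan is to leverage the fact, already secured in Lemma \ref{olk}, that $\gamma$ is $P$-almost surely a deterministic function of $Y$, so that conditioning on $Y$ is informationally equivalent to conditioning on the pair $(Y,\gamma)$. Concretely, Lemma \ref{olk} gives $\gamma=\gamma_Y$ $P$-almost surely, where $y\mapsto\gamma_y$ is the $\mathcal G$-measurable function defined in \eqref{pre}. Since the product of two Souslin spaces is Souslin, $G\times\mathbf R_+$ is a Souslin space, and Theorem \ref{ex} furnishes an essentially unique solution $\tilde\mu$ of the statistical inverse problem of estimating the distribution of $X$ given $(Y,\gamma)$; moreover its proof allows us to fix a version for which $(y,t)\mapsto\tilde\mu(U,(y,t))$ is Borel measurable for every $U\in\mathcal F$.

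First I would compare the two conditioning $\sigma$-algebras. Because $\gamma=\gamma_Y$ $P$-a.s.\ and $y\mapsto(y,\gamma_y)$ is a Borel map from $G$ into $G\times\mathbf R_+$, the random variable $(Y,\gamma)$ agrees $P$-almost surely with the $\sigma(Y)$-measurable map $\omega\mapsto(Y(\omega),\gamma_{Y(\omega)})$. By completeness of $\Sigma$ this yields $\sigma(Y,\gamma)\subseteq\sigma(Y)$ modulo $P$-null sets, while the reverse inclusion is immediate. Consequently, for every $U\in\mathcal F$,
\[
\mathbf E[1_U(X)\mid\sigma(Y)]=\mathbf E[1_U(X)\mid\sigma(Y,\gamma)]\quad P\text{-a.s.},
\]
the two conditional expectations sharing the same integrals over their common (mod $P$-null) conditioning $\sigma$-algebra.

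Next I would chain together the defining relations of the two solutions. Using Condition 1 of Definition \ref{si} for both $\mu$ and $\tilde\mu$, together with the identity $\gamma=\gamma_Y$ a.s., I obtain
\[
\mu(U,Y(\omega))=\mathbf E[1_U(X)\mid\sigma(Y)](\omega)=\tilde\mu(U,(Y(\omega),\gamma(\omega)))=\tilde\mu(U,(Y(\omega),\gamma_{Y(\omega)}))
\]
for $P$-almost every $\omega$. Both $y\mapsto\mu(U,y)$ and $y\mapsto\tilde\mu(U,(y,\gamma_y))$ are $\mu_Y$-measurable — the first by Condition 2 of Definition \ref{si}, the second as the composition of the Borel map $y\mapsto(y,\gamma_y)$ with the Borel kernel $\tilde\mu(U,\cdot)$. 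Pulling the preceding identity back through $Y$ and invoking Lemma \ref{leu} exactly as in the proof of Theorem \ref{ex}, I conclude that for each fixed $U$ the two functions of $y$ coincide outside some $\mu_Y$-null set $N_U$.

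Finally I would upgrade this to coincidence of the full posterior measures. Letting $U$ range over a fixed countable measure-determining algebra generating $\mathcal F$ — available because the Borel $\sigma$-algebra of the Souslin space $F$ is countably generated — the exceptional sets combine into a single $\mu_Y$-null set $N=\bigcup_U N_U$; off $N$ the measures $\mu(\cdot,y)$ and $\tilde\mu(\cdot,(y,\gamma_y))$ agree on a measure-determining class and therefore coincide as Borel measures on $F$. The only step demanding genuine care is the passage from the $P$-almost-sure identity on $\Omega$ to the $\mu_Y$-almost-sure identity on $G$, which is precisely the measurability-and-pullback argument governed by Lemma \ref{leu}; everything else is bookkeeping resting on the already-established fact that $\gamma=\gamma_Y$ almost surely.
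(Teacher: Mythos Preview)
Your proof is correct and follows essentially the same arc as the paper's: both rely on the key fact from Lemma \ref{olk} that $\gamma=\gamma_Y$ $P$-almost surely, then establish $\mu(U,Y)=\tilde\mu(U,(Y,\gamma_Y))$ almost surely, check Borel measurability of $y\mapsto\tilde\mu(U,(y,\gamma_y))$, and finish via a countable measure-determining class. The only minor difference is in how the central identity of conditional expectations is justified: the paper uses the tower property $\mathbf E[1_U(X)\mid\sigma(Y)]=\mathbf E[\,\mathbf E[1_U(X)\mid\sigma(Y,\gamma)]\,\mid\sigma(Y)]$ (invoking $\sigma(Y)\subseteq\sigma(Y,\gamma)$) and then applies the statement of Lemma \ref{olk} to the inner kernel, whereas you argue directly that $\sigma(Y)$ and $\sigma(Y,\gamma)$ coincide modulo $P$-null sets so the two conditional expectations agree; both justifications are sound and yield the same conclusion.
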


\begin{proof}
The $\sigma$-algebra  $\sigma(Y)$ generated by $Y=L(X)+\gamma Z$ is a sub-$\sigma$-algebra of 
the $\sigma$-algebra $\sigma( (Y,\gamma)$ generated by the $G\times \mathbf R_+$-valued 
random variable $(Y, \gamma) = (\gamma Z+ L(X),\gamma)$.  By Lemma \ref{olk} and a property of conditional expectations, the 
 solutions satisfy
\begin{eqnarray*}
\mu(U, Y)&=&\mathbf E[1_U(X)| \sigma(Y)] = \mathbf E[  \mathbf E[1_U(X)| \sigma(Y,\gamma)] |\sigma(Y)] \\
&=& =\mathbf E [\tilde  \mu(U, (Y,\gamma)) | \sigma(Y)]=  \tilde \mu(U, (Y, \gamma_Y))
\end{eqnarray*}
almost surely  for a fixed $U\in \mathcal  F$.   It is easy to see that $y\mapsto \tilde \mu(U, (y, \gamma_y))$ is  Borel-measurable.  
By the Souslin property, it is enough to consider only countably many 
$U\in \mathcal F$ in order to identify the two  measures. Hence,   $(U,y)\mapsto \tilde \mu(U, (y, \gamma_y))$
 is a solution of the  statistical inverse problem of estimating the distribution  of $X$ given  a sample $y$ of 
$Y= L(X)+\gamma Z$. 
\end{proof}

\begin{theorem}\label{subi}
Let the assumptions of the Lemma \ref{olk} hold. The essentially unique  posterior distribution    
of $X$ given a sample $y$ of $Y=L(X)+ \gamma Z$ has a version
\begin{equation}\label{kaava}
\mu(A, y) =   \frac{\int_ A \exp \left( \langle  y,  \gamma_y^{-2}C^{-1}_Z L(x) \rangle - \frac{1}{2\gamma_y^2} \Vert L(x) \Vert_{H_{\mu_Z}}^2  \right) d\mu_X (x)   }{\int_ F \exp \left( \langle  y,  \gamma_y^{-2}C^{-1}_Z L(x) \rangle - \frac{1}{2\gamma_y^2} \Vert L(x) \Vert_{H_{\mu_Z}}^2  \right) d\mu_X (x)   }, 
 \end{equation}
for all $y\in G$ such that the limit 
 $$
 \gamma_y=\left(\lim _{n\rightarrow \infty}  \frac{1}{n} \sum_{i=1}^n \langle y, C_Z ^{-1}e_i \rangle^2 \right) ^\frac{1}{2}
 $$
 exists and does not vanish.
\end{theorem}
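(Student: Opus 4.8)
The plan is to reduce this to the Gaussian noise analysis of Section~\ref{secgau} by enlarging the observation to the pair $(Y,\gamma)$ and then eliminating $\gamma$ through the averaging principle of Lemma~\ref{aver}. First I would note that $F$, $G$ and the product $G\times\mathbf R_+$ are all Souslin, so Theorem~\ref{ex} already supplies an essentially unique solution of the problem of estimating $X$ given $Y$; it therefore remains only to exhibit \eqref{kaava} as a version of it. By Lemma~\ref{aver} a solution of this problem agrees $\mu_Y$-almost surely with $\tilde\mu(\cdot,(y,\gamma_y))$, where $\tilde\mu$ is the posterior of $X$ given $(Y,\gamma)$. Hence the task is to compute $\tilde\mu(\cdot,(y,t))$ and then substitute $t=\gamma_y$.

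For the augmented problem I would recognize a dominated-noise structure to which Theorem~\ref{toka} applies. Writing $\tilde L(x)=(L(x),0)$ and $\tilde\varepsilon=(\gamma Z,\gamma)$, one has $(Y,\gamma)=\tilde L(X)+\tilde\varepsilon$ on $G\times\mathbf R_+$, with $\tilde L$ continuous and $\tilde\varepsilon$ independent of $X$. The law of $\tilde\varepsilon$ disintegrates over $\gamma$ as $\mu_{\tilde\varepsilon}(dy,dt)=\mu_{tZ}(dy)\,\mu_\gamma(dt)$, where $\mu_{tZ}$ is the law of $tZ$; translating the first coordinate by $L(x)\in H_{\mu_Z}$ fixes the $\gamma$-marginal while, for each fixed $t>0$, moving $tZ$ to $tZ+L(x)$. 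Since $tZ$ is Gaussian with covariance $t^2C_Z$, so that $C_{tZ}^{-1}=t^{-2}C_Z^{-1}$ and $\Vert\cdot\Vert_{H_{\mu_{tZ}}}=t^{-1}\Vert\cdot\Vert_{H_{\mu_Z}}$, the Cameron--Martin formula of Section~\ref{secgau} applied coordinatewise in $t$ gives $\mu_{\tilde\varepsilon+\tilde L(x)}\ll\mu_{\tilde\varepsilon}$ with
\begin{equation*}
\rho(x,(y,t))=\frac{d\mu_{\tilde\varepsilon+\tilde L(x)}}{d\mu_{\tilde\varepsilon}}(y,t)=\exp\!\left(\langle y,\,t^{-2}C_Z^{-1}L(x)\rangle-\tfrac{1}{2t^2}\Vert L(x)\Vert^2_{H_{\mu_Z}}\right).
\end{equation*}

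I would then check the joint measurability of $\rho$ required by Theorem~\ref{toka} with $\nu=\mu_{\tilde\varepsilon}$. On $F\times G\times(0,\infty)$ the factor $t^{-2}$ is continuous, the quantity $\Vert L(x)\Vert^2_{H_{\mu_Z}}=\sum_i\langle L(x),C_Z^{-1}e_i\rangle^2$ is Borel in $x$ as a pointwise sum of continuous functions, and the measurable extension $\langle y,C_Z^{-1}L(x)\rangle$ is the $L^2(\mu_Z)$-limit of the partial sums $\sum_{i\le n}\langle L(x),C_Z^{-1}e_i\rangle\langle y,C_Z^{-1}e_i\rangle$, each of which is jointly continuous in $(x,y)$ because $C_Z^{-1}e_i\in G'$; hence $\rho$ is $\mu_X\times\mu_{\tilde\varepsilon}$-measurable. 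This is exactly where the assumption that $\gamma$ has no atom at zero is used, since $\mu_{\tilde\varepsilon}(G\times\{0\})=\mu_\gamma(\{0\})=0$ makes the singularity of $t^{-2}$ at $t=0$ negligible. Theorem~\ref{toka} then yields the essentially unique posterior $\tilde\mu(A,(y,t))=\int_A\rho\,d\mu_X/\int_F\rho\,d\mu_X$ for $\mu_{(Y,\gamma)}$-almost every $(y,t)$ with finite nonzero denominator. Because $\gamma=\gamma_Y$ $P$-almost surely by Lemma~\ref{olk}, the law $\mu_{(Y,\gamma)}$ is carried by the graph $t=\gamma_y$, so substituting $t=\gamma_y$ via Lemma~\ref{aver} delivers \eqref{kaava} for $\mu_Y$-almost every $y$, in particular wherever $\gamma_y$ exists and does not vanish.

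The main obstacle is the middle step: rigorously justifying the disintegration $\mu_{\tilde\varepsilon}(dy,dt)=\mu_{tZ}(dy)\,\mu_\gamma(dt)$ together with the coordinatewise-in-$t$ Cameron--Martin computation, so as to obtain a \emph{single} density $\rho$ that is jointly measurable in $(x,y,t)$. The delicacy is that $\langle y,C_Z^{-1}L(x)\rangle$ exists only as an almost-everywhere-defined measurable extension; one must pin down the series representation above to produce a version defined on a common set of full measure for all $t$, so that the later substitution $t=\gamma_y$ — where $\gamma_y$ itself depends on $y$ — is unambiguous.
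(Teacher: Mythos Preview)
Your proposal is correct and follows essentially the same route as the paper: augment the observation to $(Y,\gamma)$, compute the Radon--Nikodym density of $\mu_{(\gamma Z+L(x),\gamma)}$ with respect to $\mu_{(\gamma Z,\gamma)}$ via the Cameron--Martin formula applied at each fixed level $t$, obtain the posterior of $X$ given $(Y,\gamma)$, and finish with Lemma~\ref{aver}. The only cosmetic difference is that you cast the augmented problem as an instance of Theorem~\ref{toka} with $\tilde L(x)=(L(x),0)$ and $\tilde\varepsilon=(\gamma Z,\gamma)$, while the paper performs the same computation by direct iterated conditioning on $\gamma$ (via Lemma~\ref{eka2}) without naming Theorem~\ref{toka}; the resulting density $\rho(x,(y,t))$ and the concluding substitution $t=\gamma_y$ are identical.
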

 \begin{proof}     
 Let us calculate the posterior distribution    of $X$ given $(Y,\gamma)$. 

 The conditional distribution of $(Y,\gamma)$ given a sample $x$ of $X$ is 
  $\mu_{(\gamma Z+ L(x), \gamma)} (C \times B)$, 
 where $C\in \mathcal G$ and $B\in \mathcal B(\mathbf R_+)$ by Lemma \ref{eka2}.  Furthermore, the conditional distribution  of $L(x)+\gamma Z$ given $\sigma((\gamma,X))$  is  $\mu_{\gamma(\omega_0) Z +L(x)}$. 
 Taking  conditional expectations inside the integral gives
 \begin{eqnarray*}
  \mu_{(\gamma Z+ L(x), \gamma)} (C \times B)&=& P( \gamma Z+L(x) \in C \cap \gamma \in B) \\
  &=& \mathbf E [1_C(\gamma Z+L(x)) 1_B(\gamma)]\\
  &=& \mathbf E [\mathbf E[ 1_C(\gamma Z+L(x))  | \sigma(\gamma)] 1_B(\gamma)] \\
  &=&  \int \mu_{a Z +L(x)} (C) 1_B(a) d\mu_{\gamma}(a).
  \end{eqnarray*}
  We may now use the absolute continuity of the translated measures $\mu_{aZ+L(x)}$ with respect to 
  $\mu_{aZ}$ which follows from the Cameron-Martin theorem. We obtain 
  \begin{eqnarray*}
 \mu_{(\gamma Z+ L(x), \gamma)} (C \times B) &=& \int_B  \int_ C e^{ \langle  y,  a^{-2}C^{-1}_Z L(x) \rangle  - \frac{1}{2a^2} \Vert L(x) \Vert_{H_{\mu_Z}}^2  } d\mu_{aZ}(y)   d\mu_\gamma(a) \\
  &=& \int_ {C\times B}  e^ { \langle  y,  a^{-2}C^{-1}_Z L(x) \rangle - \frac{1}{2a^2} \Vert L(x) \Vert_{H_{\mu_Z}}^2  } d\mu_{aZ}(y) d\mu_ \gamma(a)\\
  &=& \mathbf E [  \mathbf E[ 1_ {C\times B}(\gamma  Z,\gamma ) e^{ \langle  \gamma Z,  \gamma ^{-2}C^{-1}_Z L(x) \rangle - \frac{1}{2\gamma^2} \Vert L(x) \Vert_{H_{\mu_Z}}^2  } | \sigma(\gamma) ] ]\\
  &=& \int_ {C\times B}     e^{ \langle  y,  a^{-2}C^{-1}_Z L(x) \rangle - \frac{1}{2a^2} \Vert L(x) \Vert_{H_{\mu_Z}}^2} d\mu_{(\gamma Z, \gamma )} (y,a). 
    \end{eqnarray*}
  Hence, the Radon-Nikodym derivative of $\mu_{(\gamma Z+L(x),\gamma)}$ with respect to $\mu_{(\gamma Z,\gamma)}$ is 
   $$
   \frac{d\mu_{(\gamma Z + L(x),\gamma)}}{ d\mu_{(\gamma Z, \gamma)} }  (y,a)= \exp \left( \langle  y,  a^{-2}C^{-1}_Z L(x) \rangle - \frac{1}{2a^2} \Vert L(x) \Vert_{H_{\mu_Z}}^2  \right). 
   $$ 
 The   posterior distribution    of $X$ given a sample $(y,a)$ of $(Y, \gamma)$ has a version
 $$
 \mu (U, (y,a))= \frac{\int_ U \exp \left( \langle  y,  a^{-2}C^{-1} _ZL(x) \rangle - \frac{1}{2a^2} \Vert L(x) \Vert_{H_{\mu_Z}}^2  \right) d\mu_X (x)   }{\int_ F \exp \left( \langle  y,  a^{-2}C^{-1} _ZL(x) \rangle - \frac{1}{2a^2} \Vert L(x) \Vert_{H_{\mu_Z}}^2  \right) d\mu_X (x)   } 
 $$
 for all $y\in G$ and $a\not=0$.  We obtain the required result  by  Lemma  \ref{aver}.
   \end{proof}

Posterior convergence holds under the same  conditions as in the Gaussian case. 

\begin{remark}
The posterior distribution  \eqref{kaava}  does not depend on the distribution of $\gamma$.  Especially, $\gamma$ does not necessarily have finite moments.
  \end{remark}
\begin{remark}\label{epa}
If the sample $y\in H_{\mu_Z}$, then  the  estimated random number $\gamma_y=0$. Consequently,  we can not apply   Theorem \ref{uniq}    for the solution \eqref{kaava}  on   any measurable linear  subspace  of $G$ of full $\mu_{Z}$-measure, since it contains the Cameron-Martin space  $H_{\mu_Z}$. Besides the Lusin theorem, nothing seems to be known about the continuity of the measurable function $y\mapsto \gamma_y$.  Even though the continuity of the posterior distribution as a function of observations remains an open question, we can anticipate from the form of the posterior distribution  that the prior distribution  will have a good regularizing effect on  the corresponding  ill-posed inverse problem. 
 \end{remark}

Following \cite{taqqu},   we call  $\varepsilon =\gamma Z $    {\it  a symmetric  $\alpha$-stable  sub-Gaussian $G$-valued random variable} if 
  $\gamma= \sqrt{\Gamma}$, where  the  non-negative random variable $\Gamma $ satisfies
  $$
  \mathbf E [e^{-t \Gamma}] = e^{- t^{\alpha/2}},  \; t>0,
  $$ 
  for some $0<\alpha <2$, and $Z$ is a zero mean $G$-valued Gaussian random variable.

For instance,   $\alpha$-stable   random variables  are used as approximative    models for    ambient noise. 
   An example of ambient noise is the acoustic noise in oceans originating 
from  e.g. shipping, rain fall, waves, animal activity, bubbles, cracking of ice and geological processes \cite{mar,ur}. It disturbs  acoustic communication and  active acoustic remote sensing  in underwater environments \cite{sea,ray}.  
 The  finite-dimensional distributions of ambient  noise are thought to originate from many disturbances occurring in natural environments: typically few strong and a large number of weak disturbances of different orders.   The variances of   individual disturbances  are often  such that  Lindeberg's condition, which is a sufficient condition  (and in some cases also necessary)  for the applicability of  the classical central limit theorem,  does not hold \cite{ambient}.  A generalized central limit theorem states that a.s. converging sums of independent random variables necessarily have stable distributions (see Definition 1.1.5 in \cite{taqqu}).     Non-Gaussian stable distributions  exhibit heavy tails, which explains why  the Gaussian  distributions are  not the best ones  for  modeling ambient noise. 
 Symmetric $\alpha$-stable sub-Gaussian random variables are perhaps the most simple subclass of stable distributions.     
 
 Sub-Gaussian noise is encountered also in fMRI (functional magnetic resonance imaging), where it models physiological noise, e.g.  disturbances originating from  breathing and  heartbeat \cite{adali}.

  Spherically symmetric noise models are used also  as approximative   models  in  high resolution radar imaging for describing  the ground-clutter (i.e. unwanted echoes of the transmitted radar signal from the ground), and also sea-clutter (i.e. echoes from the surface of the sea) \cite{conte,conte1,conte2}.  It should be noted that the modeling of radar clutter and underwater noise is not yet a mature field of science. Beside of  spherically symmetric models also other models  have been developed and better models are pursued after.
 
Noise is usually   rougher than the signal by rule of thumb. In  the above applications,  it is not verified whether this holds for the noise $\varepsilon$ and  signals $L(x)$, where $x\in F$.   For radar imaging  this is not  a critical point since 
the reflected signal acquires some regularity from the transmitted signal.

\subsection{Subordinated  noise}\label{sasse}

 We consider  another  generalization of  Gaussian noise  that is similar to 
  spherically symmetric noise.    
  
  Let  $B_t$  be a Brownian motion on $\mathbf R_+$ satisfying $B_0=0$ almost surely.   Subordinated  noise is here  defined as a time-changed process  $$\varepsilon_t= B_ {\alpha_t}, $$  where    $\alpha_t$ is a strictly increasing  stochastic process
    that is  statistically independent from the Brownian motion $B_t$. We assume that   $\alpha_t$ has bi-Lipschitz-continuous sample paths and  satisfies $\alpha_0=0$. For example, $\alpha_t$ can be an integral function of some  statistically independent Gamma process starting from a non-zero value.  Such a distribution of $\alpha$  can reflect inaccuracies that are  believed to be  present in 
    the covariance operator of the noise $\varepsilon$.

   \begin{lemma}
   The random function $\varepsilon_\cdot $ on $[0,1]$ is a $C([0,1])$-valued random variable.
     \end{lemma}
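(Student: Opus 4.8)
The plan is to verify the two defining properties of a $C([0,1])$-valued random variable separately: that $P$-almost every sample path $t\mapsto \varepsilon_t(\omega)$ lies in $C([0,1])$, and that $\omega\mapsto \varepsilon_\cdot(\omega)$ is measurable into $(C([0,1]),\mathcal G)$. As in the Girsanov example above, $\mathcal G=\mathcal B(C([0,1]))$ is generated by the point evaluations $\{\delta_q:q\in \mathbf Q\cap[0,1]\}$, since $C([0,1])$ is separable and every closed ball $\{f:\Vert f-f_0\Vert_\infty\le r\}=\bigcap_{q}\{f:|f(q)-f_0(q)|\le r\}$ belongs to the $\sigma$-algebra they generate, while the evaluations are continuous. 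Hence it suffices to establish path continuity together with the measurability of each real-valued map $\omega\mapsto\varepsilon_q(\omega)$, $q\in\mathbf Q\cap[0,1]$.

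For the sample paths, I would first note that since $\alpha$ is strictly increasing with $\alpha_0=0$ and bi-Lipschitz paths, one has $0\le \alpha_t(\omega)\le\alpha_1(\omega)<\infty$ for $t\in[0,1]$, so $B$ is only ever evaluated on a bounded subinterval of $\mathbf R_+$. For $P$-almost every $\omega$ the map $s\mapsto B_s(\omega)$ is continuous (continuity of Brownian paths) and $t\mapsto\alpha_t(\omega)$ is continuous (being bi-Lipschitz), so their composition $t\mapsto B_{\alpha_t(\omega)}(\omega)=\varepsilon_t(\omega)$ is continuous on $[0,1]$ and vanishes at $t=0$. On the exceptional $P$-null set one may set $\varepsilon_\cdot:=0$, which is harmless by the completeness of $\Sigma$; thus $\varepsilon_\cdot(\omega)\in C([0,1])$ for every $\omega$.

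For the measurability of the coordinates, the key step is the joint measurability of the Brownian motion: the map $\Phi:(s,\omega)\mapsto B_s(\omega)$ is $\mathcal B(\mathbf R_+)\otimes\Sigma$-measurable, being the pointwise limit of the jointly measurable dyadic approximations $\Phi_n(s,\omega)=B_{\lfloor 2^n s\rfloor 2^{-n}}(\omega)$ (each a countable sum of products $1_{[k2^{-n},(k+1)2^{-n})}(s)\,B_{k2^{-n}}(\omega)$), which converge by the continuity of $s\mapsto B_s(\omega)$. Since each $\alpha_q$ is a random variable, the graph map $\omega\mapsto(\alpha_q(\omega),\omega)$ is measurable into $(\mathbf R_+\times\Omega,\mathcal B(\mathbf R_+)\otimes\Sigma)$, and composing it with $\Phi$ shows that $\varepsilon_q=\Phi(\alpha_q(\cdot),\cdot)$ is a real random variable for every $q\in\mathbf Q\cap[0,1]$. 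Combined with the path continuity, the generation of $\mathcal G$ by the evaluations then yields that $\omega\mapsto\varepsilon_\cdot(\omega)$ is $\Sigma/\mathcal G$-measurable, as required.

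The main obstacle is precisely this joint measurability of $B$ followed by its composition with the random time $\alpha_q$: separate continuity in each variable does not by itself produce a measurable real random variable, so one must pass through the $\mathcal B(\mathbf R_+)\otimes\Sigma$-measurable version $\Phi$ and compose with the measurable graph map. The remaining ingredients — sample-path continuity and the reduction to countably many rational evaluations — are routine, and statistical independence of $\alpha$ and $B$ is not needed for the measurability claim itself (it will be used only later, for the distributional computations).
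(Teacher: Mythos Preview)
Your proposal is correct and follows essentially the same route as the paper: the paper also argues that the sample paths are continuous as compositions of continuous functions, that $\mathcal B(C([0,1]))$ is generated by point evaluations at a countable dense set of times, and that measurability of each coordinate $\omega\mapsto B_{\alpha_{t_i}}(\omega)$ follows from the joint $\mathcal B(\mathbf R_+)\otimes\Sigma$-measurability of $(s,\omega)\mapsto B_s(\omega)$. Your version is simply more explicit about the graph-map composition and the construction of the jointly measurable version of $B$, which the paper leaves as a one-line remark.
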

 \begin{proof}
The sample paths of $\varepsilon$ are continuous functions as  compositions of continuous functions.  Moreover, the space $C([0,1])$ is a separable Fr\'echet space, which implies that its Borel $\sigma$-algebra is  generated 
by the cylinder sets 
$$
A=\{ f\in C([0,1]):  f(t_i) \in U_i \, \forall \, i \in I\} 
$$
where $U_i\in \mathcal B(\mathbf R)$,  $I\subset \mathbf N$ are finite sets, and  $\cup_{i=1}^\infty t_i $ is a dense subset of $[0,1]$.   (see Theorem A.3.7 in \cite{bog}).  It is enough to check that  the mapping 
 $$
\omega\mapsto   B_{\alpha_{t_i}} 
 $$
 is a random variable for any $t_i\in [0,1]$.  But this follows from the joint measurability of the Brownian motion
   from $[0,1]\times \Omega$ into $\mathbf R$.
   \end{proof}  
   
  We take $G= C([0,1])$, $\mathcal G= \mathcal B(C([0,1]))$, and 
     denote  $\mu_\varepsilon(A)= P( \varepsilon _\cdot \in A)$ for 
   any  Borel set $A\subset C([0,1])$.

 \begin{lemma}\label{soso}
  The  Gaussian measure $ \mu_{B_{\alpha (\omega)}+L(x)}$ that  has mean $L(x)$ and  the  covariance  operator with kernel $C_{\alpha(\omega)}(t,s)=\min (\alpha_t (\omega),\alpha_s(\omega))$ on $ [0,1]\times [0,1]$ is a version of the   conditional probability  $V\mapsto \mathbf E [ 1_ V(B_{\alpha}+L(x) )  | \sigma(\alpha)] (\omega)$ on $\mathcal G$. 
  \end{lemma}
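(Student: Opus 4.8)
The plan is to freeze the time change $\alpha$ by conditioning, observe that the frozen process $t\mapsto B_{\alpha_t}$ is an ordinary centered Gaussian process whose covariance is inherited from Brownian motion, and then package this via the independence machinery of Lemma \ref{eka2}. First I would fix the ambient spaces: I regard $\alpha$ as a $C([0,1])$-valued random variable (its paths are continuous, strictly increasing, with $\alpha_0=0$) and $B$ as a $C(\mathbf R_+)$-valued random variable, where $C(\mathbf R_+)$ carries the topology of uniform convergence on compacta; both are Polish, hence Souslin. The composition-and-shift map $T:C([0,1])\times C(\mathbf R_+)\to C([0,1])$, $T(a,b)(t)=b_{a_t}+L(x)(t)$, is continuous, since $a_n\to a$ uniformly together with $b_n\to b$ uniformly on compacta forces $b_n(a_n(\cdot))\to b(a(\cdot))$ uniformly on $[0,1]$ (using that $b$ is uniformly continuous on the compact range of $a$). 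In particular $T$ is Borel measurable, and $\varepsilon+L(x)=B_\alpha+L(x)=T(\alpha,B)$.

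Next I would apply Lemma \ref{eka2} with $Z_1=\alpha$ and $Z_2=B$. The proof of that lemma uses only the independence of $Z_1,Z_2$ and the Borel measurability of $T$, so it applies verbatim even though here the codomain $C([0,1])$ of $T$ differs from the range space $C(\mathbf R_+)$ of $B$. Since $\alpha$ and $B$ are statistically independent, the lemma gives, for $\mu_{\varepsilon+L(x)}$-integrable $f$,
\[
\mathbf E[f(B_\alpha+L(x))\mid \sigma(\alpha)](\omega)=\int_{C([0,1])} f\, d\mu_{T(\alpha(\omega),B)}
\]
$P$-almost surely, so that the deterministically frozen image measure $\mu_{T(\alpha(\omega),B)}$ — the law of $t\mapsto B_{\alpha_t(\omega)}+L(x)(t)$ with $\alpha(\omega)$ held fixed — is a version of the conditional probability $V\mapsto \mathbf E[1_V(B_\alpha+L(x))\mid\sigma(\alpha)](\omega)$. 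The required $\sigma(\alpha)$-measurability of $\omega\mapsto\mu_{T(\alpha(\omega),B)}$ is automatic from the lemma.

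It then remains to identify $\mu_{T(a,B)}$, for a fixed deterministic increasing bi-Lipschitz $a$ with $a_0=0$, as the Gaussian measure with mean $L(x)$ and covariance kernel $\min(a_t,a_s)$. For any finite $t_1,\dots,t_k\in[0,1]$ the vector $(B_{a_{t_1}},\dots,B_{a_{t_k}})$ is the evaluation of Brownian motion at the deterministic points $a_{t_1},\dots,a_{t_k}$, hence centered and jointly Gaussian with $\mathbf E[B_{a_{t_i}}B_{a_{t_j}}]=\min(a_{t_i},a_{t_j})$, and adding $L(x)$ only shifts the mean. Thus every finite-dimensional projection of $t\mapsto B_{a_t}+L(x)(t)$ is Gaussian; since the point evaluations lie in $C([0,1])'$ and generate the Borel $\sigma$-algebra, each continuous linear functional $\langle\,\cdot\,,\phi\rangle$ of the path is an a.s. limit of linear combinations of such Gaussians and so is itself Gaussian. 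Hence $\mu_{T(a,B)}$ is a Gaussian measure whose characteristic function has the form \eqref{kara} with mean $L(x)$ and covariance $\langle C_a\phi,\phi\rangle=\iint \min(a_t,a_s)\,d\phi(t)\,d\phi(s)$, i.e. the operator with kernel $\min(a_t,a_s)$. As a Gaussian measure on the separable Banach space $C([0,1])$ is determined by its mean and covariance, this yields $\mu_{T(a,B)}=\mu_{B_a+L(x)}$, and specializing $a=\alpha(\omega)$ completes the proof.

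The step I expect to be the main obstacle is the clean invocation of Lemma \ref{eka2}: one must be careful that $T$ is genuinely jointly measurable even though $B$ is indexed by the unbounded set $\mathbf R_+$ while the target path is indexed by $[0,1]$ — this is resolved by the compact-open topology on $C(\mathbf R_+)$ together with the bi-Lipschitz property, which keeps the range $\alpha([0,1])=[0,\alpha_1(\omega)]$ compact — and that freezing $\alpha(\omega)$ does produce a bona fide $\sigma(\alpha)$-measurable kernel. The Gaussian identification itself is routine once the finite-dimensional distributions are computed.
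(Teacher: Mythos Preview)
Your proof is correct and follows essentially the same route as the paper: both arguments view $B$ as a path-valued random variable (you in $C(\mathbf R_+)$, the paper in $C(\mathbf R)$ after extending by zero), show that the composition-and-shift map $(a,b)\mapsto b\circ a+L(x)$ is jointly continuous on the product, and then invoke Lemma~\ref{eka2} with $Z_1=\alpha$, $Z_2=B$. Your version is slightly more explicit in that you spell out the identification of the frozen law as Gaussian with the stated mean and covariance kernel, which the paper leaves implicit; conversely the paper checks continuity pointwise (for each $\delta_t$) rather than uniformly, which is all that is strictly needed since point evaluations generate the Borel $\sigma$-algebra of $C([0,1])$.
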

\begin{proof}
 By defining $B_t=1_{t\geq 0}B_t$ on $\mathbf R$,  the Brownian motion extends to a $C( \mathbf R)$-valued random variable, where $C(\mathbf R)$ is equipped 
with the Borel $\sigma$algebra with respect to the locally convex topology given by the family of seminorms 
$$
\rho_i(f)= \sup_{t\in K_i} |f(t)|,
$$
where  $K_i=[-i,i]$ and $i\in \mathbf N$ (i.e. the topology of  uniform convergence on compact sets). The space $C({\mathbf R})$ is then a locally convex Souslin space, since its topology  is metrizable by  a  complete metric  
$$
d(f_1,f_2)= \sum_{i=1}^\infty  2^{-i} \frac{\rho_i(f_1-f_2)}{1+\rho_i(f_1-f_2)}
$$
and  the polynomials  with rational coefficients form a dense set by the Stone-Weierstrass theorem.
Moreover, $\alpha$ is a $C([0,1])$-valued random variable. 

Recalling Lemma \ref{eka2}, we need to check that the composition mapping 
$(f,g)\mapsto f\circ g+L(x)$ is Borel measurable from  $C(\mathbf R)\times C([0,1])$ into 
$C([0,1])$. Since point evaluations generate the Borel $\sigma$-algebra of $C([0,1])$, it is enough to show that functionals 
$ (f,g)\mapsto  f\circ g(t)+L(x)_t$ are Borel measurable for a fixed $t\in[0,1]$.  We show that this function is actually continuous. 
Since the both spaces are metric spaces, it is enough to check  the  sequential continuity on the product space 
$C(\mathbf R)\times C([0,1])$, which is metrizable.

 Let $\lim_{i\rightarrow \infty }(f_i,g_i)=  (f,g)$ in 
  $C(\mathbf R)\times C([0,1])$, which implies that  $\lim_{i\rightarrow \infty} f_i =f$ and 
$\lim_{i\rightarrow \infty} g_i=g$ in corresponding spaces.  Then  $K=\{g_i(t)\in \mathbf R:  i\in \mathbf N\}$ is compact for the fixed $t\in [0,1]$  and  
\begin{eqnarray*}
|f_i(g_i(t))- f(g(t))| &=&| f_i(g_i(t))-f(g_i(t))+f(g_i(t))-f(g(t))|\\
&<&  \sup_{t\in K} |f_i(t)-f(t)|+  |f(g_i(t))-f(g(t))|\rightarrow 0
\end{eqnarray*}
as $i\rightarrow \infty$ by the convergence of $(f_i,g_i)$ and the continuity of $f$.  
\end{proof}

\begin{theorem}
Let $F$ be a locally convex Souslin topological  vector space equipped with its Borel $\sigma$-algebra  $\mathcal F$ and let $L: F \rightarrow H^1([0,1])$ be a continuous mapping that satisfies 
$L(x)|_{t=0} =0$ for all $x\in F$.  Let  $B_t$ be a Brownian motion on $[0,1]$ starting from zero. Let $\alpha_ t$ be  a strictly increasing stochastic process that is statistically independent from  the Brownian motion $B_t$ and that has  bi-Lipschitz continuous sample paths satisfying $\alpha(0)=0$ almost surely.  Let $X$ be an $F$-valued random variable that is statistically independent 
from the  Brownian motion $B_t$ and  the stochastic process $\alpha_t$.

The essentiaaly unique  solution of     estimating the distribution of $X$ given a sample path $y:[0,1]\rightarrow \mathbf R$   of 
$Y_t=L(X)_t +B_{\alpha_t} $ has a version $\mu$ such that 
$$
\mu(U,y)= \frac{\int_U  \exp \left(  \langle y, C^{-1}_{B_{[y]}}L(x) \rangle -\frac{1}{2} \Vert L(x) \Vert_ {H_{\mu_{B_{[y]}}}}^2 \right) d\mu_X(x)}  {\int  \exp \left( \langle f, C^{-1}_{B_{[y]}} L(x) \rangle -\frac{1}{2} \Vert L(x) \Vert_ {H_{\mu_{B_{[y]}}}}^2 \right) d\mu_X(x) } 
$$ 
for any $U\in \mathcal F$ and for  any $y\in C([0,1])$ such that its quadratic variation $[y]$ satisfies   $0<[y]_t<\infty$   for all  $t\in (0,1]$.
  \end{theorem}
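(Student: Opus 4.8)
The plan is to follow the template of the spherically invariant case (Theorem \ref{subi}), with the time-change process $\alpha$ playing the role of the randomizing variable $\gamma$ and the quadratic variation functional $y \mapsto [y]$ playing the role of the recovery map $y \mapsto \gamma_y$. First I would compute the posterior of $X$ given the \emph{pair} $(Y,\alpha)$, which is a conditionally Gaussian problem handled by Lemma \ref{soso} and the Cameron--Martin formula; then I would observe that $\alpha$ is recovered from $Y$ through its quadratic variation, so that the averaging principle of Lemma \ref{aver} collapses the posterior given $Y$ onto the posterior given $(Y,\alpha)$ evaluated at $\alpha = [Y]$.

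The key preliminary step, the analogue of Lemma \ref{olk}, is to exhibit a Borel functional $y \mapsto [y]\in C([0,1])$ with $[Y(\omega)]_t = \alpha_t(\omega)$ for $P$-almost every $\omega$ and all $t$. I would define $[y]_t$ as the a.s.-existing limit of the dyadic sums $\sum_{t_i \le t}(y_{t_{i+1}} - y_{t_i})^2$, which is Borel measurable (cf.\ Lemma 2.1.7 in \cite{bogm}), setting $[y]_t = 0$ where the limit fails to exist. Conditioning on $\alpha = a$ and $X = x$, the observation is $Y = L(x) + B_a$, where by Lemma \ref{soso} the process $t \mapsto B_{a_t}$ is centered Gaussian with independent increments of variance $a_{t_{i+1}} - a_{t_i}$, whose dyadic quadratic variation converges almost surely to $a_t$; since $L(x) \in H^1([0,1])$ has finite variation, it contributes zero quadratic variation and, by uniform continuity of the paths, zero covariation with $B_a$, so $[Y]_t = a_t$ pathwise. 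Because $\mu_{(\alpha,X,B)}$ is a product measure, Fubini's theorem upgrades this to $[Y] = \alpha$ $P$-almost surely. The bi-Lipschitz hypothesis on $\alpha$ guarantees $c\,t \le \alpha_t \le C\,t$, hence $0 < [y]_t < \infty$ on a set of full $\mu_Y$-measure, matching the admissibility condition in the statement.

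With $\alpha$ recovered, I would carry out the conditionally Gaussian computation exactly as in the proof of Theorem \ref{subi}. For a fixed sample path $a$ of $\alpha$, the Cameron--Martin space $H_{\mu_{B_a}}$ consists of the absolutely continuous $h$ with $h_0 = 0$ and $\int_0^1 \dot h_s^2/\dot a_s\,ds < \infty$; since $\dot a_s$ is bounded below by the bi-Lipschitz constant, every $L(x) \in H^1$ with $L(x)|_{t=0}=0$ lies in $H_{\mu_{B_a}}$, so the Cameron--Martin formula (Corollary 2.4.3 and Theorem 3.2.3 in \cite{bog}) applies and gives
$$
\frac{d\mu_{B_a + L(x)}}{d\mu_{B_a}}(y) = \exp\left(\langle y, C_{B_a}^{-1} L(x)\rangle - \tfrac12 \Vert L(x)\Vert_{H_{\mu_{B_a}}}^2\right).
$$
Conditioning on $\alpha$ and invoking Lemma \ref{eka2} to write the conditional law of $(Y,\alpha)$ given $X$ produces the joint density of $\mu_{(B_\alpha + L(x),\alpha)}$ with respect to $\mu_{(B_\alpha,\alpha)}$, and Theorem \ref{toka} then represents the essentially unique posterior of $X$ given $(Y,\alpha)=(y,a)$ as the displayed ratio with $a$ in place of $[y]$. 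Substituting $a = [y]$ via the averaging identity of Lemma \ref{aver} yields the claimed formula, and essential uniqueness follows from Theorem \ref{ex}.

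The main obstacle I anticipate is the recovery identity $[Y] = \alpha$: one must verify carefully that the $H^1$-signal $L(X)$ makes no contribution to the quadratic variation or to the covariation with $B_\alpha$, and that the dyadic quadratic variation of the time-changed Brownian motion converges \emph{almost surely} (not merely in probability), so that the single pathwise Borel functional $[\,\cdot\,]$ works for $\mu_Y$-almost every observation. A secondary point requiring care is the $\mu_X \times \mu_{(B_\alpha,\alpha)}$-measurability of the density $(x,y) \mapsto \rho(x,y)$ needed to apply Theorem \ref{toka}, which I would obtain from the separate continuity of $\rho$ as in Theorem \ref{uni}.
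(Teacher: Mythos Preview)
Your proposal is correct and follows essentially the same route as the paper: compute the posterior of $X$ given the enlarged observation $(Y,\alpha)$ via the Cameron--Martin formula (using Lemma \ref{soso} and the bi-Lipschitz hypothesis to place $L(x)$ in the Cameron--Martin space of $B_a$), then recover $\alpha$ from $Y$ through the quadratic variation and collapse via Lemma \ref{aver}. The only minor differences are expository: the paper identifies $H_{\mu_{B_a}}$ by pushing forward $H^1_0$ under the composition map $f\mapsto f\circ a$ (invoking Theorems 3.7.3 and 3.7.6 in \cite{bog}) rather than writing the norm $\int \dot h^2/\dot a$ directly, and the paper is more cautious on the recovery step, noting that the quadratic variation is a priori only a limit in probability and that one extracts a subsequence to obtain an a.s.\ limit defining the measurable functional, rather than relying on a.s.\ convergence along dyadics.
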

\begin{proof}
Let $g$ be some sample of $\alpha$ on $[0,1]$.  The mapping    
$$
T_g:   f \mapsto f \circ g
$$
is   linear and measurable from   $C(\mathbf R_+)$  to $C([0,1])$. 
 Hence, the Cameron-Martin space of $ T_g B=B_g$ coincides with 
 $T(H^1_0(\mathbf R_+))$ as a  vector space (see Theorems 3.7.3 and 3.7.6  in \cite{bog}; choose $X=C(\mathbf R_+)\times C([0,1])$ in order to generalize the claim to  the present situation).  Since $g$ is bi-Lipschitz continuous, the mapping 
$ f\circ g $ is in $H^1(0,1)$ whenever $f\in H^1 (g(0,1))$ (e.g.Theorem 2.2.2 in \cite{zie}), and the mapping is actually onto the subspace $H=\{f\in H^1(0,1): f(0)=0\}$.    Especially, the vector $L(x)\in H_{\mu_{B_g}}$ by the assumption, so by Lemma \ref{soso}
  $$
   \frac{d\mu_{(B_{\alpha} +L(x),\alpha)}}{ d\mu_{(B_\alpha,\alpha)}} (z,\alpha)= 
    \exp\left( \langle z, C_\alpha^{-1} L(x)\rangle - \frac{1}{2}\Vert  L(x)\Vert^2_{H_{\mu_{B_{\alpha}}}} \right)   
    $$
as in Theorem \ref{subi}.  It is well-known that for continuous time-changes $\alpha_t$ the quadratic variation of  $B_{\alpha_t}$ coincides with $\alpha_t$ (see Chapter 5:  Proposition 1.5    in  \cite{revuz}).  Therefore, 
$\alpha_t$ is a measurable function of  the sample pathsof $B_{\alpha_t}$ (the quadratic variation is obtained by taking a  limit in probability and we need to pick up  a subsequence in order to get the  a.s. convergence). 
  Since $L(X)\in H^1(0.1)$,  it has finite variation, which implies that its quadratic variation vanishes. 
 Also $\mu_{Y}$-almost every sample path of $ L(X)_t+B_{\alpha_t}$ has $\alpha_t$ as its 
 quadratic variation. We obtain the claim similarly as in   Lemma \ref{aver}. 
\end{proof}

Posterior convergence holds similarly as in the Gaussian case. The assumptions that guarantee   the continuity of the solution are not known.

\subsection{Decomposable additive noise}\label{sparsec}

Let  $F$ and $G$ be  locally convex Souslin topological  vector spaces.   We say that  $G$-valued random noise $\varepsilon$ is {\it decomposable} if it is  of  the form 
$$
\varepsilon= \sum_{i=1}^\infty \varepsilon_i f_i,
$$ where  $\varepsilon _i$ are independent random variables with  a.e. positive probability density functions $\rho_i$ with respect to the Lebesgue  measure  and $f_i\in G$ are some non-zero vectors. 
\begin{remark}
If $\varepsilon_i$ are random variables and 
$\varepsilon:= \sum_{i=1}^\infty \varepsilon_i f_i$  a.s. for some vectors  $f_i\in G$, then 
$\varepsilon$ is a $G$-valued random variable. Indeed, since $G$ is  a Souslin topological  vector space, the mapping 
$\mathbf R \times G \ni (a,f)\mapsto af =:T(a,f)$ is continuous, therefore also $\mathcal B(R\times G)= \mathcal B(R)\otimes \mathcal G$ measurable.  The composition of the measurable mapping $(\omega,f)\mapsto (\varepsilon _i (\omega),f)$ with  $T$ gives a  $G$-valued random variable  
  $T(\varepsilon _i, f)= \varepsilon _i f$.  Also the sum of two $G$-valued random variables is  a $G$-valued random variable  and   limits of locally convex Souslin space-valued  random  variables are   random variables (since the cylinder sets generate the Borel $\sigma$-algebra by Theorem 6.8.9 in \cite{bogm}).
  \end{remark}

If all possible signals $L(x)$, $x\in F$ are sparse in the sense that they belong to the linear span of  $\{f_i: i\in \mathbf N\}$ and  the noise $\varepsilon$ is decomposable, then the measures  $\mu_{\varepsilon+L(x)}$ are absolutely continuous with respect to $\mu_\varepsilon$ \cite{sato}. 

 Moreover,  if   $\{f_i\}_{i=1}^\infty$  is a basis of 
the closed subspace $\overline{{\rm span}(\{ f_i: i\in N\})}$,   the proof in \cite{sato}  gives, with minor additional work,  an explicit  formula for the Radon-Nikodym density.  For simplicity, we take $G=\overline{{\rm span}(\{ f_i: i\in N\})}$.

\begin{theorem}\label{spar}
Let  $G$ be a locally convex   Souslin topological vector    space  equipped with the Borel $\sigma$-algebra $\mathcal G$ and  a  basis $\{ f_i \}_{i=1}^\infty$ such that  the unique coefficients $y_i$  in  $y= \sum_{i=1}^\infty y_i f_i$ depend  measurably  on $y\in G$.  Let  a  
$G$-valued random variable $\varepsilon$ be of the form $\varepsilon = \sum_{i=1}^\infty \varepsilon_i f_i$, where  the random variables $\varepsilon_i$ are statistically independent and have
probability density functions $\rho_i$ that are a.e. positive.  If $L_n(x)=\sum_{i=1}^n  a_i(x) f_{k_i}$, then 
\begin{equation}\label{sparse}
\frac{d\mu_{\varepsilon+L_n(x)}}{d\mu_\varepsilon}(y)=\frac{\prod_{i=1}^{n} \rho_{k_i}(y_{k_i}-a_i(x)) }{\prod_{i=1}^{n} \rho_{k_i}(y_{k_i})}
\end{equation}
for  almost every  $y=\sum_{i=1}^\infty y_i f_i$.  
\end{theorem}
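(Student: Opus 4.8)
The plan is to transport the problem to the coordinate space $\mathbf R^\infty$ through the basis expansion, where the statement becomes one about product measures in which only finitely many factors are translated. First I would introduce the coordinate map $\Phi:G\to\mathbf R^\infty$, $\Phi(y)=(y_1,y_2,\dots)$, which is $\mathcal G/\mathcal B(\mathbf R^\infty)$-measurable by hypothesis and injective because $\{f_i\}_{i=1}^\infty$ is a basis with unique coefficients. Since $\Phi$ is an injective Borel map between Souslin spaces, it is a Borel isomorphism onto its Souslin image, so that $\Phi^{-1}(\mathcal B(\mathbf R^\infty))=\mathcal G$, exactly the Lusin--Souslin mechanism used in Remark \ref{samplespace}. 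It therefore suffices to prove the density identity after pushing both measures forward by $\Phi$ and then to pull the resulting density back.

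Next I would identify the two image measures. Because $\varepsilon=\sum_{i}\varepsilon_i f_i$ almost surely and the $\varepsilon_i$ are independent with densities $\rho_i$, the measure $\mu_0:=\Phi_*\mu_\varepsilon$ is the law of $(\varepsilon_1,\varepsilon_2,\dots)$, namely the product measure $\bigotimes_{i}\rho_i(t_i)\,dt_i$. Writing $\varepsilon+L_n(x)=\sum_j(\varepsilon_j+c_j(x))f_j$, where $c_{k_i}(x)=a_i(x)$ for $i\le n$ and $c_j(x)=0$ otherwise, uniqueness of the basis expansion shows that the coefficient sequence of $\varepsilon+L_n(x)$ is precisely $(\varepsilon_j+c_j(x))_j$. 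Hence $\mu_1:=\Phi_*\mu_{\varepsilon+L_n(x)}$ is the product measure agreeing with $\mu_0$ in every factor except those indexed by $k_1,\dots,k_n$, where the $k_i$-th density is replaced by $\rho_{k_i}(\cdot-a_i(x))$.

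The core is then a product-measure computation in which only finitely many factors differ. Setting $S=\{k_1,\dots,k_n\}$ and regrouping coordinates as $\mathbf R^\infty\cong\mathbf R^S\times\mathbf R^{S^c}$, both measures factor with a common tail: $\mu_0=\mu_0^S\otimes\mu^{S^c}$ and $\mu_1=\mu_1^S\otimes\mu^{S^c}$. On the finite-dimensional space $\mathbf R^S$ each $\rho_{k_i}$ is almost everywhere positive, so the elementary translation formula gives $\mu_1^S\ll\mu_0^S$ with density $\prod_{i=1}^n \rho_{k_i}(t_{k_i}-a_i(x))/\rho_{k_i}(t_{k_i})$; tensoring with the identical tail $\mu^{S^c}$ leaves this density unchanged, because the untouched factors cancel. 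This recovers the absolute continuity already furnished by \cite{sato} and, in addition, the explicit Radon--Nikodym derivative
\begin{equation*}
\frac{d\mu_1}{d\mu_0}(t)=\prod_{i=1}^n\frac{\rho_{k_i}(t_{k_i}-a_i(x))}{\rho_{k_i}(t_{k_i})}.
\end{equation*}
Pulling this density back through $\Phi$ and writing $t_{k_i}=y_{k_i}$ yields \eqref{sparse} for $\mu_\varepsilon$-almost every $y$.

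The step I expect to demand the most care is the legitimacy of transporting the density from $\mathbf R^\infty$ back to $G$, which rests on $\Phi$ generating $\mathcal G$ and acting as a measurable isomorphism onto its image; this is precisely where the Souslin hypothesis on $G$ is used. Once this is secured, the apparent infinite-dimensionality is harmless, since $L_n(x)$ perturbs only the finitely many coordinates in $S$ and the entire infinite tail cancels identically.
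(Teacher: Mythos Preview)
Your proof is correct and takes a genuinely different route from the paper's.

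The paper works probabilistically inside $G$: after relabelling so that $L_n(x)=\sum_{i=1}^n a_i f_i$, it splits $\varepsilon$ into the ``head'' $\sum_{i\le n}\varepsilon_i f_i$ and ``tail'' $Z=\sum_{i>n}\varepsilon_i f_i$, then applies the conditioning Lemma~\ref{eka2} to write $\mu_{\varepsilon+L_n(x)}(A)$ as an $n$-fold integral of $\mu_{Z+\sum_i(y_i+a_i)f_i}(A)$ against $\prod_i\rho_i(y_i)\,dy_i$. A change of variables followed by multiplying and dividing by $\prod_i\rho_i(y_i)$ converts this back into an expectation with respect to $\mu_\varepsilon$ carrying the desired ratio. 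Your approach instead transports everything to $\mathbf R^\infty$ via the coordinate map $\Phi$, where the two measures are visibly product measures differing in only finitely many factors, so the density is the finite product of one-dimensional translation densities; you then pull back using the Lusin--Souslin fact $\Phi^{-1}(\mathcal B(\mathbf R^\infty))=\mathcal G$ from Remark~\ref{samplespace}. Your argument is shorter and more transparent, and it isolates exactly where the Souslin hypothesis is used; the paper's argument stays closer to the reference \cite{sato} and to the conditional-expectation machinery (Lemma~\ref{eka2}) already built up in the paper, at the price of a longer chain of identities.
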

\begin{proof}
Let $A\in \mathcal G$. By possibly rearranging  finitely many vectors, we may suppose that  $L_n(x)= \sum_{i=1}^n a_i f_i$. We consider the probability
\begin{eqnarray*}
\mu_{\varepsilon+L_n(x)} (A) = P(\varepsilon+L_n(x)\in A)
= \mathbf E \left[ 1_A\left ( \sum_{i=1}^\infty \varepsilon_i f_{i} + \sum_{i=1}^n a_i f_i  \right)\right].
\end{eqnarray*}
Denote $Z=\sum_{i=n+1}^\infty \varepsilon _i f_i$. 
Following \cite{sato}, we calculate the conditional expectation of $1_A(\varepsilon+L(X))$ given 
$\sum_{i=1}^n   (\varepsilon_i+ a_i ) f_i  $ and, by  Lemma \ref{eka2},  obtain with straightforward calculations
\begin{eqnarray*}
\mathbf E\left [ 1_A\left( \sum_{i=1}^\infty \varepsilon_i f_{i} + \sum_{i=1}^n a_i f_i  \right)\right]
&=& \mathbf E \left[ \mathbf E \left[1_A\left( Z + \sum_{i=1}^n (\varepsilon_i+a_i) f_i  \right) |\sum_{i=1}^n   (\varepsilon_i+ a_i ) f_i   \right]\right]\\
&=&
\int     \mu_{ Z+\sum_{i=1}^n (\varepsilon_i(\omega)+a_i) f_i  } \left (A \right) dP(\omega)   \\
&=&   \int_{\mathbf R^n}   \mu_{Z+\sum_{i=1}^n (y_i+a_i) f_i  } \left (A \right) 
 \left(  \prod_{i=1}^n \rho_i(y_i)  \right)dy_1\cdots dy_ n\\
&=&   \int_{\mathbf R^n}   \mu_{Z+\sum_{i=1}^n  y_i f_i  } \left (A \right) 
 \left(  \prod_{i=1}^n \rho_i(y_i-a_i)  \right)dy_1\cdots dy_ n.
 \end{eqnarray*}
At this point, the proof   differs from \cite{sato}. Namely, we multiply and divide with
the positive densities of $\varepsilon_i$, and obtain 
   \begin{eqnarray*}
\mathbf E\left [ 1_A\left( \sum_{i=1}^\infty \varepsilon_i f_{i} + \sum_{i=1}^n a_i f_i  \right)\right]
 &=&   \int    \mu_{ Z+ \sum_{i=1}^n y_i f_i  }  (A)    \prod_{i=1}^n  \frac{\rho_i(y_i-a_i)}{ \rho_i( y_i)}   \rho_i(y_i)dy_1\cdots dy_n\\
 &=&   \int    \mu_{ Z+ \sum_{i=1}^n \varepsilon_i (\omega) f_i  }  (A)    \prod_{i=1}^n  \frac{\rho_i(\varepsilon_i(\omega)-a_i)}{ \rho_i( \varepsilon_i (\omega))} dP(\omega)\\
&=&   \mathbf E \left[  \mathbf  E \left[ 1_A\left(\sum_{i=1}^\infty \varepsilon_i f_i \right) | \sum_{i=1}^n \varepsilon_i f_i    \right]
 \prod_{i=1}^n  \frac{\rho_i(\varepsilon_i-a_i)}{ \rho_i( \varepsilon_i)} \right ] \\
 &=&  \mathbf E \left[  1_A  \left (\varepsilon \right)    \prod_{i=1}^n  \frac{\rho_i(\varepsilon_i-a_i)}{ \rho_i( \varepsilon_i)} \right ]. 
\end{eqnarray*}
 Since the unique coefficients $\varepsilon_i$  depend  measurably on  $\varepsilon $, we may write 
\begin{eqnarray*}
\mu_{\varepsilon+L_n(x)} (A) &=&  \mathbf E \left[ 1_A(\varepsilon)  \prod_{i=1}^n \frac{ \rho_i (\varepsilon_i-a_i)}{\rho_i(\varepsilon_i) }\right]\\
&=&\int 1_A(y)\prod_{i=1}^n \frac{ \rho_i (y_i-a_i)}{\rho_i(y_i) } d\mu_{\varepsilon} (y). 
\end{eqnarray*}
\end{proof}

The above theorem verifies the intuitive  picture that for sparse signals  we may as well study the posterior  of $X$ given  the finite-dimensional data 
$$
Y_n=\sum_{i=1}^n  \left( a_i(X) f_{k_i} + \varepsilon_i f_{k_i}\right).
$$

The following theorem gives a significant enlargement of applicable noise models in statistical inverse problems.

   \begin{corollary}[Generalized Cameron-Martin formula]\label{cama}
   Let  $G$ be a locally convex   Souslin topological vector    space  equipped with the Borel $\sigma$-algebra $\mathcal G$ and  a  basis $\{ f_i \}_{i=1}^\infty$ such that  the unique coefficients $y_i$  in  $y= \sum_{i=1}^\infty y_i f_i$ depend  measurably  on $y\in G$.  Let  a  
$G$-valued random variable $\varepsilon$ be of the form $\varepsilon = \sum_{i=1}^\infty \varepsilon_i f_i$, where  the random variables $\varepsilon_i$ are statistically independent and have
probability density functions $\rho_i$ that are a.e. positive.  If $L(x)=\sum_{i=1}^\infty  a_i(x) f_{i}$ for all $x\in F$, and densities  
\begin{equation*}
\frac{d\mu_{\varepsilon+L_n (x)}}{d\mu_\varepsilon}(y)=\frac{\prod_{i=1}^{n} \rho_{i}(y_{i}-a_i(x)) }{\prod_{i=1}^{n} \rho_{i}(y_{i})}
\end{equation*}
are uniformly integrable with respect to $\mu_{\varepsilon}$ and convergent  $\mu_{\varepsilon}$-almost everywhere, then 
\begin{equation*}
\frac{d\mu_{\varepsilon+L (x)}}{d\mu_\varepsilon}(y)=\prod_{i=1}^{\infty} \frac{ \rho_{i}(y_{i}-a_i(x)) }{\rho_{i}(y_{i})}
\end{equation*}
for  $\mu_\varepsilon$-almost every  $y=\sum_{i=1}^\infty y_i f_i$.  
\end{corollary}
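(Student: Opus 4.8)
The plan is to recognize the truncated densities as a martingale under $\mu_\varepsilon$ and then combine martingale convergence with the continuity of translation to identify the limit. Write $\pi_i\colon G\to\mathbf R$ for the (measurable) $i$-th coefficient functional $\pi_i(y)=y_i$, and set $\mathcal G_n=\sigma(\pi_1,\dots,\pi_n)$. Since $\{f_i\}$ is a basis, $\pi_i(\varepsilon)=\varepsilon_i$, so under $\mu_\varepsilon$ the coordinates $\pi_i$ are independent with densities $\rho_i$; that is, $\mu_\varepsilon\circ(\pi_1,\pi_2,\dots)^{-1}=\bigotimes_i\rho_i(t)\,dt$. By Theorem~\ref{spar} the function
$$
g_n(y):=\frac{d\mu_{\varepsilon+L_n(x)}}{d\mu_\varepsilon}(y)=\prod_{i=1}^n\frac{\rho_i(y_i-a_i(x))}{\rho_i(y_i)}
$$
is nonnegative and $\mathcal G_n$-measurable. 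First I would verify the martingale property: since $\pi_{n+1}$ is $\mu_\varepsilon$-independent of $\mathcal G_n$ and $\int_{\mathbf R}\rho_{n+1}(t-a_{n+1}(x))\,dt=1$, the one-step factor $\rho_{n+1}(\pi_{n+1}-a_{n+1}(x))/\rho_{n+1}(\pi_{n+1})$ has conditional mean one, whence $\mathbf E_{\mu_\varepsilon}[g_{n+1}\mid\mathcal G_n]=g_n$. Thus $\{g_n\}$ is a nonnegative $(\mathcal G_n,\mu_\varepsilon)$-martingale.

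By hypothesis the family $\{g_n\}$ is uniformly integrable with respect to $\mu_\varepsilon$ and converges $\mu_\varepsilon$-a.e.\ to $g_\infty(y):=\prod_{i=1}^\infty\rho_i(y_i-a_i(x))/\rho_i(y_i)$. Uniform integrability together with a.e.\ convergence gives convergence in $L^1(\mu_\varepsilon)$ (Vitali), so
$$
\sup_{A\in\mathcal G}\Bigl|\mu_{\varepsilon+L_n(x)}(A)-\textstyle\int_A g_\infty\,d\mu_\varepsilon\Bigr|\le\int|g_n-g_\infty|\,d\mu_\varepsilon\longrightarrow0 .
$$
Hence the translated measures $\mu_{\varepsilon+L_n(x)}=g_n\,d\mu_\varepsilon$ converge in variation to the finite measure $\nu:=g_\infty\,d\mu_\varepsilon$.

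Independently, since $L(x)=\sum_i a_i(x)f_i$ converges in $G$, the partial sums satisfy $L_n(x)\to L(x)$ in $G$; continuity of translation and bounded convergence then give, for every bounded continuous $f\colon G\to\mathbf R$, that $\int f\,d\mu_{\varepsilon+L_n(x)}=\mathbf E[f(\varepsilon+L_n(x))]\to\mathbf E[f(\varepsilon+L(x))]=\int f\,d\mu_{\varepsilon+L(x)}$, i.e.\ $\mu_{\varepsilon+L_n(x)}\to\mu_{\varepsilon+L(x)}$ weakly. Convergence in variation implies weak convergence, so $\mu_{\varepsilon+L_n(x)}\to\nu$ weakly as well. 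On a locally convex Souslin space the bounded continuous functions separate finite Borel (Radon) measures, so weak limits are unique; therefore $\nu=\mu_{\varepsilon+L(x)}$, which is exactly the assertion $d\mu_{\varepsilon+L(x)}/d\mu_\varepsilon=g_\infty$.

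The main obstacle is conceptual rather than computational. Uniform integrability of the martingale delivers only \emph{some} $L^1$-limit $g_\infty$ of the truncated densities, and one must still certify that this limit is the genuine Radon--Nikodym derivative of the full translated measure $\mu_{\varepsilon+L(x)}$ and not of some other measure. This is precisely where the two modes of convergence must be matched: the martingale supplies variation convergence to $g_\infty\,d\mu_\varepsilon$, while continuity of $y\mapsto y+L_n(x)$ supplies weak convergence to $\mu_{\varepsilon+L(x)}$, and uniqueness of weak limits closes the gap. A secondary point to handle with care is the measurability of $g_\infty$ on $G$, which follows because each $\pi_i$ is measurable by hypothesis and $g_\infty$ is an a.e.\ limit of the measurable $g_n$.
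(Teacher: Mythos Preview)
Your argument is correct and follows essentially the same route as the paper's: the paper simply invokes Proposition~9.9.10 of Bogachev, which packages exactly your step of combining uniform integrability of the truncated densities with the convergence $\varepsilon+L_n(x)\to\varepsilon+L(x)$ to conclude that the (weak $L^1$) limit of the densities is the Radon--Nikodym derivative of $\mu_{\varepsilon+L(x)}$, and then identifies this weak $L^1$ limit with the assumed $\mu_\varepsilon$-a.e.\ limit. Your martingale observation is correct but redundant under the stated hypotheses, since both uniform integrability and a.e.\ convergence are already assumed; it would become genuinely useful only if one wished to \emph{derive} the a.e.\ convergence rather than assume it.
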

\begin{proof}
See Proposition 9.9.10 in \cite{bogm}, which says that    if  $\lim_ n T_n =T$, where $T_n$ and $T$ are measurable mappings on a completely regular space,  and  the   distributions of  all   $T_n$    have uniformly integrable Radon-Nikodym densities $\rho_n$ with respect to 
the  same Radon  probability  measure $\nu$,    then the distribution of  $T$   has Radon-Nikodym density $\rho$  with respect to the same Radon probability measure as well, and  $\rho$ is  the limit of $\rho_n$ in the weak topology of 
 $L^1(\nu)$.    This result is especially applicable to the  random variables $ T_n (x,z)=  L_n (x)+ z$ and $T(x,z)=
   L(x)+z$  on $(F\times G, \mathcal F\otimes \mathcal G, \mu_{X}\otimes \mu_{\varepsilon})$ and the measure $\nu=\mu_{\varepsilon}$.    The  integrals of the densities  over   any  Borel set converge. By Theorem 4.5.6 and Corollary 4.5.7 in \cite{bogm} the weak limit coincides with the almost sure limit.
\end{proof}
In Corollary \ref{cama},   the Radon-Nikodym density  $\frac{d\mu_{\varepsilon+L (x)}}{d\mu_\varepsilon}(y)$   has  a form similar  to  Radon-Nikodym densities appearing  in  the Kakutani dichotomy theorem, which addresses the  equivalence  and singularity of  infinite product measures on $\mathbf R^\infty$  \cite{kakutani}.   Also Umemura \cite{umemura}  has given   conditions  for the    absolute continuity of  measures  on abstract spaces  when the corresponding finite-dimensional distributions are absolutely continuous.  In our case,   Umemura's conditions ask     $\left(\frac{d\mu_{\sum_{i=1}^n (\varepsilon_i+ L(x)_i)}}{d\mu_{ \sum_{i=1}^n \varepsilon_i}} (y)\right)^\frac{1}{2}$ to be  a Cauchy sequence in $L^2(\mu_{\varepsilon})$. 
 We feel that  the uniform integrability of the Radon-Nikodym densities is easier to validate than Umemura's conditions.

  \begin{corollary}
  Let the assumptions of Corollary \ref{cama} hold.  The essentially unique posterior distribution of $X$ given a sample $y$ of 
  $Y=L(X)+\varepsilon$  has a version $\mu(\cdot, y)$  such that 
  $$
  \mu(U,y)= \frac{\int _U  \prod_{i=1}^{\infty}  \frac{\rho_{i}(y_{i}-L(x)_i) }{ \rho_{i}(y_{i})}  d\mu_X(x)}
{\int  \prod_{i=1}^{\infty}  \frac{  \rho_{i}(y_{i}-L(x)_i(x)) }{ \rho_{i}(y_{i})}  d\mu_X(x)}
  $$
  whenever $0<\int  \prod_{i=1}^{\infty}  \frac{  \rho_{i}(y_{i}-L(x)_i) }{ \rho_{i}(y_{i})}  d\mu_X(x)< \infty$. 
  \end{corollary}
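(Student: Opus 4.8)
The plan is to read the posterior density off the generalized Cameron--Martin formula of Corollary \ref{cama} and then feed it into the representation theorem, Theorem \ref{toka}, with the dominating measure chosen to be $\nu = \mu_\varepsilon$ itself. Since $\mu_\varepsilon$ is a probability measure it is $\sigma$-finite, hence an admissible $\nu$. Under the hypotheses inherited from Corollary \ref{cama} the translated noise laws satisfy $\mu_{\varepsilon+L(x)} \ll \mu_\varepsilon$ with
\[
\rho(x,y) := \frac{d\mu_{\varepsilon+L(x)}}{d\mu_\varepsilon}(y) = \prod_{i=1}^\infty \frac{\rho_i(y_i - L(x)_i)}{\rho_i(y_i)},
\]
so the only thing to settle before invoking Theorem \ref{toka} is that this $\rho$ meets its measurability requirement.

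First I would check that $\rho$ is jointly $\mu_X \times \mu_\varepsilon$-measurable on $F \times G$. Each finite partial product $\rho_n(x,y) = \prod_{i=1}^n \rho_i(y_i - L(x)_i)/\rho_i(y_i)$ is jointly measurable: the coefficient functionals $y \mapsto y_i$ are measurable by hypothesis, the components $x \mapsto L(x)_i$ are measurable as compositions of the continuous map $L$ with those functionals, and each $\rho_i$ is Borel, so $(x,y)\mapsto \rho_i(y_i - L(x)_i)$ is jointly measurable; the denominators $\rho_i(y_i)$ are strictly positive for $\mu_\varepsilon$-a.e.\ $y$ since $\rho_i > 0$ a.e., making the quotients well defined off a $\mu_\varepsilon$-null set. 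The infinite product $\rho$ is the pointwise $\mu_X \times \mu_\varepsilon$-a.e.\ limit of the $\rho_n$ --- exactly the convergence already produced in the proof of Corollary \ref{cama} on the product space $(F\times G, \mathcal F \otimes \mathcal G, \mu_X \otimes \mu_\varepsilon)$ --- so $\rho$ is itself $\mu_X \times \mu_\varepsilon$-measurable (setting $\rho = 0$ on the exceptional set).

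With joint measurability in hand, Theorem \ref{toka} then applies directly with $\nu = \mu_\varepsilon$: it yields an essentially unique solution $\mu$ of the problem of estimating the law of $X$ given $Y = L(X)+\varepsilon$, of the form $\mu(U,y) = \int 1_U(x)\rho(x,y)\,d\mu_X(x) \,\big/\, \int \rho(x,y)\,d\mu_X(x)$ on $G \setminus N_0$, where $N_0 = \{y : \int \rho(x,y)\,d\mu_X(x) \in \{0,\infty\}\}$ has $\mu_Y$-measure zero. Substituting the product expression for $\rho$ and noting that the admissibility region $\{y : 0 < \int \rho(x,y)\,d\mu_X(x) < \infty\}$ is precisely $G \setminus N_0$ gives the claimed formula.

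I do not expect a genuinely hard step: the statement is a bookkeeping composition of two already-proved results. The single point deserving care --- and the only place something could slip --- is the distinction between measurability of $y \mapsto \rho(x,y)$ for each fixed $x$ (immediate from the Cameron--Martin formula) and the joint $\mu_X \times \mu_\varepsilon$-measurability that Theorem \ref{toka} actually demands. For that reason the hard part, such as it is, will be to route the argument through the product-space formulation of Corollary \ref{cama}, where the limit is taken for $\mu_X \otimes \mu_\varepsilon$-a.e.\ $(x,y)$, rather than through a merely pointwise-in-$x$ application of the formula.
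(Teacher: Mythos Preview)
Your proposal is correct and matches the paper's intent: the paper states this corollary without proof, treating it as an immediate consequence of combining Corollary~\ref{cama} (which supplies the Radon--Nikodym density $\rho(x,y)=\prod_{i=1}^\infty \rho_i(y_i-L(x)_i)/\rho_i(y_i)$) with Theorem~\ref{toka} (which converts such a density into the posterior formula). Your additional care in verifying the joint $\mu_X\times\mu_\varepsilon$-measurability of $\rho$ --- via joint measurability of the finite products $\rho_n$ and their a.e.\ pointwise limit --- is exactly the check Theorem~\ref{toka} requires and which the paper leaves implicit; the Fubini step (each $y$-section of the non-convergence set is $\mu_\varepsilon$-null, the set itself is jointly measurable, hence it is $\mu_X\times\mu_\varepsilon$-null) is the right way to pass from the per-$x$ convergence hypothesis of Corollary~\ref{cama} to product-space a.e.\ convergence.
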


\begin{remark}\label{model}
Signals in non-Gaussian noise may appear in model approximations.
Let the true model be  
$$
Y= L(X)+\varepsilon
$$
where $\varepsilon=\sum_{i=1}^\infty  \varepsilon_i f_i$  and  all $\varepsilon_i$ are statistically independent.  When the model  $L$ is numerically very complicated,   the common practice is to replace $L$ with some simpler approximation $ L_n$.  For example,  $L_n$ may have the form $ L_n(X) =\sum_{i=1}^n   a_i(X) f_i$.
Though the true model $L $ and the approximated   model $L_n$ are known,  the model error $\widetilde  \varepsilon =   L(X)-  L_n(X)$   is sometimes  replaced with  a $G$-valued random variable  $\varepsilon'$ that has the same distribution as   $\widetilde \varepsilon$ but is statistically independent from $X$  \cite{tanja}. 
We note that the  observation model is then 
 $$
 Y=L_n(X)  +\varepsilon'+ \varepsilon,
 $$
where $\varepsilon$ represents the uncertainties in the forward model $ L_n$.  Beside of physical noise, the distribution of the noise may  represent  our prior beliefs about the  uncertainties in the forward model, which  do not  necessarily
have  Gaussian distributions. 
\end{remark}

\subsection{Periodic signals in  decomposable Laplace noise}\label{peri}

In this section, we study an example case of the generalized Cameron-Martin formula for a non-Gaussian noise distribution.   
A similar distribution  has been constructed before by Shimomura \cite{shimomura}  who  gave conditions under which certain translates of the  distribution were equivalent to the original distribution. However, we use the methods  of Section 
\ref{sparsec}.

One  class of inverse problems  that involves periodic signals  are the inverse  scattering problems --    the far-field  pattern of  the scattered wave in the 2D fixed energy  inverse  acoustic or potential scattering problem  is a  function on the  torus. The  measured far-field pattern is possibly  contaminated by  instrumental noise,  far-fields of  other unknown incoming fields, contributions from other scatterers,   and the near-field  and plane wave approximation errors.     Although the random model below is  oversimplified to fully cover this case,  it  shows how  periodicity can be utilized in Bayesian inverse problems. 
 
 Suppose that $L(x) \in C ^\alpha (S^{1)}$  for all $x\in F$ and some $\alpha>1$. Then 
the  Fourier coefficients 
$$
\widehat{L(x)}_k =  \frac{1}{2\pi}\int_0^{2\pi}  L(x;t) e^{-i k t} dt
$$
 are 
 $\ell_1$-summable and the corresponding Fourier series converges to the limit
 $$
 L(x;t)= \sum_{k=-\infty}^\infty \widehat{L(x)}_k e^{i k t}
 $$ 
 in $C(S^1)$ (equipped with the usual supremum norm).
 
   Let   $\varepsilon_k$ be  mutually statistically  independent random variables 
   whose  probability density functions with respect to the Lebesgue measure are
 $$
 \rho_k (t)= \frac{}{2b}e^{-|t|/b}
 $$
 for all $k\in\mathbf Z$ and some common $b>0$ i.e. they are  zero mean  Laplace random variables.  The relation of the normal distribution to  the Laplace distribution  is that a conditionally normal random variable     $\tilde \varepsilon_k | \sigma  \sim N(0,\sigma^2)$ with  a  Rayleigh distributed  variance  has a Laplace distribution. In statistical inverse problems,  one  interpretation of the Laplace distribution is that we do not know  the  error variance exactly and  are lead to describe  our lack of knowledge in the form of a probability distribution. 
 
 \begin{lemma}\label{spar1}
  The random sum
  $$
  \varepsilon = \sum_{k=-\infty}^\infty   \varepsilon_k e^{ik  t}
  $$
 converges in $H^{-1} (S^1)$ and 
 $$
 \langle \varepsilon, e^{-ikt}\rangle _{H^{-1}(S^1), H^1(S^1)} =  \varepsilon_k
 $$ 
 \end{lemma}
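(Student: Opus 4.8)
The plan is to view $H^{-1}(S^1)$ as a separable Hilbert space in which the exponentials form an orthogonal system, and to obtain $\varepsilon$ as the $P$-a.s. limit of the symmetric partial sums $S_N=\sum_{|k|\le N}\varepsilon_k e^{ikt}$ by a second-moment argument. Recall that, with the Fourier normalization $\widehat f_k=\frac{1}{2\pi}\int_0^{2\pi}f(t)e^{-ikt}\,dt$, the norm on $H^{-1}(S^1)$ is $\Vert f\Vert_{H^{-1}}^2=\sum_{k}(1+k^2)^{-1}|\widehat f_k|^2$, so that $\Vert e^{ikt}\Vert_{H^{-1}}^2=(1+k^2)^{-1}$ and distinct exponentials are orthogonal in $H^{-1}$. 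First I would record that each Laplace variable $\varepsilon_k$ is symmetric with $\mathbf E[\varepsilon_k]=0$ and $\mathbf E[\varepsilon_k^2]=2b^2<\infty$, so that the $H^{-1}$-valued random vectors $\varepsilon_k e^{ikt}$ are independent, mean zero, and satisfy
\[
\sum_{k\in\mathbf Z}\mathbf E\bigl[\Vert \varepsilon_k e^{ikt}\Vert_{H^{-1}}^2\bigr]=2b^2\sum_{k\in\mathbf Z}\frac{1}{1+k^2}<\infty .
\]

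Next I would deduce a.s. convergence of $S_N$ in $H^{-1}(S^1)$. By orthogonality and independence the cross terms vanish, so for $M<N$ one has $\mathbf E\Vert S_N-S_M\Vert_{H^{-1}}^2=\sum_{M<|k|\le N}2b^2(1+k^2)^{-1}$, which tends to zero by the display above. Since $(S_N)$ is a martingale in $N$ with respect to the filtration generated by $\{\varepsilon_k:|k|\le N\}$ and is bounded in $L^2(\Omega;H^{-1})$, the vector-valued martingale convergence theorem yields convergence both in $L^2$ and $P$-almost surely in $H^{-1}(S^1)$. This is exactly the criterion $\sum_i\mathbf E\Vert Z_ie_i\Vert_G^2<\infty$ quoted earlier from \cite{kahane}, specialised to the Hilbert space $H^{-1}$. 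The limit, denoted $\varepsilon$, is an $H^{-1}(S^1)$-valued random variable because $H^{-1}(S^1)$ is a separable Hilbert space, hence Souslin, and an a.s.\ limit of measurable maps is measurable.

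Finally I would identify the coefficients. Since $e^{-ikt}\in H^1(S^1)$, the functional $f\mapsto\langle f,e^{-ikt}\rangle_{H^{-1},H^1}$ is continuous on $H^{-1}(S^1)$, and under the bilinear duality extending $\frac{1}{2\pi}\int_0^{2\pi}fg\,dt$ it returns precisely the $k$-th Fourier coefficient $\widehat f_k$. For every $N\ge|k|$ the partial sum satisfies $\langle S_N,e^{-ikt}\rangle=\varepsilon_k$; passing the continuous functional through the a.s.\ limit $S_N\to\varepsilon$ then gives $\langle \varepsilon,e^{-ikt}\rangle_{H^{-1},H^1}=\varepsilon_k$ $P$-almost surely, as claimed.

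The argument is essentially routine; the only points requiring care are fixing the Fourier/duality convention so that $\langle\cdot,e^{-ikt}\rangle$ extracts $\widehat{(\cdot)}_k$ rather than $\widehat{(\cdot)}_{-k}$, and confirming that the weights make $\sum_k(1+k^2)^{-1}$ summable, which is what converts the mere finiteness of $\mathbf E[\varepsilon_k^2]$ into convergence in $H^{-1}$. I expect the main (modest) obstacle to be the bookkeeping of the duality pairing rather than any analytic difficulty.
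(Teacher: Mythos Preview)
Your argument is correct and follows essentially the same route as the paper: both verify the Kahane criterion $\sum_k\mathbf E\Vert\varepsilon_k e^{ikt}\Vert_{H^{-1}}^2<\infty$ using $\mathbf E[\varepsilon_k^2]=2b^2$ and the summability of $\Vert e^{ikt}\Vert_{H^{-1}}^2$ (you compute $(1+k^2)^{-1}$ directly, the paper invokes Maurin's theorem for the Hilbert--Schmidt embedding $L^2\hookrightarrow H^{-1}$). Your treatment is in fact slightly more complete, since you explicitly justify the coefficient identity $\langle\varepsilon,e^{-ikt}\rangle=\varepsilon_k$ by continuity of the pairing, which the paper leaves implicit.
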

 \begin{proof}
 The space $H^{-1}(S^1)$ is a Hilbert space,  and the random variables $\varepsilon _k$ have zero mean so it suffices to 
 prove that $ \sum_{k}  E[  \Vert \varepsilon _k e^{ik t }\Vert_ {H^{-1}(S^1)}^2 ]<\infty$  (see Theorem  2 in Chapter  3.2 in
 \cite{kahane}).   The sequence $\{ e^{ik t}\}_{k=-\infty}^\infty$ forms an orthonormal 
basis of $L^2(S^1)$. The imbedding  of $L^2(S^1)$ into $H^{-1}(S^1)$ is 
Hilbert-Schmidt by Maurin's theorem, which implies that
$$
 \sum_{k}  E[  \Vert  e^{ik t }\Vert_ {H^{-1}(S^1)}^2 ]< \infty
$$
and therefore
$$
\mathbf E   \sum_{k=-\infty}^\infty | \varepsilon_k|^2   \Vert e^{ik t}\Vert_{H^{-1}(S^1)}^2  \leq  C \sum_{k} \Vert e^{ik t}\Vert_{H^{-1}}^2 <\infty . 
$$
Here we used the fact that the variance of  the  Laplace random variable $\varepsilon_i$ is  $2b^2$.
 \end{proof}
 
 Next, we quickly check that $\varepsilon$ is a non-Gaussian  random variable.
 The characteristic function of the random variable 
 $\varepsilon$  is
 $$
\widehat\mu_{\varepsilon}(\phi) = \mathbf E[e^{i \langle \varepsilon, \phi\rangle_{H^{-1}(S^1), H^1(S^1)}}] = 
 \prod_{k=-\infty}^\infty  \frac{1}{1+ b^{2}({\widehat \phi}_{-k})^2},
 $$
 where $\widehat{\phi}_k $ is  the  Fourier coefficient  $\frac{1}{2\pi}\int_0^{2\pi}  \phi(t) e^{-i k t} dt$  of $\phi\in H^1(S^1)$.
 Note, that when $\phi_j \rightarrow \psi \in L^2(S^1)$ as $j\rightarrow \infty$, then 
 $$
 \lim_{j\rightarrow \infty} \widehat\mu_{\varepsilon}(  \phi_j ) =  \prod_{k=-\infty}^\infty  \frac{1}{1+ b^{2} ({\widehat \psi}_k)^2}=:\widehat\mu_{\varepsilon}(\psi )
 $$
 i.e. their distributions converge weakly.  Especially, when $\widehat{\psi}_k=\frac{1}{\pi(|k| -\frac{1}{2})}$, $k\not=0$ and 
 $\widehat \psi_{0}=0$, then 
  $$
  \widehat\mu_\varepsilon(t\psi)= \frac{1}{\cosh^2( bt)}.
 $$ 
 Since the weak limits of  zero mean Gaussian distributions are always zero mean Gaussian distributions, this shows that $\varepsilon$ is indeed a  non-Gaussian random variable.

 We wish to study the statistical inverse problem of estimating the probability distribution of 
 $X$ when  a sample of 
 $$
 Y= L(X)+\varepsilon 
 $$
 is known. This means that the inexact observations   of the Fourier coefficients of $L(X)$ are assumed to be similarly  inaccurate and  some components are allowed to have  high inaccuracies. Since Laplace distribution has heavier tails than the Gaussian distribution,  it protects against outliers better than the normal distribution.

  Consider first finite sums 
 $$
 L_n (x;t)=  \sum_{|k|\leq n} \widehat{L(x)}_k e^{i k t}.  
 $$
By  Lemma \ref{spar1} and Theorem \ref{spar}, the Radon-Nikodym derivative of the  translated measure $\mu_{\varepsilon+L_n(x)}$  with respect to 
the measure $\mu_{\varepsilon}$ on $H^{-1}(S^1)$ is
 \begin{eqnarray*}
 \frac{d\mu_{\varepsilon +L_n(x)}}{d\mu_\varepsilon} (y)&=&\frac{\prod_{k=-n}^n \rho_{k}(\widehat y_{k}- \widehat {L(x)}_k) }{\prod   _{k=-n}^{n} \rho_{k}(\widehat y_k)}\\
 &=& \prod_{k=-n}^n  e^ {- b^{-1} |\widehat y_{k}- \widehat {L(x)}_k) | +b^{-1}  | \widehat y_k|}.
 \end{eqnarray*}
  By the triangle inequality, we obtain that
$$
||\widehat y_k|- |\widehat  y_k -\widehat {L(x)}_k|| \leq |\widehat{  L(x)}_k|, 
$$
which are summable.   Therefore, the limit 
 $$
  \sum_{k= -\infty }^\infty  (  |\widehat y_k | - |\widehat y_{k}- \widehat {L(x)}_k |)  
 $$
 exists. 

 Random variables  $\varepsilon+L_n(x)$ converge almost surely to $\varepsilon+L(x)$. Therefore, 
 corresponding measures converge weakly i.e. for all Borel sets  $A$ whose boundary  is $\mu_{\varepsilon+L(x)}$- zero measurable
 it holds that
 \begin{eqnarray*}
\mu_{\varepsilon+L(x)}(A)&=& \lim_{n\rightarrow \infty } \mu_{\varepsilon+L_n(x)} (A) \\
&=&  \lim_{n\rightarrow \infty }\int_ A   e^{  b^{-1} \sum_{k= 1}^n  (  |\widehat y_k | - |\widehat y_{k}- \widehat {L(x)}_k|) } d\mu_\varepsilon(y)\\ 
&=& \int_ { A}    e^{ b^{-1}  \sum_{k= -\infty}^\infty  (  |\widehat y_k | - |\widehat y_{k}- \widehat {L(x)}_k |) } d\mu_\varepsilon(y)
\end{eqnarray*}
by  the Lebesgue dominated convergence theorem. The  exponential  function is 
the Radon-Nikodym derivative  $\frac{d\mu_{\varepsilon +L(x)}}{d\mu_\varepsilon} (y) $. 

We have shown  the following theorem.
\begin{theorem}
Let $F$ be a locally convex Souslin topological  vector space equipped with its Borel $\sigma$-algebra.  
Let $X$ be an $F$-valued random variable and let $L:F \rightarrow  C ^\alpha (S^{1})$ be a continuous 
mapping  for  some $\alpha>1$.  Let  
$$
\varepsilon= \sum_{k} \varepsilon_k e^{ik t}
$$
be  a $H^{-1}(S^1)$-valued random variable such that  all $\varepsilon_k$, where $k\in \mathbf Z$, are  mutually statistically  independent random variables  with probability density functions 
 $$
 \rho_k (t)= \frac{1}{2b}e^{-|t|/b}
 $$
with respect to the Lebesgue   measure     for some $b>0$.

The  solution of the statistical inverse problem of estimating the distribution  of $X$ given  a sample $y \in H^{-1}(S^1)$ of $Y=L(X)+\varepsilon$ is essentially unique and has a version  $\mu$ such that
$$
\mu (U,y)= \frac{\int_ U e^{ b^{-1} \sum_{k= -\infty}^\infty  (  |\widehat y_k | - |\widehat y_{k}- \widehat {L(x)}_k |) } d\mu_X(x)  }{
 \int_ U e^{b^{-1} \sum_{k= -\infty}^\infty  (  |\widehat y_k | - |\widehat y_{k}- \widehat {L(x)}_k |) } d\mu_X(x) }
$$
for all $U\in \mathcal F$ and for all  $y\in H^{-1}(S^1)$ such that the denominator is finite and non-zero.
\end{theorem}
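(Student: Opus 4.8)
The plan is to reduce the whole statement to the generalized Bayes formula of Theorem \ref{toka} applied with the dominating measure $\nu=\mu_\varepsilon$, so that the genuine work is to identify the Radon-Nikodym density $\rho(x,y)=d\mu_{\varepsilon+L(x)}/d\mu_\varepsilon(y)$ and to verify its joint $\mu_X\times\mu_\varepsilon$-measurability. First I would observe that $\varepsilon$ is a decomposable noise in the sense of Section \ref{sparsec} for the system $\{e^{ikt}\}_{k\in\mathbf Z}$ with Laplace marginals $\rho_k$: by Lemma \ref{spar1} the coefficients $\varepsilon_k$ are recovered from $\varepsilon$ by the continuous (hence Borel) functionals $y\mapsto\widehat y_k=\langle y,e^{-ikt}\rangle_{H^{-1},H^1}$. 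Since $L(x)\in C^\alpha(S^1)$ with $\alpha>1$, its Fourier coefficients $\widehat{L(x)}_k$ are $\ell^1$-summable, so the truncations $L_n(x)=\sum_{|k|\le n}\widehat{L(x)}_k e^{ikt}$ converge to $L(x)$ in $C(S^1)$, and a fortiori in $H^{-1}(S^1)$.

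Second, for each fixed $n$ Theorem \ref{spar} applies and, after inserting the explicit Laplace densities, yields
\begin{equation*}
\frac{d\mu_{\varepsilon+L_n(x)}}{d\mu_\varepsilon}(y)=\prod_{k=-n}^{n}\frac{\rho_k(\widehat y_k-\widehat{L(x)}_k)}{\rho_k(\widehat y_k)}=\exp\Big(b^{-1}\sum_{k=-n}^{n}\big(|\widehat y_k|-|\widehat y_k-\widehat{L(x)}_k|\big)\Big).
\end{equation*}
The triangle inequality gives $\big||\widehat y_k|-|\widehat y_k-\widehat{L(x)}_k|\big|\le|\widehat{L(x)}_k|$, so the exponents are bounded, uniformly in $n$ and in $y$, by $b^{-1}\|\widehat{L(x)}\|_{\ell^1}$. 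Consequently each truncated density is dominated by the $y$-independent constant $\exp(b^{-1}\|\widehat{L(x)}\|_{\ell^1})$, hence uniformly integrable with respect to the probability measure $\mu_\varepsilon$; the same $\ell^1$-majorant shows that the series $\sum_k(|\widehat y_k|-|\widehat y_k-\widehat{L(x)}_k|)$ converges for every $y$, so the densities converge $\mu_\varepsilon$-almost everywhere.

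Third, I would invoke Corollary \ref{cama} (equivalently, dominated convergence applied exactly as in the paragraph preceding the statement) to conclude that $\mu_{\varepsilon+L(x)}\ll\mu_\varepsilon$ with
\begin{equation*}
\rho(x,y)=\frac{d\mu_{\varepsilon+L(x)}}{d\mu_\varepsilon}(y)=\exp\Big(b^{-1}\sum_{k=-\infty}^{\infty}\big(|\widehat y_k|-|\widehat y_k-\widehat{L(x)}_k|\big)\Big).
\end{equation*}
Joint $\mu_X\times\mu_\varepsilon$-measurability of $\rho$ follows because it is an everywhere pointwise limit of the finite products above, each jointly measurable: $y\mapsto\widehat y_k$ is Borel on $H^{-1}(S^1)$, and $x\mapsto\widehat{L(x)}_k$ is continuous by the continuity of $L$ into $C^\alpha(S^1)$ composed with the bounded Fourier-coefficient functional. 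With this measurability established, all hypotheses of Theorem \ref{toka} hold with $\nu=\mu_\varepsilon$, and the theorem delivers both essential uniqueness and the displayed representation of $\mu(U,y)$ on the set where the denominator is finite and non-zero, which has full $\mu_Y$-measure.

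I expect the only delicate point to be the passage to the infinite product: one must ensure the dominating bound is genuinely independent of $y$, which it is precisely because the triangle inequality converts the $y$-dependence into the summable majorant $|\widehat{L(x)}_k|$. This is what makes the uniform-integrability hypothesis of Corollary \ref{cama} hold cleanly and lets joint measurability survive the limit. Everything else is a direct assembly of Lemma \ref{spar1}, Theorem \ref{spar}, Corollary \ref{cama}, and Theorem \ref{toka}.
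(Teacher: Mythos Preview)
Your proposal is correct and follows essentially the same route as the paper: apply Theorem \ref{spar} to the truncations $L_n(x)$, use the triangle inequality $||\widehat y_k|-|\widehat y_k-\widehat{L(x)}_k||\le|\widehat{L(x)}_k|$ to obtain a $y$-independent $\ell^1$ majorant, and pass to the limit to identify $d\mu_{\varepsilon+L(x)}/d\mu_\varepsilon$ before invoking Theorem \ref{toka}. The only cosmetic difference is that you package the limiting step through Corollary \ref{cama} (uniform integrability), whereas the paper argues directly via almost-sure convergence of $\varepsilon+L_n(x)$, weak convergence of the image measures, and dominated convergence; you yourself note these are equivalent, and your explicit verification of joint $\mu_X\times\mu_\varepsilon$-measurability is a welcome addition that the paper leaves implicit.
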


\begin{remark}
The convergence of posterior distributions holds for example if for all $n\in \mathbf N$ 
it holds that
$
\mathbf E [ e^{b^{-1}\Vert  \widehat { L(X_n)} \Vert _{\ell^1}}] \leq C
$
 and 
$
\mathbf E [ e^{b^{-1}\Vert  \widehat{  L(X) }\Vert _{\ell^1}}] \leq C
$
 for  some  $C>0$. Under the same conditions, the posterior distributions depend 
 continuously on the observations. Indeed, the posterior distributions are 
 sequentially continuous by the Lebesgue dominated convergence theorem and  the continuity of $y\mapsto \hat y_k$ , and 
  sequentially continuous functions on Hilbert spaces are continuous.  The topological support of 
  $  \mu_Y$ coincides with the topological support of $\mu_{\varepsilon}$ by Lemma \ref{konti}. The topological support of $\mu_\varepsilon$ coincides with the closure of  the linear span of $\{e^{ikt}\}_{k}$ in $H^{-1}(S^1)$, that is,    $H^{-1}(S^1)$ (see \cite{sato}). By Theorem \ref{uniq},  $\mu$ is the only posterior distribution that depends continuously on the observations. 
\end{remark}

\section{Examples of  prior approximations}\label{sec6}

We present some  methods  for approximating the  unknown.  We take  the unknown $X$ always to be statistically independent from the noise  $\varepsilon $. Especially, when  the Radon-Nikodym   density $\rho(x,z)=\frac{d\mu_{\varepsilon+L(x)}}{d\nu}(z)$ is bounded and continuos, we do not need to ask anything special on $X$  or its converging  approximations  in order to obtain   posterior convergence.

\subsection{Random series}  

As discussed in Section \ref{review}, random  series and wavelet expansions   are important devices in  defining  infinite-dimensional prior models.  If a random variable $X$ in a locally convex Souslin topological vector space  $F$  can be expressed as  an almost surely converging series  $X=\sum_{i=1}^\infty Z_i \phi_i$, where $\phi_i \in F$ and $Z_i$ are ordinary random variables,  we   obtain immediately   finite-dimensional approximations  $X_n:= \sum_{i=1}^n Z_i \phi_i$ by truncation.  The almost sure  convergence of random variables $X_n$  to $X$ implies (by the Lebesgue dominated convergence theorem)  that the distributions $\mu_{X_n}$  converge weakly to  $\mu_X$.    We return to  this topic  in connection with the 
 linear discretizations of $X$ in  Section \ref{rs}. 

\subsection{Gaussian priors}
\subsubsection{Gaussian random series}
All  Gaussian  $F$-valued random variables can be expressed with random series expansions \cite{bog}.  A typical example is the Karhunen-Lo\`eve expansion   of  a zero mean $L^2(a,b)$-valued Gaussian random variables $X$,  where $\phi_i$ are chosen to be normed  eigenfunctions of the covariance operator 
$$
C f(t)= \int_a^b \mathbf E [X_t X_s] f(s)ds
$$ 
on $L^2(a,b)$. Then  $X$ can be expressed as $X=\sum_{i=1}^\infty Z_i e_i$, where  the standard normal random variables $Z_i$ are statistically independent, and the truncated series $X_n$ gives an almost surely converging   approximation of $X$. 
 The almost sure convergence of $X_n$ to $X$ implies the  weak convergence of $\mu_{X_n}$ to $\mu_X$.   

\subsubsection{Converging covariances} 
 
 A well-known sufficient condition for  the weak convergence of probability measures $ m_{n}$ to a probability measure $m$ on  a locally convex  Souslin topological vector space $F$   is that 
  \begin{itemize}
\item[$(i)$]{the measures $m_{n}$ are uniformly tight i.e. for every $\varepsilon >0$ there exist a compact set $K_\varepsilon  \subset F$ such that $\sup_n m_{n}(K_\varepsilon ^C  )<\varepsilon $.}  
\item[$(ii)$]{Characteristic functionals $\widehat m_{n} (\phi)$ converge to 
 $\widehat m(\phi )$ for every $\phi \in F'$.}
  \end{itemize} 
According to  Prohorov's theorem,   $(i)$ implies  that  
each subsequence of $m_n$ has a weakly convergent subsequence (see   Theorem 8.6.7 in \cite{bogm}).  Part $(ii)$ identifies the limits of different subsequences.   For some  spaces,    $(i)$ can be deduced from  $(ii)$ and the known  properties of $\widehat m$ (see   \cite{vakha}).  These spaces include $\mathbf R^n$ and   the distribution spaces $\mathcal S'(\mathbf R^d)$ and $\mathcal D'(U)$, $U\subset \mathbf R^d$ open.

Let $\mu_{n}$ and $\mu$ be Gaussian zero mean measures on the distribution space  $\mathcal D'(U)$, where   $U\subset \mathbf R^n$ is open. If 
the covariance operators $C_n$ of $\mu_n$ converge weakly to the covariance operator $C$ of $\mu$, i.e. 
$$
\lim _{n\rightarrow \infty }\langle C_n \phi,\psi \rangle =\langle C \phi,\psi \rangle 
$$
for every $\phi,\psi \in \mathcal D(U)$, then $(i)$ holds. Moreover,   the characteristic functionals
 $\widehat \mu_n$ are then equicontinuous at zero, which 
 is sufficient for the uniform tightness of the sequence  $\mu_n$ by Corollary 7.13.10 in \cite{bogm}.

\subsubsection{Martingale approximations}
      Another possibility  is to use   a special  martingale approximation of  the unknown $X$.   We discretize a separable Hilbert space-valued  $X$ with finite-dimensional  increasing orthogonal projections $P_n$  on the Cameron-Martin space  $H$ of $\mu_X$  by setting       $$
    X_n= \sum_{j=1}^n  \langle X, f_j\rangle f_j, 
    $$
     where  the vectors $f_j$, $ j=1,...,n$ form an orthonormal basis of the finite-dimensional  subspace  $P_n(H)$ and 
     $\cup_ n P_n(H)$ is dense in $H$.  Such  martingale approximations where introduced to statistical inverse problems  in \cite{sari}. Then 
     $$
\mathbf E[     \langle X_n,\phi\rangle  | X_m]=   \langle X_m,\phi\rangle
     $$
     when $m\leq n$ and $\phi\in F'$.  Hence  $\langle X_n,\phi\rangle$ is a martingale. This makes  $\Vert X-X_n\Vert_F$  a reversed submartingale. It is integrable, since 
     $$
    \mathbf E  [ \Vert X-X_n\Vert^2_F ] \leq  
  \sum_{i=1}^\infty  \Vert (I-P_n) e_i\Vert^2_F=  \Vert I-P_n\Vert_{HS}^2, 
     $$
where $\Vert I-P_n \Vert _{HS}$ denotes the Hilbert-Schmidt norm of   $I-P_n:H\rightarrow F$,      and has limit  0 in $L^2(P)$. Hence, $X-X_n$ converge a.s. to zero (see Theorem 10.6.4 in \cite{dud}).
  The almost sure convergence of  $X_n$ to $X$ implies  the weak convergence of $\mu_{X_n}$ to $\mu$.

\subsection{Mappings of Gaussian variables}

We consider non-linear functions of  continuous Gaussian processes as prior models.   We start by deforming a  Brownian motion $B_t,  t\in  [0,1]$,  with a continuous function  $f:\mathbf  R\rightarrow \mathbf R$ by setting
$$
X_t= f(B_t).
$$

 We check that $X=X_t$ is a random variable having values in a suitable  function space $F$.    Obviously, each $X_t, t\in [0,1]$ is a random variable. The process  $X_t$   also inherits sample-continuity from  Brownian motion. A natural choice for   $F$ is the space $C([0,1])$ of  continuous functions on  compact interval $[0,1]$, equipped with the supremum  norm  $\Vert f\Vert_\infty= \sup_{t\in [0,1]} |f(t)|$. Sample-continuous stochastic processes on $[0,1]$ are  $C([0,1])$-valued random variables, since  the Borel $\sigma$- algebra $\mathcal F$ of $F$ coincides with the smallest $\sigma$-algebra generated by Dirac's delta functions $\delta_t, t\in [0,1]$, which are continuous linear forms on $C([0,1])$ (see  Proposition 12.2.2 in  \cite{dud}).

 The process $X_t$ can be  discretized by replacing the Brownian motion with its piecewise linear interpolation 
 $$
b_n(t)=\sum_{i=1}^n  B_{t_i}  \phi_i(t)
$$
on $[0,T]$.  Here  the functions $\phi_i$ are the usual linear interpolation functions.  The mapping $f\mapsto f_n$ is measurable on $C([0,1])$, and $\lim_{n\rightarrow \infty} f_n = f $ in $C([0,1])$.  Especially, the  approximations  $b_n(t) $ converge a.s. to $B_t$ in $C([0,T])$ (the sample path of $B_t$ may first  be  approximated by a  $C^2$-function). Then  $X_n(t)=f(b_n(t))$ converge almost surely to $X_t=f(B_t)$ as $n\rightarrow \infty $ due to  the continuity  of $f$. Almost sure convergence implies the weak convergence of  the corresponding image measures.

Examples:  1) Take $f(t)=t^2$,i.e. $X_t=B_t^2$ and  $X_n(t)=b_n(t)^2$.  The positive continuous functions form a  measurable set in $C([0,T])$  that  has full measure in this case.   2) Take $f(t)=\min(t,1)$. Then we obtain the approximation 
$X_n(t)=\min(b_n(t),1)$ of the bounded function $X_t=\min(B_t,1)$.

The Brownian motion may be replaced with any other stochastic 
process whose sample paths are continuous.

\subsection{Stochastic integrals}

We consider now prior models defined with stochastic integrals 
$$
X(t)=\int_0^t  f(s,\omega)dB_s,  \; t\in \mathbf [0,T]
$$
where $B_s$ is a Brownian motion and $f:[0,T]\times \Omega \rightarrow \mathbf R$ is   in the class $\mathcal V([0,T])$ that   satisfies the following conditions. A function $f\in \mathcal V([0,T])$ is   $\mathcal B([0,T]\times \mathcal F)$-measurable,  $f(t,\omega)$ is $\mathcal F_t$-adapted and $\mathbf E[\int_0^t f(t,\omega)^2dt ]<\infty$.  Here $\mathcal F_t$ is the $\sigma$- algebra generated by  all  $B_s$, $s\leq t$  (see   \cite{oek}). Furthermore, we assume that $f$ satisfies
$$
\lim_{n\rightarrow \infty }\mathbf E \left [\int_0^{T}\left\vert  \sum_{j=1}^n f(t_{j-1}^{(n)},\omega) 1_{(t_{j-1}^{(n)},t_{j}^{(n)}]} (t) - f(t,\omega)\right\vert ^2 dt\right]=0, 
$$
where  $0=t_0^{(n)}<t_1^{(n)}<...<t_{n}^{(n)}=T$  are such that 
$\max_{i} (t_i^{(n)}-t_{i-1}^{(n)})\rightarrow 0$ as $n\rightarrow \infty$.

 With probability one, $X(t)$ has continuous sample paths (see   \cite{oek}).  As in the previous section, we may interpret $X$ as $C([0,T])$-valued random variable. One discrete approximation is to take
 $$
 X_n(t)= \sum_{i=0}^{n} X_n(t_i^{(n)}) \phi_i(t),
 $$
 where the functions  $\phi_i$ are the linear interpolation functions,  and 
 \begin{eqnarray*}
X_n (t_i^{(n)}; \omega) &=&  \sum_{j=1}^i  f(\omega,t_{j-1}^{(n)}) (B_{t_j^{(n)}}(\omega)-B_{t_{j-1}^{(n)}}(\omega))
\\
&=& 
\int_0^{t_i^{(n)}}  \sum_{j=1}^n f(\omega,t_{j-1}^{(n)}) 1_{[t_{j-1}^{(n)},t_j^{(n)})}(t) dB_t(\omega) 
\end{eqnarray*}
 are  approximations of the stochastic integrals $X(t_i^{(n)})$ for $i\geq 1$ and $X_n(0)=0$. Then $X_n$ has a subsequence that converges  to $X$ on 
 $C([0,T])$. Indeed,
 $$
\sup_t | X(t) -X_{n} (t) |  \leq  \Vert  X (t) - \sum_{i=1}^{n} X(t^{(n)}_i) \phi_i (t) \Vert_\infty   +    \sup_{1\leq i \leq n}  |X(t_i^{(n)})-X_{n}(t_i^{(n)})|,  
  $$
where, by  Doob's  inequality  and the  It\=o isometry   the latter term satisfies  
\begin{eqnarray*}
\mathbf E [ \sup_{1\leq i\leq n} |X_n(t_i^{(n)})-X(t_i^{(n)})|^2]  &\leq  &C   \sup_{1\leq i\leq n } \mathbf E [|X_n(t_i^{(n)})-X(t_i^{(n)})|^2]
\\
 & \leq & C \mathbf  E \left [\int_0^{T}\left| \sum_{j=1}^n f(t_{j-1}^{(n)},\omega) 1_{(t_{j-1}^{(n)},t_{j}^{(n)}]} (t) - f(t,\omega)\right|^2 dt\right], 
\end{eqnarray*}
which converge to zero as $n\rightarrow \infty$.    Especially,  the sequence $\{ \sup_{1\leq i\leq n}|X_n(t_i^{(n)})-X(t_i^{(n)})|\}_{n}$  converges in $L^2(P)$ and has therefore an a.s. convergent  subsequence $\{   |X_{n_j}(t_i^{(n_j)})-X(t_i^{(n_j)})|\}_{j}$. 
Therefore, $\mu_{X_{n_j}}$ converges weakly to $\mu_X$.  Then  any subsequence of measures $\mu_{X_n}$ has   a  weakly converging subsequence with  the same limit $\mu_X$, and  the measures   $\mu_{X_n}$ converge weakly to $\mu_{X}$ on $C([0,T])$.

\subsection{Hyperparametric models}\label{hyyp}

We consider here  simple hyperparametric models.  Approximations and convergence results in  a  more complicated  case concerning   edge-preserving     Gaussian hierarchical models  were obtained in \cite{helin}.

 Let $\lambda $ be a Borel measure on $\mathbf R^d$.  Let  $\nu_{n}^t$, $n\in \mathbf N$ be Borel probability measures on  a locally convex Souslin  topological vector space  $ F$  for all $t\in \mathbf R^d$ and let $t\mapsto \nu^t_n (U)$ be  $\lambda$-measurable 
 for all $U\in \mathcal F$ and $n$. If $\nu_n^t$ converge weakly to  the probability  measure  $\nu^t$ for all $t$ then  the  hierarchical prior model
$\mu_{n}(U)=\int \nu_{n}^t(U)d\lambda(t)$
converge weakly. Indeed, if $f$ is a continuous bounded function on $F$, then 
$$
\lim_{n\rightarrow \infty }\mu_n(f) = \int \lim_{n\rightarrow \infty } \nu_n^t  (f) d\lambda(t) =\mu(f).
$$

For example,  take $X=\alpha Z$, where $Z$  is  zero mean Gaussian with  covariance $C$, and $\alpha$ is an ordinary random variable independent from $Z$ (so-called scale-mixing).  We take $Z_n$ to be the linear discretizations 
$ P_n(Z)$ and $X_n= \alpha Z_n$, where $P_n(Z)\rightarrow Z$ a.s. as $n\rightarrow \infty$. We denote the  distribution of $\alpha $ with $\lambda $. Then the
hierarchical prior distributions 
$$
\mu_{X_n}(U)=\int \mu_{ t Z_n} (U)d\lambda(t)
$$
converge weakly to $\mu_X$ as $n\rightarrow \infty$. This holds especially for  sub-Gaussian processes.  Moreover, all  spherically $H$-symmetric nonatomic measures are  mixtures of Gaussian measure $\mu_{tZ}$, where $Z$ is centered Gaussian with infinite-dimensional  Cameron-Martin space $H$  by Theorem 7.4.2 in \cite{bog}.

If   only the distribution of  the  hyperparameters $\alpha \in \mathbf R^d$  is approximated, we may get stronger convergence. Indeed, let $Z$ be any $F$-valued random variable and let $\alpha_n$ be a sequence of hyperparameters with probability densities $\lambda_n(t)$ on $\mathbf R^d$. We set $X=\alpha Z$ and $X_n= \alpha_n Z$.  If the densities $\lambda_n(t)$     converge almost everywhere to the density  $\lambda(t)$ of  the hyperparameter $\alpha$  and the densities are uniformly bounded, then the  hierarchical  prior distributions
  $$
\mu_{X_n}(U)=\int_{\mathbf R^d} \mu_{ tZ}(U)\lambda_n(t)dt
$$
  converge  in variation to $\mu_X$. Under the conditions of  Theorem \ref{coco}, also the corresponding posterior distributions converge in variation.

\subsection{Linear discretizations of random variables} \label{rs}

 In    \cite{siltanen}, a sequence of  random variables $X_n$ having values in a separable Banach space $F$ (equipped with the Borel $\sigma$-algebra)  is   called {\it a proper linear discretization} of an $F$-valued random variable  $X$ if $X_n$ converge to $X$  weakly  in distribution (i.e.   $X_n=P_n X$  for a sequence of   finite-rank operators $P_n$ on $F$ and $\mu_{ \langle X_n,\phi\rangle }$ converge weakly to $\mu_{\langle X,\phi\rangle} $   for all continuous linear functionals $\phi\in F'$).    The  definition of a proper linear discretization is too weak  for the present convergence results. The reason is that  the weak convergence in distribution  is equivalent to the convergence of  the characteristic functionals $\widehat \mu_{X_n} (\phi)$ to the characteristic functional   $\widehat \mu_X (\phi)$ for all $\phi\in F'$  (see Theorem 7.6 in \cite{bill}) and  the weak convergence of the approximated prior distributions $\mu_{X_n}$ is  guaranteed if the measures $\mu_{X_n}$ are additionally uniformly tight  (see Corollary 3.8.5 in \cite{bog}).  In order to get convergent  CM estimates,    Lassas et al  applied  in \cite{siltanen2} an  enforced  condition  that $P_n x$ converge in norm to $x$  for all  $x\in F$. We follow  the   Gaussian case  \cite{sari} and call $X_n$  {\it a  measurable  linear discretization} of $X$ if there exists measurable  operators $P_n$ having finite-dimensional ranges on $F$ such that   $X_n=P_n(X)$ and    $\mu_{X_n}$ converge weakly   to   the measure  $\mu_X$ on $F$.  Moreover, we call  $X_n$  {\it  a continuous  linear discretization } of $X$ if there exists  finite-rank  operators  (i.e. bounded linear operators with finite-dimensional ranges) $P_n$ on $F$ such that   $X_n=P_n X$ and    $\mu_{X_n}$ converge weakly   to   the measure  $\mu_X$.    We discuss shortly the existence of 
certain linear discretizations.

   The notion of a continuous linear discretization is related  to the  so-called  $\mu$-approximation property.       Let $\mu$ be  a Radon probability measure on a separable  Banach space $F$.  The space $F$ is said to have {\it the $\mu$-approximation property},   if there exists finite-rank operators  $P_n$   converging  $\mu$-a.s. 
  to identity  on $F$.  Moreover, the space $F$ is said to have {\it  the stochastic approximation property}  if  it has $\mu$-approximation property for every Radon probability measure $\mu$ (see   \cite{fonf}).   
       The  stochastic $\mu_X$-approximation property gives finite-rank   operators  $P_n$,  which  define continuous linear discretizations of  $X$ by  $X_n=P_nX$.    In \cite{fonf}, it was  demonstrated that not all  separable Banach spaces have stochastic approximation property.  Hence,  not all    separable Banach space-valued random variables $X$   have  almost everywhere converging continuous linear discretizations. 
   
      In \cite{fonf} it was  shown that on separable Banach spaces the stochastic approximation    property coincides with the existence of a stochastic basis  of  Herer    --  a biorthogonal system $(e_k,f_k)$ in $(F,F')$ such that  $x=\sum_{k=1}^\infty f_k(x)e_k$ for $\mu$-almost every $x$ in $F$ \cite{herer}.   Candidates of the type $X_n=\sum_{k=1}^n f_k(X)e_k$  are therefore plausible for almost everywhere converging continuous linear discretizations of $X$.  Moreover, if the coefficients $f_k(X)$ are   mutually statistically independent and their distributions are equivalent to the Lebesgue measure, then many properties of Gaussian measures hold also for  $\mu_X$ \cite{sato}. For example,  measures $\mu_{X+x_0}$ are then absolutely continuous with respect to  $\mu_X$ for every $x_0$ in the linear span of $\{e_k\}$, the linear span of $\{e_k\}$ has either  $\mu_X$-measure zero or one   and the topological support of $\mu_X$  coincides with the closure of the linear span of $\{e_k\}$ in $F$.
     
 Continuous linear discretizations  of Souslin space-valued unknowns  $X$ with property $\mu_X(H)=1$ for some separable Hilbert space $H$ were  considered in \cite{petteri} (see the discussion in Section \ref{sec13}).

  In some cases,   the prior distribution on  a  separable  Fr\'echet space $F$ may not be quite what we expect.     Okazaki  \cite {oka} proved a  remarkable result  that  for any separable  Fr\'echet  space  $F$ equipped with the Borel $\sigma$-algebra, a   probability measure  $\mu $  and a stochastic basis  ${(e_k,f_k)}_{k\in\mathbf N}$, there   exists a  separable  Banach space $B$ such that  $B\subset F$ continuously, 
  $\mu(B)=1$ and  the  stochastic basis  $(e_k,f_k)_{k\in\mathbf N}$ is actually a  Schauder basis for $B$.   The prior distribution $\mu$  on $F$ could be replaced with a prior distribution on the Banach space with the Schauder  basis.   The  above result refines   Kuelbs' classic  result that any Radon measure on a separable Fr\'echet space has  Banach support (see \cite{kue}, or  e.g.  Theorem  3.6.5  in \cite{bog}).

\subsection{Uniformly distributed sequences}\label{unif}

We suggest an approximation method that has been  historically valued as a competitor to Monte Carlo methods. The best known application of the approximation method  is  the so-called  quasi-Monte Carlo method (see    \cite{nie}).

  Let $F$ be a Hausdorff space  and $\mu$  a  finite Borel measure on $F$.  A sequence            $\{x_i\}_{i=1}^\infty \subset F $ is called $\mu$-uniformly distributed if 
          $$
          \lim_{n\rightarrow \infty }\frac{1}{n} \sum_{i=1}^n f(x_i) = 
          \int f (x)d\mu(x)
           $$
           for all continuous and bounded $f$ on $F$.             That is, the average  $n^{-1} \sum_{i=1}^n  \delta_{x_i}$ of point masses   converges weakly to $\mu$.  Roughly speaking,  it is the   law of large numbers with   a   predetermined  sequence.   
                   
    For any Borel probability measure $\mu_X$     on  a locally convex Souslin space $F$,  there exists some uniformly distributed sequence $\{x_i\}$  (see   Section 8.10 (ix) in \cite{bogm}).   A low dimensional example of such 
         a  sequence is the  Hammersley sequence for the Lebesgue   measure     on the unit square \cite{ham}. 
  This means that the  prior distribution  $\mu_X$ on $(F,\mathcal F)$   may   be approximated by measures $\mu_{X_n}=  n^{-1}\sum_{i=1}^n \delta_{x_i}$, whose prior information states that a realization of  the random variable $X_n$  is one of the values   $x_i$, $i=1,...,n$  and there is no preference between the values $x_i$, $i=1,...,n$.  The uniformly distributed  sequence is also  a possible tool  for interpreting prior information.

\section{Conclusions}

The generalized Bayes formula is an efficient tool for obtaining posterior convergence in the weak topology of measures or in the  stronger topologies of setwise convergence and convergence in variation (cf. Theorem  \ref{coco2} and Theorem \ref{coco}). In the case, when  only the hyperparameters of a hierarchical model are approximated,  we verified  that the posterior  distributions converge in variation when the approximations are refined (cf. Section \ref{hyyp}).    In Section \ref{sec5} we gave examples of  applicable non-Gaussian noise models. The explicit expressions  of   posterior distributions derived in Section \ref{sec5}   for simple  non-Gaussian noise models may serve  as model cases for further studies on the effects of non-Gaussianity of the noise distribution.  In particular,  a Kakutani type generalization of the Cameron-Martin formula  was derived. We anticipate that this  generalization  opens a way for a wide class of non-Gaussian noise models in statistical inverse problems,  especially when used in connection with the  wavelet expansions.   
Another example  demonstrates  the surprising fact that  the posterior  distribution given an infinite-dimensional observation  can have   significantly simpler expression  than the posterior distribution given a corresponding finite-dimensional observation.  This suggest that  in some cases the infinite-dimensional model could provide new numerical approximations schemes.   

It is well-known that the generalized Bayes formula holds  when the measures $\mu_{\varepsilon+L(x)}$ are  $\mu_X$-almost surely dominated i.e.  absolutely continuous with respect to some $\sigma$-finite measure for $\mu_X$-almost every $x\in F$. We showed that there is  a  curious interplay  between the continuity of posterior distributions with respect to the observations  and the $\mu_X$-a.s. domination of $\mu_{\varepsilon+L(x)}$ (cf. Theorem \ref{support2} and Remark \ref{divi}). The continuity of the posterior distribution with respect to observations   is only possible in the dominated case, which means that in the undominated cases some posterior distributions have discontinuities.   The discontinuities of the posterior distributions may enhance the errors caused by  replacing the required sample of $Y_n=L(X_n)+\varepsilon$  in   the posterior distribution of  $X_n$ given $Y_n$   by the actual   observation  of   $Y=L(X)+\varepsilon$.   Moreover, the regularizing effect of the prior distributions on an ill-posed inverse problem  could  be of limited power.

Continuity of the posterior distributions with respect to the observations  has also other roles in statistical inverse problems. It helps  to reduce the nonuniqueness  of posterior distributions in quite general cases  (cf. Theorem \ref{uniq}). 
  
   In Section \ref{rs}, we discussed the linear discretizations of the unknown $X$.  We remarked that on  arbitrary separable Banach spaces there does not always exist  a  continuous linear discretization  by  a result of Fonf \cite{fonf}. The present convergence results, which are written in the same spirit as in  \cite{helin},  are therefore important since they do not require the    pointwise convergence of the discretization operators.   Beside of continuous  linear discretizations,   other approximation methods can  therefore be used.  One of them is a generic method for approximating any prior measure on a locally convex Souslin space with the help of  a  quasirandom sequence (see Section  \ref{unif}).

 Finally, we list some directions for generalizing this work.  We have not studied the speed of convergence of posterior distributions, which is a very natural question when choosing between  different approximation schemes. The generalized Bayes formula gives a good  framework for this study.    Moreover, we  considered only classical noise models i.e.  statistically independent noise and unknowns, which enabled us to write a simple expression for the conditional   probability of the observation $Y$ given the unknown  $X$.  The case of  the statistically depended noise and the unknown is not purely theoretical, since often the unknown is approximated with a  simple expression and the  approximation error is included in the noise term. Convergence of  the corresponding posterior distributions  is  therefore an important topic. 
 Furthermore,   we have not discussed     what kind of approximating  prior distributions could  guarantee  meaningful convergence of  maximum a posteriori (MAP) estimates.  Note, that the question is proper for classical noise models that are statistically independent from the unknown  since  the posterior distribution  of $X$ given a sample of $Y=L(X)+\varepsilon$ depends then on  the prior model $X$  only through its distribution.  The example of Lassas and Siltanen on total variation priors shows that the  weak convergence of any approximating prior distributions is, in general, not sufficient as  MAP estimates   converged  then to zero.

There  is still a wide class of   statistical inverse problems, which  are covered  neither by the present work nor  \cite{sari,petteri},  where the question of posterior convergence remains open.
 
\section*{Acknowledgements}  This work was supported by the Academy of Finland
(application number 213476, Finnish Programme for Centres of
Excellence in Research 2006-2011).


\begin{thebibliography}{200}


 \bibitem{silve2}
 \newblock   F. Abramovich, T. Sapatinas and B. W. Silverman,
 \newblock   \emph {Wavelet thresholding via a Bayesian approach}, \newblock J. R. Stat. Soc. Ser. B Stat. Methodol., \textbf{60} (1998), 725--749. 

\bibitem{silve3}
\newblock F. Abramovich and B. W. Silverman, 
\newblock \emph {Wavelet decomposition approaches to statistical inverse problems}, 
\newblock Biometrika, \textbf{85} (1998), 115--129.
 
\bibitem{silve}
\newblock F. Abramovich, T. Sapatinas and B. W. Silverman, 
\newblock \emph {Stochastic expansions in an overcomplete wavelet dictionary}, 
\newblock Probab. Theory Related Fields, \textbf{117} (2000), 133--144. 
 
 \bibitem{maen}
 \newblock B. D'Ambrogi, S. M\"aenp\"a\"a and M. Markkanen, 
 \newblock \emph{Discretization independent  retrieval of atmospheric ozone profile}, Geophysica, \textbf{35} (1999), 87--99.

 
 \bibitem{backus}
\newblock  G. Backus, 
\newblock \emph {Isotropic probability measures in infinite-dimensional spaces},
\newblock  Proc. Nat. Acad. Sci. U.S.A.,  \textbf{84} (1987),  8755--8757.
 
\bibitem{barron} 
\newblock A. Barron, M. J. Schervish and L. Wasserman,  
\newblock \emph {The consistency of posterior distributions in nonparametric problems}, 
\newblock  Ann. Statist., \textbf{27} (1999), 536--561. 
    

\bibitem{bernardo} 
 \newblock J. M. Bernardo and A. F. M. Smith  
\newblock ``Bayesian theory,"
\newblock John Wiley \& Sons, Chichester, 1994.

\bibitem{berti} 
\newblock  P. Berti, L. Pratelli and  P. Rigo, 
\newblock \emph{Almost sure weak convergence of random probability measures}, 
\newblock  Stochastics, \textbf{78} (2006), 91--97.

\bibitem{bhat}
\newblock A B. Bhatt,  G. Kallianpur,  and R. L.  Karandikar,
\newblock\emph{Robustness of the nonlinear filter},
\newblock  Stochastic Process. Appl., \textbf{81} (1999),  247--254.


\bibitem{bill}
\newblock P. Billingsley,
\newblock ``Convergence of probability measures,"  John Wiley \& Sons, New York-London-Sydney, 1968.


\bibitem{biz}
\newblock N. Bissantz and H. Holzmann, 
\newblock \emph{Statistical inference for inverse problems},
\newblock Inverse Problems, \textbf{24} (2008),  034009--034025. 


\bibitem{bog} 
     \newblock V. I. Bogachev, 
     \newblock ``Gaussian Measures," 
     \newblock American Mathematical Society, Providence, RI, 1998.

\bibitem{bogm} 
     \newblock V. I. Bogachev,
     \newblock `` Measure Theory. Vol. I, II," 
     \newblock Springer-Verlag, Berlin, 2007.
     
       \bibitem{bogata}
\newblock  V. I.  Bogachev,  A. V.  Kolesnikov, and  K. V.  Medvedev, 
  \newblock \emph{Triangular transformations of measures},  
  \newblock Sb. Math., \textbf{196} (2005),  309--335. 
     
     
     
     \bibitem{bul}
\newblock V. V. Buldygin, 
\newblock \emph{On invariant Bayesian estimators for generalized random variables}, 
\newblock Theor. Probability Appl.,  22 (1977),  172--175.
     
     \bibitem{burgess}
\newblock J. P. Burgess and R. D.  Mauldin,
\newblock \emph{Conditional distributions and orthogonal measures},
\newblock Ann. Probab., \textbf{9} (1981),  902--906. 

 \bibitem{adali}
 \newblock  V. D. Calhoun and T.   Adali,  
 \newblock\emph{Unmixing fMRI with independent component analysis},
\newblock IEEE  Engineering in Medicine and Biology Magazine,  
\textbf{25} (2006),  79 -- 90.

\bibitem{cava} 
    \newblock L. Cavalier, 
    \newblock \emph{Inverse problems with non-compact operators},
    \newblock J. Statist. Plann. Inference,  \textbf{136}  (2006), 390--400.  
    
    \bibitem{cava2}
    \newblock L. Cavalier, \emph{Nonparametric statistical inverse problems}, 
    \newblock  Inverse Problems, \textbf{24} (2008), 034004--033025.
    
     \bibitem{sea}
 \newblock M. A.  Chitre, J. R.  Potter,  and  Ong  Sim-Heng,  
 \newblock\emph {Optimal and Near-Optimal Signal Detection in Snapping Shrimp Dominated Ambient Noise},
\newblock IEEE J. Ocean. Eng., \textbf{31} (2006), 497--503.
    
    \bibitem{gos} 
   \newblock N. Choudhuri, S. Ghosal and A.  Roy, 
\newblock \emph{Bayesian methods for function estimation},  in  D. K. Dey  et al (Eds.): Bayesian thinking: modeling and computation, 373--414, Elsevier/North-Holland, Amsterdam, 2005. 
  
  \bibitem{conte}
   \newblock  E. Conte and M. Longo,  
\newblock \emph{Characterisation of radar clutter as a spherically invariant random process},
\newblock IEE Proc. Part F, 
\textbf{134} (1987), 191--197.
 
      \bibitem{conte1}
     \newblock E. Conte, M.    Longo and M.   Lops,  
\newblock \emph{Modelling and simulation of non-Rayleigh radar clutter},
 \newblock IEE Proc. Part F, \textbf{138} (1991), 121--130.
 
   \bibitem{conte2}
 \newblock E. Conte and A. De Maio,
 \newblock \emph{Mitigation techniques for non-Gaussian sea clutter},
\newblock  IEEE J. Ocean. Eng., \textbf{29} (2004), 284--302.

     \bibitem{cotter}
\newblock S. L. Cotter, M. Dashti, J. C. Robinson and A. M. Stuart, 
\newblock \emph {Bayesian inverse problems for functions and applications to fluid mechanics},
\newblock  Inverse Problems, \textbf{25} (2009), 115008--1150051.

\bibitem{cotter2}
\newblock S. L. Cotter, M. Dashti, and A. M. Stuart,
\newblock \emph{Approximations of Bayesian inverse problems for PDEs},
\newblock SIAM J. Numer. Anal., \textbf{48} (2010), 322--345.

  \bibitem{cox} 
 \newblock  D. D. Cox, 
 \newblock \emph {An analysis of Bayesian inference for nonparametric regression},
 \newblock   Ann. Statist.,  \textbf{21} (1993), 903--923.
  
     \bibitem{crimaldi1}
   \newblock
   I. Crimaldi and L. Pratelli,
\newblock \emph{Convergence results for conditional expectations}, 
Bernoulli, \textbf{11} (2005),  737--745. 
 
 \bibitem{crimaldi2}
  \newblock I. Crimaldi and L. Pratelli, 
\newblock  \emph {Two inequalities for conditional expectations and convergence results for filters}, 
\newblock Statist. Probab. Lett., \textbf {74} (2005),  151--162. 



\bibitem{davenport}
 \newblock W. B. Davenport and W. L. Root,
\newblock ``An introduction to the theory of random signals and noise", 
 \newblock McGraw-Hill Book Company, Inc., New York-Toronto-London, 1958. 


\bibitem{diaco}
\newblock P. Diaconis and D. Freedman,
\newblock \emph {On the consistency of Bayes estimates},
\newblock Ann. Statist., \textbf{14} (1986), 1-67. 

\bibitem{dia} 
     \newblock P. Diaconis and D. Freedman,
     \newblock \emph{Consistency of Bayes estimates for nonparametric regression: normal theory},
     \newblock Bernoulli, \textbf{4} (1998), 411--444.


\bibitem{diestel}
\newblock J.  Diestel and J. J.  Uhl, 
 \newblock ``Vector measures",
 \newblock American Mathematical Society, Providence, RI, 1977.


\bibitem{die1} 
\newblock J. Dieudonn\'e, 
\emph{Un exemple d'espace normal non susceptible d'une structure uniforme d'espace complet}, 
\newblock C. R. Acad. Sci. Paris, \textbf{209} (1939), 145--147.

\bibitem{die2} 
\newblock J. Dieudonn\'e,  \emph{Sur le th\'eor\`eme de Lebesgue-Nikodym. III}, 
\newblock Ann. Univ. Grenoble. Sect. Sci. Math. Phys. (N.S.), \textbf{ 23}  (1948), 25--53.

\bibitem{doob}
\newblock J. L. Doob, 
\newblock \emph{Stochastic processes depending on a continuous parameter},
\newblock Trans. Amer. Math. Soc., \textbf{42} (1937), 107--140. 

\bibitem{doob1}
\newblock J. L. Doob, 
\newblock \emph{Stochastic processes with an integral-valued parameter},
\newblock Trans. Amer. Math. Soc., \textbf{44} (1938), 87--150. 

\bibitem{doobm}
\newblock J. L. Doob,
\newblock \emph {Application of the theory of martingales},
\newblock  in Le Calcul des Probabilit\'es et ses Applications, pp. 23--27, Coll. Int. du CNRS Paris, 1949. 

\bibitem{dud} 
     \newblock R. M. Dudley,
     \newblock ``Real Analysis and Probability," 
     \newblock Cambridge University Press, Cambridge, 2002.

\bibitem{evans} 
     \newblock S. N. Evans and P. B. Stark,
     \newblock \emph{Inverse problems as statistics},
     \newblock  Inverse Problems, \textbf{18} (2002), R55--R97.

\bibitem{esco}
\newblock  M. D. Escobar and M. West,
 \newblock\emph{Bayesian density estimation and inference using mixtures},
 \newblock  J. Amer. Statist. Assoc., \textbf{90} (1995),   577--588.


\bibitem{fer2}
 \newblock T. S. Ferguson.
\emph{Prior distributions on spaces of probability measures}.
Ann.  Statist., \textbf{2} (1974), 615--629.
  
 \bibitem{fer}
 \newblock T. S. Ferguson, 
 \newblock \emph{ A Bayesian analysis of some nonparametric problems},
 \newblock   Ann. Statist., \textbf{1} (1973), 209--230.

\bibitem{fitz}
     \newblock B. G. Fitzpatrick,
     \newblock \emph{Bayesian analysis in inverse problems},
     \newblock Inverse Problems, \textbf{7} (1991), 675--702.
 
\bibitem{flo} 
     \newblock  J.-P. Florens, M. Mouchart and  J.-M. Rolin,  
     \newblock ``Elements of Bayesian Statistics,"
         \newblock Marcel Dekker, Inc., New York, 1990.
         
         
         \bibitem{simoni}
         \newblock J.-P. Florens and A. Simoni,
         \newblock \emph{Regularizing priors for linear inverse problems},
          \newblock IDEI Working paper, \textbf{621} (2010). 

\bibitem{fonf} 
     \newblock V. P. Fonf,  W .B. Johnson,  G. Pisier and  D. Preiss, 
          \newblock \emph{Stochastic approximation properties in Banach spaces},
     \newblock Studia Math., \textbf {159} (2003), 103--119.

\bibitem{foster}(0145649)
\newblock M. Foster,
\newblock \emph{An application of the Wiener-Kolmogorov smoothing theory to matrix inversion},
\newblock J. Soc. Indust. Appl. Math., \textbf{9} (1961), 387--392. 

\bibitem{frank} 
     \newblock J. N. Franklin,
     \newblock \emph{Well-posed stochastic extensons of ill-posed linear problems},
     \newblock J. Math. Anal. Appl., \textbf{31} (1970), 682--716. 
  

  
  
    \bibitem{frechet}
\newblock M. Fr\'echet,
\newblock \emph {On two new chapters in the theory of probability},
\newblock Math. Mag., \textbf{22} (1948), 1--12. 
 
 \bibitem{freed}
\newblock D. Freedman,
\newblock \emph {On the asymptotic behavior of Bayes estimates in the discrete case I},
\newblock Ann. Math. Statist., \textbf{34} (1963), 1386-1403.
  
  
   \bibitem{gans} 
\newblock P. G\"anssler and J. Pfanzagl,
\newblock \emph{Convergence of conditional expectations},
\newblock Ann. Math. Statist., \textbf{42} (1971), 315--324. 
  
  
  \bibitem{gel}
 \newblock  I. M. Gelfand and N.Ya. Vilenkin,
 \newblock ``Generalized functions. Vol. 4: Applications of harmonic analysis",
 \newblock  Academic Press, New York - London, 1964. 
     
  \bibitem{gho}
\newblock J. K. Ghosh  and R. V.  Ramamoorthi,   
  \newblock 
 ``Bayesian Nonparametrics,"
   \newblock Springer-Verlag, New York, 2003. 

\bibitem{gih}
\newblock I. I. Gihman and A. V. Skorohod, 
\newblock ``The theory of stochastic processes I", 
\newblock  Springer-Verlag, New York-Heidelberg, 1974. 
 
 \bibitem{goggin}(1288145)
 \newblock   E. Goggin, 
 \newblock \emph{Convergence in distribution of conditional expectations},
 \newblock  Ann. Probab., \textbf{22} (1994), 1097--1114.

\bibitem{ctnoise}
\newblock P. Gravel,  G. Beaudoin and  J. A. De Guise,
\newblock  \emph{A method for modeling noise in medical images},
\newblock  IEEE Trans Med Imaging., \textbf{23} (2004), 1221-32.

 \bibitem{grenander}
\newblock U. Grenander, 
\newblock \emph{Stochastic processes and statistical inference},  \newblock Ark. Mat., \textbf{1} (1950), 195--277.
 


\bibitem{halmos} 
\newblock P. R. Halmos and L. J. Savage, 
\newblock \emph{Application of the Radon-Nikodym theorem to the theory of sufficient statistics},
\newblock Ann. Math. Statist., \textbf{20} (1949), 225--241. 



\bibitem{ham} 
     \newblock J. M. Hammersley,
     \newblock \emph{Monte Carlo methods for solving multivariable problems},
     \newblock  Ann. New York Acad. Sci., \textbf{86} (1960), 844--874.


\bibitem{harada}
\newblock K. Harada and H. Saigo,
\newblock\emph{The space of tempered distributions as a k-space},
\newblock  preprint,  ArXiv {1009.1429}.



\bibitem{hegland}
\newblock M. Hegland, 
\newblock \emph{Approximate maximum a posteriori with Gaussian process priors},
\newblock Constr. Approx., \textbf{26} (2007), 205--224. 

\bibitem{helin}
\newblock T. Helin, 
\newblock \emph{On infinite-dimensional hierarchical probability models in statistical inverse problems},
\newblock  Inverse Probl. Imaging, \textbf{3} (2009), 567--597.   

\bibitem{helin2}
\newblock T. Helin and M. Lassas,
\newblock\emph{Hierarchical models in statistical inverse problems and the Mumford-Shah functional},
\newblock Inverse problems, \textbf{27} (2011), 015008--014039.

\bibitem{herer} 
     \newblock W. Herer,
     \newblock \emph{Stochastic bases in Fr\'echet spaces},
     \newblock Demonstratio Math., \textbf{14} (1981), 719--724.


 \bibitem{mar}
 \newblock J. A.  Hildebrand,
\newblock\emph{Anthropogenic and natural sources of ambient noise in the ocean},
\newblock Mar Ecol Prog Ser., \textbf{295} (2009), 5--20.



\bibitem{pikkarainen3}
\newblock A. Hofinger and H. K. Pikkarainen,
\newblock \emph{Convergence rate for the  Bayesian approach to linear inverse problems},
\newblock Inverse Problems, \textbf{23} (2007), 2469--2484.


\bibitem{pikkarainen2}
\newblock A. Hofinger and H. K. Pikkarainen,
\newblock \emph{Convergence rates for linear inverse problems in the presence of an additive normal noise},
\newblock Stoch. Anal. Appl., \textbf{27} (2009), 240--257.


 
 \bibitem{jessen}
\newblock B. Jessen,
\newblock \emph{The theory of integration in a space of an infinite number of dimensions},
\newblock Acta Math., \textbf{63} (1934), 249--323. 
 
 \bibitem{ji1}
\newblock M. Ji\v{r}ina, 
\newblock\emph{On regular conditional probabilities}, 
\newblock  Czechoslovak Math. J.,  \textbf {9} (1959), 445--451. 

\bibitem{ji2}
 \newblock M. Ji\v{r}ina, 
\newblock\emph{Conditional probabilities on $\sigma $-algebras with countable basis},  in Select. Transl. Math. Statist. and Probability, 
\newblock Vol. 2,  pp. 79--86, American Mathematical Society, Providence, RI, 1962. 
 
 
 
 \bibitem{silver}
\newblock I. M. Johnstone and B. W. Silverman,
\newblock\emph{Speed of estimation in positron emission tomography and related inverse problems},
\newblock Ann. Statist., \textbf{18} (1990), 251--280. 

\bibitem{kahane} 
     \newblock J.-P. Kahane,
     \newblock ``Some Random Series of Functions," 
          \newblock  Cambridge University Press, Cambridge, 1985.

\bibitem{kailath} 
\newblock T. Kailath,
\newblock\emph{A view of three decades of linear filtering theory},
\newblock IEEE Trans. Information Theory,\textbf{IT-20} (1974), 146--181. 

\bibitem{kaip} 
     \newblock J. Kaipio and E. Somersalo,
     \newblock ``Statistical and Computational Inverse Problems,"     
     \newblock Springer-Verlag, New York, 2005.

\bibitem{ks}
\newblock J. Kaipio and E. Somersalo, 
\newblock \emph{Statistical inverse problems: discretization, model reduction and inverse crimes},
\newblock  J. Comput. Appl. Math., \textbf{198} (2007), 493--50.

\bibitem{kakutani}
\newblock S. Kakutani,
\newblock \emph{On equivalence of infinite product measures},
\newblock  Ann. of Math.,  \textbf{49} (1948), 214--224.

 \bibitem{kallianpur}
\newblock G. Kallianpur,
\newblock \emph{Stochastic filtering theory}, 
\newblock Springer-Verlag, New York-Berlin, 1980.

\bibitem{kallianpur2} 
\newblock G. Kallianpur and C. Striebel,
\newblock \emph{Estimation of stochastic systems: Arbitrary system process with additive white noise observation errors},
\newblock Ann. Math. Statist. \textbf{39} (1968), 785--801



\bibitem{karhunen}
\newblock K. Karhunen, 
\newblock\emph{\"Uber lineare Methoden in der Wahrscheinlichkeitsrechnung},
\newblock Ann. Acad. Sci. Fennicae. Ser. A. I. Math.-Phys., No. 37 (1947).

\bibitem{kelly}  
\newblock E. J. Kelly, I. S. Reed and W. L. Root,
\newblock \emph{The detection of radar echoes in noise. I, II},
\newblock J. Soc. Indust. Appl. Math., \textbf{8} (1960), 309--341, 481--507. 


\bibitem{wahba2}
\newblock G. S. Kimeldorf and G.  Wahba,  
\newblock \emph{A  correspondence between Bayesian estimation on stochastic processes and smoothing by splines},
\newblock  Ann. Math. Statist., \textbf{41} (1970), 495--502. 


\bibitem{siltanen3}
\newblock V. Kolehmainen, M. Lassas, K. Niinim\"aki, and S. Siltanen,
\newblock\emph{Sparsity-promoting Bayesian inversion},
\newblock Preprint (2011).


\bibitem{kol}
\newblock A. Kolmogorov, 
\newblock ``Grundbegriffe der Wahrscheinlichkeitsrechnung", \newblock Springer, Berlin, 1933.

\bibitem{kol1}
\newblock A. Kolmogorov.
\newblock \emph{Stationary sequences in Hilbert's space} (Russian),
\newblock Bolletin Moskovskogo Gosudarstvenogo Universiteta. Matematika, \textbf{2} (1941). 


\bibitem{kri}
\newblock K. Krikkeberg,
\newblock \emph {Convergence of conditional expectation operators}, 
\newblock Theory Probab. Appl. \textbf{9} (1964), 538--549.

\bibitem{krein1} 
\newblock M. Krein, 
\newblock\emph{On a generalization of some investigations of G. Szeg\"o, V. Smirnoff and A. Kolmogoroff},
\newblock   C. R. (Doklady) Acad. Sci. URSS (N.S.), \textbf{46} (1945), 91--94. 


\bibitem{krein2}
\newblock M. Krein, 
\newblock\emph{On a problem of extrapolation of A. N. Kolmogoroff},
\newblock  C. R. (Doklady) Acad. Sci. URSS (N. S.), \textbf{46} (1945),  306--309.




\bibitem{krug}
\newblock P. Krug, 
\newblock\emph{
The conditional expectation as estimator of normally distributed random variables with values in infinitely-dimensional Banach spaces},  
\newblock J. Multivariate Anal., \textbf{38} (1991), 1--14. 


\bibitem{kue} 
     \newblock J. Kuelbs,
     \newblock \emph{Some results of probability measures on linear topological vector spaces with an application to Strassen's log log law},
     \newblock  J. Funct. Anal., \textbf{14} (1973), 28--43.

\bibitem{kuo}
\newblock H. H. Kuo,
\newblock ``Gaussian measures in Banach spaces",
\newblock  Springer-Verlag, Berlin-New York, 1975. 


 \bibitem{ray}
 \newblock E. E. Kuruoglu, W. J. Fitzgerald, and P. J. W.  Rayner,  
\newblock \emph{Near optimal detection of signals in impulsive noise modeled with a symmetric $\alpha$-stable distribution},
\newblock IEEE Communications Letters, \textbf{2} (1998), 282--284.



\bibitem{land} 
\newblock D. Landers and  L. Rogge,  
\newblock \emph{ A generalized Martingale theorem},
\newblock Z. Wahrsch. Verw. Gebiete, \textbf {23} (1972), 289--292.

\bibitem{sari}
\newblock  S. Lasanen, 
\newblock \emph{Discretizations of generalized random variables with applications to inverse problems}, Dissertation, University of Oulu,
\newblock   Ann. Acad. Sci. Fenn. Math. Diss.,  No.130 (2002).

\bibitem{roininen}
\newblock S. Lasanen and L. Roininen,
\newblock \emph{Statistical inversion with Green's priors}, \newblock Proceedings of the 5th
International Conference on Inverse Problems in Engineering: Theory and Practice, Cambridge,UK, 11-15th July 2005.

\bibitem{siltanen2} 
     \newblock M. Lassas, E. Saksman and S. Siltanen, 
     \newblock \emph{Discretization-invariant Bayesian inversion and Besov space priors},
     \newblock Inverse Probl. Imaging, \textbf {3} (2009), 87--122.

\bibitem{siltanen} 
     \newblock M. Lassas and S. Siltanen, 
     \newblock \emph{Can one use total variation prior for edge-preserving Bayesian inversion?},
     \newblock  Inverse Problems, \textbf {20} (2004), 1537--1563.
    


\bibitem{ledoux} 
\newblock M. Ledoux and M. Talagrand,
\newblock ``Probability in Banach spaces. Isoperimetry and processes", 
\newblock Springer-Verlag, Berlin, 1991.

\bibitem{koodi}
\newblock M. Lehtinen, B. Damtie, P. Piiroinen and M. Orisp\"a\"a,
\newblock \emph{Perfect and almost perfect pulse compression codes for range spread radar target},
\newblock Inverse probl. Imaging, \textbf{2} (2009), 465--486.  


\bibitem{markku} 
     \newblock M. Lehtinen, L. P\"aiv\"arinta and E. Somersalo,     \newblock \emph{Linear inverse problems for generalised random variables},
     \newblock Inverse Problems, \textbf{5} (1989), 599--612.


\bibitem{lewa}
\newblock M. Lewandowski, M.  Ryznar, and T.  Zak,
\newblock \emph{Anderson inequality is strict for Gaussian and stable measures},
\newblock  Proc. Amer. Math. Soc., \textbf{123} (1995),  3875--3880. 


     \bibitem{lus}
\newblock H. Luschgy,
\newblock \emph{Linear estimators and Radonifying operators}, 
\newblock Theory Probab. Appl., \textbf{40} (1995), 167--175. 
     
 
\bibitem{macci} 
\newblock C. Macci,
\newblock \emph{On the Lebesgue decomposition of the posterior distribution with respect to the prior in regular Bayesian experiments}, \newblock Statist. Probab. Lett., \textbf{26} (1996), 147--152. 

\bibitem{mandal}
\newblock P. K. Mandal and V.  Mandrekar,
\newblock \emph{A Bayes formula for Gaussian noise processes and its applications},
\newblock SIAM J. Control Optim, \textbf{39} (2000),  852--871. 


\bibitem{mandelbaum} 
     \newblock A. Mandelbaum,
     \newblock \emph{Linear estimators and measurable linear transformations on a Hilbert space},
     \newblock  Z. Wahrsch. Verw. Gebiete,  \textbf{65}  (1984),   385--397.


\bibitem{muller}
\newblock  P. M\"uller and  F. A. Quintana,
\newblock  \emph{Nonparametric Bayesian data analysis}, 
\newblock  Statist. Sci., \textbf{19} (2004), 95--110. 





 \bibitem{pikkarainen}
  \newblock A. Neubauer and H. K. Pikkarainen,
  \newblock \emph{Convergence results for the Bayesian inversion theory},
  \newblock J. Inverse Ill-Posed Probl., \textbf{16} (2008), 601--613.

\bibitem{neveu}
\newblock J. Neveu, 
\newblock ``Discrete-parameter martingales",
\newblock  North-Holland Publishing Co., Oxford, 1975.  


\bibitem{nie} 
     \newblock H. Niederreiter,
     \newblock ``Random Number Generation and Quasi-Monte Carlo Methods",
     \newblock SIAM, Philadelphia, PA, 1992.

\bibitem{oka} 
     \newblock  Y. Okazaki,
     \newblock \emph{Stochastic basis in Fr\'echet space},
     \newblock Math. Ann., \textbf{274} (1986), 379--383.


\bibitem{oek}
\newblock B. Oeksendal, 
\newblock \emph{Stochastic differential equations. An introduction with applications}, 
\newblock Springer-Verlag, Berlin, 2003.



\bibitem{osul} 
     \newblock   F. O'Sullivan,   
      \newblock \emph{A statistical perspective on ill-posed inverse problems},
     \newblock Statist. Sci., \textbf{1} (1986), 502--527. 


\bibitem{partha}
 \newblock K. R. Parthasarathy,
 \newblock ``Probability measures on metric spaces",
  \newblock AMS Chelsea Publishing, Providence, RI, 2005.

\bibitem{pet}
\newblock B. J.  Pettis, 
\newblock\emph{On integration in vector spaces},
\newblock  Trans. Amer. Math. Soc., \textbf{44} (1938), 277--304.




\bibitem{phil}
\newblock D. L. Philips,
\newblock \emph{A technique for the numerical solution of certain integral equations of the first kind}, \newblock Journal of the ACM,  \textbf{9}  (1962), 84--97. 

\bibitem{petteri}
\newblock P. Piiroinen, 
\newblock \emph{Statistical measurements, experiments and applications} Dissertation, University of Helsinki,
\newblock  Ann. Acad. Sci. Fenn. Math. Diss., No 143 (2005).


 \bibitem{poin2}
 \newblock  H. Poincar\'e,  
 \newblock ``Science and Hypothesis",  
\newblock Walter Scott Publishing, London, 1905.


\bibitem{poin} 
     \newblock  H. Poincar\'e,
     \newblock ``Calcul des probabilit\'es", 
        \newblock \'Editions Jacques Gabay, Sceaux, 1987.


\bibitem{pren} 
     \newblock P. M. Prenter and  C. R. Vogel, 
     \newblock \emph{Stochastic inversion of linear first kind integral equations. I. Continuous theory and the stochastic generalized inverse}, \newblock J. Math. Anal. Appl., \textbf{106} (1985),  202--218.


\bibitem{regu}
\newblock D. Ramachandran, 
\newblock \emph{A note on regular conditional probabilities in Doob's sense},
\newblock Ann. Probab., \textbf{9} (1981),  907--908. 


\bibitem{revuz}
\newblock D. Revuz and M. Yor,
\newblock ``Continuous martingales and Brownian motion", 
Springer-Verlag, Berlin, 1999.

\bibitem{robert}
 \newblock C. P. Robert, 
 \newblock ``The Bayesian choice. From decision-theoretic foundations to computational implementation",
 \newblock   Springer-Verlag, New York, 2001. 


\bibitem{roh} 
\newblock V. A. Rohlin, 
\newblock\emph{On the fundamental ideas of measure theory} (Russian) Mat. Sbornik N.S. \textbf{ 25(67)} (1949) 107--150.
Translated in  Amer. Math. Soc. Translation  \textbf{71} (1952). 

\bibitem{rud} 
     \newblock W. Rudin, 
     \newblock \emph{Lebesgue's first theorem},      \newblock In L. Nachbin (Ed): Mathematical analysis and applications. Part B, pp. 741--747, \newblock Academic Press, 1981, 

\bibitem{taqqu}
 \newblock G.  Samorodnitsky and M. S.  Taqqu, 
 \newblock  ``Stable non-Gaussian random processes", 
 \newblock  Chapman \and Hall, New York, 1994.




\bibitem{sato}
\newblock  H.  Sato, 
\newblock \emph{ An ergodic measure on a locally convex topological vector space},
\newblock  J. Funct. Anal., \textbf{43} (1981), 149--165.

\bibitem{sa} 
\newblock V. V. Sazonov, 
\newblock \emph{On perfect measures}, 
\newblock Izv. Akad. Nauk SSSR Ser. Mat. \textbf {26} (1962), 391--414. Translated in American Mathematical Society Translations. Series 2. Vol. 48: Fourteen papers on logic, algebra, complex variables and topology. American Mathematical Society, Providence, RI, 1965.




\bibitem{schervish}
\newblock M. J. Schervish,
\newblock ``Theory of statistics",
 \newblock Springer-Verlag, New York, 1995.

\bibitem{schwarz} 
     \newblock L. Schwartz,
     \newblock ``Radon Measures on Arbitrary Topological Spaces and Cylindrical Measures," 
     \newblock Oxford University Press, London, 1973.

\bibitem{lorraine}
\newblock L. Schwartz,
\newblock \emph{On Bayes procedures}, 
\newblock  Z. Wahrsch. Verw. Gebiete, \textbf{4} 1965, 10--26. 


\bibitem{shimomura}
\newblock H. Shimomura, 
\newblock\emph{Some new examples of quasi-invariant measures on a Hilbert space},
\newblock  Publ. Res. Inst. Math. Sci., \textbf{11} (1975/76),  635--649.


\bibitem{shir} 
     \newblock  A. N. Shiryaev,
     \newblock ``Probability," 
     \newblock  Springer-Verlag, New York, 1996.


\bibitem{simonithe}
\newblock A. Simoni,
\newblock ``Bayesian Analysis of Linear Inverse Problems with Applications in Economics and Finance,"
 \newblock Dissertation,  Univ. of Bologna,  2009.
 
 \bibitem{slu}
\newblock E. Slutsky,
\newblock \emph{Quelques propositions sur la th\'eorie des fonctions al\'eatoires} (Russian),
\newblock Acta [Trudy] Univ. Asiae Mediae. Ser. V-a.,  \textbf{31} (1939). 



 \bibitem{stein}
\newblock D. M. Steinberg,
\newblock \emph{A Bayesian approach to flexible modeling of multivariable response functions},
\newblock  J. Multivariate Anal., \textbf{34} (1990), 157--172. 
 
\bibitem{strand}
\newblock O. N. Strand and E. R. Westwater,
\newblock \emph{Statistical estimation of the numerical solution of a Fredholm integral equation of the first kind},
\newblock  J. Assoc. Comput. Mach., \textbf{15} (1968), 100--114.
 
 
  \bibitem{stuart}
  \newblock A. M. Stuart,
  \newblock  \emph{Inverse Problems: A Bayesian Perspective}, 
  \newblock  Acta Numerica, \textbf{19} (2010), 451--559. 



\bibitem{tarantola}
\newblock A. Tarantola, 
\newblock  ``Inverse Problem Theory. Methods for Data Fitting and Model Parameter Estimation",
\newblock  Elsevier Science Publishers,  Amsterdam, 1987.

\bibitem{tarantol}
\newblock  A. Tarantola  and B. Valette, 
\newblock \emph {Inverse Problems = Quest for Information}, 
\newblock  J. Geophys., \textbf{50} (1982), 159-170.

\bibitem{tanja}
\newblock  T. Tarvainen, V. Kolehmainen, A. Pulkkinen, M. Vauhkonen, M. Schweiger, S. R. Arridge, and J. P. Kaipio, 
   \newblock  \emph{An approximation error approach for compensating for modelling errors between the radiative transfer equation and the diffusion approximation in diffuse optical tomography},
   \newblock Inverse Problems, \textbf{26} (2010), 015005--015023.

\bibitem{luis} 
    \newblock L. Tenorio,
    \newblock  \emph{Statistical regularization of inverse problems},
    \newblock  SIAM Rev., \textbf{43} (2001), 347--366.


\bibitem{thomas}
\newblock G. E. F. Thomas,
\newblock \emph{Integration of functions with values in locally convex Suslin spaces},
\newblock Trans. Amer. Math. Soc., \textbf{212} (1975), 61--81. 


 

\bibitem{turchin} 
\newblock V. F. Turchin, \newblock \emph{Statistical regularization}, in H. J. Krappe et al (Eds): Advanced methods in the evaluation of nuclear scattering data, pp. 33-49, 1985, \newblock Springer, Berlin, 1985. 

\bibitem{twomey} 
\newblock S. Twomey, 
\newblock \emph{On the numerical solution of Fredholm integral equations of the first kind by the inversion of the linear system produced by quadrature},
\newblock  J. Assoc. Comput. Mach., \textbf{10} (1963),  97--101.
 
 \bibitem{umemura}
\newblock Y. Umemura,
\newblock \emph{Measures on infinite dimensional vector spaces},
\newblock Publ. Res. Inst. Math. Sci. Ser. A, \textbf{1} (1965),  1--47. 
 
 
 \bibitem{ur}
 \newblock  R. J. Urick,
 \newblock ``Ambient noise in the sea",
 Undersea Warfare Technology Office, Naval Sea Systems Command, Dept. of the Navy, Washington, D.C., 1984.


\bibitem{vakha} 
     \newblock N. N. Vakhania,  V.I. Tarieladze and S. A. Chobanyan,
     \newblock ``Probability Distributions on Banach Spaces," 
     \newblock Reidel Publishing Co., Dordrecht, 1987.

\bibitem{vaart} 
\newblock A. W. van der Vaart and J. H. van Zanten,
\newblock \emph{Rates of contraction of posterior distributions based on Gaussian process priors},
\newblock  Ann. Statist., \textbf{36} (2008), 1435-1463.

\bibitem{vara}
\newblock V. S. Varadarajan,
\newblock ``Measures on topological spaces", 
\newblock  Amer. Math. Soc. Transl., \textbf {2}  (1965), 161--220.

\bibitem {wahba}
\newblock  G. Wahba, 
\newblock \emph{Improper priors, spline smoothing and the problem of guarding against model errors in regression},
\newblock   J. Roy. Statist. Soc. Ser. B, \textbf{40} (1978), 364--372.  



\bibitem{walter}
S. G. Walker, P. Damien, P. W. Laud  and A. F. M. Smith,
\newblock \emph{Bayesian nonparametric inference for random distributions and related functions. With discussion and a reply by the authors},
\newblock  J. R. Stat. Soc. Ser. B Stat. Methodol., \textbf{61} (1999),  485--527.
 
 \bibitem{ambient}
 \newblock R. J. Webster,
 \newblock\emph{Ambient noise statistics},
\newblock IEEE Trans. Signal Proces., \textbf{41} (1993), 2249--2253.


\bibitem{whittle1}  
\newblock P. Whittle, 
\newblock \emph{Curve and periodogram smoothing}, J. \newblock Roy. Statist. Soc. Ser. B, \textbf{19} (1957), 38--47.

\bibitem{whittle2}
\newblock P. Whittle, 
\newblock \emph{On the smoothing of probability density functions},  
\newblock J. Roy. Statist. Soc. Ser. B,  \textbf{20} (1958), 334--343. 
 


\bibitem{wiener2}
\newblock N. Wiener, 
\newblock ``Extrapolation, Interpolation, and Smoothing of Stationary Time Series. With Engineering Applications",
\newblock  Chapman \& Hall, Ltd., London, 1949.
    
\bibitem{wiener} 
\newblock N. Wiener,
\newblock ``Collected works. Vol. I."  Edited by P. Masani.
 MIT Press, Cambridge, Mass.-London, 1976.


\bibitem{willard}
\newblock S.  Willard,
\newblock  ``General topology", 
\newblock  Dover Publications Inc., Mineola NY, 2004. 

     \bibitem{sph}
     \newblock G. Wise and N. Gallagher,
     \newblock\emph  {On spherically invariant random processes}, 
\newblock IEEE Trans. Information theory,
\textbf{24} (1978), 118--120. 



 \bibitem{wol1}
 \newblock R. L. Wolpert and K.  Ickstadt,
\newblock  \emph{Reflecting uncertainty in inverse problems: a Bayesian solution using L\'evy processes},
\newblock  Inverse Problems, \textbf{20} (2004), 1759--1771. 

\bibitem{wol2}
R. L. Wolpert. K. Ickstadt and M. B. Hansen,
\newblock{A nonparametric Bayesian approach to inverse problems}, \newblock in  Bayesian statistics 7, 403--417, Oxford Univ. Press, New York, 2003. 



   \bibitem{xia}
 \newblock D. X. Xia,
 \newblock ``Measure and integration theory on infinite-dimensional spaces",
 \newblock  Academic Press, New York-London, 1972.
      


\bibitem{xi} 
\newblock Y. Xing and B. Ranneby, 
\newblock \emph{Sufficient conditions for Bayesian consistency},
\newblock J. Statist. Plann. Inference, \textbf{139} (2009),  2479--2489.


\bibitem{yamasaki} 
Y. Yamasaki, 
\newblock ``Measures on infinite-dimensional spaces",
 \newblock  World Scientific Publishing Co., Singapore, 1985.



\bibitem{zhao} 
\newblock L. H. Zhao, 
\newblock \emph{Bayesian aspects of some nonparametric problems},
\newblock  Ann. Statist., \textbf{28} (2000), 532--552. 

  \bibitem{zie}
  \newblock W. P.  Ziemer, 
  \newblock ``Weakly differentiable functions. Sobolev spaces and functions of bounded variation",
   \newblock  Springer-Verlag, New York, 1989.


\end{thebibliography}
\end{document}